\DeclareSymbolFont{rsfs}{U}{rsfs}{m}{n}
\DeclareSymbolFontAlphabet{\mathscrsfs}{rsfs}
\newcommand\blfootnote[1]{%
  \begingroup
  \renewcommand\thefootnote{}\footnote{#1}%
  \addtocounter{footnote}{-1}%
  \endgroup
}
\setlist[itemize,1]{label=$\blacktriangleright$}
\def\GS{E_\star}
\def\Vsol{{\rm V}}
\def\uq{{\underline{q}}}
\def\cuF{\mathscrsfs{F}}
\def\de{{\rm d}}
\def\Par{{\sf P}}
\def\sub{\mbox{\tiny \rm ub}}
\def\slb{\mbox{\tiny \rm lb}}
\def\sHD{\mbox{\tiny \rm HD}}
\def\sGF{\mbox{\tiny \rm GF}}
\def\sGD{\mbox{\tiny \rm GD}}
\def\sLip{\mbox{\tiny \rm Lip}}
\def\sRS{\mbox{\tiny \rm RS}}
\def\<{\langle}
\def\>{\rangle}
\def\sRS{\mbox{\tiny \rm RS}}
\def\s1RSB{\mbox{\tiny \rm 1RSB}}
\def\sSAT{\mbox{\tiny \rm SAT}}
\def\sens{\mbox{\tiny \rm sens}}
\def\sinit{\mbox{\tiny \rm init}}
\def\srepl{\mbox{\tiny \rm repl}}
\def\sopt{\mbox{\tiny \rm opt}}
\def\spoly{\mbox{\tiny \rm poly}}
\def\dist{{\sf dist}}
\def\sit{\mbox{\tiny \rm it}}
\def\salg{\mbox{\tiny \rm alg}}
\def\prob{{\mathbb P}}
\def\slin{\mbox{\tiny lin}}
\def\proj{{\boldsymbol P}}
\def\cL{{\mathcal L}}
\def\he{\mathfrak{h}}
\def\sep{\mbox{\rm \tiny ep}}
\newtheoremstyle{myremark} % name
    {\topsep}                    % Space above
    {\topsep}                    % Space below
    {\rm}                        % Body font
    {}                           % Indent amount
    {\bf}                        % Theorem head font
    {.}                          % Punctuation after theorem head
    {.5em}                       % Space after theorem head
    {}  % Theorem head spec (can be left empty, meaning normal)
\newtheorem{claim}{Claim}[section]
\newtheorem{lemma}[claim]{Lemma}
\newtheorem{theorem}{Theorem}
\newtheorem{proposition}[claim]{Proposition}
\newtheorem{corollary}[claim]{Corollary}
\newtheorem{definition}[claim]{Definition}
\theoremstyle{myremark}
\newtheorem{remark}{Remark}[section]
\def\ed{\stackrel{{\mathrm d}}{=}}
\def\bA{{\boldsymbol A}}
\def\bB{{\boldsymbol B}}
\def\hbx{\hat{{\boldsymbol x}}}
\def\hby{\hat{{\boldsymbol y}}}
\def\hbz{\hat{{\boldsymbol z}}}
\def\hx{\hat{x}}
\def\hy{\hat{y}}
\def\hR{\hat{R}}
\def\lr{{\rm lr}}
\def\Proj{{\sf P}}
\def\cuC{\mathscrsfs{C}}
\def\cD{{\mathcal D}}
\def\cW{{\mathcal W}}
\def\cS{{\mathcal S}}
\def\hp{\hat{p}}
\def\hnu{\hat{\nu}}
\def\cV{\mathcal{V}}
\def\cF{\mathcal{F}}
\def\cE{\mathcal{E}}
\def\cG{\mathcal{G}}
\def\cS{\mathcal{S}}
\def\cO{\mathcal{O}}
\def\GOE{\mbox{GOE}}
\def\tbx{\tilde{\bx}}
\def\myangle{{\rm angle}}
\def\hbh{\hat{\boldsymbol h}}
\def\diam{{\rm diam}}
\def\Vol{{\rm Vol}}
\def\hbm{\hat{\boldsymbol m}}
\def\obm{\overline{\boldsymbol m}}
\def\obh{\overline{\boldsymbol h}}
\def\bz{{\boldsymbol z}}
\def\S{{\mathbb S}}
\def\Sol{{\sf Sol}}
\def\Ts{{\sf T}}
\def\oh{\overline{h}}
\def\og{\overline{g}}
\def\of{\overline{f}}
\def\cF{\mathcal{F}}
\def\cE{\mathcal{E}}
\def\ualpha{\underline{\alpha}}
\def\oW{\overline{W}}
\def\LL{\mathbb{L}}
\def\TT{\mathbb{T}}
\def\calE{\mathcal{E}}
\def\bA{\mathbf{A}}
\def\bB{\mathbf{B}}
\def\bD{\mathbf{D}}
\def\bF{\mathbf{F}}
\def\bG{\mathbf{G}}
\def\bH{\mathbf{H}}
\def\bI{\mathbf{I}}
\def\bK{\mathbf{K}}
\def\bL{\mathbf{L}}
\def\bM{\mathbf{M}}
\def\bO{\mathbf{O}}
\def\bP{\mathbf{P}}
\def\bQ{\mathbf{Q}}
\def\bR{\mathbf{R}}
\def\bS{\mathbf{S}}
\def\bU{\mathbf{U}}
\def\bW{\mathbf{W}}
\def\bX{\mathbf{X}}
\def\bY{\mathbf{Y}}
\def\bZ{\mathbf{Z}}
\def\ba{\boldsymbol{a}}
\def\be{\boldsymbol{e}}
\def\bg{\boldsymbol{g}}
\def\bh{\boldsymbol{h}}
\def\bm{\boldsymbol{m}}
\def\bu{\boldsymbol{u}}
\def\bv{\boldsymbol{v}}
\def\bw{\boldsymbol{w}}
\def\bx{\boldsymbol{x}}
\def\by{\boldsymbol{y}}
\def\bz{\boldsymbol{z}}
\def\bA{\boldsymbol{A}}
\def\bB{\boldsymbol{B}}
\def\bD{\boldsymbol{D}}
\def\bF{\boldsymbol{F}}
\def\bG{\boldsymbol{G}}
\def\bH{\boldsymbol{H}}
\def\bI{\boldsymbol{I}}
\def\bK{\boldsymbol{K}}
\def\bL{\boldsymbol{L}}
\def\bM{\boldsymbol{M}}
\def\bO{\boldsymbol{O}}
\def\bP{\boldsymbol{P}}
\def\bQ{\boldsymbol{Q}}
\def\bR{\boldsymbol{R}}
\def\bS{\boldsymbol{S}}
\def\bU{\boldsymbol{U}}
\def\bW{\boldsymbol{W}}
\def\bX{\boldsymbol{X}}
\def\bY{\boldsymbol{Y}}
\def\bZ{\boldsymbol{Z}}
\def\normal{{\mathsf{N}}}
\def\bcH{{\boldsymbol {\mathcal H}}}
\def\bDelta{\boldsymbol{\Delta}}
\def\blambda{\boldsymbol{\lambda}}
\def\bxi{\boldsymbol{\xi}}
\def\bzero{\boldsymbol{0}}
\def\bfone{\boldsymbol{1}}
\def\bzeta{\boldsymbol{\zeta}}
\def\bm{\boldsymbol{m}}
\def\GOE{{\sf GOE}}
\def\rr{\overline{r}}
\def\ALG{{\sf ALG}}
\def\complex{{\mathbb C}}
\def\upper{{\mathbb H}}
\def\reals{{\mathbb R}}
\def\naturals{{\mathbb N}}
\def\sT{{\sf T}}
\def\Proj{{\bf P}}
\def\Ball{{\sf B}}
\def\id{{\boldsymbol I}}
\def\ed{\stackrel{{\rm d}}{=}}
\def\op{{\mbox{\rm\tiny op}}}
\def\sB{{\sf B}}
\def\hsB{\hat{\sf B}}
\DeclareMathOperator*{\plim}{p-lim}
\renewcommand{\P}{\mathbb{P}}
\newcommand{\E}{\mathbb{E}}
\newcommand{\R}{\mathbb{R}}
\newcommand{\eps}{\varepsilon}
\newcommand{\Var}{\operatorname{Var}}
\newcommand{\argmin}{\operatorname{argmin}}
\newcommand{\sign}{\operatorname{sign}}
\newcommand{\Unif}{\operatorname{Unif}}
\newcommand{\Lip}{\operatorname{Lip}}
\newcommand{\Tr}{\operatorname{Tr}}
\newcommand{\RN}[1]{%
  \textup{\uppercase\expandafter{\romannumeral#1}}%
}
\newcommand{\RNum}[1]{\uppercase\expandafter{\romannumeral #1\relax}}
\newcommand*{\rom}[1]{\expandafter\@slowromancap\romannumeral #1@}
\definecolor{lightergray}{rgb}{0.97, 0.97, 0.97}
\bfseries\color{blue}, % Keywords in bold blue
\title{
 Solving systems of Random Equations\\
 via First and Second-Order Optimization Algorithms}
\author{Andrea Montanari
	\thanks{Department of Statistics and Department of Mathematics, Stanford University} 
	\and 
	Eliran Subag\thanks{Department of Mathematics, Weizmann Institute of Science}
}
\date{}
\begin{document}
\maketitle

\begin{abstract}
We revisit the classical problem of solving $n$ random equations in 
$d$ real variables, when the equations are independent realizations 
of a common rotationally-invariant  Gaussian process in $d$ dimensions.
A special case is the one of random polynomial equations, which has been studied
since the seminal contributions of Littlewood-Offord and Kac in the 1940s (who 
initiated the study of existence of solutions of random polynomials)
and of Shub and Smale in the 1990s.
The last authors first investigated the computational aspect of this problem.
In particular Smale's `17th problem' asks whether  a system of random polynomial equations
can be (approximately) solved in average case polynomial time.

We formulate this as a nonconvex optimization problem, and apply local
algorithms based on gradient or Hessian information. 
We leverage recent advances in spin glass theory to characterize the optimal algorithm
in this class, and show that the latter undergoes a phase transition at a critical
value $\alpha_{\salg}$ of the ratio $\alpha=n/d$. The present paper establishes 
that near-solutions can be found with-high probability for $\alpha<\alpha_{\salg}$,
while a companion paper proves that a broad class of efficient algorithms fail for $\alpha>\alpha_{\salg}$
(we outline the proof of this hardness result here).
We further prove that there are cases (distributions over the
random equations)
such that for $(1+\delta)\alpha_{\salg}<n/d<(1-\delta)\alpha_{\slb}$
(with $(\alpha_{\salg},\alpha_{\slb})$
a non-empty interval and $\delta>0$ arbitrarily small)
solutions exists with high probability but are not found efficiently by 
a broad class of local search algorithms.

We compare our theoretical predictions with numerical simulations
using the optimal algorithm we propose as well as 
stochastic gradient descent, and show that they are accurate for a related albeit non-Gaussian cost function.
We finally observe empirically a sensitivity phase
transition (or cross-over) in the behavior of optimization
algorithms, below $\alpha_{\salg}$. This marks a qualitative 
departure with respect to standard optimization theories.
\end{abstract}

\tableofcontents

\section{Introduction}
\label{sec:Introduction}

Given\blfootnote{The present manuscript subsumes the earlier technical report 
\cite{montanari2023solving} by the same authors.} a cost function $H:\reals^d\to\reals$ (which we will assume throughout to be smooth),
first-order optimization algorithms produce a sequence of 
candidates $\bx^1,\dots,\bx^k\in\reals^d$. 
The new candidate $\bx^{k+1}$ is a function uniquely of the 
first-order information at previous points,
namely a function of  $\{\bx^i, H(\bx^i)$, $\nabla H(\bx^i)$: $i\le k\}$.  Second-order methods 
additionally use the Hessians 
$\{\nabla^2 H(\bx^i):\; i\le k\}$. Important examples of these two classes of algorithms 
are, respectively, gradient descent and the Newton method. 
With a slight abuse of terminology, we will refer to stochastic gradient descent (SGD) also
as to a first-order method.

Because of their simplicity, these algorithms are the 
method of choice to solve 
large optimization problems. A  prominent application is training 
large machine learning (ML) models. The training problem is normally formulated
as a non-convex optimization problem whereby $\bx$ is a vector 
comprising the neural network weights (a.k.a. `model parameters').
Focusing, to be definite, on the case of square loss, the cost function
optimized in ML takes the form 
\begin{align}
H(\bx) = \frac{1}{2} \sum_{i=1}^n F_i(\bx)^2\, ,\label{eq:FirstHamiltonian}
\end{align}
where each term $F_i$ is the error incurred in
fitting the $i$-th data point (seen as a function of the 
model parameters $\bx$). 
Approximate minimizers of problems of this type are sought
 via SGD and its variants \cite{bottou2018optimization}. 

 In  
ML theory, the functions $F_i$ are assumed to be i.i.d. copies of a 
certain stochastic process $F_1:\reals^d\to\reals_{\ge 0}$ \cite{shalev2014understanding}.
For instance, in a supervised learning problem,  
we are given i.i.d. data $(y_i,\ba_i)$, with 
$y_i\in\reals$ a response and $\ba_i\in\reals^D$ a features vector.
We want to learn a ML model $f(\,\cdot\,;\bx):\reals^D\to\reals$
(parametrized by $\bx$),
that, given a new feature vector $\ba_{n+1}$, predicts the response
$y_{n+1}$ via $f(\ba_{n+1};\bx)$.
In this case, we might define  $F_i(\bx) = y_i-f(\ba_i;\bx)$,
and hence learn the parameters $\bx$ by minimizing 
$H(\bx)$ of Eq.~\eqref{eq:FirstHamiltonian}. 
While we will focus on the case 
of square loss, we believe our methods can be generalized to other losses.
We will sometimes refer to $H(\bx)$ as to the `energy.'

Despite their applicability, first and second-order 
methods are well understood only for convex cost functions $H$ \cite{nesterov2003introductory}. When
the number of parameters $d$ is large and $H$ is non-convex, we 
cannot predict whether
SGD will converge to a near global optimum in polynomial time. 

Substantial effort has been devoted to the mathematical study of
SGD dynamics in non-convex landscapes, and we will briefly review some
literature in Section \ref{sec:Related}. Broadly speaking, existing theory is limited
either to the `highly underparametrized' regime $n\gg d$, or makes crude linearity approximations
that are only accurate in the `highly overparametrized' regime $n\ll d$.

In contrast, our focus will be on the proportional asymptotics whereby 
\begin{align}
n,d\to\infty,\;\;\; \frac{n}{d}\to \alpha\in (0,\infty)\, .
\end{align}
Big-Oh notation refers to this limit, unless we write the
asymptotic variable in subscript.
 `With high probability' (whp) means `with probability converging to one
as $n,d\to\infty$.' 

We will consider settings in
which the $n$ equations  $F_i(\bx) = 0$, $i\le n$ admit a common solution,
and hence the global minimum is $\min_{\bx}H(\bx)=0$. 
When this happens, finding a near minimum of $H(\bx)$ is equivalent to approximately
solving the $n$ nonlinear equations $F_i(\bx)=0$, $i\le n$. 

For mathematical convenience, we will assume the normalization $\|\bx\|=1$
and hence (denoting by $\S^{d-1}$ the unit sphere) define the set of near-global optima as 
\begin{align}\label{eq:NearSolutions}
\Sol_{n,d}(\eps):= \Big\{\bx\in \S^{d-1}:\,\; \|\bF(\bx)\|_2^2\le n \eps\Big\}\, ,
\end{align}
where $\bF(\bx) = (F_1(\bx),\dots,F_n(\bx))^{\sT}$. We will use the term
`near solutions' to refer to elements of $\Sol_{n,d}(\eps_n)$ for some $\eps_n\to 0$ slowly.

We will consider two distributions for the processes $F_i$, defined as follows.

\begin{figure}
\begin{center}
\includegraphics[width=0.75\linewidth]{./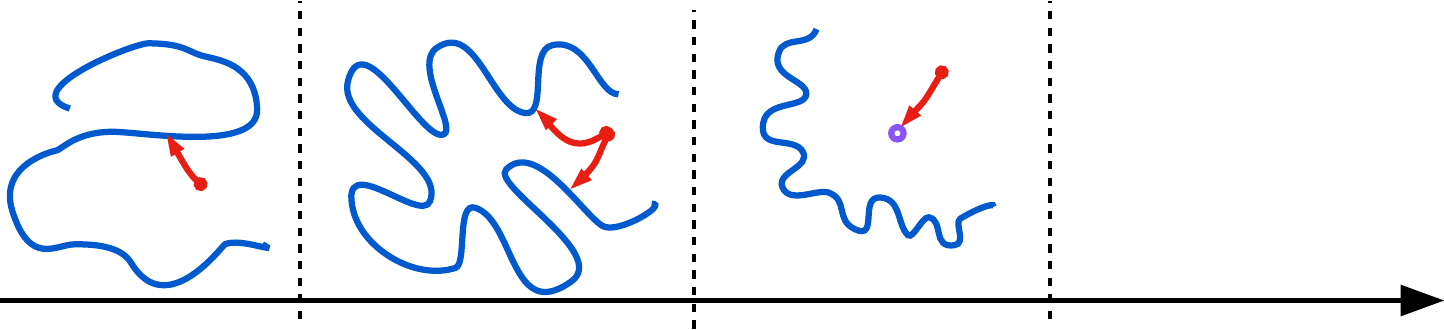}
\put(-290,-8){$\alpha_{\sens}$}
\put(-195,-8){$\alpha_{\salg}$}
\put(-100,-8){$\alpha_{\sSAT}$}
\put(-10,-8){$\alpha = n/d$}
\end{center}
\vspace{-0.2cm}
\caption{Cartoon phase diagram for random equations models ($n$ equations in $d$ unknowns).
Blue: manifold of solutions; red: local optimization trajectories. See text for description.}
		\label{fig:phase_diag}
\end{figure}

\paragraph{Average of ridges (AoR).} For $\phi:\reals\to\reals$ a fixed non-linear function,
and 
$\ba_i = (\ba_i^{(1)},\dots,\ba^{(m)}_i)\sim\normal(0,\id_D)$, $D=md$,
$\ba^{(k)}_i \in\reals^d$, we let (for $\bx\in \S^{d-1}$)
\begin{align}\label{eq:AoR_Fi}
F_i(\bx) =  \frac{1}{\sqrt{m}}\sum_{k=1}^m\phi(\<\bx,\ba^{(k)}_i\>)\, .
\end{align}
In words, each component $\phi(\<\bx,\ba^{(k)}_i\>)$ is a ridge function 
(a function of a one-dimensional projection of the variables $\bx$) along
a random direction $\ba^{(k)}_i$, and we are averaging $m$
independent copies of such a function\footnote{We emphasize that the AoR model is
not motivated here by a specific application, but rather as a natural
example of random cost function.}. 
The functions \eqref{eq:AoR_Fi} are extended to $\bx\in \reals^d$ as described 
in Appendix \ref{sec:Appendix_Numerical_Setup}. 
%\vspace{0.1cm}

\paragraph{Gaussian equations (GE) model.}  In this case the $F_i$ are i.i.d. Gaussian processes with 
zero mean and covariance:
\begin{equation}\label{eq:Fcov}
	\E\big[F_i(\bx^1)F_i(\bx^2)\big] = \xi\big(\langle \bx^1,\bx^2\rangle\big)\, .
\end{equation} 
This specific covariance structure is equivalent to the assumption the $F_i$
is invariant (in distribution) under rotations.
The fact that the covariance is positive semidefinite  implies that\footnote{We will 
assume $\xi(1+\eps)<\infty$ for some $\eps>0$
which implies that the random
 function $\bF:\bx\mapsto \bF(\bx)$ is a.s. smooth.} 
$\xi(t)=\sum_{k\ge 0}\xi_kt^k$, with $\xi_k\ge 0$ for all $k$ \cite{Schoenberg1942}.
An explicit construction of the processes $\{F_i\}$ is given by 
\begin{align}
 F_i(\bx)&:=\sum_{k\ge 0}\sqrt{\xi_k}\sum_{j_1,\dots,j_k=1}^d 
 G_{i,j_1\dots j_k}^{(k)}x_{j_1}\cdots x_{j_k}\, ,\label{eq:Representation}
 \end{align}
 where the random coefficients
 $(\bG^{(k)})_{k\ge0}:=(G_{i,j_1\dots j_k}^{(k)})_{k\ge 0, i\le n, j_1,\dots ,j_k\le d}
 \sim_{i.i.d. } \normal(0,1)$ are i.i.d. 
 Gaussian random variables. 

Our theoretical results will be stated and proved in the GE model.
We will also compare predictions from the GE model to
numerical simulations in the AoR model,
in Section \ref{sec:Numerical_First}, \ref{sec:Numerical_Second}
(while more challenging from an analytical viewpoint, the AoR model is
simpler to simulate). The two
models are related by matching the covariance of the processes $F_i$. Namely, letting 
$\phi(s) = \sum_{k\ge 0} \phi_k \he_k(s)$ be the expansion of $\phi$ into
the basis of orthonormal Hermite polynomials in $L^2(\reals,\gamma)$ (here $\gamma$
is the standard Gaussian measure), the process \eqref{eq:AoR_Fi} has covariance of the form
\eqref{eq:Fcov}, where
\begin{align}
\xi(s) = \sum_{k\ge 0} \xi_ks^k\, ,\;\;\;\;\; \xi_k = \phi_k^2\, .
\end{align}
We find good agreement (for $m$ sufficiently large), 
pointing at a remarkable universality phenomenon.

\vspace{0.2cm}

The GE model is connected to several rich lines of inquiry within
mathematics (zeros of random polynomials); statistical physics (mean field spin glasses);
machine learning (loss landscape in deep learning);
random constraint satisfaction problems (solving the system of equations 
$\bF(\bx)=\bzero$ can be viewed as a continuous constraint 
satisfaction problem).
If $\xi(t)$ is a polynomial of degree $p$, then the $F_i$'s are polynomials
of degree $p$ with Gaussian coefficients, see Eq.~\eqref{eq:Representation}. 
The zeros of such polynomials
have been studied at least since Littlewood-Offord
\cite{littlewood1939number} and Kac \cite{Kac1943}, see, e.g., \cite{Azais2009}
for a recent overview. In particular, Kac first developed the celebrated
Kac-Rice formula to compute the expected number of real solutions in one dimension.
Much less is known about the existence and computational complexity of finding real solutions 
of systems of random polynomial equations. We refer again to Section \ref{sec:Related} for 
further related work.

The analogy with discrete constraint satisfaction problems 
\cite{MezardMontanari} suggests
that the GE model should undergo
two  phase transitions in the limit $n,d\to\infty$.
(All statements below are understood to hold whp.
 We refer Fig.~\ref{fig:phase_diag} for a cartoon.)
\begin{description}
\item[$\alpha<\alpha^{\spoly}_{\salg}$.] The solution set $\Sol_{n,d}(0)$ is
 non-empty and, for any $\eps>0$,  there exists a polynomial-time algorithm
 that converges to points in $\Sol_{n,d}(\eps)$ with high probability. 
\item[$\alpha^{\spoly}_{\salg}<\alpha<\alpha_{\sSAT}$.]
 $\Sol_{n,d}(0)$ is non-empty but no polynomial time
algorithm
converges to it.
\item[$\alpha_{\sSAT}<\alpha$.] Solutions don't exist and 
$\min_{\bx\in\S^{d-1}}H(\bx)/n = u_{\sopt}(\alpha,\xi) +o_n(1)$ for a strictly positive 
$u_{\sopt}(\alpha,\xi)$.
\end{description}
The next subsection outlines our contributions towards establishing 
this conjectural picture.

\subsection{Summary of contributions}

This paper establishes rigorously several elements of the above picture 
and additionally presents numerical and heuristic evidence for certain
other phenomena that we deem worthy of future investigation. 

\paragraph{Rigorous results.}
We prove the following results about the algorithmic and satisfiability 
phase transitions in the GE model:
\begin{itemize}
\item \emph{Characterization of the algorithmic threshold.} 
We introduce a first order algorithm that finds near-solutions 
with high probability for $\alpha<\alpha_{\salg}(\xi)$, with 
$\alpha_{\salg}(\xi)$ an explicit threshold 
(see Sec.~\ref{sec:AchievableEnergy} and in particular
Eqs.~\eqref{eq:uRS_def} to \eqref{eq:AlphaAlgDef} below). 
For $\alpha>\alpha_{\salg}(\xi)$, we show that any algorithm whose output 
is Lipschitz continuous 
in the input data (more precisely, in the coefficients of the polynomials $F_i$), 
with Lipschitz constant bounded uniformly in $d$,  fails to find a
near-solution. Indeed, we will give an energy threshold $u_{\salg}(\xi,\alpha)>0$
such that any Lipschitz algorithm returns 
$\bx^{\salg}$ such that, with high-probability,
$\|\bF(\bx^{\salg})\|_2^2/2 > n (u_{\salg}(\alpha,\xi) -o_n(1))$,
for $\alpha>\alpha_{\salg}(\xi)$.

The present paper focuses on establishing
the achievability of energy $u_{\salg}(\xi,\alpha)$. We outline the proof of the matching
lower bound, deferring a full proof to a companion paper.
\item\emph{Upper and lower bounds on the  satisfiability phase transition.} 
We will give upper and lower bounds on the satisfiability threshold 
$\alpha_{\sSAT}(\xi)$,
denoted by $\alpha_{\slb}(\xi)$ and $\alpha_{\sub}(\xi)$, respectively. 
More precisely, for $\alpha<\alpha_{\slb}(\xi)$ we will show that $\Sol_{n,d}(0)$ is non-empty 
whp, and for $\alpha>\alpha_{\sub}(\xi)$ there exists $\eps>0$ such that $\Sol_{n,d}(\eps)$ is empty.
(We emphasize that we do not prove that an $d$-independent sharp threshold 
$\alpha_{\sSAT}(\xi)$ exists, but we conjecture it does and will keep using this terminology
informally.)
While these upper and lower bounds do not match in general,
 they are strong enough to imply (for certain choices of $\xi$) the existence 
of a non-empty interval 
$\alpha_{\salg}(\xi)<\alpha<\alpha_{\slb}(\xi)$ such that the set
of solutions $\Sol_{n,d}(0)$ is non-empty (and, in fact, 
is a manifold of dimension $(d-n-1)\to\infty$) but no Lipschitz algorithm finds even near-solutions.
\end{itemize}

While we expect that $\alpha_{\salg}(\xi)$ coincides with 
the threshold for general polynomial-time algorithms
 (denoted in the previous section by $\alpha^{\spoly}_{\salg}(\xi)$),
 proving it is an open problem.

 We will sometimes refer to the first order algorithm mentioned
 at the first point also as the `two-phase algorithm' or
 `Hessian descent,' because it can be alternatively implemented using
 the Hessian of the energy function.

\paragraph{Nonrigorous results.}
We compare the predictions of our theory (which is rigorously established within the GE model) 
with numerical simulations in the AoR model with matching covariance structure.
In numerical experiments we implement both SGD, 
and our optimal first order algorithm.
We make the following observations:
\begin{itemize}
\item \emph{Agreement between theoretical predictions and 
numerical simulations.} 
We find good agreement in the following aspects: 
$(i)$~The behavior of the Hessian descent algorithm agrees 
with theoretical predictions, both in terms
of the achieved energy and the spectrum of the Hessian, 
along the algorithm's trajectory;
$(ii)$~The minimum energy $H(\bx)$ achieved by SGD, 
and the spectrum of the Hessian along the 
SGD trajectory agree with theoretical predictions for the optimal algorithm.

These findings provide evidence for two phenomena.
First, a remarkable degree of \emph{universality}, since
the GE and AoR models behave very similarly and, second,
 the fact that \emph{SGD is nearly optimal for these models.}
We believe that both of these phenomena are important directions 
for future mathematical work.
\item \emph{Sensitivity phase transition.} 
We observe a qualitative change in the behavior of optimization
algorithms around a sensitivity threshold $\alpha_{\sens}(\xi)
<\alpha_{\salg}(\xi)$.
Denoting by $\bx^{\salg}_1, \bx^{\salg}_2$ the outputs of a specific algorithm for
two independent runs (on the same cost function $H$, but 
under a random perturbation of the algorithm), we observe the following behavior:
for $\alpha$ below $\alpha_{\sens}(\xi)$, 
$\|\bx^{\salg}_1 - \bx^{\salg}_2\|\ll 1$, i.e.
 two independent runs return essentially the same solution;
 for $\alpha$ above $\alpha_{\sens}(\xi)$,
$\|\bx^{\salg}_1 - \bx^{\salg}_2\|$ is of order $1$:
two runs yield substantially different solutions.

For a version 
of our optimal first order algorithm,  we can compute a theoretical prediction
for $\alpha_{\sens}(\xi)$. 
For the case of SGD, we observe an increase in  sensitivity
around the same threshold but a theoretical characterization 
of the latter is an open problem.
(Further, for SGD this appears as a
 gradual transition rather than a sharp threshold.)
 \end{itemize}

\begin{figure}
\hspace{-0.5cm}
\includegraphics[width=0.52\linewidth]{./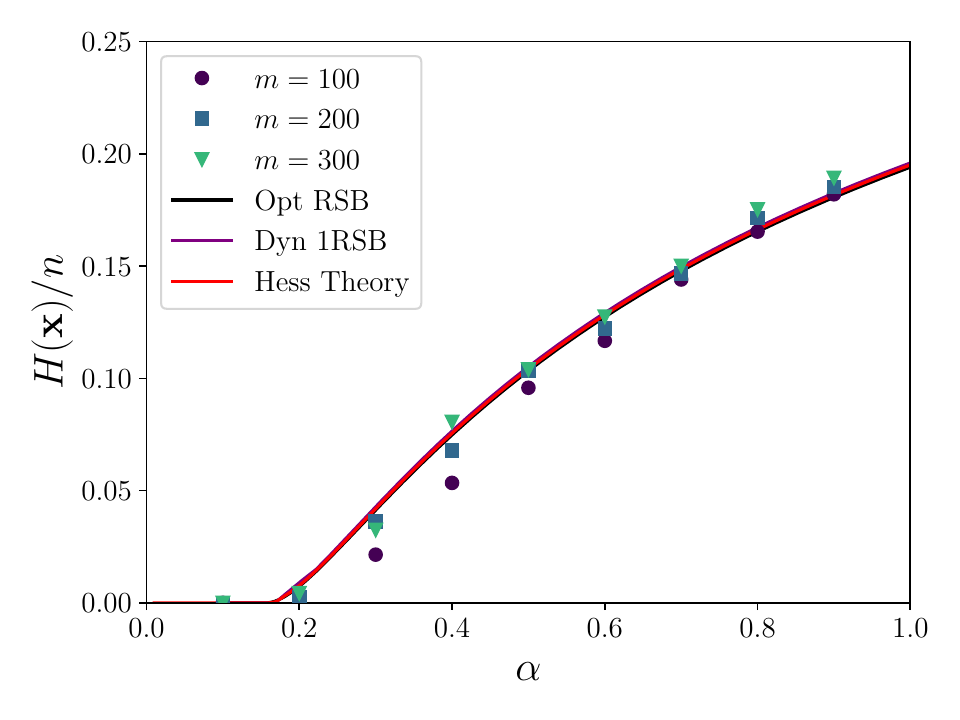}
\hspace{-0.5cm}
\includegraphics[width=0.52\linewidth]{./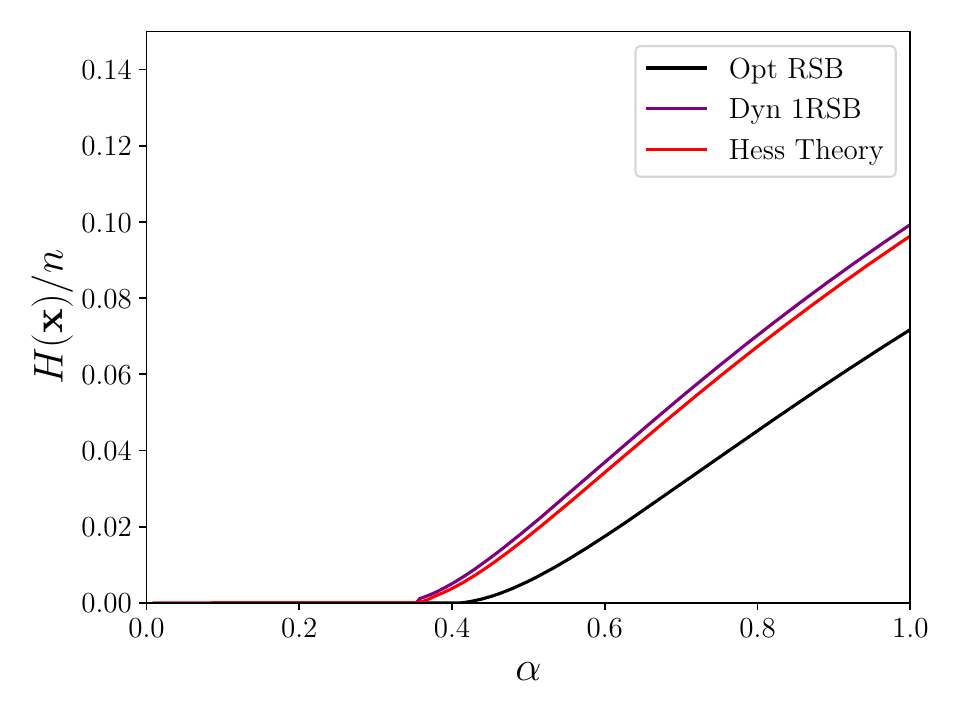}
\caption{Black lines: Replica theory predictions for the asymptotics of minimum cost  in the GE model 
\cite{urbani2023continuous}. Red lines: Algorithmic threshold energy (this paper). Purple lines:
1RSB predictions for the threshold energy. Left:
$\xi(t) =1+a^2 t^3$, $a=0.25$. Right: $\xi(t) = 1+a^2 t^{10}$,
$a=0.33$.
On the left, we also plot the energy achieved by SGD in the corresponding AoR model.
Here the learning rate is $\lr = 0.05$, number of iterations $N_{\sit}=8,000$, $d=100$ and 
vary $m$ of Eq.~\eqref{eq:AoR_Fi}.}
		\label{fig:a025_third_change_m}
\end{figure}

 Figure \ref{fig:a025_third_change_m} presents 
a preview of our results for covariance polynomials 
$\xi_1(t) = 1+a^2 t^3$, $a=0.25$
(left frame) and  $\xi_2(t) = 1+a^2 t^{10}$, $a=0.33$ (right frame).
In these figures, we plot several theoretical predictions: 
$(i)$~The (non-rigorous) prediction
for the asymptotics of the optimum 
$u_{\sopt}(\alpha,\xi) = \lim_{n,d\to\infty}\min_{\bx\in\S^{d-1}}H(\bx)/n$ 
from replica symmetry breaking (RSB) \cite{urbani2023continuous};
$(ii)$~The (non-rigorous) one-step replica symmetry breaking (1RSB) prediction for the so called
`dynamical threshold' energy 
(see, e.g., \cite{MezardMontanari} for general background); 
$(iii)$~Our (rigorous) prediction for the energy achieved by the 
optimal first-order algorithm 
$u_{\salg}(\alpha,\xi) = \lim_{n,d\to\infty} H(\bx^{\salg})/n$. 
Evaluating the explicit formulas in 
Section \ref{sec:AchievableEnergy}
(for $\alpha_{\salg}(\xi)$) and the 
formulas of \cite{urbani2023continuous}
(for $\alpha_{\sSAT}(\xi)$),
we obtain 
$\alpha_{\salg}(\xi_1)\approx 0.16$, 
$\alpha_{\sSAT}(\xi_1)\approx 0.16$
(left frame of Fig.~\ref{fig:a025_third_change_m}), 
$\alpha_{\salg}(\xi_2)\approx 0.35$.
$\alpha_{\sSAT}(\xi_2)\approx 0.41$
(right frame).
 \vspace{0.2cm}

 \subsection{Organization of the paper}

 We summarize related literature in Section 
 \ref{sec:Related}. We present our rigorous results in 
 Sections \ref{sec:Rigorous-SAT} (satisfiability phase transition) 
 and \ref{sec:Rigorous-ALG} (algorithmic phase transition).
We present the results of our numerical simulations with the
AoR model in Section \ref{sec:Numerical_First}, 
while we discuss
the sensitivity phase transition in 
Section \ref{sec:Numerical_Second}. We present the analysis of 
our algorithm in Sections \ref{sec:Algo_HD} 
(which focuses on the case $\xi'(0)=0$)
and  Section \ref{sec:TwoPhase}
(which extends the analysis to the case $\xi'(0)>0$). Finally, we draw some conclusions 
and outline open problems in Section \ref{sec:Conclusion}.

 \subsection{Definitions and notations}
 \label{app:Definitions}

 Given a symmetric matrix $\bA\in\reals^{n\times n}$, we denote by 
 $\lambda_1(\bA)\ge \lambda_2(\bA)\ge \cdots\ge
 \lambda_n(\bA)$ its eigenvalues in decreasing order. For a general matrix 
 $\bM\in\reals^{m\times n}$, $m\le n$, $\sigma_1(\bM)\ge \sigma_2(\bM)\ge \cdots\ge
 \sigma_m(\bM)\ge 0$ denote its singular values and $\|\bM\|_{\op}$ its operator norm.
 
 We let $\Ball^d(r):=\{\bx\in \R^d:\, \|\bx\|\le r\}$ denote the  ball of radius $r$ in $\R^d$,
 with $\Ball^d:=\Ball^d(1)$. 
 %We also write $\Ball^d(r_1,r_2):= \Ball^d(r_2)\setminus \Ball^d(r_1)$.
 
 Throughout, we will write $\bW\sim \GOE(N)$ if $\bW=\bW^{\sT}$ and $(W_{ij})_{i\le j\le N}$
 are independent with $W_{ii}\sim\normal(0,2)$,  $W_{ij}\sim\normal(0,1)$ for $i<j$.
 We write   $\bZ\sim \GOE(M,N)$ if $(Z_{ij})_{i\le M,j\le N}$
 are independent with $Z_{ij}\sim\normal(0,1)$.

%
%*****************************************************
%
\section{Further related work}
\label{sec:Related}
Our work is motivated and related to multiple lines of 
research investigating the behavior of optimization algorithms 
in high-dimensional random landscapes.

\paragraph{Optimization in machine learning.}
Two generic theoretical approaches have been developed to 
prove convergence of gradient based methods to fit large-neural
networks: neural tangent theory and mean field theory.

Roughly speaking, \emph{neural tangent theory} 
	\cite{jacot2018neural,du2018gradient,chizat2019lazy}
	observes that, for certain non-convex cost functions
	$H$, when $d$ is sufficiently larger than $n$, 
	the set of global optima is `near space-filling' in the sense that
	a random initialization is close to some global minimum with high probability.
	Gradient descent initialized at such a point converges in a 
	neighborhood of the unique closest global minimum.  
	We will show in Appendix \ref{sec:Algo_GD} that it 
	fails to capture all regimes in which gradient descent succeeds.
	
\emph{Mean field theory} 
\cite{mei2018mean,chizat2018global,agoritsas2018out,celentano2021high}
leverages mathematical tools developed for the study of particle systems,  
to derive an exact asymptotic characterization of the SGD
dynamics, in the limits of large  dimension and large sample size. 
However, capturing the effects of overparametrization by these 
techniques, while in principle possible
\cite{agoritsas2018out,celentano2021high},
is often too complex for analytic treatment.

\paragraph{Zeros of random polynomials.}
If $\xi(t)$ is a polynomial of degree $p$
then the $F_i$'s are polynomials
of degree $p$ with Gaussian coefficients, see Eq.~\eqref{eq:Representation}.
The zeros of such polynomials
have been studied at least since Littlewood-Offord
\cite{littlewood1939number} and Kac \cite{Kac1943}, see \cite{Azais2009}
for a recent overview.

When $n=d-1$, and the polynomials are homogeneous,
the number of \emph{complex solutions} is fixed by the classical 
Bezout theorem. Shub and Smale \cite{ShubSmaleI} explored the average-case
complexity of algorithms to find such complex solutions,
a question that was formalized as `Smale's 17th problem' \cite{smale1998mathematical,BeltranPardo1,BeltranPardo2,BurgisserCucker,Lairez1}. 
Much less is known about
the  computational complexity of finding \emph{real solutions} of systems of
random polynomial equations.
In sharp contrast with the setting of Smale's 17th problem
we will provide evidence that for certain natural 
distributions over the polynomials, solutions are abundant but
hard to find for a broad family of algorithms. 

Throughout this paper we will focus on the notion of near-solutions that 
is encapsulated by Eq.~\eqref{eq:NearSolutions} 
(with $\eps$ small or slowly vanishing with $n$).
The question of finding approximate solution in a stronger 
sense is investigated (in a more restricted setting) in 
\cite{montanari2024smale} which develops
some of the  ideas presented here.

\paragraph{Spin glass theory.}
Mean field spin glasses are random cost functions in high-dimensional space, whose covariance is often 
of the form \eqref{eq:Fcov}. Their analysis led to the development of a number of new
techniques in theoretical physics and probability theory \cite{mezard1987spin,talagrand2010mean,Pan}.
In particular, the
 GE model is closely related to the mixed $p$-spin model from spin glass theory,
and indeed, each coordinate $F_i$ is an independent copy of the so-called 
`mixed $p$-spin Hamiltonian' \cite{crisanti1992sphericalp,crisanti1995TAP,subag2017geometry}.

Note that --in the GE model-- the energy function \eqref{eq:FirstHamiltonian} is the sum of of $n$
terms, each being the square of a Gaussian process.
This Hamiltonian
was recently studied in physics by Fyodorov \cite{fyodorov2019spin,fyodorov2022optimization} and by 
Urbani \cite{urbani2023continuous,urbani2023dynamical} who proposed it as 
model for confluent tissues. In particular, 
\cite{urbani2023continuous} derives the phase diagram of this model using the 
non-rigorous replica method  and  \cite{urbani2023dynamical} derives DMFT equations
that characterize the high-dimensional asymptotics of gradient flow
in the landscape $H(\,\cdot\,)$. Our focus and approach 
will be different from the one of these works,
but we will report for comparison some of the predictions of \cite{urbani2023continuous}.

\paragraph{Random constrain satisfaction problems.}
The GE model can be seen as a random constraint satisfaction problem 
with continuous variables. Over the last two decades, spin glass
techniques were successful in characterizing random 
discrete constraint satisfaction problems such as graph-coloring
or $k$-satisfiability 
\cite{mezard2002analytic,achlioptas2008algorithmic,MezardMontanari,ding2022proof}.
The phase diagram we derive for the GE model bears strong resemblance 
with the one discovered in the discrete setting.

\paragraph{Loss landscape in deep learning.} While Gaussian models might appear 
crude analogs for the empirical risk landscape of machine learning problems,
several influential works used them to derive important insights into the latter 
\cite{dauphin2014identifying,choromanska2015loss}. In particular, 
\cite{choromanska2015loss} argued that the loss landscape of a multi-layer
perceptron is analogous to the one of a pure $p$-spin spherical spin glass.
Namely, $H$ is approximated by a Gaussian process with covariance
$\E[H(\bx_1)H(\bx_2)] = a_{n}+b_n\<\bx_1,\bx_2\>^p$. 
Motivated by this connection they demonstrated that SGD optimization typically converges to 
local minima or saddles with small index.
We notice that the Gaussian model
of \cite{choromanska2015loss} cannot capture the overparametrized regime which is our
main focus, since in that model 
$\prob(\min_{\bx\in\S^{d-1}}H(\bx) = 0) = 0$. However, for large $n/d$ (and $\xi(t)=\xi_0+\xi_p t^p$)
the model we study can be shown to be well approximated by the one of \cite{choromanska2015loss}.

Our characterization of the algorithmic phase transition
is directly related to the 
asymptotic spectral structure of the Hessian of the energy 
$\nabla ^2H(\bx)$.
A substantial literature has investigated (either empirically or heuristically)
the relation between spectrum of the Hessian and optimization dynamics in deep learning
\cite{dauphin2014identifying,choromanska2015loss,sagun2017empirical,pennington2017geometry,ghorbani2019investigation,papyan2018full,papyan2020traces}. 
The GE model allows to make this connection rigorous, and recover several facts
observed empirically in more complex models.

%
%*****************************************************
%
\section{Rigorous results (I): Satisfiability phase transition}
\label{sec:Rigorous-SAT}

In \cite{SubagPolynomialSystems}, one of the authors 
studied the GE model with independent  but not identically distributed functions
$F_i$:
 $\E\big[F_i(\bx^1)F_i(\bx^2)\big] = \xi_i\big(\langle \bx^1,\bx^2\rangle\big)$, 
such that $\xi_i(0)=\xi'_i(0)=0$ for all $i\le n$.  In that case, \cite{SubagPolynomialSystems}
proved that solutions exist with high probability for all $n\le d-1$ and
that for $n=d-1$ the number of solutions is, with high probability,
$\prod_{i\le n}(\xi_i'(1)/\xi_i(1))^{1/2}\cdot(1+o_d(1))$ (see also \cite{Wschebor} for the iid case with $\xi_i(t)=t^p$). For homogeneous polynomials 
$\xi_i(t) = t^{p_i}$, we thus get $\prod_{i\le n}p_i^{1/2}\cdot(1+o_d(1))$
real solutions, while the number of complex 
solutions is $\prod_{i\le n}p_i$ by the classical Bezout theorem \cite{bezout1779}.

Throughout this paper, we will be interested uniquely in real solutions. 
Our first observation is that the behavior of the solution set $\Sol_{n,d}(0)$
changes qualitatively when  $\xi(0)>0$ (i.e. the functions $F_i$ have a degree-zero
term). In this case,  solutions may not exist on $\S^{d-1}$ whp, even if the number of 
equations per variable is  such that $n/d\to \alpha$ for some $\alpha<1$. The next two 
propositions establish upper and lower bounds on the satisfiability threshold.
Proofs from this Section are 
deferred to Appendix \ref{sec:UpperLowerBounds}.
\begin{proposition}\label{thm:SecMoment}
	Define $\Psi(\,\cdot\, ;\alpha,\xi): [0,1]\to \reals$ via
	\begin{align}
	\Psi(r;\alpha,\xi):= \frac{1}{2}\log(1-r^2) -\frac{\alpha}{2}\log
	\left(1-\Big(\frac{\xi(r)-\xi(0)}{\xi(1)-\xi(0)}\Big)^2\right)
	-\frac{\alpha \xi(0)}{\xi(1)+\xi(r)-2\xi(0)}+\frac{\alpha \xi(0)}{\xi(1)-\xi(0)}\, .
	\label{eq:PsiDef}
	\end{align}
	Further define 
	\begin{itemize}
	\item For $\xi'(0) = 0$:
	\begin{align}
	\alpha_{\slb}(\xi)&:= \inf\Big\{\alpha\ge 0: \;\; \sup_{r\in [0,1]} \Psi(r;\alpha,\xi)>0 
	\, \Big\}\, ,
	\end{align}
	\item For $\xi'(0) > 0$:
	\begin{align}
	\alpha_{\slb}(\xi)&:=
	\frac{\alpha_{\slb}(\xi_{\#})}{1+\alpha_{\slb}(\xi_{\#})}\,, \;\;\;\;\;
	\xi_{\#}(t) := \xi(t)-\xi'(0)t\, .
	\end{align}
	\end{itemize}
	If $\alpha< \alpha_{\slb}(\xi)$ and $\xi''(0)=0$ (respectively $\xi''(0)>0$) 
	then $\Sol_{n,d}(0)\neq \emptyset$
	with probability converging to one (respectively, probablity bounded away from zero)
	as $n,d\to\infty$ with $n/d\to\alpha$.
\end{proposition}
We note that $\Psi(r;\alpha,\xi) = ((1-\alpha)/2)\log(1-r)+O(1)$ as 
$r\uparrow 1$, whence it follows that $\alpha_{\slb}(\xi)\le 1$.

In order to state the upper bound, we define:
\begin{equation}
	\GS(\xi):=  \lim_{d\to\infty}\frac{1}{\sqrt d}\E  \max_{\bx\in\S^{d-1}} F_1(\bx)\,.
\end{equation}
The limit is known to exist and is given by the Parisi formula \cite{auffinger2017parisi,JT},
which we recall in Appendix \ref{sec:Existence_UB}.
\begin{proposition}\label{propo:SimpleUB}
	Assume that $\xi(0)>0$ and define $\xi_{>0}(q) := \xi(q)-\xi(0)$ and 
	\begin{align}
		\alpha_{\sub}(\xi) := \alpha_{\sub}^{(1)}(\xi)\wedge
		\alpha_{\sub}^{(2)}(\xi)\, ,
		\;\;\;
	\alpha_{\sub}^{(1)}(\xi):=\frac{\GS(\xi_{>0})^2}{\xi(0)}\, ,\;\;
	\alpha_{\sub}^{(2)}(\xi):= \frac{\xi'(1)}{\xi(1)}\, .
	\end{align} 
	If 	$\alpha> \alpha_{\sub}$ then there exists $\eps>0$ such that,
	for $n/d\to\alpha$, $\Sol_{n,d}(\eps) = \emptyset$ with probability converging to one.

	In fact, $\Sol_{n,d}(\eps) = \emptyset$  
	with probability converging to one
	for any $\eps< 2\, u_{0}(\xi,\alpha)$, where
	\begin{align}
	u_0(\alpha,\xi)&:= u_0^{(1)}(\alpha,\xi) \vee u_0^{(2)}(\alpha,\xi)\, ,\\
	u_0^{(1)}(\alpha,\xi)&:=\frac{1}{2}
	\xi(0)
	\left(1-\sqrt{\frac{\alpha_{\sub}^{(1)}}{\alpha}}\right)_+^2\, , \;\;\;\;
	u_0^{(2)}(\alpha,\xi):=\frac{1}{2}\left(\sqrt{\xi(1)}-
	\sqrt{\frac{\xi'(1)}{\alpha}}\right)_+^2\, .\label{eq:U0_LB_def}
	\end{align}
\end{proposition}
We note that an equivalent formulation of the last proposition 
is 
\begin{align}
\lim_{n,d\to\infty}\P\left(\frac1n
\min_{\bx \in\S^{d-1}}H(\bx) \ge u_0(\xi,\alpha)-\delta\right) = 1\;\;\;
\forall\delta>0\, .
\end{align}

While the above upper and lower bounds do not match in general, 
they get close to each other for certain sequences of 
high-degree polynomials.
As an example, consider the case $\xi_{p,\gamma_0}(t) = \xi_{0,p}+t^p$, $\xi_{0,p}=\gamma_0\log p$, 
for some constant $\gamma_0>1$. This corresponds to $F_i(\bx) = \sqrt{\xi_{0,p}} g_i+\tilde{F}_i(\bx)$
where the $\tilde{F}_i$ are homogeneous degree-$p$ polynomials with Gaussian coefficients.
Then, we have the large-$p$ asymptotics
(cf. Appendix \ref{app:LB-asymp})
\begin{align}
\alpha_{\slb}(\xi_{p,\gamma_0})= \frac{1}{\gamma_0}\cdot \big(1+o_p(1)\big)\,,\;\;\;\alpha_{\sub}(\xi_{p,\gamma_0})  = \frac{1}{\gamma_0}\cdot \big(1+o_p(1)\big)\, .\label{eq:LB-asymp}
\end{align}

We further note that ---in the language of spin glass theory---
$u_0^{(1)}(\xi,\alpha)$ is the `replica symmetric' lower bound.
We will see that in some cases this will be achieved by the algorithm
in the next section and hence be tight.

%
%*********************************************************
%
\section{Rigorous results (II): Algorithmic phase transition}
\label{sec:Rigorous-ALG}

\subsection{Achievable energy}
\label{sec:AchievableEnergy}

In order to state our main results, 
we introduce the notion of generalized first order method (GFOMs),
a terminology that we adapt from \cite{celentano2020estimation}.
 
A GFOM is an iterative algorithm. At iteration $\ell$, 
the algorithm keeps track of $2\ell$ vectors
$\bm^1$, \dots, $\bm^\ell\in \reals^d$, 
$\bh^1$, \dots, $\bh^\ell\in \reals^n$. The new vectors
$\bm^{\ell+1}$ and $\bh^{\ell+1}$ are functions 
of the previous ones $(\bm^i,\bh^i:i\le \ell)$, 
and of the evaluation 
$\bF(\bm^{\ell})$, $\bD\bF(\bm^{\ell})$,
and are further linear in $\bF(\bm^{\ell})$, $\bD\bF(\bm^{\ell})$.

We next present our achievability result for
this class of algorithms. Throughout, we will assume\footnote{There is no real loss of generality in this assumption when discussing near-solutions,
because we an always replace $\bF(\bx)$ by $\bF_{\eps_0}(\bx) :=\eps_0\bg+\bF(\bx)$ 
with $\eps_0>0$ and $\bg\sim \normal(\bzero,\bI_n)$. This only results in an $O(n\eps_0)$ change in
the energy, which is irrelevalt since we are only interested in near-solutions.} $\xi(0)>0$.
Define the `replica symmetry breaking radius' 
(uniqueness of the minimizer is shown in Section \ref{sec:TwoPhase})
\begin{align}\label{eq:qrsDef}
q_{\sRS}(\xi):= \argmin_{q\ge 0} \frac{\xi(q)\xi'(q)}{q} \, .
\end{align}
Here it is understood that the right-hand side is extended by 
continuity to $q=0$ when $\xi'(0)=0$ and therefore $q_{\sRS}(\xi)=0$
in that case (we set by convention $q_{\sRS}(\xi)=\infty$ if 
$\xi(t) =\xi_0+\xi_1 t$). 
We further dedine $q_0=q_0(\alpha,\xi)$  as the solution 
of 
\begin{align}\label{eq:q0Def}
\frac{q_0\xi'(q_0)}{\xi(q_0)} = \alpha\, .
\end{align}
(The solution exists uniquely, unless 
$\xi(t) =\xi_0+\xi_1 t$ and $\alpha\ge 1$, in which case we set
$q_0(\alpha,\xi)=\infty$.)

Next we define 
\begin{align}\label{eq:uRS_def}
u_{\sRS}(q,\alpha,\xi) := 
\frac{1}{2}\Big(\sqrt{\xi(q)}-\sqrt{\frac{1}{\alpha}q\xi'(q)}\Big)_+^2\, ,
\end{align}
and note that $q_0$ is the first point at which 
$q\mapsto u_{\sRS}(q,\alpha,\xi)$ vanishes.

For $a,b\in\reals_{\ge 0}$, define
$z_*(\alpha,a,b)$  via
\begin{align}
 Q(m;\alpha,a,b) &:= -\frac{1}{m}+\frac{\alpha b}{1+bm}-a^2m\, ,\\
 z_*(\alpha,a,b) & := -\sup_{m>0}  Q(m;\alpha,a,b)\, .
 \end{align}
 %
%Further, given $q_{\sRS}(\xi)$, $q_0(\alpha,\xi)$,
%as in Eqs. \eqref{eq:qrsDef} and \eqref{eq:q0Def}, and 
%set $q_*:=  q_{\sRS}(\xi)\wedge q_0(\alpha,\xi)$.
We distinguish two cases.

\paragraph{Case 1: $q_{\sRS}(\xi)\ge 1$.} We set 
 $u_{\sRS}(\alpha,\xi):=u_{\sRS}(1,\alpha,\xi)$ and
\begin{align}
	u_{\salg}(\alpha,\xi) := u_{\sRS}(\alpha,\xi)=
	\frac{1}{2}\Big(\sqrt{\xi(1)}-\sqrt{\frac{\xi'(1)}{\alpha}}
	\Big)_+^2\, .\label{eq:REenergy}
\end{align}

\paragraph{Case 2: $q_{\sRS}(\xi)< 1$.} 
 Let $u(t) = u(t ;\alpha,\xi,q)$,  be the unique solution of 
 the ordinary differential equation 
 \begin{align}
 \frac{\de u}{\de t}(t) = -\frac{1}{2\alpha}z_*\big(\alpha; \sqrt{2\alpha u(t) \xi''(q+t)}, \xi'(q+t)\big)\,,
 \;\;\;\;\; u(0) = u_{\sRS}(q,\alpha,\xi)\, .\label{eq:U_ODE}
 \end{align}
Finally we define the algorithmic threshold energy as
\begin{align}\label{eq:AchievableEnergyDef}
 u_{\salg}(\alpha,\xi)= \begin{cases}
	u(1-q_{\sRS};\alpha,\xi,q_{\sRS}) & \mbox{if $q_{\sRS}(\xi)
	<q_0(\alpha,\xi)$,}\\
	0 & \mbox{otherwise.}
 \end{cases}
\end{align}
In particular, $q_0(\alpha,\xi)\le q_{\sRS}(\xi)$ then the above 
yields $u_{\salg}(\alpha,\xi)= 0$.

We are now in position to state our achievability result,
establishing that we can achieve an 	energy arbitrarily close
to $u_{\salg}(\alpha,\xi)$ in polynomial time.
Below, we denote by $\chi_{\bF}$, $\chi_{\bD\bF}$, respectively,
the complexity of evaluating
$\bF$ and of matrix vector multiplication by the Jacobian $\bD\bF$.
\begin{theorem}\label{thm:MainAchievability}
For any $\eps>0$, there exists a constant $C(\eps)$ and
a GFOM algorithm with complexity $C(\eps)(\chi_{\bF} +\chi_{\bD\bF}+nd)$
such that, denoting by $\bx^{\salg}\in \S^{d-1}$ the algorithm output, 
we have, for any $\eps>0$,
 \begin{align}
 \lim_{d,n\to\infty}\prob\Big(\frac{1}{n}H(\bx^{\salg})\le 
 u_{\salg}(\alpha,\xi)+\eps
 \Big)=1\, .
 \end{align} 
 \end{theorem}
 The proof of this theorem is presented in Sections
 \ref{sec:Algo_HD} and \ref{sec:TwoPhase}.

In view of Theorem \ref{thm:MainAchievability}, we define 
the algorithmic threshold as
\begin{align}\label{eq:AlphaAlgDef}
\alpha_{\salg}(\xi):= \sup\big\{\alpha\ge 0 \;\;
\mbox{such that}\;\; u_{\salg}(\alpha,\xi) = 0\big\}\, .
\end{align}
 It is possible to show (see Appendix \ref{sec:Monotonicity}) that 
 $\alpha\mapsto
 u_{\salg}(\alpha,\xi)$ is continuous non-decreasing,
 and hence $u_{\salg}(\alpha,\xi)=0$ for all 
 $\alpha\in [0,\alpha_{\salg}]$. We thus get the following
 immediate consequence.
 \begin{corollary}\label{coro:RS_Opt}
For $\alpha\in [0,\alpha_{\salg}(\xi)]$ and  any $\eps>0$ 
there exists an algorithm, with the same complexity as in
Theorem \ref{thm:MainAchievability}, which, with high probability,
outputs $\bx^{\salg}\in \Sol_{n,d}(\eps)$. 
 \end{corollary}

A particularly simple case is the one in which $q_{\sRS}(\xi)>1$,
which by the definition \eqref{eq:qrsDef} happens if and only if
$\xi(1)\xi'(1)\ge \xi'(1)^2+\xi(1)\xi''(1)$.  In this case, we never 
enter the second phase of the algorithm, and the algorithmic threshold
$u_{\salg}(\alpha,\xi)$ coincides with the lower bound
$u^{(1)}_0(\alpha,\xi)$ of Proposition \ref{propo:SimpleUB}. 
\begin{corollary}
Define $u_{\sRS}(\alpha,\xi)$
as per Eq.~\eqref{eq:REenergy}.
If $\xi(1)\xi'(1)\ge \xi'(1)^2+\xi(1)\xi''(1)$, then,
denoting by $\bx^{\salg}$ the output of the algorithm of
Theorem \ref{thm:MainAchievability}, we have
\begin{align}
\plim_{n,d\to\infty}\frac{1}{n}\min_{\bx\in\S^{d-1}}
H(\bx) = \plim_{n,d\to\infty}\frac{1}{n}H(\bx^{\salg}) = 
u_{\sRS}(\alpha,\xi)\, .
\end{align}
In particular, under this condition, 
$\alpha_{\salg}(\xi) =\xi'(1)/\xi(1)\in (0,1]$ 
and this coincides with the threshold for the existence of near solutions.
Namely, for $\alpha<\alpha_{\salg}(\xi)$ $\Sol_{n,d}(\eps)\neq\emptyset$
whp for any $\eps>0$, while, 	for $\alpha>\alpha_{\salg}(\xi)$ there exists $\eps_0>0$
such that $\Sol_{n,d}(\eps_0)=\emptyset$.
\end{corollary}

\begin{remark}[Numerical evaluation of $u_{\salg}(\alpha,\xi)$]
	Evaluating $u_{\salg}(\alpha,\xi)$ numerically 
	by solving the ODE \eqref{eq:U_ODE} is 
	straightforward, and we provide examples in 
	Fig.~\ref{fig:a025_third_change_m}, 
	as well as in Section \ref{sec:Numerical_First}
	and Appendix \ref{sec:Appendix_Numerical} below.
\end{remark}

\begin{remark}
	The complexities $\chi_{\bF}$, $\chi_{\bD\bF}$ of evaluating
	$\bF$ and of matrix vector multiplication by the Jacobian $\bD\bF$ at a query point 
	$\bx$ depends on the details of the computation model in use. In a model in which sums 
	and products in $\reals$ can be carried out in $O(1)$ time, if $\xi_{k}=0$ for all $k>k_{\max}$
	(i.e. $\bF$ is a polynomial), then $\chi_{\bF}$, $\chi_{\bD\bF}=O(d^{k_{\max}+1})$.
	
	If $\xi_k\neq 0$ for infinitely many $k$ (i.e. $\bF$ is not a polynomial),
	we can truncate it at a large level $k_{\max}$, and  hence approximate matrix-vector multiplication
	by using the truncated Jacobian. Similarly, if only integer operations are allowed, we 
	can use finite-precision approximations of the entries of 
	$\bF(\bx)$, $\bD\bF(\bx)$. It is easy to 
	show that these modifications do not change the claim of Theorem \ref{thm:Hessian}.
\end{remark}

\begin{figure}
	\begin{center}
		\includegraphics[width=0.45\linewidth]{./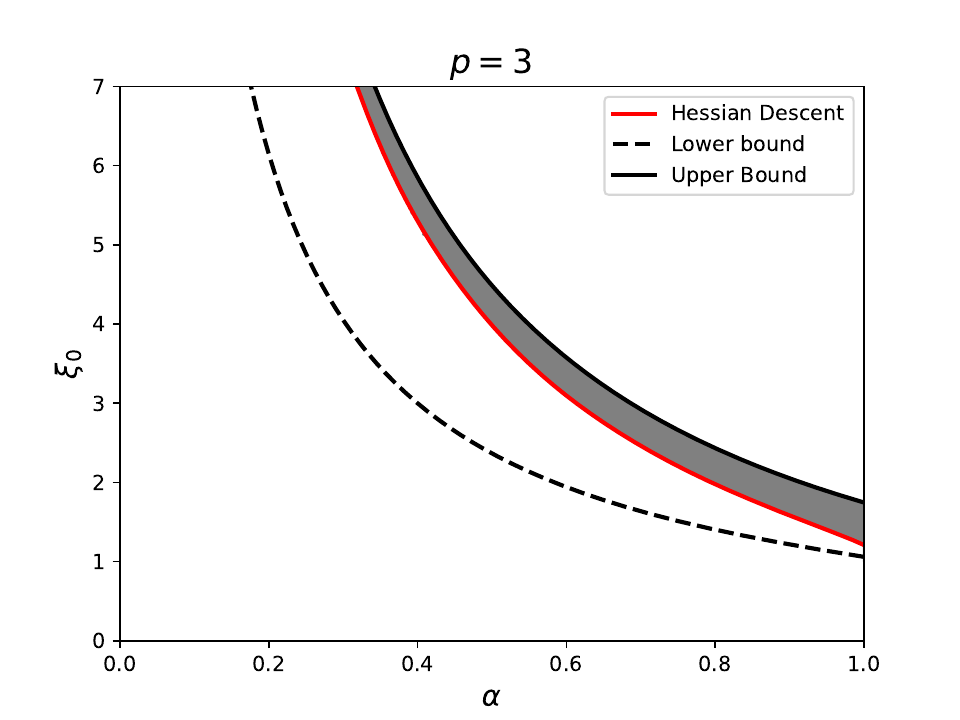}
			\includegraphics[width=0.45\linewidth]{./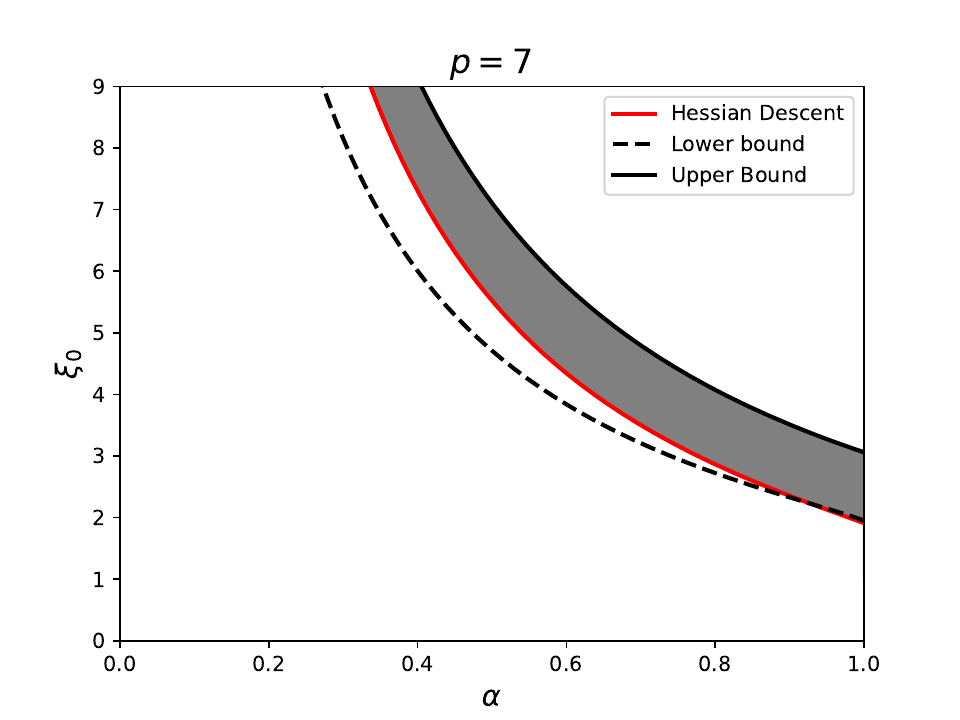}\\
				\includegraphics[width=0.45\linewidth]{./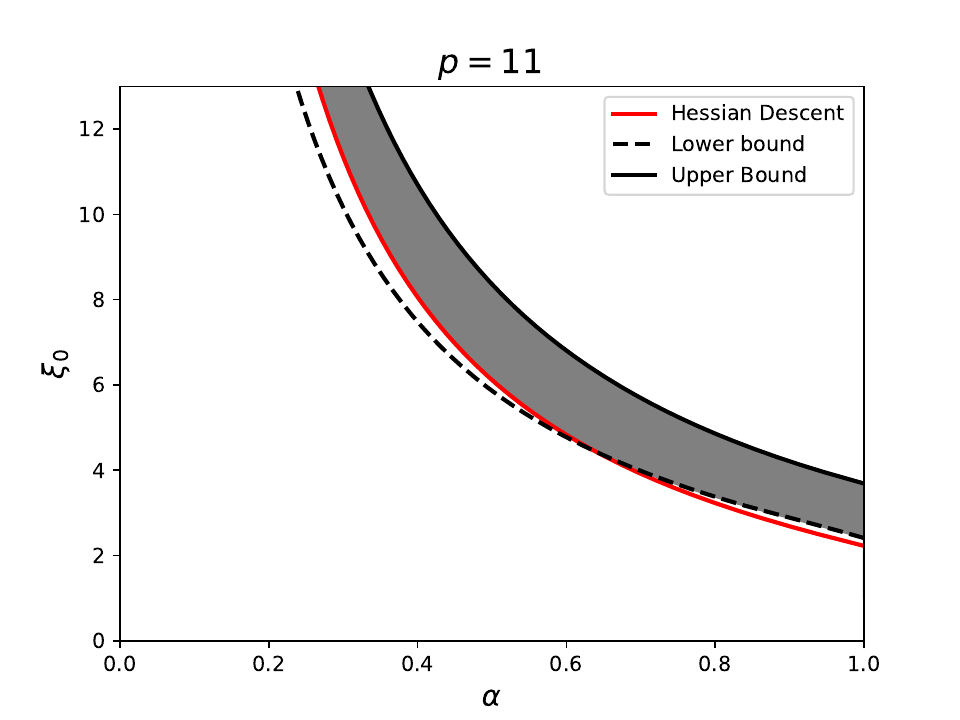}
			\includegraphics[width=0.45\linewidth]{./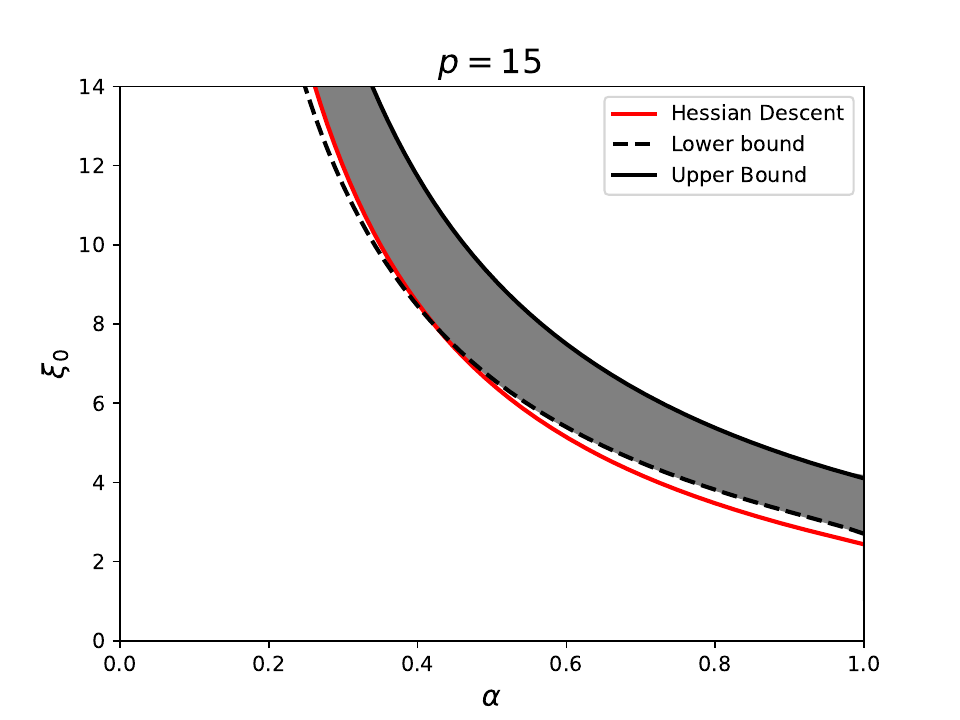}
		\caption{Phase diagram for $\xi(q)= \xi_0+q^p$, $p\in\{3,7,11,15\}$. 
		Upper (solid) and lower (dashed) black lines are the upper and lower bounds on the 
		threshold for existence of solutions $\alpha_{\sub}$ and $\alpha_{\slb}$
		obtained respectively by Gaussian comparison and second moment method. 
		Red line: The threshold $\alpha_{\salg}$ below which the Hessian descent algorithm finds solutions 
		with high probability.}
		\label{fig:PhaseDiagP11}
	\end{center}
\end{figure} 
 \subsection{Lipschitz hardness}

 Lipschitz algorithms are algorithms  whose output $\bx^{\salg}$ is a 
  Lipschitz function of the input.
 Denote by $\cuF(n,d)$ the space of polynomials
 $\bF:\reals^d\to \reals^n$, $F_i(\bx) = \sum_{\ell\ge 0}^{\ell_{\max}}\<\bG_{i}^{(\ell)},\bx^{\otimes \ell}\>$
 which we identify with the set of coefficients $\bG_{i}^{(\ell)} \in(\reals^{d})^{\otimes \ell}$
 and endow with the distance 
 \begin{align}
 \dist(\bF,\tilde{\bF})^2 : = \frac{1}{d} \sum_{\ell=0}^{\ell_{\max}}\sum_{i=1}^n 
 \big\|\bG_{i}^{(\ell)} -\tilde\bG_{i}^{(\ell)}\big\|^2_F \, .
 \end{align}
  An algorithm $\bx^{\salg}: \cuF(n,d) \to \S^{d-1}$, $\bF\mapsto \bx^{\salg}(\bF)$
 is Lipschitz if there exists a constant $M$, independent of $n,d$ such that
 for any $\bF,\tilde\bF\in  \cuF(n,d)$, $\|\bx^{\salg}(\bF)-\bx^{\salg}(\tilde\bF)\|\le
 M\dist(\bF,\tilde{\bF})$.

 We prove the next theorem
in a companion paper  \cite{hardness_in_preparation}, 
by using recently developed techniques in average-case complexity
and spin glass theory \cite{gamarnik2021overlap,huang2022tight}.
An outline of the proof is presented in Appendix \ref{sec:optimalitypfsketch}.
\begin{theorem}\label{thm:Optimality}
	For any Lipschitz algorithm $\bx^{\salg}$ and any $\eps>0$,
	\begin{align}\label{eq:optimality}
		\lim_{d,n\to\infty}\P\left(\frac1n H(\bx^{\salg}) \ge 
		u_{\salg}(\alpha,\xi)-\eps\right) = 1\, .
	\end{align}
	\end{theorem}
We note that the algorithm of Theorem \ref{thm:MainAchievability}
is Lipschitz and hence together this two theorems characterize 
the optimal energy achieved by this class of algorithms.

\begin{remark}\label{rmk:LBeqUB}
	The proof of Theorem  \ref{thm:Optimality}
	yields an alternative characterization of 
	$u_{\salg}(\alpha,\xi)$, which is of independent interest.
	Let $\cuC^1:=\{p:[0,1]\to [0,1]:\; p(0)=0, p(1)=1, p\in C^1((0,1))\cap C^0([0,1]), \mbox{non-decreasing}\}$, and for any
	$p\in\cuC^1$ define $u_{\slb}(t) = u_{\slb}(t;p)$ as the solution of the ordinary differential 
	equation\footnote{Since $p\in\cuC^1$, we have 
	$u_{\slb}(0) =   0$ by definition. We prefer to write the initialization
	in the form \eqref{eq:ulb}, since it generalizes to 
	other choices of $p$ \cite{hardness_in_preparation}.}
	\begin{align}\label{eq:ulb}
		\frac{\de u_{\slb}}{\de t}(t) &= \max_{s>0} \cL(s;p,\dot{p},u_{\slb},t)\, , \;\;\;\;
		u_{\slb}(0) =   \frac{1}{2}p(0)\xi(0)\, ,
	\end{align}
	where we write $\dot{p}$ (and below $\dot{\xi}$) for the derivative of $p$ (similarly for $\xi$)
	and 
	\begin{align}
		\cL(s;p,\dot{p},u,\cdot) = -(\dot{p}\dot{\xi}+p\ddot{\xi})  u s+\frac{1}{2}\dot{p}\xi-\frac{1}{2\alpha s}
		+ \frac{p\dot{\xi}}{2(1+p\dot{\xi} s)}(1-\dot{p}\xi s)\, .\label{eq:DriftLowerBound}
	\end{align}
	Finally, we define
	\begin{align}\label{eq:ALG-LB}
		u_{\salg,\slb}(\alpha,\xi):=\inf_{p\in\cuC^1}  u_{\slb}(1;p)\, .
	\end{align}
	In \cite{hardness_in_preparation}, we prove that 
	$u_{\salg,\slb}(\alpha,\xi)=u_{\salg}(\alpha,\xi)$.
\end{remark}

 \begin{remark}
Figure \ref{fig:PhaseDiagP11} compares numerical evaluations of  
the threshold $\alpha_{\salg}(\xi)$
with the upper and lower bounds
$\alpha_{\slb}(\xi)$ and $\alpha_{\sub}(\xi)$ 
on the satisfiability threshold.
We observe that there are  choices of $\xi$, 
such that there  exists a non-empty interval 
$\alpha\in  (\alpha_{\salg},\alpha_{\sSAT})$
for which  solutions exist whp, but Lipschitz algorithms 
do not find them.

As a specific example, recall the model
$\xi_{p,\gamma_0}(t)= \xi_{0,p}+t^p$, $\xi_{0,p} =
 \gamma_0\log p$. For large $p$
 (see Corollary \ref{coro:Hessian} and Eq.~\eqref{eq:AlphaHessianBounds})
\begin{align}
	\alpha_{\salg}(\xi_{\gamma_0,p}) = \frac{4}{\gamma_0\log p}(1+o_p(1))\, .
	\end{align}
For all $p$ large enough, this is strictly smaller than 
$\alpha_{\slb}(\xi_{p,\gamma_0})$, see Eq.~\eqref{eq:LB-asymp}.
	\end{remark}

\subsection{Description of the algorithm}
\label{sec:AlgoDescr}

We will describe a second-order algorithm that achieves 
the threshold energy of Theorem \ref{thm:MainAchievability}.
Then, we will show that the Hessian evaluation can in fact
be approximated by a sequence of gradient evaluations,
hence reducing it to a first-order algorithm.

The algorithm proceeds in two phases:
$(i)$~We first compute $\bm^L$ which is an approximate stationary point  of $H(\bm)$
subject to $\|\bm\|^2=q_*$. We use gradient 
information and constant stepsize $\gamma_*$, with $q_*$ and $\gamma_*$ 
computed from our theory. $(ii)$~We use the vector $\bm^L$ as initialization for the next phase.
We restrict to the hyperplane orthogonal to this vector and run a Hessian descent 
algorithm to optimize the energy $H(\bx)$ in the intersection of this hyperplane 
with the sphere $\S^{d-1}$.

 The first phase is defined by letting  $\bm^0 = \bh^{0} = \bzero$, and,
 for $\ell\ge 0$,
 \begin{align}
\bh^{\ell+1}= \bh^{\ell}+\frac{1}{\sqrt{n}}\proj^{\perp}_{\bh,\ell}\bF(\bm^{\ell}) ,\;\;\;\;
\bm^{\ell+1}= \bm^{\ell}-\frac{\gamma}{\sqrt{d}}
\proj^{\perp}_{\bm,\ell}\bD\bF(\bm^{\ell})^{\sT}\bh^{\ell}\, .\label{eq:Ortho_2_maintext}
 \end{align}
Here $\proj^{\perp}_{\bh,\ell}\in\reals^{n\times n}$ is the projector orthogonal to
 $(\bh^j:\, j\le \ell)$, and
$\proj^{\perp}_{\bm,\ell}\in\reals^{d\times d}$ is the projector orthogonal to
$(\bm^j:\, j\le \ell)$. The update \eqref{eq:Ortho_2_maintext}  is repeated for $L$ iterations, with fixed
stepsize $\gamma$. 

It turns out that the above iteration is asymptotically equivalent
to an approximate message passing (AMP) algorithm \cite{DMM09,bayati2011dynamics}, and this allows us to leverage the theory
of those algorithms for its analysis. 

In the second phase we take $K$ iterations with 
stepsize parameter $\delta$, related via $K\delta = 1-\|\bm^L\|^2$.
At each step $k\in\{0,1,\dots , K-1\}$, given current state $\bx^k$, 
we  compute the Hessian $\nabla^2H(\bx^k)$ along the hyperplane orthogonal to
$\bx^k$ and take a step of length $\sqrt{\delta}$ along the optimal Hessian direction.
 The pseudocode the algorithm is given in 
 Algorithm \ref{algo:HD+AMP}.
 
\vspace{0.3cm}

\begin{algorithm}[H]
\SetAlgoLined
\KwData{Couplings $\{\bG^{(k)}\}_{0\le k\le k_{\max}}$, iteration number $L$, stepsize $\delta$,
AMP parameter $\gamma$}
\KwResult{Approximate optimizer $\bx^{\salg}\in \S^{d-1}$}
Initialize $\bm^0 = \bh^{0} = \bzero$\;
\For{$\ell\in\{0,\dots,L-1\}$}{
Compute $\bh^{\ell+1}$, $\bm^{\ell+1}$ via Eq.~\eqref{eq:Ortho_2_maintext}\;
}
Set $\bx^0= \bm^{L}$, $V_{L}:= \{\bx\in\reals^d:\, \<\bx,\bm^L\>=0\}$\;
Set $K=(1-\|\bm^L\|^2)/\delta$\;
\For{$k\in\{0,\dots,K-1\}$}{
Compute $\bv = \bv(\bx^k)\in \Ts_{\bx^k}\cap V_L$ such that $\|\bv\|_2=1$ and
\begin{align*}
\<\bv, \nabla^2H(\bx^k)\bv\> \le \lambda_{\min} (\nabla^2H(\bx^k)|_{\Ts_{\bx^k}\cap V_L})+d\delta\, ;
\end{align*}
Set $s_k:= \sign(\<\bv(\bx^k),\nabla H(\bx^k)\>)$\;
$\bx^{k+1} = \bx^k-s_k\sqrt{\delta} \, \bv(\bx^k)$\;
}
\KwRet $\bx^{\salg} = \bx^K/\|\bx^K\|_2$\;
\caption{Two-Phase Algorithm}\label{algo:HD+AMP}
\end{algorithm}

 \begin{remark}
The calculation of the approximate eigenvector $\bv(\bx^k)$
can be performed by  power iteration, by performing $O(1/\delta)$
matrix-vector multiplications.
Hence, for arbitrary constants $L$, $\delta$,
 the complexity of this algorithm is of the same order 
 as matrix vector multiplication by 
 $\nabla^2H(\bx)$ at a query point $\bx$.

 In fact evaluating the Hessian is not necessary, and this calculation
 can be approximated by evaluating the gradient $\nabla H$,
 resulting in a pure first-order method.
 We refer to Section \ref{sec:TwoPhase} for the details.

An alternative approach to obtain a first order method 
is provided by the IAMP technique of 
\cite{montanari2019optimization,el2021optimization,montanari2024exceptional}.
 \end{remark}

\section{Numerical results (I):\\ Comparing theory and simulations for 
Hessian descent and SGD}
\label{sec:Numerical_First}

As mentioned above, the
theoretical prediction of Theorem \ref{thm:MainAchievability} is 
easily evaluated numerically.
We carried out extensive numerical simulation 
within the AoR model with matched covariance using:
$(i)$~The 
algorithm of Theorem \ref{thm:MainAchievability}, described in 
Section \ref{sec:AlgoDescr}; $(ii)$~Projected SGD.
We present some of our results in this section and further 
ones in Appendix \ref{sec:Appendix_Numerical}.

We find good agreement between analytical predictions derived for
 the optimal algorithm within the GE model, and 
 numerical results within the AoR model obtained either with the same algorithm or
 with SGD. 
Providing a rigorous explanation of the near optimality of SGD,
and of universality beween AoR and GE models is an outstanding
mathematical challenge. 

We note that $\xi'(0)=0$ for the examples considered here, and therefore
the optimal algorithm described in Section \ref{sec:AlgoDescr}
is purely Hessian descent in these cases.

\subsection{Hessian descent}

\begin{figure}
	\includegraphics[width=0.5\linewidth]{./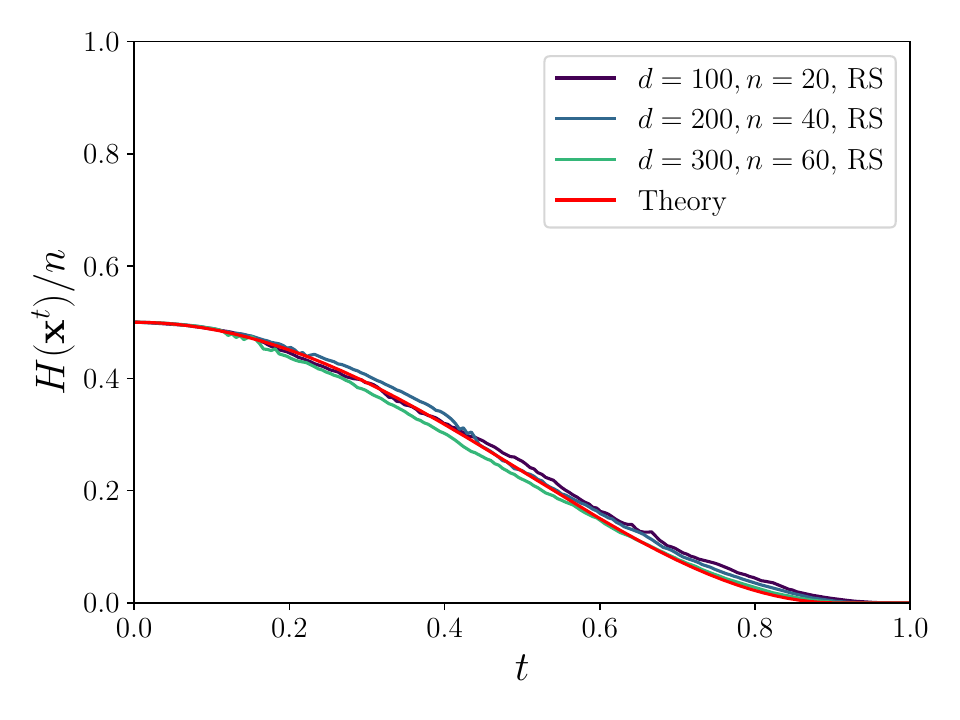}
	\includegraphics[width=0.5\linewidth]{./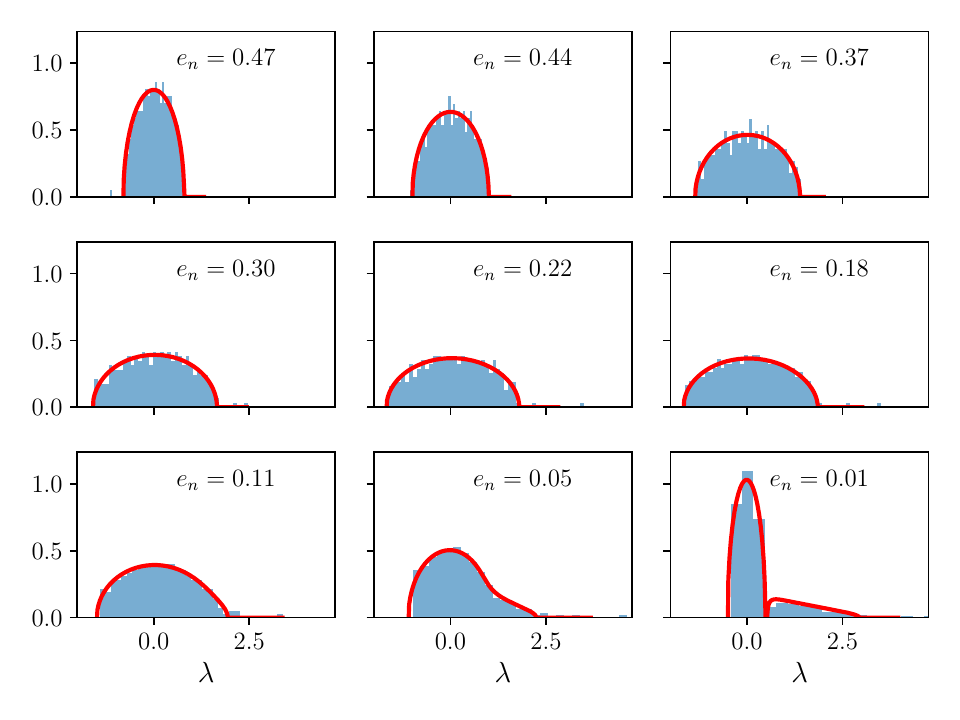 }
	\caption{Trajectory of Hessian descent, \emph{with random signed steps},
	for the AoR model of Eq.~\eqref{eq:AoR_Fi} with 	
	$\phi(t) = 1+a \he_4(t)$, where $a=0.33$ ams $\he_4$ 
	is the fourth Hermite polynomial. 
	Left, energy as a function of time (radius): $\alpha=0.2$, $m=16d$, $K=200$. 
	Red lines are theoretical predictions for the
	algorithm evolution.
	Right: Empirical spectral distribution of the Hessian along Hessian descent 
	trajectory, with $e_n$ the energy achieved. 
	(Here  $d=300$, $n=20$, $m=4800$, $K=200$.)}
			\label{fig:a033_fourth_alpha02_RS}
	\end{figure}
	 Figure \ref{fig:a033_fourth_alpha02_RS} reports the 
	 evolution of the cost $H(\bx)/n$
	 along the Hessian descent evolution for the AoR model \eqref{eq:AoR_Fi}
	 (for a specific choice of $\phi$, $\alpha$), 
	 and compares empirical results with asymptotic
	 predictions for the GE model from Theorem \ref{thm:MainAchievability}.
	  We see good agreement despite the fact that in the AoR model
	   the processes
	  $F_i(\,\cdot\,)$ are non-Gaussian. Here we use a slight variant of 
	  Algorithm \ref{algo:HD+AMP}, whereby each Hessian descent step is multiplied by a 
	  uniformy random sign. We refer to Appendix \ref{sec:Appendix_Numerical} 
	  for similar examples in which the sign is chosen to align with the $\nabla H$.

In the right frame of Fig.~\ref{fig:a033_fourth_alpha02_RS}, we plot the evolution of the 
spectrum of the Hessian along the Hessian descent trajectory, as the energy decreases.
 This is compared with 
the asymptotic prediction for the GE model (red lines). 
Again the agreement is excellent.
	
Establishing Theorem \ref{thm:Hessian} requires to analyze
	the eigen-structure of the Hessian $\nabla^2 H(\bx)$ at a point 
	$\bx\in\reals^d$:
\begin{align}
	\nabla^2
	 H(\bx) = \bD\bF(\bx)^\sT\bD\bF(\bx) + \sum_{i=1}^nF_i(\bx) 
	 \nabla^2F_i(\bx)\, ,
	\end{align}
	A key remark is that the restriction of the Hessian to
	$\Ts_{\bx}$ (the orthogonal complement of $\bx$) has a simple 
	distribution. 
	Namely, for $\bx\in\reals^d$ with $\|\bx\|^2_2=q>0$, let $\bU_{\bx}\in\reals^{d\times(d-1)}$
	be an orthonormal basis for the orthogonal complement of $\bx$ and
	  $\bcH(\bx)  := \bU_{\bx}^{\sT}\nabla^2 H(\bx)\bU_{\bx}$
	be the restriction of the Hessian  to the tangent space. Then, the joint
	distribution of the Hessian and cost at $\bx$ is given by
	\begin{align}
	\bcH(\bx)  = \sqrt{\xi(q)\xi''(q)}\, \|\bg\|_2\bW+\xi'(q)\,\bZ^{\sT}\bZ\, ,\;\;\;\;
	H(\bx)    = \frac{1}{2}\xi(q)\|\bg\|_2^2\, ,\label{eq:DistributionHessian}
	\end{align}
	where
	$(\bg,\bW,\bZ)\sim \normal(0,\id_n)\otimes \GOE(d-1)\otimes \GOE(n,d-1)$
	(cf. Section \ref{app:Definitions} for definitions). 

	The distribution of the Hessian determined by Eq.~\eqref{eq:DistributionHessian}
	 holds at a deterministic point $\bx$ (or at a random point $\bx$ independent of $H(\,\cdot\,)$).
	In particular, it implies that the empirical spectral distribution converges (for large
	$n,d$) to the free convolution of suitably scaled Wigner (for the term $\bW$) 
	and Marchenko-Pastur (for the term $\bZ^{\sT}\bZ$) laws. 
	
	The distribution of the Hessian implied by Eq.~\eqref{eq:DistributionHessian}
	does not hold for a point $\bx$ dependent on the cost function $H(\,\cdot\, )$,
	such as those produced by the algorithm. 
	Nevertheless, in Section \ref{sec:Algo_HD} we prove that 
	--in the GE model-- the empirical spectral distribution 
	is close to the one
	predicted by the model  \eqref{eq:DistributionHessian}
	\emph{uniformly over the unit ball $\bx \in \Ball^{d-1}(1)$}.

	The theoretical predictions in the right frame of 
	Fig.~\ref{fig:a033_fourth_alpha02_RS} are the free convolution of Wigner and Marchenko-Pastur 
	spectra 
	from
	Eq.~\eqref{eq:DistributionHessian}.
	
	 For $\alpha<1$, the Wishart component has $(1-\alpha) d$ flat directions, and hence
$\nabla^2 H(\bx)$ has of the order of $d$ negative eigenvalues\footnote{This however does 
not imply that zero energy is achieved for any $\alpha<1$ because of the spherical constraint
$\|\bx\|=1$.}. 
As the energy decreases, the weight of the Wigner component decreases, and this leads to 
the decomposition of the support of the spectrum in two intervals.

\subsection{Stochastic gradient descent}

\begin{figure}
\includegraphics[width=0.5\linewidth]{./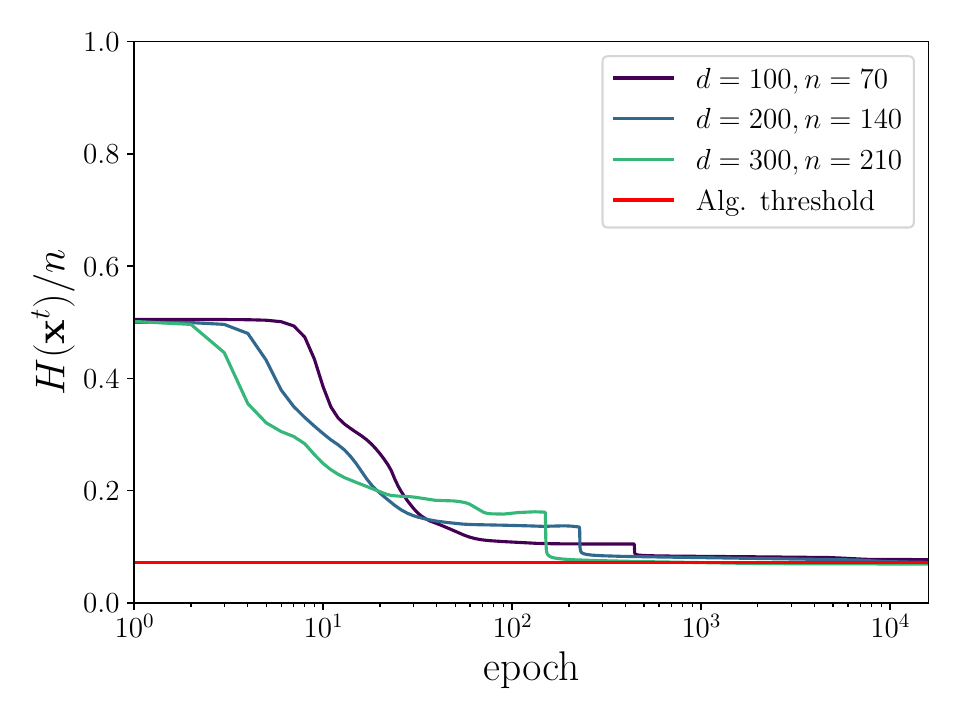}
\includegraphics[width=0.5\linewidth]{./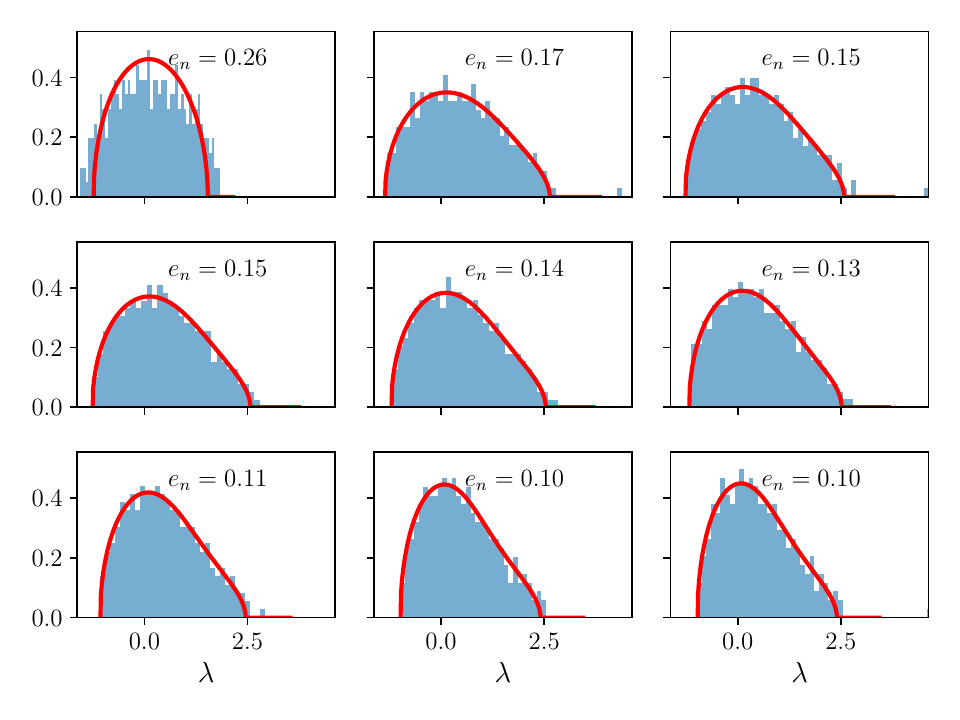}
\caption{Trajectory of SGD ,
for $\phi(t) = 1+ a \he_4(t)$, with $a=0.33$.
Left, energy as a function of time (epochs):
 $d=300$, $n=210$, $m=4800$, $N_{\sit}=16,000$,
${\rm lr}=0.01$, whence $\alpha=0.7$ (see caption of Fig.~\ref{fig:a025_third_change_m} for notations).
Right: Empirical spectral distribution of the Hessian along SGD trajectory,
with theoretical predictions as red lines.}
		\label{fig:a033_fourth_alpha07_SGD3}
\end{figure}

Figure \ref{fig:a025_third_change_m}, left frame, 
 reports the results of 
numerical simulations using SGD within the AoR  model 
(where we use the matching activation $\phi(x) = 1+a \he_3(x)$).
We run SGD in the unit ball $\bx\in\Ball^d(1)$, initializing with $\bx_0=\bzero$.
We refer to Appendix \ref{sec:Appendix_Numerical} for details of the SGD implementation as well as results
with other choices of the SGD hyperparameters (number of epochs, step size).

In Figure \ref{fig:a033_fourth_alpha07_SGD3}, we plot the evolution of the cost
$H(\bx^t)/n$ along three SGD trajectories for increasing values of $n$, $d$, at fixed 
$n/d=\alpha$. We use the AoR model with $\phi(t) = 1+a \he_3(t)$, $a=0.25$ and $\alpha=0.7$.
Once again, we compare these simulation results with 
analytical predictions for the threshold energy $u_{\salg}(\alpha,\xi)$
obtained within the GE model for the Hessian descent algorithm. 

As for Hessian descent, in the right frame of Fig.~\ref{fig:a033_fourth_alpha07_SGD3}, we plot the evolution of the 
spectrum of the Hessian along the SGD trajectory, as the energy decreases. 
This is compared with 
the asymptotic prediction for the GE model (red lines). 
Again the agreement is excellent.

	\section{Numerical results (II): Sensitivity phase transition}
	\label{sec:Numerical_Second}
	
\subsection{Experiments with stochastic gradient descent}
\label{sec:SensitivityMain}

\begin{figure}
	\hspace{-1cm}\includegraphics[width=0.5\linewidth]{./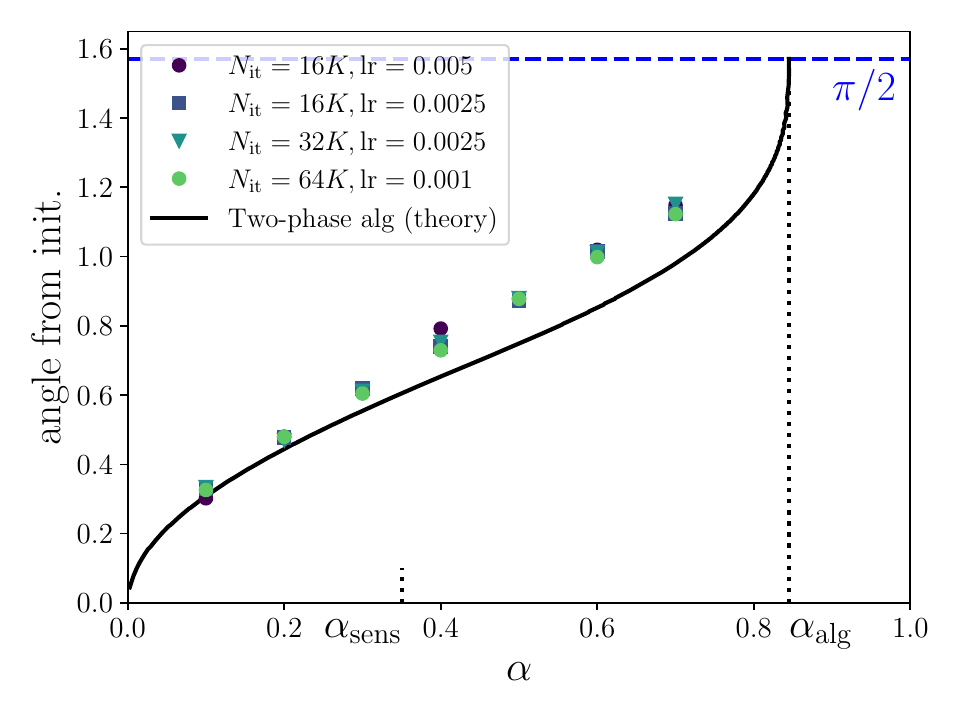}
				\includegraphics[width=0.5\linewidth]{./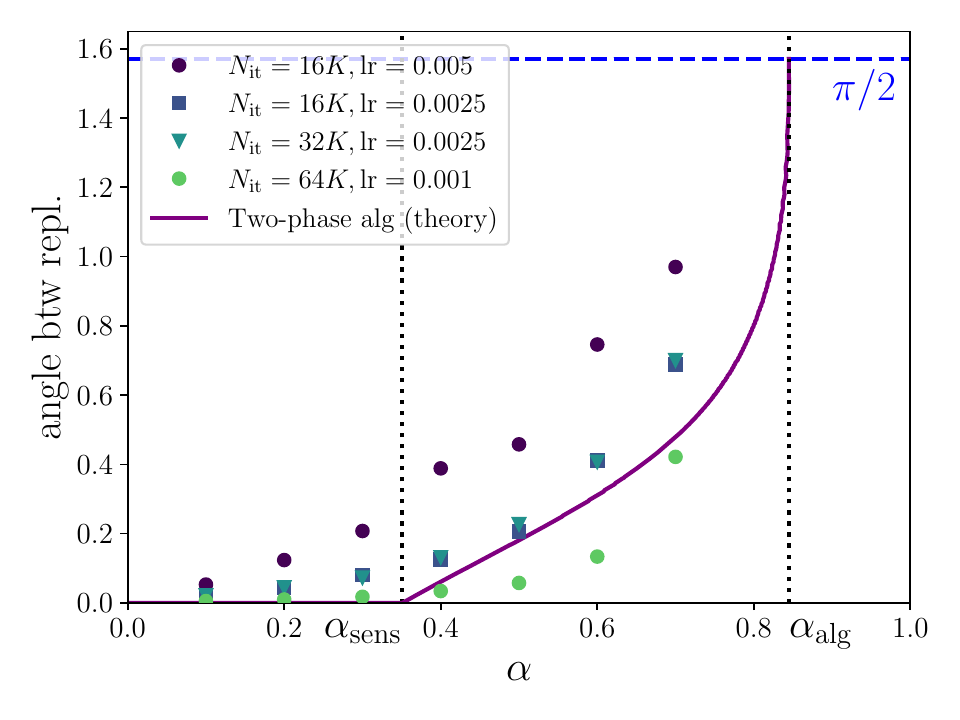}
				
			\caption{Geometry of SGD (symbols) and two-phase algorithm (continuous
			lines, asymptotic theory) in the AoR model $\phi(t) = 1+ a \he_3(t)$, with $a=0.33$. Here $d=200$, $m=1600$
			and we vary $N_{\sit}$, ${\rm lr}$, $\alpha$
			(see caption of Fig.~\ref{fig:a025_third_change_m} for notations). Left: angular distance of the algorithm output
			from the random initialization $\bx_0\in \S^{d-1}$. Right: angular distance between 
			outputs between two runs of the optimization algorithm that differ by a perturbation
			(reshuffling for SGD).}
			\label{fig:Geometry}
	\end{figure}

	It is interesting to revisit the neural tangent (NT) theory 
	\cite{jacot2018neural}, since it 
	predicts convergence to a global minimum,  provided the problem is sufficiently overparametrized, i.e. 
	$n/d$ is small enough. At first sight, this might appear to explain the transition at 
	$\alpha_{\salg}(\xi)$. 
	
	A closer look reveals that NT theory is both quantitatively and qualitatively inaccurate. 
	From a quantitative viewpoint, in Appendix \ref{sec:Algo_GD} we carry out an
	NT analysis of the GE model, and show that this applies ony to 
	$\alpha< \ualpha_{\sGD}(\xi)$, and  $\ualpha_{\sGD}(\xi)$ can be significantly smaller than
	$\alpha_{\salg}(\xi)$.
	
	From a qualitative point of view,
	 NT theory approximates the cost function $H(\bx) = \|\bF(\bx)\|^2/2$ by
	linearizing $\bF(\bx)$ around the initialization $\bx_0$:
	\begin{align}
	H_{\slin}(\bx) = \frac{1}{2}\big\|\bF_0+\bL(\bx-\bx_0)\big\|^2\, ,\;\;\;
	\bF_0 = \bF(\bx_0) \, ,\;\; \bL = \bD\bF(\bx_0)\, .\label{eq:Linear}
	\end{align}
	Whenever this approximation is accurate it implies convergence of SGD
	to a neighborhood of the solution of $\bF_0+\bL(\bx-\bx_0)=\bzero$  
	which minimizes $\|\bx-\bx_0\|$ \cite{bartlett_montanari_rakhlin_2021}. 
	In particular the global minimizer 
	$\bx^*\in \Sol_{n,d}(0)$ selected by SGD should be 
	insensitive to small perturbation of the initialization. 
	We refer to Appendix \ref{sec:Insensitivity} 
	for formal statements.

	Does this qualitative behavior hold in practice?
	We carry out the following numerical experiment in the AoR model. 
	We initialize SGD at a random point $\bx^0\sim\Unif(\S^{d-1})$ and
	run it twice after reshuffling the data. We thus generate two trajectories
	$\bx^t_1$, $\bx^t_2$, $t\ge 0$, which 
	we refer to as the two `replicas' (borrowing from the statistical physics jargon).
	 We stop SGD whenever $H(\bx_{a}^t)\le n\eps$ (in simulations
	$\eps=10^{-4}$) or we reach a pre-specified maximum number of iterations. 
	We thus obtain two configurations $\bx^*_{1}=\bx^*_{1}(\eps)$, 
	$\bx^*_2 = \bx^*_2(\eps)$ which are near solutions
	when the early stopping criterion is achieved (which,
	empirically, happens with high probability for 
	$\alpha$ sufficiently smaller than $\alpha_{\salg}$).
	
	Figure \ref{fig:Geometry} reports the results of these experiments,
	for $\xi(t) =1+a^2t^3$, $a=0.75$.
	In this case, Theorems \ref{thm:MainAchievability} 
	and \ref{thm:Optimality} predict the algorithmic phase transition
	to occur at
	 $\alpha_{\salg}(\xi)\approx 0.8450$.
	We plot the average angular distance 
	between any of the two near solutions
	$\bx^*_1$, $\bx^*_2$, and the initialization $\bx^0$,
	and the average angular distance between  
	$\bx^*_1$ and $\bx^*_2$:
	\begin{align}
	\myangle_{\sinit}^2(\alpha;n) := \E
	\arccos\<\bx^*_a,\bx^0\>\, ,\;\;\;\;\;\;
	\myangle_{\srepl}^2(\alpha;n) :=\E
	\arccos\<\bx^*_1,\bx^*_2\>\, .
	\end{align}
	We expect  $\myangle_{\sinit}, \myangle_{\srepl}^2 \in[0,\pi/2]$, with
	the  value $\pi/2$ corresponding to  orthogonal vectors.
	The following pattern emerges for large $n,d$ and small stepsize 
	(see also Appendix \ref{app:SensitivityNumer}):
	\begin{itemize}
	\item The distance from initialization $\myangle_{\sinit}^2(\alpha;n)$ grows  with $\alpha$.
	It is close to $0$ for $\alpha$ small, and close to $\pi/2$ when 
	$\alpha$ approaches $\alpha_{\salg}(\xi)$. For $\alpha\approx \alpha_{\salg}(\xi)$, the algorithm stops 
	converging to a global minimum. Finally, $\myangle_{\sinit}^2(\alpha;n)$ is roughly independent 
	of the stepsize (parametrized by the learning rate ${\rm lr}$).
	\item The replicas distance $\myangle_{\srepl}^2(\alpha;n)$ is very small for $\alpha\lesssim 0.3$,
	then grows rapidly and becomes comparable with $\myangle_{\sinit}^2(\alpha;n)$ when  
	$\alpha\approx\alpha_{\salg}(\xi)$. Further, the resulting distance depends strongly in the stepsize.
	The rapid growth in  $\myangle_{\srepl}^2(\alpha;n)$ does not appear to
	be a phase transition in this case. 
	\end{itemize}

\subsection{Theoretical predictions for the two-phase algorithm}

We can show that sensitivity undergoes a phase transition for a randomized
variant of the two-phase algorithm described in Section \ref{sec:AlgoDescr}.
Namely, denoting the output of two independent executions of the algorithm
by $\bx^{\salg}_1$ and $\bx^{\salg}_2$, respectively, and letting
$\dist_{\srepl,n}^2(\alpha) = \|\bx^{\salg}_1-\bx^{\salg}_2\|^2$, for 
$n=\lfloor d\alpha \rfloor$:
	\begin{equation}\label{eq:SensitivityDef}
	\begin{aligned}
	\alpha<\alpha_{\sens}(\xi)  \;\Rightarrow\; 
	\lim_{n\to\infty}\E\big\{\dist_{\srepl,n}^2(\alpha)\big\}
	 = 0\, ,\\
	\alpha> \alpha_{\sens}(\xi)  \;\Rightarrow\; \lim_{n\to\infty}
	\E\big\{\dist_{\srepl,n}^2(\alpha)\big\}>0 \, .
	\end{aligned}
\end{equation}
We will also use the notation $\dist_{\sinit,n}^2(\alpha):=\|\bx^0-
\bx^{\salg}_1\|^2$.

The modified two-phase algorithm proceeds as follows.
We draw an `initialization' $\bx^0\sim\Unif(\S^{d-1})$
and for $q\in Q_{\Delta}:=\{1-\Delta, 1-2\Delta, \dots, \Delta,0\}$, 
we attempt to minimize $H(\bx)$ over
$\bx\in S_{\bx^0,q}:=\{\bx \in \S^{d-1}: \<\bx^0,\bx\>=q\}$. Note that,
by a change of coordinates, this is equivalent to minimizing the Hamiltonian
\begin{align}
H_{\bx^0,q}(\bz) = \frac{1}{2}\|\bF_{\bx^0,q}(\bz)\|^2\, ,
\;\;\; \bF_{\bx^0,q}(\bz) := \bF\big(q\bx^0+\sqrt{1-q^2} \bU_{\bx^0} \bz\big)\, ,
\end{align}
over $\bz\in \S^{d-2}$, where $\bU_{\bx^0}\in\reals^{d\times (d-1)}$ 
denotes the orthogonal complement of $\bx^0$.
We run the two-phase algorithm to minimize $H_{\bx^0,q}(\bz)$ over 
$\bz\in \S^{d-2}$.  Denoting by  $\bx^{\salg}(q)$
the output of the two-phase algorithm at $q\in Q_{\Delta}$, 
we return $\bx^{\salg}=\bx^{\salg}(\hat{q})$, where 
$\hat{q}$ is the largest value in $Q_{\Delta}$ 
for which $\bx^{\salg}(\hat{q})\in\Sol_{n,d}(\eps)$
for a pre-assigned tolerance $\eps$. 

The randomization consists in choosing radomly the Hessian descent step 
$\bv(\bx^k)$. Namely, we let $\bv(\bx^k)$ be uniformly random in the 
eigenspace associated to eigenvalues of the Hessian 
smaller\footnote{For obtaining a polynomial time algorithm, this
step will need to approximated by an efficiently computable one,
but we omit such considerations in this section for simplicity.}
 than $\lambda_{\min}(\nabla^2H(\bx^k)|_{\Ts_{\bx^k}\cup V_L})+d\delta$.
(Of course the whole algorithm is applied to $H$ replaced by $H_{\bx^0,q}$.)

The process $\bF_{\bx^0,q}(\bz)$ is again centered Gaussian with covariance
function $\xi_q(t) = \xi(q^2+(1-q^2) t)$. 
We can therefore apply the theory developed in this section to derive
predictions for the quantities $\dist_{\sinit,n}^2(\alpha)$
and $\dist_{\srepl,n}^2(\alpha)$, which are plotted in 
Fig.~\ref{fig:Geometry}.

Let $u_{\salg}(\alpha,\xi)$
be the algorithmic energy threshold defined in Section \ref{sec:AchievableEnergy},
see Eq.~\eqref{eq:AchievableEnergyDef}, and 
$q_{\sRS}(\alpha;\xi)$ be defined as per Eq.~\eqref{eq:qrsDef}. 
By construction, the trajectories
coincide for the first phase
of the algorithm, i.e. up to the computation of $\bm^L$ in Algorithm
\ref{algo:HD+AMP}. On the other hand,  increments in the second phase
to be roughly orthogonal $\<\bx_1^{k}-\bm^L,\bx_2^{k}-\bm^L\>=o_n(1)$.
We thus obtain the following prediction
\begin{proposition}
Define $q_{\min}=q_{\min}(\alpha,\xi):= 
\min\big\{q: \; u_{\salg}(\alpha;\xi_q)=0\big\}$, and 
\begin{align}
\dist_{\sinit,\infty}^2(\alpha) &= 2-2q_{\min}\, ,\label{eq:DistInitPred}\\
\dist_{\srepl,\infty}^2(\alpha) &= (1-q_{\min}^2)\cdot(2-2q_{\sRS}(\alpha,\xi_{q_{\min}}))_+\, .\label{eq:DistReplPred}
\end{align}
Then, for the modified two-phase algorithm described above, for any
 $\eta_0>0$, there exists $\delta_0$, $L_0>0$, $\eps_0>0$, $\Delta_0>0$
such that for $\delta\in (0,\delta_0)$, $L\ge L_0$, 
$\eps\in (0,\eps_0)$, $\Delta\in (0,\Delta_0)$, we have
\begin{align}
&\lim_{n\to\infty}\P\Big(\big|\dist_{\sinit,n}(\alpha)- \dist_{\sinit,\infty}(\alpha)\big|\ge \eta_0\Big)=0\, ,\\
&\lim_{n\to\infty}\P\Big(\big|\dist_{\srepl,n}(\alpha)- \dist_{\srepl,\infty}(\alpha)\big|\ge \eta_0\Big)=0\, .
\end{align}
\end{proposition}
The asymptotic predictions on the right-hand side of 
Eqs.~\eqref{eq:DistInitPred} and \eqref{eq:DistReplPred} 
 are reported as the curves in Figure \ref{fig:Geometry}.
We note that $\dist_{\srepl,\infty}^2(\alpha)=0$
in an interval $[0,\alpha_{\sens}(\xi)]$, thus confirming
the behavior of Eq.~\eqref{eq:SensitivityDef}. 

While the above result is obtained for a version
of the algorithm in which the Hessian steps are completely randomized,
we expect to remain valid for weaker forms of randomization.

Evaluating Eq.~\eqref{eq:DistReplPred}  for the example 
of Figure \ref{fig:Geometry}, we get
$\alpha_{\sens}(\xi) \approx 0.3506$. We also observe
that the curves $\dist_{\sinit,\infty}^2(\alpha)$,
$\dist_{\sinit,\infty}^2(\alpha)$ are not far from the empirical 
results for SGD. Explaining this agreement is an open problem.

We conclude by observing that the occurrence of a sensitivity phase as
	described in Eq.~\eqref{eq:SensitivityDef} has important practical implications. 
	As an example, perturbative measures (a.k.a. `influence functions') are often used to estimate 
	the effect of a single datapoint in machine learning \cite{park2023trak,kolossovtowards,xialess}. 
	When the algorithm behavior is highly sensitive to small changes, 
	such perturbative arguments have obvious limitations.

%
%************************************************
%
\section{Analysis of Hessian descent}
\label{sec:Algo_HD}

For the sake of clarity, we present the analysis of the 
two phase algorithm, Algorithm \ref{algo:HD+AMP}, in two separate
sections, corresponding to the two phases. In this section, we focus on the 
second phase, namely Hessian descent. 
It will follow from the analysis in the next section that the first 
phase can be skipped  if $\xi'(0)=0$ (i.e. $\bF(\bx)$ does not contain 
terms linear in $\bx$). 

We complete the analysis of the first phase in Section \ref{sec:TwoPhase}. 

The pseudocode for Hessian descent is specified in 
Algorithm \ref{algo:HD} below. This is is simply a special case
of Algorithm \ref{algo:HD+AMP} with $L=0$, and we reproduce it here for the reader's convenience.
In Section \ref{sec:Hessian-FirstOrder}, we show how to implement Hessian descent 
via a first order oracle.

\vspace{0.3cm}

\begin{algorithm}[H]
\SetAlgoLined
\KwData{Couplings $\{\bG^{(k)}\}_{k\ge 0}$, stepsize $\delta$, with $1/\delta\in\naturals$}
\KwResult{Approximate optimizer $\bx^{\sHD}\in \S^{d-1}$}
Initialize $\bx^0= \bzero$, $\bx^1\sim \sqrt{\delta}\cdot \Unif(\S^{d-1})$\;
\For{$k\in\{1,\dots,K:= 1/\delta-1\}$}{
Compute $\bv = \bv(\bx^k)\in \Ts_{\bx^k}$ such that $\|\bv\|_2=1$ and
\begin{align}
\<\bv, \nabla^2H(\bx^k)\bv\> \le \lambda_{\min} (\nabla^2H(\bx^k)|_{\Ts,\bx^k})+d\delta\, ;
\label{eq:ConditionEvector}
\end{align}
$s_k:= \sign(\<\bv(\bx^k),\nabla H(\bx^k)\>)$\;
$\bx^{k+1} = \bx^k-s_k\sqrt{\delta} \, \bv(\bx^k)$\;
}
\KwRet $\bx^{\sHD} = \bx^{K}$\;
\caption{Hessian Descent}\label{algo:HD}
\end{algorithm}

\vspace{0.3cm}

The next theorem bounds the value achieved by the Hessian descent 
algorithm, and is a special case of Theorem \ref{thm:MainAchievability} 
for the case $\xi'(0)=0$. 
\begin{theorem}\label{thm:Hessian}
For $\alpha\in (0,\infty)$, $a,b\in\reals_{\ge 0}$, define
\begin{align}
 Q(m;\alpha,a,b) &:= -\frac{1}{m}+\frac{\alpha b}{1+bm}-a^2m\, ,\\
 z_*(\alpha,a,b) & := -\sup_{m>0}  Q(m;\alpha,a,b)\, .
 \end{align}
 Let $u(\,\cdot\, ;\alpha,\xi):[0,1]\to \reals$ be the unique solution of 
 the ordinary differential equation
 \begin{align}
 \frac{\de u}{\de t}(t) = -\frac{1}{2\alpha}z_*\big(\alpha; \sqrt{2\alpha u(t) \xi''(t)}, \xi'(t)\big)\,,
 \;\;\;\;\; u(0) = \frac{1}{2}\xi(0) \, .\label{eq:EnergyEvolutionHD}
 \end{align}
Then there exists constants $C_0 =C_{0}(\alpha,\xi)$, $\delta_0=\delta_0(\alpha,\xi)>0$ 
depending uniquely on $\alpha,\xi$ 
such that the Hessian descent algorithm, with  stepsize parameter $\delta\le \delta_0$,
 outputs $\bx^{\sHD}\in \S^{d-1}$ such that, with probability converging to one
 as $n,d\to\infty$ ($n/d\to\alpha$)
 \begin{align}
\frac{1}{2n}\big\|\bF(\bx^{\sHD})\big\|^2_2\le u(1;\alpha,\xi)+C_0\delta\, .
 \end{align}
 Further, the algorithm has complexity at most $(C_0\chi_{n,d}/\delta)\log(1/\delta)$, 
 where $\chi_{n,d}$ is the complexity of a single matrix vector multiplication by 
 $\nabla^2H(\bx)$ at a query point $\bx\in\Ball^d(1)$.
 \end{theorem}

We next provide a sketch of the proof  
of Theorem \ref{thm:Hessian} with a detailed presentation deferred to 
next subsections.
The following simple lemma (whose conclusions were anticipated in Eq.~\eqref{eq:DistributionHessian})
 characterizes the joint distribution of $H(\bx)$, $\nabla H(\bx)$, $\nabla^2 H(\bx)$.
\begin{lemma}\label{lemma:HessianDistr}
For a fixed $\bx\in\reals^d$ with $\|\bx\|^2_2=q$,  we have $\bF(\bx) = \sqrt{\xi(q)}\, \bg$,
$\bD\bF(\bx)\bU_{\bx} = \sqrt{\xi'(q)}\, \bZ$, $\bU_{\bx}^{\sT}\nabla^2 F_{\ell}(\bx)
\bU_{\bx} = \sqrt{\xi''(q)}\,\bW_{\ell}$,
where $\bg, (\bW_{\ell})_{\ell\le n}, \bZ$ are mutually independent with
\begin{align}
\bg\sim\normal(0,\id_n)\, ,\;\;\; \bW_{\ell}\sim\GOE(d-1)\, ,\;\;\; \bZ\sim\GOE(n,d-1)\, .
\end{align}
As a consequence, letting $\bcH(\bx)  := \bU_{\bx}^{\sT}\nabla^2 H(\bx)\bU_{\bx}$
be the restriction of the Hessian  to the tangent space, we have
\begin{align}
\bcH(\bx) & = \sqrt{\xi(q)\xi''(q)}\, \|\bg\|_2\bW+\xi'(q)\,\bZ^{\sT}\bZ\, ,\label{eq:HessianDistr}\\
H(\bx)    &= \frac{1}{2}\xi(q)\|\bg\|_2^2\, ,
\end{align}
where
$(\bg,\bW,\bZ)\sim \normal(0,\id_n)\otimes \GOE(d-1)\otimes \GOE(n,d-1)$.
\end{lemma}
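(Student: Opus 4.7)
The plan is to leverage joint Gaussianity: every random object in the statement is mean-zero Gaussian in the underlying coefficients $(\bG^{(k)})_{k\geq 0}$, so it suffices to identify the claimed distributions at the level of second moments and to obtain the independences from vanishing cross-covariances. By the rotation invariance of the process $\bF$, I may assume without loss of generality that $\bx=\sqrt{q}\be_1$ and take $\bU_{\bx}=[\be_2\,|\,\cdots\,|\,\be_d]$, so that ``restriction to $\Ts_{\bx}$'' becomes literally ``drop the first coordinate'' and the tangent-space indices are $\{2,\dots,d\}$.

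The core computation is then to differentiate the kernel $K(\bx^1,\bx^2):=\xi(\langle\bx^1,\bx^2\rangle)$ in both arguments and evaluate at $\bx^1=\bx^2=\bx$, using the standard identity $\E[\partial^{\alpha}F_\ell(\bx)\,\partial^{\beta}F_\ell(\bx)]=\partial_{\bx^1}^{\alpha}\partial_{\bx^2}^{\beta}K(\bx^1,\bx^2)\big|_{\bx^1=\bx^2=\bx}$. The decisive observation is that each derivative $\partial_{x_j^{(1)}}$ or $\partial_{x_j^{(2)}}$ applied to $\xi(\langle\bx^1,\bx^2\rangle)$ pulls down a factor $x_j^{(2)}$ or $x_j^{(1)}$ from the chain rule; in our adapted coordinates these factors vanish whenever the index lies in $\{2,\dots,d\}$. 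Consequently, for tangent-space indices only the ``contraction'' terms produced when two derivatives collide to give a Kronecker $\delta$ survive. One reads off directly: $\mathrm{Var}(F_\ell(\bx))=\xi(q)$; for $i,j\geq 2$, $\E[\partial_i F_\ell(\bx)\,\partial_j F_\ell(\bx)]=\xi'(q)\delta_{ij}$; and for $i,j,k,l\geq 2$, $\E[\partial_i\partial_j F_\ell(\bx)\,\partial_k\partial_l F_\ell(\bx)]=\xi''(q)(\delta_{ik}\delta_{jl}+\delta_{il}\delta_{jk})$, which is precisely the GOE covariance. This gives the three distributional assertions $\bF(\bx)=\sqrt{\xi(q)}\,\bg$, $\bD\bF(\bx)\bU_{\bx}=\sqrt{\xi'(q)}\,\bZ$, $\bU_{\bx}^{\sT}\nabla^2 F_\ell(\bx)\bU_{\bx}=\sqrt{\xi''(q)}\,\bW_\ell$ with the claimed joint marginals.

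The independence of $\bg$, $(\bW_\ell)_{\ell}$ and $\bZ$ reduces, again by Gaussianity, to verifying that every cross-covariance vanishes. Each of the three cross-covariances between distinct members of $\{F_\ell,\nabla F_\ell,\nabla^2 F_\ell\}$ restricted to tangent-space indices carries, by the chain-rule mechanism just described, an uncontracted factor $x_i^{(s)}$ with $i\geq 2$, hence vanishes at $\bx=\sqrt{q}\be_1$. Independence across $\ell$ is automatic from the iid assumption on the coordinates of $\bF$.

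For the ``consequence'' I use the elementary identity $\nabla^2 H(\bx)=\bD\bF(\bx)^{\sT}\bD\bF(\bx)+\sum_{\ell=1}^n F_\ell(\bx)\,\nabla^2 F_\ell(\bx)$, conjugate by $\bU_{\bx}$, and substitute the representations above to obtain $\bcH(\bx)=\xi'(q)\bZ^{\sT}\bZ+\sqrt{\xi(q)\xi''(q)}\sum_{\ell=1}^n g_\ell\bW_\ell$ and $H(\bx)=\tfrac12\xi(q)\|\bg\|_2^2$. The one non-mechanical step, which I regard as the main (still mild) obstacle, is the rewriting $\sum_{\ell=1}^n g_\ell\bW_\ell=\|\bg\|_2\bW$ for some $\bW\sim\GOE(d-1)$ independent of $(\bg,\bZ)$: conditionally on $\bg$, this sum is a centered symmetric Gaussian matrix whose covariance depends on $\bg$ only through $\|\bg\|_2^2$ (so it is distributed as $\|\bg\|_2\bW$ for some $\bW\sim\GOE(d-1)$), and orthogonal invariance of the iid family $(\bW_\ell)_\ell$ in the $\ell$-index shows that this $\bW$ can be chosen independent of the direction $\bg/\|\bg\|_2$, hence of $\bg$ itself, and the earlier independence gives independence of $\bZ$. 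The remainder of the argument is bookkeeping of the tensor of derivatives of $\xi(\langle\bx^1,\bx^2\rangle)$.
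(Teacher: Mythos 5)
Your proof is correct and is essentially the argument the paper has in mind: the lemma is presented as a "simple lemma" whose proof is omitted, and the covariance computations it rests on are exactly the ones carried out (in the same style, by differentiating $\xi(\langle\bx^1,\bx^2\rangle)$ and interchanging derivatives with expectations) in the proof of Lemma \ref{lemma:Derivatives}. Your handling of the one non-trivial step, rewriting $\sum_{\ell}g_\ell\bW_\ell$ as $\|\bg\|_2\bW$ with $\bW\sim\GOE(d-1)$ independent of $(\bg,\bZ)$ by conditioning on $\bg$, is also correct.
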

This lemma suggests to estimate the energy decrease at step $k$
of Hessian descent, by computing the minimum eigenvalue of 
$\lambda_{\min}(\bcH(\bx))$ at a point $\bx$  with $\|\bx\|_2^2=k\delta$ and  $H(\bx)/n=u$.
If $\bx$ is a point independent of the Gaussian process  $\bF(\,\cdot\,)$,
$\lambda_{\min}(\bcH(\bx))$ turns out to concentrate around $-z_{\#}(t=k\delta)d$ where 
$z_{\#}(t):=z_*(\alpha; \sqrt{2\alpha u \xi''(t)}, \xi'(t)) $.
By summing this energy decrement over $k\in\{,\dots,\lfloor 1/\delta\rfloor-1\}$
and letting $\delta$ be small, this calculation yields the value $u(1;\alpha,\xi)$ of 
Theorem \ref{thm:Hessian}.

At first sight, such a derivation might seem incorrect because $\bx^k$ 
is not independent of $\bcH(\bx^k)$. However, the fast decay of 
the probability of upper deviations
of the minimum eigenvalue allows to establish the claim nevertheless.

 The formula $u(1;\alpha,\xi)$ for the energy achieved by Hessian descent,
 cf. Theorem \ref{thm:Hessian}, is somewhat implicit. The next corollary provides
 simple upper and lower bounds. Its proof follows immediately from Theorem 
 \ref{thm:Hessian} using the bounds on $z_*$ given in Lemma \ref{lemma:RMT-Sharp}, part 3.
 \begin{corollary}\label{coro:Hessian}
 Define 
 \begin{align}
 u_{\sHD,\slb}(\alpha,\xi)& := \frac{1}{2}
 \left(\sqrt{\xi(0)}-\sqrt{\frac{1}{\alpha}}\int_0^1\sqrt{\xi''(s)} \right)_+^2\, ,\\
 u_{\sHD,\sub}(\alpha,\xi)& := \frac{1}{2}
 \left(\sqrt{\xi(0)}-\sqrt{\frac{1-\alpha}{\alpha}}\int_0^1\sqrt{\xi''(s)} \right)_+^2\, .
 \end{align}
 Then the energy achieved by Hessian descent satisfies
 \begin{align}
u_{\sHD,\slb}(\alpha,\xi) \le \plim_{d,n\to\infty}\frac{1}{2n}\big\|\bF(\bx^{\sHD})\big\|^2_2\le u_{\sHD,\sub}(\alpha,\xi)\, .
 \end{align}
 In particular, 
 letting $\alpha_{\sHD}(\xi):= \inf\{\alpha>0: u_{\sHD,\slb}(\alpha,\xi)>0\}$ denote
 the critical point of the algorithm, we have 
 \begin{align}
&\frac{A(\xi)}{1+A(\xi)} \le \alpha_{\sHD}(\xi) \le A(\xi)\, ,\;\;\;\;\
A(\xi) := \left(\int_{0}^1\sqrt{\frac{\xi''(t)}{\xi(0)}}\, \de t\right)^2\, .
\end{align}
 \end{corollary}

For our running example of a pure model $\xi(t) = \xi_0+t^p$, $\xi_0=\gamma_0\log p$
(denoting the corresponding threshold by $\alpha_{\sHD}(\gamma_0,p)$), the last bounds yields
 \begin{align}
&\frac{4(p-1)}{p\xi_0+4(p-1)} \le \alpha_{\sHD}(\gamma_0,p) 
\le \frac{4(p-1)}{p\xi_0}\,  .\label{eq:AlphaHessianBounds}
\end{align}
In particular, for large $p$, we obtain 
\begin{align}
\alpha_{\sHD}(\gamma_0,p) = \frac{4}{\gamma_0\log p}(1+o_p(1))\, .
\end{align}
As shown in Appendix \ref{sec:Algo_GD},
this is substantially better than the guarantee obtained there for gradient descent,
but still far from the 
maximum value of $\alpha$ in which we know that solutions exist, $\alpha <
\alpha_{\slb}(\gamma_0,p) \asymp 1/\gamma_0$, cf. Eq.~\eqref{eq:LB-asymp}.

\subsection{Implementing Hessian descent via a first order oracle}
\label{sec:Hessian-FirstOrder}

By Lemma \ref{lemma:Derivatives}, we can assume that  
$\sup_{\|\bx\|\le 1}\|\nabla^2H(\bx)\|_{\op}\le C_*d$ for some constant $C_*$.
Define $\bA_k:= \bP_{\bx^k}^{\perp}(\bI-\nabla^2H(\bx^k)/(2C_*d))\bP_{\bx^k}^{\perp}$
where $\bP_{\bx^k}^{\perp}$ is the projection orthogonal to $\bx^k$.
Recall that $\lambda_1(\bA_k)\ge \lambda_2(\bA_k)\ge \cdots$ denote 
the eigenvalues of $\bA_k$ in decreasing order. 
We then have $1\ge \lambda_1(\bA_k)\ge \dots \ge \lambda_d(\bA_k)
\ge 0$. Achieving condition \eqref{eq:ConditionEvector} is equivalent 
to 
\begin{align}\label{eq:ConditionApproxPCA}
	\<\bv, \bA_k\bv\> \ge \lambda_{1}(\bA_k)-\delta'\, ,
\end{align}
for $\delta'=\delta/2C_*$. 
Then the proof of 
Theorem \ref{thm:Hessian} outlined above  (in particular, the 
fact that, for each $\eta>0$, the matrix $\bcH(\bx)$ of Eq.~\eqref{eq:HessianDistr}
has with high probability $c(\eta)d$ eigenvalues 
smaller than $-z_*(\alpha,a,b)-\eta$, where $a,b$ are the argument of $z_*$ in
Eq.~\eqref{eq:EnergyEvolutionHD}), 
implies that there exists a constant $c>0$ such that
letting $m=cd$, with high probability
$\lambda_{m}(\bA_k)\ge \lambda_{1}(\bA_k)-\delta'$. 

We can then apply the power method with a random initialization 
$\bu_0\sim\Unif(\S^{d-1})$: 
\begin{align}
\bu^{\ell+1} = \frac{\bA_k\bu^{\ell}}{\|\bA_k\bu^{\ell}\|_2}\, .
\end{align}
A standard argument implies that there exists $\ell_*$ such that, 
with probability $1-2\exp(-c'd)$ over the
random initialization $\bu_0$, setting $\bv=\bu^{\ell_*}$ satisfies
Eq.~\eqref{eq:ConditionApproxPCA}.

The above power iteration procedure provides an implementation 
of Eq.~\eqref{eq:ConditionEvector} using a second order oracle.
We obtain a first order implementation by initializing
$\bv_0\sim\Unif(\S^{d-1})$, and iterating: 
\begin{align}
\bv^{\ell+1} &= \frac{\tilde\bv^{\ell+1}}{\|\tilde\bv^{\ell+1}\|_2}\, ,\\
	\tilde\bv^{\ell+1}&=\bv^{\ell}-
	\frac{1}{2C_*d\eps}
\big[\nabla H(\bx^k+\eps\bv^{\ell})-\nabla H(\bx^k)\big]\, .
\end{align}
Using the Lipschitz continuity of the Hessian 
(see Lemma \ref{lemma:BoundLip}), it is easy to see that,
taking $\eps= n^{-3}$, yields $\|\bv^{\ell}-\bu^{\ell}\|\le n^{-1}$
for all $\ell\le \ell_*$ and all $n,d$ large enough.

\subsection{Random matrix theory estimates}
\label{app:RMT}

Throughout, for $M\le N$,  we let $\bW\sim \GOE(N)$ independent of $\bZ\sim \GOE(M,N)$ and
define 
\begin{align}
\bA =\bA_{M,N} := a\sqrt{N}\, \bW +b \bZ^{\sT}\bZ\, .
\end{align}
\begin{lemma}\label{lemma:RMT-Sharp}
Assume $a,b, \alpha\in\reals_{\ge 0}$, and recall the definition of 
$Q, z_*$ from Theorem \ref{thm:Hessian}, namely
\begin{align}
 Q(m;\alpha,a,b) &:= -\frac{1}{m}+\frac{\alpha b}{1+bm}-a^2m\, ,\\
 z_*(\alpha,a,b) & := -\sup_{m>0}  Q(m;\alpha,a,b)\, .
 \end{align}
 Further, let $S(\,\cdot\, ;\alpha,a,b):\upper\to \complex$ be the only analytic function 
 on the upper half
 plane, such that: $(i)$ $S(\,\cdot\, ;\alpha,a,b) = -1/z+o(1/z)$ as $z\to i\infty$;
 $(ii)$~ $S(z ;\alpha,a,b)$ solves $Q(S;\alpha,a,b) =z$.
 
 Then the following hold almost surely in the limit $M,N\to\infty$ with $M/N\to\alpha\in (0,1]$.
 \begin{enumerate}
\item  
 \begin{align}
 \lim_{M,N\to\infty} \frac{1}{N}\lambda_{\min}(\bA_{M,N}) = -z_*(\alpha,a,b)\, .
\end{align}
\item For $z\in\upper$ (the upper half complex plane)
 \begin{align}
 \lim_{M,N\to\infty} \frac{1}{N}\Tr\big((\bA_{M,N}/N-z\id )^{-1}\big) = S(z ;\alpha,a,b)\, .
\end{align}
\item If $a>0$, $\alpha<1$ then,
\begin{align}
2a\sqrt{1-\alpha}<z_*(\alpha,a,b)< 2a\, .
\label{eq:SimpleZstarBounds}
\end{align}
\item Letting $\hnu_{M,N}(\,\cdot\, ;a,b)$ denote the empirical spectral distribution of
$\bA_{M,N}/N$, there exists a non-decreasing, deterministic function $\nu_0(\, \cdot\, ;a,b):\reals\to\reals_{\ge 0}$ such that
$\nu_0(t;a,b)>0$ for all $a,b\ge 0$ and $z_* = z_*(\alpha,a,b)$, almost surely
\begin{align}
\lim_{M,N\to\infty}\hnu_{M,N}(-z_*+t;a,b)\ge \nu_0(t;a,b)\, .
\end{align}
\item For any $a_0,b_0>0$, $t>t'>0$, we have
\begin{align}
&\lim_{M,N\to\infty}
\inf_{a\in [0,a_0], b\in[0,b_0]} \hnu_{M,N}(-z_*+t;a,b)\ge \nu_{\min}(t';a_0,b_0)\, ,
\label{eq:UniformLBEmpirical}\\
&\nu_{\min}(t';a_0,b_0) := \inf_{a\in [0,a_0], b\in[0,b_0]}\nu_0(t';a,b)>0\, .
\label{eq:LB_numin}
\end{align}
\end{enumerate}
\end{lemma}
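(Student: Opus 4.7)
The approach is free probability. The matrix $\bA_{M,N}/N = (a/\sqrt{N})\bW + b\bZ^{\sT}\bZ/N$ is a sum of two asymptotically free, rotationally-invariant random matrices: the first converges spectrally to the semicircle of radius $2a$ (with $R$-transform $R_1(\zeta) = a^2\zeta$), the second to the $b$-rescaled Marchenko--Pastur law of parameter $\alpha$ (with $R$-transform $R_2(\zeta) = b\alpha/(1-b\zeta)$). Under the sign convention $S\sim -1/z$ as $z\to i\infty$ of the lemma, free additivity of $R$-transforms delivers the characterization $z = R_1(-S) + R_2(-S) - 1/S$, which unwinds to $z = -1/S - a^2 S + b\alpha/(1+bS) = Q(S;\alpha,a,b)$, establishing part 2. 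For part 1 I will combine this bulk convergence with a no-outlier bound for $\lambda_{\min}(\bA_{M,N})/N$, obtained via moment or resolvent estimates tailored to this deformed ensemble.

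For part 3 I will use the identity $\alpha b/(1+bm) = \alpha/m - \alpha/(m(1+bm))$ to rewrite
\begin{align*}
Q(m;\alpha,a,b) = -\frac{1-\alpha}{m} - \frac{\alpha}{m(1+bm)} - a^2 m.
\end{align*}
On $\{m>0\}$ with $\alpha,b>0$ the middle term is strictly negative, so $Q(m) < -(1-\alpha)/m - a^2 m \leq -2a\sqrt{1-\alpha}$ (maximum of the latter at $m=\sqrt{1-\alpha}/a$), giving $z_* > 2a\sqrt{1-\alpha}$. Conversely the original form has a strictly positive middle term, so $Q(m) > -1/m - a^2 m$ and thus $z_* < 2a$.

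For parts 4 and 5 I will extract the density via $\rho(\lambda) = \pi^{-1}\Im S(\lambda + i0^+)$. When $a > 0$, $m_* := \argmax_{m>0} Q(m;\alpha,a,b)$ is the unique critical point with $Q''(m_*)<0$; a second-order Taylor expansion of $Q(S)=-z_*+t$ near $m_*$ yields $\Im S(-z_*+t+i0^+) = \sqrt{-2t/Q''(m_*)}(1+o(1))$ as $t\downarrow 0$, so $\nu_0(t;a,b) \sim c(a,b)\, t^{3/2}$ with $c(a,b)>0$. The degenerate boundary $a=0$, $b>0$ is handled separately: the limit measure has an atom of mass $1-\alpha$ at $0 = -z_*$, so $\nu_0(t;0,b) \geq 1-\alpha$ for every $t>0$. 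Continuity of $(a,b)\mapsto \nu_0(t';a,b)$ on the compact $[0,a_0]\times[0,b_0]$ together with its strict positivity yield $\nu_{\min}(t';a_0,b_0)>0$ by compactness. The uniform inequality \eqref{eq:UniformLBEmpirical} then follows from a net argument over $(a,b)$, using continuity of the empirical CDFs in $(a,b)$ at a fixed $t > t'$.

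The step I expect to be the main obstacle is the edge no-outlier estimate for $\lambda_{\min}$ in part 1, together with its uniform-in-$(a,b)$ version needed for part 5: these must be quantitative enough to transfer the square-root (or atomic, for $a=0$) edge behavior of the limit measure to the empirical CDF uniformly across the parameter compact. A secondary difficulty is the transition at the boundary $a=0$, where $m_*$ escapes to infinity and the edge structure changes from square-root to an atom at $0$.
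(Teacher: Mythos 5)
Your plan for parts 2--5 is essentially the paper's own argument: the $R$-transform additivity $R_{\bA/N}(\zeta)=a^2\zeta+\alpha b/(1-b\zeta)$ giving $Q(S)=z$ for the Stieltjes transform, the elementary sandwich $-1/m-a^2m<Q(m)<-(1-\alpha)/m-a^2m$ for part 3, the square-root edge behavior of the density near $-z_*$ for part 4, and continuity in $(a,b)$ plus a net for part 5. Two remarks on the latter. First, your direct Stieltjes-inversion expansion at $m_*$ is a fine substitute for the paper's root-counting argument on the cubic $Q(S)=z$, and your separate treatment of the atom at $a=0$ is consistent with what the paper's argument yields there (the cubic degenerates to a quadratic). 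Second, for part 5 the phrase ``continuity of the empirical CDFs in $(a,b)$'' hides the only delicate point: the sharp indicator $\bfone\{x\le -z_*(a,b)+t\}$ is not a continuous spectral statistic, so one must mollify. The paper does this explicitly via the Wielandt--Hoffman inequality (Lipschitz dependence of linear spectral statistics on $(a,b)$ in Frobenius norm, on a high-probability event) applied to piecewise-linear approximations $f_{t,\eps}$ of the indicator, combined with Borel--Cantelli; your net argument needs exactly this ingredient to close.

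The one genuine gap is part 1, which you yourself flag as the main obstacle and do not prove. Bulk convergence (part 2) only gives $\limsup_N\lambda_{\min}(\bA_{M,N})/N\le -z_*$; the matching no-outlier bound $\liminf_N\lambda_{\min}/N\ge -z_*$ is not something you can wave at with ``moment or resolvent estimates'' --- it is a theorem about free convolution with a semicircular distribution, and the paper simply invokes \cite{capitaine2011free}, which establishes a.s. convergence of the extreme eigenvalues to the edges of the support for deformations of the form $a\sqrt{N}\bW+$ (independent matrix). If you cite that result (or the analogous exact-separation statements for deformed Wigner ensembles), part 1 is done; without it, your proof of part 1 is incomplete. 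A cosmetic point shared with the paper: the strict inequality $z_*<2a$ in part 3 implicitly requires $\alpha b>0$, since for $b=0$ one has $z_*=2a$ exactly.
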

\begin{proof}
The asymptotic R-transforms of the random matrices $\bX:=\bW/\sqrt{N}$, $\bY:= \bZ^{\sT}\bZ/N$
are \cite{mingo2017free}
\begin{align}
R_{\bX}(z) = z\, ,\;\;\;\;\; R_{\bY}(z) = \frac{\alpha}{1- z}\, .
\end{align}
Since $\bX$, $\bY$ are asymptotically free, 
\begin{align}
R_{\bA/N}(z)& = a R_{\bX}(a z)+b R_{\bY}(b z)\\
& = a^2 z + \frac{\alpha b}{1- b z}\, .
\end{align}
Therefore, we have $ Q(m;\alpha,a,b) = R_{\bA/N}(-m)-m^{-1}$. Point 2 follows then from the
standard connection between Stieltjes transform and S-transform. 

Point 1 follows from \cite{capitaine2011free}.

For part 3, Eq. \eqref{eq:SimpleZstarBounds} follows from
the  inequalities 
\begin{align}
 -\frac{1}{m}-a^2m < Q(m;\alpha,a,b) <  -\frac{1}{m}+\frac{\alpha }{m}-a^2m\, .
 \end{align}

For part 4, note that $Q(S;\alpha,a,b) = z$ is a third order algebraic equation for the Stieltjes transform $S$, 
with coefficients that depend continuously on $a,b,z$. The equation reduces
to a second order one if $a=0$ or $b=0$.
By the definition of $z_*$,  there exists $r>0$ such that, for $z\in (-z_*,-z_*+r)$ this 
equation has three solutions, of which two are complex conjugates. The imaginary
part of these solutions gives (up to a constant factor) the asymptotic density of 
empirical spectral distribution, which is strictly positive, hence implying the claim (see e.g. \cite[Theorem 2.4.3]{Guionnet}).

Finally, we consider part 5. On the favorable event 
$\cG := \{\|\bW\|^2_{F}\le 2N^2, \|\bZ^{\sT}\bZ\|_F^2\le 2N^3\}$ (which holds with probability
at least $1-\exp(-cN)$), we have
\begin{align}
\Big\|\frac{1}{N}\bA_{M,N}(a_1,b_1)-\frac{1}{N}\bA_{M,N}(a_2,b_2)\Big\|_F\le
4 \sqrt{N} \|(a_1,b_1)-(a_2,b_2)\|_2\, ,
\end{align}
where we noted explicitly the dependence of $\bA_{M,N}$ on parameters $a,b$.
Hence, by the Wielandt-Hoffman inequality, for any Lipschitz function $f:\reals\to\reals$,
the following holds on $\cF$ for  all $a_1,a_2,b_1,b_2\ge 0$:
\begin{align}
\left|\int f(x)\, \hnu_{M,N}(\de x;a_1,b_1)-\int f(x)\, \hnu_{M,N}(\de x;a_2,b_2)\right|
\le 4 \|f\|_{\sLip}\cdot \|(a_1,b_1)-(a_2,b_2)\|_2\, .
\end{align}
Using the function 
\begin{align}
f_{t,\eps}(x) = \begin{cases}
1 & \mbox{ if $x\le -z_*+t-\eps$,}\\
(-z_*+t-x)/\eps& \mbox{ if $-z_*+t-\eps<x<-z_*+t$,}\\
0 & \mbox{ if $-z_*+t\le x$,}
\end{cases}
\end{align}
we get,  for  all $a_1,a_2,b_1,b_2\ge 0$:
\begin{align*}
\hnu_{M,N}(-z_*+t;a_1,b_1)&\ge \int f_{t,\eps}(x)\, \hnu_{M,N}(\de x;a_1,b_1)\\
& \ge  \int f_{t,\eps}(x)\, \hnu_{M,N}(\de x;a_2,b_2) -\frac{4}{\eps} \|(a_1,b_1)-(a_2,b_2)\|_2\\
&\ge \hnu_{M,N}(-z_*+t-\eps;a_2,b_2) -\frac{4}{\eps} \|(a_1,b_1)-(a_2,b_2)\|_2\, .
\end{align*}
Let $S_{\delta}$ be a finite $\delta$-net in $[0,a_0]\times [0,b_0]$.
Using the last inequality and the result at point 4 on $S_{\delta}$,
alongside Borel-Cantelli (which implies that $\cG$ holds eventually almost surely),
we get
\begin{align*}
\lim_{M,N\to\infty}
\inf_{a\in [0,a_0], b\in[0,b_0]} \hnu_{M,N}(-z_*+t;a,b)&\ge 
\min_{(a,b)\in S_{\delta}} \nu_0(t-\eps;a,b)-\frac{4\delta}{\eps}\, .
\end{align*}
The lower bound \eqref{eq:UniformLBEmpirical} follows by taking $\delta\to 0$.

Finally to prove Eq.~\eqref{eq:LB_numin}, i.e. 
$\nu_{\min}(t';a,b) >0$ strictly, we note that
\begin{align}
 \nu_0(t';a,b)\ge \nu_0(f_{t',\eps};a,b):= \int f_{t',\eps}(x)\, \nu_0(\de x;a,b)\, .
 \end{align}
 Further, $(a,b)\mapsto \nu_0(f_{t',\eps};a,b)$ is Lipschitz continuous
 by point 4 and the argument given above (with Lipschitz modulus $4/\eps$), and 
 $\nu_0(f_{t',\eps};a,b)\ge \nu_0(t'-\eps;a,b)>0$ for any $a,b$.
Therefore, by taking $\eps\in (0,t')$, we get $\nu_{\min}(t';a,b) >\inf_{a,b\in [0,a_0]\times[0,b_0]}
\nu_0(f_{t',\eps};a,b)>0$.
\end{proof}

\begin{lemma}\label{lemma:RMT-LD}
Let $a_0,b_0\in\reals_{\ge 0}$, $\alpha\in(0,1]$ and, for $(a,b)\in [0,a_0]\times[0,b_0]$
let  $z_*=z_*(\alpha;a,b)$ be defined as
in Lemma \ref{lemma:RMT-Sharp}. Let $M=M(N)$ be a sequence such that $M/N\to\alpha$ as $N\to\infty$. Then for any fixed $a_0,b_0,t,c>0$, there exists 
$C(t) = C(t,c,a_0,b_0,\alpha)$ such that, if $(a,b)\in [0,a_0]\times[0,b_0]$ then for large enough $N$,
\begin{align}
\prob\big(\lambda_{\min}(\bA_{M,N})\ge N(-z_*+t)\big)\le C(t)\, e^{-N^2/C(t)}\, .
\end{align}
\end{lemma}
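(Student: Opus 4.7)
The bound asserts that pushing the entire spectrum of $\bA_{M,N}/N$ into the bulk by a macroscopic amount $t$ has probability $\exp(-\Omega(N^2))$. My plan is to reduce this to Gaussian concentration for a smoothed CDF value. Fix $t'\in(0,t/2)$ and choose a smooth $g:\reals\to[0,1]$ with $g\equiv 1$ on $(-\infty,-z_*(\alpha,a,b)+t']$ and $g\equiv 0$ on $[-z_*(\alpha,a,b)+2t',\infty)$ (since $(a,b)\mapsto z_*(\alpha,a,b)$ is continuous on $[0,a_0]\times[0,b_0]$, a single $g$ can be used after a finite $\delta$-net of the parameter square). Define $F(\bW,\bZ):=N^{-1}\Tr g(\bA_{M,N}/N)$. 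The event $\{\lambda_{\min}(\bA_{M,N})\geq N(-z_*+t)\}$ forces $F=0$, while by Lemma~\ref{lemma:RMT-Sharp}, parts 2 and 5,
\begin{equation*}
\lim_{N\to\infty}\E F(\bA_{M,N})\;=\;\int g\,\de\nu_0(\,\cdot\,;a,b)\;\geq\;\nu_{\min}(t';a_0,b_0)\;=:\;\eta\;>\;0,
\end{equation*}
uniformly in $(a,b)\in[0,a_0]\times[0,b_0]$.

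A direct gradient computation for $(\bW,\bZ)\mapsto F$ in the Frobenius metric gives
\begin{equation*}
\|\nabla F\|_2\;\leq\;\frac{a\,\|g'\|_\infty}{N}\;+\;\frac{2b\,\|\bZ\|_{\op}\,\|g'\|_\infty}{N^{3/2}}.
\end{equation*}
On the event $\{\|\bZ\|_{\op}\leq C_0\sqrt N\}$, which holds with probability at least $1-e^{-cN}$, the gradient is deterministically bounded by $L:=C_1(a_0+b_0)\|g'\|_\infty/N$. Since the underlying Gaussians $(\bW,\bZ)$ are standard, the Gaussian concentration inequality (equivalently, Herbst from the log-Sobolev inequality) applied to $F$ gives
\begin{equation*}
\prob\bigl(|F(\bA_{M,N})-\E F(\bA_{M,N})|\geq \eta/2\bigr)\;\leq\;2\exp\!\bigl(-c'\eta^2 N^2/L^2\bigr)\;\leq\;2\exp(-c''N^2)
\end{equation*}
with $c''=c''(t,a_0,b_0,\alpha)>0$. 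Absorbing the low-probability event $\{\|\bZ\|_{\op}>C_0\sqrt N\}$ into the constant yields the claim with $C(t)$ depending only on $t,a_0,b_0,\alpha$. For (full) uniformity in $(a,b)$, pass to a finite $\delta$-net of $[0,a_0]\times[0,b_0]$ with $\delta$ small enough that $|z_*(\alpha,a,b)-z_*(\alpha,a',b')|<t'/4$ for nearest-neighbor parameters, and take a union bound over the $O(\delta^{-2})$ net points; the argument above gives the required bound at each net point, and the map $(a,b)\mapsto F(\bA_{M,N}(a,b))$ changes by $O(\delta)$ between nearest points on the event $\{\|\bW\|_F^2\vee\|\bZ\|_F^2\leq CN^2\}$ by the Wielandt--Hoffman bound used in Lemma~\ref{lemma:RMT-Sharp}(5).

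The main obstacle is the continuity and uniform positivity of $\eta=\nu_{\min}(t';a_0,b_0)$ down to the boundary cases $a=0$ or $b=0$, where $\nu_0$ degenerates (to Marchenko--Pastur or to the semicircle law); this is exactly the content of Lemma~\ref{lemma:RMT-Sharp}(5), so the argument goes through. The auxiliary parameter $c>0$ in the statement does not appear essential to the approach above; it presumably reflects an alternative proof where one imposes a mild lower bound on $a$, $b$, or $t$ to keep continuity constants uniform, and can be absorbed into $C(t,c,a_0,b_0,\alpha)$ without affecting the conclusion.
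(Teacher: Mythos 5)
Your overall strategy is the same as the paper's: identify the event $\{\lambda_{\min}(\bA_{M,N})\ge N(-z_*+t)\}$ with the vanishing of a smoothed linear eigenvalue statistic, lower bound its mean uniformly in $(a,b)$ via part 5 of Lemma \ref{lemma:RMT-Sharp}, and conclude by Gaussian concentration at scale $N^2$. However, there is a genuine gap in the concentration step. Because of the Wishart part $b\,\bZ^{\sT}\bZ$, the map $(\bW,\bZ)\mapsto F$ is \emph{not} globally Lipschitz: its gradient in the $\bZ$-directions grows linearly in $\|\bZ\|_{\op}$. Gaussian concentration (Herbst/log-Sobolev) requires a global Lipschitz constant; it does not apply to a function whose gradient is only controlled on a high-probability event. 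The standard repair --- restrict to $\{\|\bZ\|_{\op}\le C_0\sqrt N\}$, extend $F$ to a globally Lipschitz function agreeing with it there, and add back the complement --- fails to deliver the claimed rate: the complement of $\{\|\bZ\|_{\op}\le C_0\sqrt N\}$ has probability only $e^{-\Theta(N)}$, not $e^{-\Theta(N^2)}$, so ``absorbing the low-probability event into the constant'' degrades the final bound from $e^{-N^2/C}$ to $e^{-N/C}$. Pushing the truncation level up to $\|\bZ\|_{\op}\le CN$ (whose complement does have probability $e^{-cN^2}$) makes the Lipschitz constant of order $N^{-1/2}$ rather than $N^{-1}$, and again only yields $e^{-cN}$. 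There is no choice of truncation level that gives both ingredients at the $N^2$ scale.

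The paper circumvents exactly this obstruction by replacing $\bS^{\sT}\bS$ (with $\bS=\bZ/\sqrt N$) by the regularized matrix $\bM^{\eps}(\bS)=\bS^{\sT}(\id+\eps\bS\bS^{\sT})^{-1}\bS$. This map is \emph{globally} $\eps^{-1/2}$-Lipschitz in Frobenius norm, so the linear statistic of $\bB^{\eps}:=\tfrac{a}{\sqrt2}(\bG+\bG^{\sT})+b\,\bM^{\eps}(\bS)$ concentrates at scale $e^{-cN^2}$ with no truncation at all; and since $\bM^{\eps}(\bS)\preceq\bS^{\sT}\bS$, one has $\bB^{\eps}\preceq \bA_{M,N}/N$, so the upper-deviation event for $\lambda_{\min}(\bA_{M,N})$ implies the same for $\bB^{\eps}$. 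The bias $\|\bB^0-\bB^{\eps}\|_F\le \eps\sqrt{N}\,\|\bS\|_{\op}^4$ only enters the comparison of the \emph{means} of the linear statistics, where a high-probability (rather than $e^{-N^2}$) control of $\|\bS\|_{\op}$ suffices. To complete your argument you would need to introduce this (or some other globally Lipschitz, spectrally monotone) surrogate before invoking Gaussian concentration. The remaining components of your proof --- the choice of test function, the uniform positivity of $\nu_{\min}$, and the net argument in $(a,b)$ --- are sound and essentially match the paper.
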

\begin{proof}
%The case $b=0$ is standard \cite{Guionnet}, so we can and will assume, without loss of generality, $b=1$
Throughout the proof, we denote by $C$ constants that depend on
$a_0,b_0,\alpha,t,c$, and possibly other quantities as indicated, but not on $a,b$.

Define $\bW/\sqrt{N}:=\frac{1}{\sqrt2}(\bG+\bG^{\sT})$ and $\bS :=\bZ/\sqrt{N}$ so that $N^{-1}\bA_{M,N} = 
\frac{a}{\sqrt2}(\bG+\bG^{\sT})+b\bS^{\sT}\bS$,
and $\bG,\bS$ are matrices containing i.i.d. random variables $\normal(0,1/N)$.
Further, for $\eps>0$, define
\begin{align}
\bB^{\eps} & := \frac{a}{\sqrt{2}}(\bG+\bG^{\sT})+ b \bM^{\eps}(\bS)\, ,\\
\bM^{\eps}(\bS)&:=  \bS^{\sT}(\id+\eps\bS\bS^{\sT})^{-1}\bS\, .
\end{align}
Since $N^{-1}\bA_{M,N} \succeq \bB^\eps$, it is sufficient to prove the following claim:
\begin{quote}
{\bf Claim.} For any $a_0,b_0,\alpha,t,c >0$, there exists $\eps,C\ge 0$ such that 
$\prob\big(\lambda_{\min}(\bB^{\eps})\ge -z_*+t\big)\le C\, \exp(-N^2/C)$
for all  $(a,b)\in [0,a_0]\times[0,b_0]$ and large $N$.
\end{quote}

In order to prove this claim, we view $\bB^{\eps}$ as a function of the $N^2+NM$ random variables $(G_{ij},S_{ij})$.
Note that 
\begin{align}
\|\bB^\eps(\bG_1,\bS_1)- \bB^\eps(\bG_2,\bS_2)\|_F\le&2a \|\bG_1-\bG_2\|_{F}  + b\|\bM^{\eps}(\bS_1)-\bM^{\eps}(\bS_2)\|_{F}\, . 
\end{align}
Further 
\begin{align*}
 \|\bM^{\eps}(\bS_1)-\bM^{\eps}(\bS_2)\|_{F} 
 &\overset{(a)}{=}\frac{1}{\eps}\big\|(\id+\eps\bS_1^{\sT}\bS_1)^{-1}-(\id+\eps\bS_2^{\sT}\bS_2)^{-1}\big\|_F\\
 & \overset{(b)}{=}\big\|(\id+\eps\bS_1^{\sT}\bS_1)^{-1}(\bS_1^{\sT}\bS_1-\bS_2^{\sT}\bS_2)(\id+\eps\bS_2^{\sT}\bS_2)^{-1}\big\|_F\\
 & \overset{(c)}{\le} \big\|(\id+\eps\bS_1^{\sT}\bS_1)^{-1}\bS_1^{\sT}(\bS_1-\bS_2)\big\|_F+
  \big\|(\id+\eps\bS_2^{\sT}\bS_2)^{-1}\bS_2^{\sT}(\bS_1-\bS_2)\big\|_F\\
  &\le \left(\big\|(\id+\eps\bS_1^{\sT}\bS_1)^{-1}\bS_1^{\sT}\big\|_{\op}+
  \big\|(\id+\eps\bS_2^{\sT}\bS_2)^{-1}\bS_2^{\sT}\big\|_{\op}\right)\big\|\bS_1-\bS_2\big\|_F\\
  &\overset{(d)}{\le} \frac{1}{\sqrt{\eps}}\big\|\bS_1-\bS_2\big\|_F\, .
 \end{align*}
 The equality (a) can be proved by using the singular value decomposition of $\bS_i$ to show that the two matrices whose Frobenius norm is computed define the same bilinear form. To verify (b) add and subtract $\id$ from the middle term. For (c) add and subtract $\bS_1^{\sT}\bS_2$ from the middle term in the left-hand side of the inequality and use the fact that $\id+\eps\bS_i^{\sT}\bS_i\succeq \id$.
 Finally, for (d) we use the elementary inequality $x/(1+\eps x^2)\le \eps^{-1/2}/2$.
 
 Recall that, by \cite[Lemma 2.3.1]{Guionnet}, for any $L$-Lipschitz function
 $f:\reals\to\reals$, $N^{-1}\sum_{i=1}^Nf(\lambda_i(\bB^{\eps}))$ is $\sqrt{2/N}\cdot L$
 Lipschitz function of $\bB^{\eps}$. Therefore, by the above and Gaussian concentration,
 for any such function, any $u\ge 0$, and any $(a,b)\in [0,a_0]\times[0,b_0]$
 \begin{align}
&\prob\left(
\left|\cF(\bB^{\eps})-\E\Big\{\cF(\bB^{\eps})\Big\}
\right|\ge u\right) \le 2\, \exp\Big(-\frac{\eps N^2 u^2}{C(a_0,b_0)L^2}\Big)\, ,\label{eq:ConcBeps}\\
&\cF(\bB^{\eps}):=\frac{1}{N}\sum_{i=1}^Nf(\lambda_i(\bB^{\eps})) \, ,
\end{align}
where we can take $C(a_0,b_0)= C_0(a_0^2+b_0^2)$ for a suitable numerical constant $C_0>0$.

We next take 
\begin{align}
f(x) = \begin{cases}
0 & \mbox{if $-z_*+t\le x$,}\\
(2/t)(-x-z_*+t)  & \mbox{if $-z_*+t/2<x<-z_*+t$,}\\
1&  \mbox{if $x\le -z_*+t/2$.}
\end{cases}
\end{align}
We have
\begin{align}
\frac{1}{N}\sum_{i=1}^Nf(\lambda_i(\bB^{\eps})) & \ge 
\frac{1}{N}\sum_{i=1}^{N}f(\lambda_i(\bB^{0}))-
\frac{2}{t}\cdot\frac{1}{N}\sum_{i=1}^{N}|\lambda_i(\bB^{0})-\lambda_i(\bB^{\eps})|\\
& \stackrel{(a)}{\ge} 
\frac{1}{N}\sum_{i=1}^{N}\bfone\big\{\lambda_i(\bB^{0})\le -z_*+t/2\big\}\;-
\frac{2b_0}{t\sqrt{N}}\|\bB^{0}-\bB^{\eps}\|_F\\
& \stackrel{(b)}{\ge}  \nu_{\min}(t/3;a_0,b_0) - \frac{2b_0}{t\sqrt{N}} \big\|\eps(\bS^{\sT}\bS)^2(\id+\eps\bS^{\sT}\bS)^{-1}\big\|_F\\
& \ge  \nu_{\min}(t/3;a_0,b_0)  - \frac{2b_0}{t} \eps\|\bS\|_{\op}^4\,.
\end{align}
Here in $(a)$ we used Wielandt-Hoffman and $(b)$ holds eventually almost surely, by part 5 of Lemma
\ref{lemma:RMT-Sharp}. Recall that $\|\bS\|_{\op}\le 2+\sqrt{\alpha}\le 3$ with probability $1-\exp(-cN)$.
Hence, taking $\eps =  t\nu_{\min}(t/3;a_0,b_0)/( 4\cdot 3^4 b_0)$,
 we get that,  with high probability
\begin{align}
\frac{1}{N}\sum_{i=1}^Nf(\lambda_i(\bB^{\eps})) & \ge  \frac{1}{2}\nu_{\min}(t/3;a_0,b_0) \, .
\end{align} 
 Since $f$ is $2/t$ Lipschitz, Eq.~\eqref{eq:ConcBeps} implies
 \begin{align}
\prob\left(
\frac{1}{N}\sum_{i=1}^Nf(\lambda_i(\bB^{\eps}))\le  \frac{1}{2}\nu_{\min}(t/3;a_0,b_0)-u\right) &
\le 2\, 
\exp\Big(-\frac{\eps N^2 t^2u^2 }{C(a_0,b_0)}\Big)\\
& \le 2\, 
\exp\Big(-\frac{N^2 t^4\nu_{\min}(t/3;a_0,b_0) u^2 }{C'(a_0,b_0)}\Big)\, ,
\end{align}
and the proof of the claim follows by noting that 
 \begin{align}
\prob\left(\lambda_{\min}(\bB^{\eps})\ge -z_*+t\right)\le 
\prob\left(
\frac{1}{N}\sum_{i=1}^Nf(\lambda_i(\bB^{\eps}))\le 0\right)\, .
\end{align}
\end{proof}

\begin{lemma}\label{lemma:RMT-Hessian}
Assume we let $n,d\to\infty$ with $n/d\to\alpha\in (0,1]$. %Assume $c_0d \le n\le C_1d$ for some constants $c_0,C_1>0$, and 
Recall
the definition of Hamiltonian $H(\bx):=\|\bF(\bx)\|_2^2/2$, and that $\Ts_{\bx}$
denotes the tangent space to the sphere of radius $\|\bx\|_2$ at $\bx$. 

Let $\lambda(\bx):= \lambda_{\min}(\nabla^2H(\bx)|_{\Ts,\bx})$
and define
\begin{align}
z_0(\bx): = -z_*(\alpha;\sqrt{2\alpha \xi''(\|\bx\|^2)H(\bx)/n},\xi'(\|\bx\|^2))\, .
\end{align}
Then, for any $\eps>0$ there exists a constant $C(\eps)>0$ such that for large enough $n$ and $d$,
\begin{align}
\prob\Big(\forall \bx\in\Ball^d(1): \lambda(\bx)\le  d(z_0(\bx)+\eps)
 \Big)\ge 1-C(\eps)\, e^{-d/C(\eps)}\, .
\end{align}
\end{lemma}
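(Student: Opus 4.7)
The plan is to combine the distributional identity of Lemma~\ref{lemma:HessianDistr} with the large-deviation estimate of Lemma~\ref{lemma:RMT-LD}, via a standard $\eta$-net and Lipschitz continuity argument. Concretely, by Lemma~\ref{lemma:HessianDistr}, for fixed $\bx$ with $\|\bx\|_2^2=q$,
\[
\bcH(\bx) \ed \sqrt{d-1}\,a(\bx)\bW + b(\bx)\bZ^{\sT}\bZ\,,\quad a(\bx):=\frac{\sqrt{\xi(q)\xi''(q)}\,\|\bg(\bx)\|_2}{\sqrt{d-1}}\,,\;b(\bx):=\xi'(q)\,,
\]
where $\bg(\bx):=\bF(\bx)/\sqrt{\xi(q)}\sim\normal(0,\id_n)$ is independent of $\bW\sim\GOE(d-1)$ and $\bZ\sim\GOE(n,d-1)$. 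This is precisely the matrix $\bA_{n,d-1}$ of Lemma~\ref{lemma:RMT-LD} with parameters $(a(\bx),b(\bx))$; and since $H(\bx)=\xi(q)\|\bg(\bx)\|_2^2/2$, one checks $a(\bx)^2 = 2\xi''(q)H(\bx)/(d-1)$, matching the first argument of $z_*$ in the definition of $z_0(\bx)$ up to the ratio $n/(d-1)\to\alpha$, which is absorbed into the slack $\eps$ via continuity of $z_*$ on compact parameter sets.

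Next, I would establish the pointwise bound
\[
\prob\big(\lambda(\bx)\ge(d-1)(z_0(\bx)+\eps/2)\big)\le C(\eps)\,e^{-d/C(\eps)}
\]
for each $\bx\in\Ball^d(1)$. Fix $a_0=a_0(\xi,\eps)$ large and set $\cE_\bx:=\{\|\bg(\bx)\|_2^2\le 2n\}$, so that $\prob(\cE_\bx^c)\le e^{-c_0 d}$ by Gaussian concentration, with $c_0\to\infty$ as $a_0\to\infty$. On $\cE_\bx$ the parameters lie in $[0,a_0]\times[0,\xi'(1)]$, so conditioning on $\bg(\bx)$ (which fixes $a(\bx)$) and applying Lemma~\ref{lemma:RMT-LD} to the still-independent pair $(\bW,\bZ)$ gives a conditional bound of $C(\eps)e^{-d^2/C(\eps)}$, whence the display follows. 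Union-bounding over an $\eta$-net $\Net_\eta\subset\Ball^d(1)$ of size $|\Net_\eta|\le(C_0/\eta)^d$ yields
\[
\prob\big(\exists\,\bx\in\Net_\eta:\lambda(\bx)>(d-1)(z_0(\bx)+\eps/2)\big)\le (C_0/\eta)^d\big[C(\eps)e^{-d^2/C(\eps)}+e^{-c_0 d}\big]\,,
\]
which becomes $\le e^{-d/C'(\eps)}$ once $a_0$ is chosen so that $c_0>\log(C_0/\eta)$.

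To transfer from $\Net_\eta$ to $\Ball^d(1)$, I would use Lipschitz continuity of both $\lambda$ and $z_0$. Weyl's inequality together with operator-norm control on $\nabla^2H(\bx)=\bD\bF(\bx)^{\sT}\bD\bF(\bx)+\sum_i F_i(\bx)\nabla^2 F_i(\bx)$ and its spatial derivatives (via Proposition~\ref{propo:OPNorm} and Lemma~\ref{lemma:BoundLip}) gives $|\lambda(\bx_1)-\lambda(\bx_2)|\le K_1 d\,\|\bx_1-\bx_2\|_2$ on an event of probability $\ge 1-e^{-d/C}$. The map $\bx\mapsto z_0(\bx)$ is $O(1)$-Lipschitz by smoothness of $z_*(\alpha;\cdot,\cdot)$ on the compact box $[0,a_0]\times[0,\xi'(1)]$ (as a non-degenerate root of the cubic defining the Stieltjes transform) composed with the Lipschitz maps $\bx\mapsto\|\bx\|_2^2$ and $\bx\mapsto H(\bx)/n$ (the latter controlled via $\|\nabla H\|_2\le\|\bD\bF\|_{\op}\|\bF\|_2=O(d)$). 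Choosing $\eta=\eps/[4(K_1+K_2)]$, a fixed constant depending only on $\eps$ and $\xi$, makes the Lipschitz error at most $d\eps/4$, which combined with the net bound yields the claim for all $\bx\in\Ball^d(1)$.

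The main obstacle is the interplay in the second step between the doubly exponential deviation bound from Lemma~\ref{lemma:RMT-LD} and the merely singly exponential bound on $\prob(\cE_\bx^c)$: the net cardinality $(C_0/\eta)^d$ would overwhelm $e^{-c_0 d}$ unless $a_0$ is taken large enough. This is precisely the flexibility that saves us, since $\eta$ depends only on $\eps$ and $\xi$ (through the Lipschitz constants) while $c_0$ grows unboundedly with $a_0$; balancing these exponents is the only quantitatively delicate step.
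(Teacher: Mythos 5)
Your proposal follows the same overall strategy as the paper's proof: the distributional identity of Lemma~\ref{lemma:HessianDistr}, the pointwise large-deviation bound of Lemma~\ref{lemma:RMT-LD} (applied conditionally on $\bg(\bx)$, which is legitimate since that lemma is uniform over $(a,b)$ in a compact box), a net, and a Lipschitz transfer via Weyl's inequality. The one genuine difference is how the truncation of $a(\bx)$ is handled. The paper conditions on a single \emph{global} event $\cG\supseteq\{\max_{\bx\in\Ball^d(1)}H(\bx)\le C_0 d\}$, which holds with probability $1-e^{-d/C}$ by Proposition~\ref{propo:MaxValue} and Remark~\ref{rmk:MaxValBorell}; on $\cG$ only the $e^{-d^2/C}$ term needs to survive the union bound, so the paper can afford a shrinking $1/d$-net of cardinality $e^{Cd\log d}$ and there is no balancing of exponents at all. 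You instead union-bound per-point truncation events $\cE_\bx^c$ over a constant-radius net, which forces the truncation exponent to beat the net entropy $\log(C_0/\eta)$. This works, but your write-up is internally inconsistent on exactly this point: with $\cE_\bx:=\{\|\bg(\bx)\|_2^2\le 2n\}$ the failure probability is $e^{-c_0 n}$ for a \emph{fixed} universal $c_0$, which does not grow with $a_0$; to get $c_0\to\infty$ you must raise the threshold to $Kn$ with $K=K(\eps,\xi)$ large (whence $a_0\propto\sqrt{K}$), as your closing paragraph suggests but your definition of $\cE_\bx$ does not implement. With that repair the argument closes; still, the global-event route is cleaner and avoids the delicate step you identify. (Both your sketch and the paper are equally terse about the Lipschitz continuity of $\bx\mapsto z_0(\bx)$; the clean way to see it is that $a(\bx)\propto\|\bF(\bx)\|_2/\sqrt{n}$ is $O(1)$-Lipschitz by \eqref{eq:LipF}, and $z_*$ is Lipschitz in $a$ by an envelope computation, rather than going through $H(\bx)/n$ and the square root.)
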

\begin{proof}
 Lemmas \ref{lemma:HessianDistr} and \ref{lemma:RMT-LD} imply that 
for any $\eps, C_0>0$, there exists a constant $C(C_0,\eps)>0$ (depending on $\xi$
as well)  such that, for large $n,d$
for any $\bx\in\Ball^d(1)$
\begin{align}
\prob\big(\lambda(\bx)\ge d(z_0(\bx)+\eps);H(\bx)\le C_0 d\big)\le C(C_0,\eps)\, e^{-2d^2/C(C_0,\eps)}\, .
\end{align}

Applying Proposition \ref{propo:MaxValue}, Lemma \ref{lemma:BoundLip}, and Weyl's inequality,
we can work on the event 
\begin{align*}
\cG:=\Big\{ &\max_{\bx\in\Ball^d(1)}H(\bx)\le C_0d,\;
\max_{\bx\in\Ball^d(1)}\|\bD\bF|_{\Ts,\bx}\|_{\op}\le C_0\sqrt{d},\;  \Lip(H;\Ball^d(\bzero;1))\le C_0\, d,\\
&\Lip(\bD\bF;\Ball^d(1))\le C_0\, d,\; 
 \Lip(\lambda;\Ball^d(1))\le C_0\, d
\Big\}\, ,
\end{align*}
for $C_0$ a sufficiently large constant (dependent on $\xi$).
Indeed $\prob(\cG)\ge 1-C_1e^{-d/C_1}$ for a certain constant $C_1$ and large $n,d$.
Further, on the same event $\Lip(z_0;\Ball^d(1))\le C_0$ (eventually
enlarging $C_0$).

Let $N^d(\delta_d)$ be a $\delta_d$-net in $\Ball^d(1)$, with $\delta_d=1/d$.
Then for large $n,d$,
\begin{align*}
\prob\big(\exists \bx \in\Ball^d(1):
\; \lambda(\bx)\ge & d(z_0(\bx)+\eps);\cG\big)\le \prob\big(\exists \bx \in N^d(\delta_d):
\; \lambda(\bx)\ge d(z_0(\bx)+\eps) -C_0'd\delta_d ;\cG\big)\\
& \le |N^d(\delta_d)|\, \max_{\bx \in N^d(\delta_d)}\prob\big(
\; \lambda(\bx)\ge d(z_0(\bx)+\eps/2) ; \, H(\bx)\le C_0 d\big)\\
& \le \exp\Big(C_{1}d\log d-\frac{d^2}{C(C_0,\eps/2)}\Big)\, ,
\end{align*}
which proves the desired claim.
\end{proof}

\subsection{Proof of Theorem \ref{thm:Hessian}}
\label{app:HessianProof}

Recall the definition of $z_0(\bx)$ in Lemma \ref{lemma:RMT-Hessian}
and define the event $\cE(\eps)$ by
\begin{align}
\cE(\eps) := \Big\{\forall \bx\in\Ball^d(1):& \; \lambda_{\min}(\nabla^2H(\bx)|_{\Ts,\bx})\le  d(z_0(\bx)+\eps);\\
&\;\;\Lip(H;\Ball^d(1))\le C_0d;\,  \Lip(\nabla^2H;\Ball^d(1))\le C_0d\, \Big\}\, .\nonumber
\end{align}
By Lemma \ref{lemma:BoundLip} and Lemma \ref{lemma:RMT-Hessian} we can choose $C$
sufficiently large, so that, for any $\eps>0$, $\prob(\cE(\eps)) \ge 1-C(\eps)\exp(-d/C(\eps))$ for large $n,d$.

Consider the sequence $\bx^k$ produced by the Hessian descent algorithm.
 By Taylor's theorem, on the event $\Lip(\nabla^2H;\Ball^d(1))\le C_0d$, we have
 \begin{equation}\label{eq:HdecreamentHesian}
 \begin{aligned}
H(\bx^{k+1})&\le H(\bx^k)-s_k\sqrt{\delta} \, \<\bv(\bx^k),\nabla H(\bx^k)\>
+\frac{1}{2}\delta \, \<\bv(\bx^k),\nabla^2 H(\bx^k)\bv(\bx^k)\>+ C_0d\,\delta^{3/2}\\
& \le H(\bx^k) + \frac{1}{2}\delta\lambda_{\min}(\nabla^2 H(\bx^k)|_{\Ts,\bx^k})+ 2C_0d\delta^{3/2}\,,
 \end{aligned}
\end{equation}
where in the second line we used the definition of $s_k$ and $\bv(\bx^k)$. 
We will hereafter work on the event $\cE(\eps)$, with $\eps=\delta^{3/2}$. %,
%whence, defining $u_k:=H(\bx^k)/n$, $t_k=k\delta$, 
We thus have that 
\begin{align}
H(\bx^{k+1})& \le H(\bx^k) -\frac{1}{2}\delta d z_0(\bx_k) + 3C_0d\delta^{3/2}\,.\label{eq:BasicHessianRecursion}
\end{align}
For $t=k\delta\in[0,1]$ with integer $k$ define $U(t):=H(\bx^{t/\delta})/n$, and on $(k\delta,(k+1)\delta)$ define $U(t)$ by linearly interpolating the two end values.
Defining $[t]_{\delta}:=\delta \lfloor /\delta\rfloor$
and 
\begin{align}
\Phi(u,t):=-\frac{1}{2\alpha}z_*\big(\alpha; \sqrt{2\alpha \xi''(t)u}, \xi'(t)\big) \, ,
\end{align}
we can rewrite Eq.~\eqref{eq:HdecreamentHesian} as (excluding the points  $t=k\delta$)
\begin{align}
\frac{\de U}{\de t}(t)& \le \Phi\big(U([t]_{\delta}),[t]_{\delta}\big) + C'\delta^{3/2}\,,\\
U(0) & \le \frac{1}{2}\xi(0)+\delta\, .
\end{align}
where the bound in the last line holds wth high probability since $H(\bzero)/n$ 
concentrates around its expectation. 
Since $u\mapsto \Phi(u,t)$ is a Lipschitz function of $u$ (a fact that follows from 
Weyl's inequality), for all $\delta$ small enough, $U((k+1)\delta)$ is a monotone increasing function
of $U(k\delta)$, and hence it follows that $U(t)\le U_*(t)$ where
\begin{align}
\frac{\de U_*}{\de t}(t)& = \Phi\big(U_*([t]_{\delta}),[t]_{\delta}\big) + C'\delta^{3/2}\, ,\\
U_*(0) & =\frac{1}{2}\xi(0)+\delta\, .
\end{align}
Finally, letting $u(t)= u(t ;\alpha,\xi)$ denote the solution of the ODE
\eqref{eq:EnergyEvolutionHD}, and using the fact that $t\mapsto \Phi(u,t)$ is a Lipschitz
 function of $t$, we obtain $|U_*(t)-u(t)|\le C''\sqrt{\delta}$ by standard 
 discretization arguments for ODEs.

%
%***********************************************
%
\section{Analysis of the two-phase algorithm:\\
 Completing the proof of Theorem \ref{thm:MainAchievability}}
\label{sec:TwoPhase}

Our analysis of the first phase of the algorithm, will leverage the connection
with approximate message passing (AMP) algorithms and known results about the latter 
\cite{bayati2011dynamics,el2021optimization}.
Namely, we will compare the iterated $(\bm^{\ell})_{\ell\ge 0}$, $(\bh^{\ell})_{\ell\ge 0}$ in the 
first phase with those produced by the AMP iteration which we next define.
We let, for $\ell\ge 0$,
 \begin{align}
\hbh^{\ell+1}&= \frac{1}{\sqrt{n}}\bF(\hbm^{\ell})- \gamma B_\ell\hbh^{\ell-1}\, ,\label{eq:AMP1}\\
\hbm^{\ell+1}& = \frac{\gamma}{\sqrt{d}}\bD\bF(\hbm^{\ell})^{\sT}\hbh^{\ell}-
\gamma C_{\ell} \hbm^{\ell-1}-\gamma^2D_{\ell}\hbm^{\ell-1}\, .\label{eq:AMP2}
 \end{align}
 with initialization 
 \begin{align}
 \hbm^0 = \hbm^{-1}= \hbh^{0} = \bzero\, .\label{eq:AMP_init}
 \end{align}
 Further, $\gamma$ is a (non-random) constant to be fixed in the course of the proof and
 $B_{\ell}, C_{\ell}, D_{\ell}$ are given by
 \begin{align}
B_{\ell} & := \frac{1}{\sqrt{\alpha}}\xi'\big(\<\hbm^{\ell},\hbm^{\ell-1}\>\big)\, ,\label{eq:Onsager_1}\\
C_{\ell} & := \sqrt{\alpha}\, \xi'\big(\<\hbm^{\ell},\hbm^{\ell-1}\>\big)\, ,\\
D_{\ell} & :=  \xi''\Big(\<\hbm^{\ell},\hbm^{\ell-1}\>\Big) \, 
\<\hbh^{\ell},\hbh^{\ell-1}\>\, .\label{eq:Onsager_3}
 \end{align}

We next state a theorem that characterizes this AMP algorithm.
\begin{theorem}\label{thm:AMP}
Assume $\xi(0),\xi'(0)>0$, $\alpha\in (0,1)$,
and let $q_{\sRS}(\xi)$, $q_0(\alpha,\xi)$, and $u_{\sRS}(q,\alpha,\xi)$
be defined as in Section \ref{sec:AchievableEnergy}.
For $q\in (0,q_{\sRS}(\xi))$, define $\gamma_*(q,\alpha,\xi)$ via
\begin{align}
\gamma_*(q,\alpha,\xi)  = -\sqrt{\frac{\uq}{\xi(\uq)\xi'(\uq)}}\, ,\;\;\;\;
\uq:= q\wedge q_0(\alpha,\xi)\, .
\end{align}

Then the first phase of Algorithm \ref{algo:HD+AMP},
with input parameter $\gamma=\gamma_*(q,\alpha,\xi)$, outputs $\bm^L$ such that
\begin{align}
&\Big|\plim_{n,d\to\infty} \frac{1}{d}\big\|\hbm^{L}\big\|^2-q\wedge q_0(\alpha,\xi)\Big|\le C e^{-L/C}\, ,
\label{eq:AMP-Result-1}\\
&\Big|\plim_{n,d\to\infty} \frac{1}{2n}\big\|\bF(\hbm^{L})\big\|^2-u_{\sRS}(q,\alpha,\xi)
\Big|\le C e^{-L/C}\, .\label{eq:AMP-Result-2}
\end{align}
\end{theorem}
For the proof of Theorem \ref{thm:MainAchievability}, we will
choose $q\in (0,q_{\sRS}(\xi)\wedge 1)$.
However, the statement above makes sense and holds also for $q\ge 1$.

Notice that $q\mapsto V(q):=\xi(q)\xi'(q)/q$ is a strictly convex function on $(0,q_{\max})$, where $q_{\max}\in [1,\infty]$
is the maximum radius of convergence of $\xi$ (because $\xi(q)$
is real analytic with non-negative coefficients, and hence so is $(\xi(q)\xi'(q)-\xi(0)\xi'(0))/q$). 
Further, since $\xi(0),\xi'(0)>0$, we have $V(q)\uparrow\infty$ as $q\downarrow 0$. 
Therefore $V(q)$ has a unique minimum at $q_{\sRS}$.
Further $q\mapsto g(q):= q\xi'(q)/\xi(q)$ is strictly increasing with $g(0)=0$, 
$\lim_{q\to\infty}g(q)=\infty$. Hence $q_0(\alpha,\xi)$ is well defined as well.

\begin{proposition}\label{propo:EquivalenceAMP}
Consider the AMP iteration of Eqs.~\eqref{eq:AMP1}, \eqref{eq:AMP2} with initialization
given by Eq.~\eqref{eq:AMP_init}, and the gradient-based algorithm of 
Eq.~\eqref{eq:Ortho_2_maintext}.
Then, for any $\ell\ge 0$, we have 
\begin{align}
\plim_{n,d\to\infty}\|\bm^{\ell}-\hbm^{\ell}\|=0\, ,\;\;\;\;
\plim_{n,d\to\infty}\|\bh^{\ell}-\hbh^{\ell}\|=0\, .\label{eq:EquivalenceAMP}
\end{align}
As a consequence, Eqs.~\eqref{eq:AMP-Result-1} and \eqref{eq:AMP-Result-2} 
also hold for with $\hbm^L$ replaced by $\bm^L$.
\end{proposition}
The proofs of Theorem \ref{thm:AMP} and Proposition \ref{propo:EquivalenceAMP}
are given in Section \ref{sec:ProofThmAMP} and 
\ref{sec:ProofPropEquivalenceAMP}, respectively. They are based on the following state evolution characterization
of the AMP iteration of Eqs.~\eqref{eq:AMP1}, \eqref{eq:AMP2}.
We use these results 
to prove Theorem \ref{thm:MainAchievability} in Section \ref{app:TP2}.
\begin{proposition}\label{propo:SE}
Consider the AMP iteration of Eqs.~\eqref{eq:AMP1}, \eqref{eq:AMP2} with initialization
given by Eq.~\eqref{eq:AMP_init}, and Onsager coefficients in Eqs.~\eqref{eq:Onsager_1} to
\eqref{eq:Onsager_3}. Let $(M_{\ell}: \ell\ge 0)$, $(H_{\ell}: \ell\ge 0)$ 
be two independent centered Gaussian processes with covariances 
$\bQ^M = (Q^M_{k,l})_{k,l\ge 0}$, $\bQ^H = (Q^H_{k,l})_{k,l\ge 0}$  defined recursively via
\begin{align}
Q^H_{k+1,l+1}= \xi(Q^M_{k,l})\, ,\;\;\;\;\;  Q^M_{k+1,l+1} = \gamma^2 
\xi'(Q^M_{k,l})Q^H_{k,l}\, .\label{eq:StateEvolution}
\end{align}
with initialization $Q^{M}_{0,\ell}= Q^{H}_{0,\ell}= 0$ for all $\ell\ge 0$.

Then, for any $L$, and any locally Lipschitz function $\psi:\reals^{L+1}\to\reals$,
with $|\psi(\bx)|\le C(1+\|\bx\|_2^2)$, we have
\begin{align}
&\plim_{n,d\to\infty} \frac{1}{n}\sum_{i=1}^n \psi(\sqrt{n}h^0_i,\dots, \sqrt{n}h_i^L) =
\E\big\{\psi(H_0,\dots, H_L)\big\}\, ,\\
&\plim_{n,d\to\infty} \frac{1}{d}\sum_{i=1}^d \psi(\sqrt{d}m^0_i,\dots, \sqrt{d}m_i^L) =
\E\big\{\psi(M_0,\dots, M_L)\big\}\, .
\end{align}
\end{proposition}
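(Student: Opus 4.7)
The plan is to establish state evolution via the standard conditioning technique for AMP, adapted here to the setting of mixed spin-glass Hamiltonians where $\bF$ and $\bD\bF$ share the same underlying Gaussian tensor coefficients. I proceed by induction on $L$.

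\textbf{Preliminary reduction.} Using the representation \eqref{eq:Representation}, both $\bF(\bm)$ and $\bD\bF(\bm)$ can be written as polynomial functions of the i.i.d.\ Gaussian tensors $\{\bG^{(k)}\}_{k\ge 0}$: each $F_i(\bm)$ is $\sum_k \sqrt{\xi_k}\langle \bG_i^{(k)}, \bm^{\otimes k}\rangle$, and the $a$-th component of $\nabla F_i(\bm)$ is the analogous contraction with one index fixed to $a$. The iteration is thus polynomial in a canonical family of i.i.d.\ Gaussians, which is the natural setting for AMP-type conditioning arguments.

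\textbf{Inductive step by conditioning.} Assume the claim holds through iteration $\ell$. Let $\mathcal{F}_\ell$ be the $\sigma$-algebra generated by $(\bh^s, \bm^s)_{s\le \ell}$. The conditional law of each $\bG^{(k)}$ given $\mathcal{F}_\ell$ decomposes as the orthogonal projection onto the finite-dimensional subspace spanned by the past iterates, plus an independent Gaussian residual in the orthogonal complement. Inserting this decomposition into $\tfrac{1}{\sqrt{n}}\bF(\bm^\ell)$ and $\tfrac{\gamma}{\sqrt{d}}\bD\bF(\bm^\ell)^{\sT}\bh^\ell$, the projection contribution is, by a generalized Gaussian integration-by-parts (Stein's lemma), exactly cancelled by the Onsager terms $\gamma B_\ell \bh^{\ell-1}$, $\gamma C_\ell \bm^{\ell-1}$, and $\gamma^2 D_\ell \bm^{\ell-1}$. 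This is what forces the specific coefficients in \eqref{eq:Onsager_1}--\eqref{eq:Onsager_3}: the factors $1/\sqrt{\alpha}$ and $\sqrt{\alpha}$ track the different normalizations in $\reals^n$ versus $\reals^d$, while the extra factor $\langle \bh^\ell, \bh^{\ell-1}\rangle$ in $D_\ell$ arises from the derivative structure of $\bD\bF$.

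\textbf{Covariance identification.} Once the new iterate is identified as a fresh (conditional) Gaussian, its covariances with past iterates follow from $\E[F_i(\bm^k)F_i(\bm^l)] = \xi(\langle \bm^k,\bm^l\rangle)$ and the derivative identity $\E[\partial_a F_i(\bm^k)\, \partial_b F_i(\bm^l)] = \xi''(\langle \bm^k,\bm^l\rangle) m^l_a m^k_b + \xi'(\langle \bm^k,\bm^l\rangle)\delta_{ab}$. The $\xi''$ cross-term contributes $O(1/d)$ after the $1/\sqrt{d}$ normalization and is subleading; the surviving piece, combined with the inductive identifications $\langle \bm^k,\bm^l\rangle \to Q^M_{k,l}$ and $\langle \bh^k,\bh^l\rangle \to Q^H_{k,l}$, yields exactly the recursion \eqref{eq:StateEvolution}. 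Convergence for test functions $\psi$ then follows by Gaussian concentration (Borell--TIS applied at each iteration) together with a standard truncation argument handling the quadratic growth condition on $\psi$.

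\textbf{Main obstacle.} The chief difficulty is that $\bF$ and $\bD\bF$ are not independent: they are generated by the same Gaussian tensors. The conditioning must therefore be performed simultaneously on the joint family $(\bF(\bm^s), \bD\bF(\bm^s))_{s\le\ell}$, treated as jointly Gaussian with a covariance mixing $\xi$, $\xi'$, and $\xi''$. Careful bookkeeping of which linear combinations of each $\bG^{(k)}$ have already been revealed, and then matching the resulting projection contributions across all tensor orders $k$ to the prescribed Onsager corrections, is the most delicate part of the argument.
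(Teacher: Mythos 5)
Your route is genuinely different from the paper's. The paper does not run a conditioning argument at all: it obtains Proposition~\ref{propo:SE} as a corollary of the general state-evolution theorem for symmetric-tensor AMP of \cite{el2021optimization} (restated as Proposition~\ref{propo:SE_prev}), by embedding the asymmetric iteration \eqref{eq:AMP1}--\eqref{eq:AMP2} into a symmetric one on $N=n+d$ coordinates with $D=3$-dimensional iterates, encoding the row/column asymmetry through an auxiliary vector $\bv$, and extracting the Jacobian term $\bD\bF(\bm^{\ell})^{\sT}\bh^{\ell}$ via an $\eps$-rescaling and differencing trick; the Onsager coefficients \eqref{eq:Onsager_1}--\eqref{eq:Onsager_3} and the recursion \eqref{eq:StateEvolution} are then read off by tracking this chain of reductions. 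The paper's approach outsources all of the conditioning machinery to a result proved elsewhere; yours, if completed, would give a self-contained proof and a more transparent account of where each Onsager coefficient comes from.

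As written, however, your argument has a genuine gap at its central step. The claim that the projection contribution is ``exactly cancelled'' by the Onsager terms via Stein's lemma is not correct as stated: in a conditioning proof the cancellation is only asymptotic, and establishing it is where essentially all the work lies. Conditionally on $\mathcal{F}_{\ell}$, the revealed information consists of linear functionals of the tensors $\bG^{(k)}$ whose coefficient tensors (built from $(\bm^{s})^{\otimes k}$ and the corresponding gradient contractions, $s\le \ell$) are themselves random and only measurable with respect to earlier iterates. The conditional mean of $\tfrac{1}{\sqrt{n}}\bF(\bm^{\ell})$ is therefore a combination of \emph{all} past iterates $\bh^{0},\dots,\bh^{\ell}$, with coefficients obtained by inverting a random Gram matrix; one must prove that all coefficients except the one multiplying $\bh^{\ell-1}$ vanish in probability and that the surviving one converges to $\gamma B_{\ell}$, which requires the full inductive hypothesis on the overlaps together with control of the conditional perturbation terms. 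The $\bm$-update is harder still, since the two corrections $\gamma C_{\ell}\bm^{\ell-1}$ and $\gamma^{2}D_{\ell}\bm^{\ell-1}$ must be matched to two distinct projection contributions coming from $\bF$ and $\bD\bF$, which share the same tensors --- the very difficulty you flag at the end but do not resolve. Your covariance computation and the observation that the $\xi''$ cross term is $O(1/d)$ are correct, and the strategy is viable in principle, but the proposal stops exactly where the proof begins.
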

We prove Proposition \ref{propo:SE} in Section \ref{sec:ProofSE}.
%
%***************
%
\subsection{Proof of Theorem \ref{thm:AMP}}
\label{sec:ProofThmAMP}
By construction, for any $k\ge 0$, we have that $Q^{M}_{\ell,\ell+k} =Q^{H}_{\ell,\ell+k} =q_{\ell}$,
where the sequence $(q_{\ell})_{\ell\ge 0}$ satisfies
\begin{align}
q_{\ell+1} = \gamma^2\, \xi(q_{\ell})\xi'(q_{\ell})\, ,\;\;\;\;\; q_{0} = 0\, .\label{eq:Q_Rec}
\end{align}
Fixed points of this map are solutions of the equation 
\begin{align}
\frac{1}{\gamma^2} = V(q)\, ,\;\;\;\; V(q):=\frac{\xi(q)\xi'(q)}{q}\, .\label{eq:FixedPointQ}
\end{align}
We have $\xi(q)\uparrow\infty$ as $q\uparrow\infty$ unless $\xi$ is linear, in which
case $V(q)$ is monotone decreasing, whence
 $q_{\sRS}<\infty$ if and only if $\xi$ is nonlinear.
In the nonlinear case, this implies that the fixed point equation \eqref{eq:FixedPointQ} has
 two solutions
 $q_1(\gamma)<q_{\sRS}<q_{2}(\gamma)$ if $\gamma^2<1/V_*:=1/\min_{q\in(0,q_{\max})} V(q)$
 and no solution for $\gamma^2>1/V_*$.
 In the linear case, we have only one solution $q_1(\gamma)<q_{\sRS}=\infty$.

 Since $q_1(\gamma)$ decreases continuously from $q_{\sRS}$ to $0$
 as $\gamma$ decreases from $1/\sqrt{V_*}$ to $0$, for any $q\in (0,q_{\sRS})$
 we can select $\gamma = \gamma_0(q,\xi)$, where
 \begin{align}
 \gamma_0(q,\xi)  := -\sqrt{\frac{q}{\xi(q)\xi'(q)}}\, 
 \end{align}
 and this yields that the smallest fixed point of \eqref{eq:Q_Rec} coincides with $q$.
 Further, $V'(q)<0$ (again by strict convexity). Letting 
 $f(r):=\gamma^2\, \xi(r)\xi'(r)$, this implies  $f'(q)<1$ strictly. Since $f$ is 
 convex non-decreasing, we conclude that there exists a constant $C>0$ 
 (depending on $\xi,q$) such that, if $\gamma = \gamma_0(q,\xi)$, then
 \begin{align}
 q-C\, e^{-\ell/C}\le q_{\ell}\le q\;\;\;\; \forall \ell\, .
 \end{align}
 Notice that this implies that, for all $\ell,k\ge 0$
 (the constant $C$ will change from line to line):
 \begin{align}
 \big|Q^M_{\ell,\ell+k}-q\big|\le Ce^{-\ell/C}\, , \;\;\;\;\; 
 \big|Q^H_{\ell,\ell+k}-\xi(q)\big|\le Ce^{-\ell/C}\, .
 \end{align}

 By Proposition \ref{propo:SE}, this implies that,  for all $\ell,k\ge 0$
 \begin{align}
\plim_{n,d\to\infty} \big|\|\hbm^{\ell}\|^2_2-q\big|\le Ce^{-\ell/C} 
 &&\plim_{n,d\to\infty} \big\|\hbm^{\ell}-\hbm^{\ell+k}\big\|_2^2\le Ce^{-\ell/C}\, ,\label{eq:ConvSE1}\\
\plim_{n,d\to\infty} \big|\|\hbh^{\ell}\|^2_2-\xi(q)\big|\le Ce^{-\ell/C}  
&& \plim_{n,d\to\infty} \big\|\hbh^{\ell}-\hbh^{\ell+k}\big|_2^2\le Ce^{-\ell/C}\, ,\label{eq:ConvSE2}
 \end{align}
 and therefore,
 \begin{align}
 \plim_{n,d\to\infty} \Big|B_{\ell} -\frac{1}{\sqrt{\alpha}}\xi'(q)\Big| &\le Ce^{-\ell/C}\, ,\\
 \plim_{n,d\to\infty} \big|C_{\ell} -\sqrt{\alpha}\xi'(q)\big| &\le Ce^{-\ell/C}\, ,\\
 \plim_{n,d\to\infty} \big|D_{\ell} -\xi(q)\xi'(q)\big| &\le Ce^{-\ell/C}\, .
 \end{align}
 Therefore, using Eq.~\eqref{eq:AMP1}, we get
 \begin{align*}
 \plim_{n,d\to\infty}
 \frac{1}{n}\|\bF(\hbm^{\ell})\|^2
 & =\plim_{n,d\to\infty}
\|\hbh^{\ell+1} + \gamma B_\ell\hbh^{\ell-1}\|^2\\
 & = \plim_{n,d\to\infty}\big(1+\gamma_0(q,\xi)B_{\ell}\big)^2\cdot \|\hbh^{\ell}\|^2_2 +O(e^{-\ell/C})\\
 & = \Big(1+\gamma_0(q,\xi)\frac{1}{\sqrt{\alpha}}\xi'(q)\Big)^2\xi(q)+ O(e^{-\ell/C})\\
 & = \Big(\sqrt{\xi(q)}-\sqrt{\frac{1}{\alpha}q\xi'(q)}\Big)^2+ O(e^{-\ell/C})\, .
 \end{align*}
The claim of the theorem now follows by choosing
 \begin{align}
 \gamma = \gamma_*(q,\alpha,\xi)  = \gamma_0(q\wedge q_0(\alpha),\xi)\, .
 \end{align}

While not needed for the proof of the theorem we also note that our analysis implies that 
\begin{align}
\lim_{n,d\to\infty}\frac{1}{n}\big\|\Proj_{\Ts,\hbm^L}\nabla H(\hbm^L)\big\|_2 \le 
C\, e^{-L/C}\, .\label{eq:HClaimFP}
\end{align}
(Recall that the typical scale of $\|\nabla H(\hbm)\|_2$ at a non-random point
$\hbm$ is $\Theta(n)$.)
To prove this claim, recall that $\nabla H(\hbm^{\ell}) = \bD\bF(\hbm^{\ell})^{\sT}\bF(\hbm^{\ell})$
and therefore, using Eqs.~\eqref{eq:AMP1} and \eqref{eq:AMP2}
\begin{align*}
\frac{1}{n}\nabla H(\hbm^\ell) &=   
\frac{1}{\sqrt{n}}\bD\bF(\hbm^{\ell})^{\sT}\big(\hbh^{\ell+1} + \gamma B_\ell\hbh^{\ell-1}\big)\\
& = \frac{1}{\sqrt{n}}(1+\gamma B_\ell)\bD\bF(\hbm^{\ell})^{\sT}\hbh^{\ell}+ \be^{\ell}_1\\
& = \frac{1}{\gamma\sqrt{\alpha}}(1+\gamma B_\ell)
\big(\hbm^{\ell+1}+\gamma C_{\ell} \hbm^{\ell-1}+\gamma^2D_{\ell}\hbm^{\ell-1}\big)+ \be^{\ell}_1\\
 &= \frac{1}{\gamma\sqrt{\alpha}}(1+\gamma B_\ell)
\big(1+\gamma C_{\ell}+\gamma^2D_{\ell}\big)\hbm^{\ell}+ \be^{\ell}_1+\be^{\ell}_2\, ,
\end{align*}
where, by Eqs.~\eqref{eq:ConvSE1}, \eqref{eq:ConvSE2}, we have 
\begin{align*}
\plim_{n,d\to\infty}\Big\{\|\be^{\ell}_1\|_2+\|\be^{\ell}_2\|_2 \Big\}\le C\, \exp(-\ell/C)\, .
\end{align*}
This immediately implies Eq.~\eqref{eq:HClaimFP}.

\subsection{Proof of Proposition \ref{propo:EquivalenceAMP}}
\label{sec:ProofPropEquivalenceAMP}

As already noted in the proof of Theorem \ref{thm:AMP}, we have,
for all $k$, $\ell\ge 0$, 
$Q^{M}_{\ell,\ell+k} =Q^{H}_{\ell,\ell+k} =q_{\ell}$,
where the sequence $(q_{\ell})_{\ell\ge 0}$ satisfies Eq.~\eqref{eq:Q_Rec}.
By Proposition \ref{propo:SE}, this implies that for $\ell\ge 0$
\begin{align}
\plim_{n,d\to\infty}\<\hbm^{\ell+1}-\hbm^{\ell}, \hbm^{j}\> =0\, ,\;\;\;\;\;
\plim_{n,d\to\infty}\<\hbh^{\ell+1}-\hbh^{\ell}, \hbh^{j}\> =0\, \;\;\forall \; j\le \ell.
\label{eq:OrthogonalityAMP}
\end{align}

We proceed by induction over $\ell$, the induction hypothesis being 
given by Eq.~\eqref{eq:EquivalenceAMP}. This holds by construction for the base case $\ell=0$. 
We will therefore assume that 
\begin{align}
\plim_{n,d\to\infty}\|\bm^{j}-\hbm^{j}\|=0\, ,\;\;\;\;
\plim_{n,d\to\infty}\|\bh^{j}-\hbh^{j}\|=0\, \;\;\; \forall j\le \ell,
\end{align}
and prove this holds for $j=\ell+1$. To this end we define $\obm^{\ell+1}$,
$\obh^{\ell+1}$, as follows
 \begin{align}
\obh^{\ell+1}= \hbh^{\ell}+\frac{1}{\sqrt{n}}\proj^{\perp}_{\hbh,\ell}\bF(\hbm^{\ell}) ,\;\;\;\;
\obm^{\ell+1}= \hbm^{\ell}-\frac{\gamma}{\sqrt{d}}
\proj^{\perp}_{\hbm,\ell}\bD\bF(\hbm^{\ell})^{\sT}\hbh^{\ell}\, .\label{eq:Ortho-Int}
 \end{align}
 namely, we apply the update \eqref{eq:Ortho_2_maintext} to the AMP iterates.
 We note that 
 \begin{align}
&\plim_{n,d\to\infty}\frac{1}{\sqrt{n}}\|\bF(\hbm^{\ell})-\bF(\bm^{\ell})\| = 0\, ,
\label{eq:Fcont}\\
&\plim_{n,d\to\infty}\frac{1}{\sqrt{d}} \|\bD\bF(\hbm^{\ell})-\bD\bF(\hbm^{\ell})\|_{\op} =0\, ,
\label{eq:DFcont}
\end{align}
by the Lipschitz property of $\bF$ and $\bD\bF$ (cf. Lemma \ref{lemma:BoundLip}) 
and the induction hypothesis. Further, denoting by $\bH_{\ell}\in\reals^{n\times \ell}$
the matrix with columns $\hbh^1,\dots, \hbh^{\ell}$ and by 
$\bM_{\ell}\in\reals^{n\times \ell}$
the matrix with columns $\hbm^1,\dots, \hbm^{\ell}$. By Proposition \ref{propo:SE},
we have
\begin{align}
\plim_{n,d\to\infty}\lambda_{\min}\Big(\bH_{\ell}^{\sT}\bH_{\ell}\Big)&=
\lambda_{\min}\big(\bQ^H_{\le \ell}\big)>0\, ,\\
\plim_{n,d\to\infty}\lambda_{\min}\Big(\bM_{\ell}^{\sT}\bM_{\ell}\Big)&=
\lambda_{\min}\big(\bQ^M_{\le \ell}\big)>0\, ,
\end{align}
where the last inequality holds because $Q^{M}_{j,k} =Q^{H}_{j,k} =q_{j\wedge k}$,
and Eq.~\eqref{eq:Q_Rec} implies that $q_{j+1}-q_j>0$ strictly for all $k$. 
Similarly $\lambda_{\max}(\bH_{\ell}^{\sT}\bH_{\ell})$,
$\lambda_{\max}(\bM_{\ell}^{\sT}\bM_{\ell})$ are bounded uniformly as $n,d\to\infty$. 
These facts, together with the induction hypothesis, imply 
\begin{align}
&\plim_{n,d\to\infty}\big\|\proj^{\perp}_{\hbh,\ell}-\proj^{\perp}_{\bh,\ell}\big\|_{\op} =0\, ,\\
&\plim_{n,d\to\infty}\big\|\proj^{\perp}_{\hbm,\ell}-\proj^{\perp}_{\bm,\ell}\big\|_{\op} =0\, .
\end{align}
Together with Eqs.~\eqref{eq:Fcont}, \eqref{eq:DFcont}, these imply 
\begin{align}
\plim_{n,d\to\infty}\|\bm^{\ell+1}-\obm^{\ell+1}\|=0\, ,\;\;\;\;
\plim_{n,d\to\infty}\|\bh^{\ell+1}-\obh^{\ell+1}\|=0\, .
\end{align}

We are left with the task of bounding $\|\hbm^{\ell+1}-\obm^{\ell+1}\|$,
$\|\hbh^{\ell+1}-\obh^{\ell+1}\|$. Consider, to be definite the latter,
Comparing Eq.~\eqref{eq:AMP1} and Eq.~\eqref{eq:Ortho-Int}, we obtain
\begin{align*}
\big\|\hbh^{\ell+1}-\obh^{\ell+1}\big\|^2= \big\|\proj_{\hbh,\ell}\hbh^{\ell+1}-
\proj_{\hbh,\ell}\obh^{\ell+1}\big\|^2\\
& = \big\|\proj_{\hbh,\ell}(\hbh^{\ell+1}-\hbh^{\ell})\big\|^2\,.
\end{align*}
Hence using \eqref{eq:OrthogonalityAMP} and, once more, the fact that
$\lambda_{\min}(\bH_{\ell}^{\sT}\bH_{\ell})$, $\lambda_{\max}(\bH_{\ell}^{\sT}\bH_{\ell})$ 
are bounded and bounded away from zero, we get
\begin{align*}
\plim_{n\to\infty}\big\|\hbh^{\ell+1}-\obh^{\ell+1}\big\|=0\, .
\end{align*}
The same argument applies to $\|\hbm^{\ell+1}-\obm^{\ell+1}\|$. 
%
%*******************************************************************
%
\subsection{Proof of Proposition \ref{propo:SE}}
\label{sec:ProofSE}
This statement is a direct consequence of \cite[Theorem 6]{el2021optimization}.
We begin by restating the latter in a form that is adapted to our use here. 
For each $k\ge 1$, let $\bW^{(k)}\in (\reals^N)^{\otimes k}$ be an independent, Gaussian 
symmetric tensor. More precisely, if $(\bG^{(k)})_{k\ge 0}$ is a collection of tensors with i.i.d. 
$\normal(0,1)$ entries, then 
\begin{align}
\bW^{(k)} := \frac{1}{k!}\sqrt{\frac{k}{N^{k-1}}}\sum_{\pi \in {\mathfrak S}_k}\bG^{(k)}_{\pi}\, ,
\end{align}
where the sum is over permutations of $k$ objects, and $\bG^{(k)}_{\pi}$ is obtained 
by permuting the indices of  $\bG^{(k)}$. 
Let $\nu(t)= \sum_{k\ge 1} c_k^2 t^k$ for some coefficients $c_k$, 
$D\ge 1$ be an integer, $\Phi_{\ell}:\reals^{D}\times \reals\to \reals^D$
be Lipschitz functions, and denote by
$\bD \Phi_{\ell}(z;v)\in\reals^{D\times D}$ the Jacobian of $\Phi_{\ell}$ with respect
to its first argument (which exists in weak sense).

Define
the sequence of random variables $V$, $(Z_{\ell})_{\ell\ge 0}$ by letting $(Z_0,V)\sim p_{0}$
be an arbitrary random vector taking values in $\reals^D\times\reals$, and $(Z_{\ell})_{\ell\ge 1}$ 
be a centered Gaussian process, again with $Z_{\ell}$ again taking values in $\reals^D$
and covariance $\bQ_{jk}:=\E[Z_jZ_k^{\sT}]$ determined recursively via
\begin{align}
\bQ_{j+1,k+1} = \nu\Big(\E\big\{\Phi_{j}(Z_j;V)\Phi_{k}(Z_k;V)^{\sT}\big\}\Big)\, .
\end{align}
(Throughout this section we will not use boldface for $D$-dimensional vectors.)

For any dimension $N\in\naturals$,
consider iterates $\bz^\ell\in\reals^{N\times D}$ given by
\begin{align}
\bz^{\ell+1} &= \sum_{k=1}^{\infty}c_k\bW^{(k)}\{\Phi_{\ell}(\bz^\ell;\bv)\}
- \Phi_{\ell-1}(\bz^{\ell-1};\bv)\sB_{\ell}\, ,\\
\sB_{\ell} &:= \nu'\Big(\E[\Phi_{\ell}(Z_{\ell};V)\Phi_{\ell-1}(Z_{\ell-1};V)^{\sT}]\Big)
\odot \E[\bD \Phi_{\ell}(Z_{\ell};V)]\, ,
\end{align}
where $\bv\in\reals^N$ and $\bz^0\in\reals^{N\times D}$ are non-random,
and $\nu'$ is applied entrywise, and $\odot$ denotes entrywise multiplication.
Further $\Phi_{\ell}(\bz^\ell;\bv)\in\reals^{N\times D}$ denotes the matrix whose 
$i$-th row is $\Phi_{\ell}(\bz^\ell;\bv)_i = \Phi_{\ell}(\bz_i^\ell;v_i)$.
Further, for $\bx\in\reals^{N\times D}$, $\bx= (x_1,\dots,x_N)^{\sT}$,
$x_i\in\reals^D$, we denote by $\bW^{(k)}\{\bx\}\in\reals^{N\times D}$ the matrix  whose 
$i$-th row is 
\begin{align}
\bW^{(k)}\{\bx\}_i = \sum_{j_1,\dots,j_{k-1}=1}^N
W^{(k)}_{ij_1\dots j_{k-1}} \bx_{j_1}\odot \cdots \odot \bx_{j_{k-1}}\, .
\end{align}
With these definitions, we have the following.
\begin{proposition}[Theorem 6 in \cite{el2021optimization}]\label{propo:SE_prev}
Assume that the empirical distribution of $\bv,\bz^0$, namely $\hp_{\bv,\bz^0}:=
N^{-1}\sum_{i\le N}\delta_{v_i,z^0_i}$
converges in $W_2$ distance to $p_{0}$. Then, for any $L\ge 0$
and any function $\psi:\reals^{D(L+1)+1}\to\reals$ that is locally Lipschitz and at most quadratic growth
(i.e. $|\psi(\bx)|\le C(1+\|\bx\|_2^2)$), we have
\begin{align}
\plim_{N\to\infty}\frac{1}{N}\sum_{i=1}^N\psi(\bz^0_i,\dots,\bz^L_i;v_i) &=
\E\Big\{\psi(Z_0,\dots,Z_{L};V)\Big\}
\end{align}
The same holds if $\sB_{\ell}$ is replaced by its empirical version
\begin{align}
\hat{\sB}_{\ell} &:= \nu'\Big(\frac{1}{N}\sum_{i=1}^N\Phi_{\ell}(z_i^{\ell};v_i)\Phi_{\ell-1}(z_i^{\ell-1};v_i)^{\sT}]\Big)
\odot \frac{1}{N}\sum_{i=1}^N\bD \Phi_{\ell}(z_i^{\ell};v_i)\, .
\end{align}
\end{proposition}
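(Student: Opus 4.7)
The plan is to follow the Bolthausen-style conditioning approach, adapted to symmetric Gaussian tensors as developed in the cited reference and earlier works on tensor AMP. First, I would reduce to the case where $\nu$ is a polynomial of finite degree $K$, i.e. $c_k=0$ for $k>K$, by a truncation argument: the tail contribution $\sum_{k>K}c_k\bW^{(k)}\{\Phi_\ell\}$ is $o_P(1)$ in the empirical Wasserstein sense, uniformly in $N$, because $\nu(1)<\infty$ and the iterates $\bz^\ell$ remain of order one by induction. Once this is done, the iteration involves only finitely many independent symmetric Gaussian tensors, and the state evolution recursion reduces to a finite polynomial identity.

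The core argument is induction on the iteration index $L$, with inductive hypothesis that $N^{-1}\sum_i \psi(\bz^0_i,\dots,\bz^L_i;v_i)\to \E\psi(Z_0,\dots,Z_L;V)$ for all pseudo-Lipschitz $\psi$. For the step $L\to L+1$, I would condition on the $\sigma$-algebra $\cF_L$ generated by $(\bv,\bz^0,\dots,\bz^L)$. By Gaussian regression, for each order $k$ the conditional law of $\bW^{(k)}$ given the linear observations $\{\bW^{(k)}\{\Phi_j(\bz^j;\bv)\}\}_{j\le L-1}$ decomposes into a deterministic ``predictable'' piece, which is a linear combination of the past observations with coefficients built from the empirical Gram matrix of the $\Phi_j(\bz^j;\bv)$'s, plus an independent copy of a symmetric Gaussian tensor acting orthogonally. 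Substituting this decomposition into $\bW^{(k)}\{\Phi_L(\bz^L;\bv)\}$ yields two contributions: a linear combination of $\{\Phi_j(\bz^{j-1};\bv)\}_{j\le L-1}$ plus a ``derivative'' term of the form $\Phi_{L-1}(\bz^{L-1};\bv)\cdot(\text{something computable})$ arising from the self-interaction via Stein's lemma, and an independent conditionally Gaussian matrix. Summing over $k$ with weights $c_k$, the former is \emph{exactly} cancelled by the Onsager subtraction $\Phi_{L-1}(\bz^{L-1};\bv)\sB_L$, since the coefficient produced by the regression is the empirical analogue of $\nu'(\E[\Phi_L\Phi_{L-1}^{\sT}])\odot\E[\bD\Phi_L]$, which by the inductive hypothesis converges to $\sB_L$.

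What remains is a conditionally centered Gaussian $N\times D$ matrix with row covariance computable from the empirical second moments of the past iterates; by the inductive hypothesis these converge to $\nu(\E[\Phi_L(Z_L;V)\Phi_k(Z_k;V)^{\sT}])$, which is precisely $\bQ_{L+1,k+1}$. A standard finite-dimensional CLT plus Lipschitz test-function approximation (truncation combined with Gaussian tail bounds on $\|\bz^{L+1}\|_2/\sqrt N$ uniform in $N$, which follow from boundedness of the covariance and Lipschitz continuity of $\Phi$) then upgrades this to convergence for all pseudo-Lipschitz $\psi$. To handle the empirical Onsager version $\hat\sB_\ell$, observe that $\hat\sB_\ell-\sB_\ell = o_P(1)$ entrywise by the same inductive hypothesis, and propagating this $o_P(1)$ perturbation through one AMP step is harmless because $\Phi_\ell$ is Lipschitz and the iterates have bounded second moments.

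The main obstacle is executing the conditioning step for tensors of order $k\ge 3$. Unlike the matrix case, the space of symmetric $k$-tensors does not admit an elementary block decomposition under restriction to a subspace; one must use the polarization identity and a careful orthogonal decomposition of the symmetric tensor power to verify that the ``remainder'' tensor is genuinely independent of $\cF_L$ with the correct rescaled covariance, and that cross-terms between tensors of different orders $k\ne k'$ vanish in the $N\to\infty$ limit (they do, because they are sums over products of independent centered Gaussians with a $1/N^{(k+k'-2)/2}$ normalization that is strictly sub-critical when $k\ne k'$). This is precisely the technical content carried out in Theorem 6 of the cited reference, whose proof I would follow verbatim rather than redo from scratch.
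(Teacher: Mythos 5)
Your route is genuinely different from the paper's. The paper does not re-prove state evolution at all: its proof of Proposition \ref{propo:SE_prev} is a pure reduction to the statement of Theorem 6 in \cite{el2021optimization}, checking three discrepancies — (i) the cited theorem has $D=1$ but allows memory, so the $D$-dimensional, memory-free iteration is recovered by grouping $D$ consecutive scalar iterates into one block; (ii) the covariate $\bv$ is absorbed by appending it as an extra column of $\bz^0$; (iii) the empirical Onsager coefficients $\hsB_\ell$ are handled by an induction showing $\hsB_\ell=\sB_\ell+o_P(1)$ and that the perturbed iterates differ from the original ones by $o_P(1)$. You instead sketch the internal proof of the cited theorem (truncation to finite degree, Bolthausen-style Gaussian conditioning on the past observations of each $\bW^{(k)}$, Onsager cancellation, induction over iterations), and then defer the genuinely hard step — the orthogonal decomposition of symmetric Gaussian tensors under conditioning — to the reference ``verbatim.'' That is a legitimate alternative presentation, and your treatment of $\hsB_\ell$ coincides with the paper's point (iii); but note that your sketch silently assumes the conditioning argument extends verbatim to $D>1$ columns and to $\bv$-dependent nonlinearities, which is exactly the bookkeeping the paper makes explicit, and which is the only content the paper actually needs to supply beyond the citation. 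What the paper's route buys is brevity and a clean audit trail of what is borrowed; what yours buys is a self-contained picture of why state evolution holds, at the cost of redoing (or hand-waving) work already done in \cite{el2021optimization}.

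Two imprecisions in your sketch are worth flagging. First, the cross-terms between tensors of different orders $k\neq k'$ are not controlled by a ``sub-critical normalization'' count: the tensors $\bW^{(k)}$ are mutually independent, so after conditioning the contributions of different orders are handled separately and their cross-covariances vanish identically, not merely asymptotically. Second, the truncation step needs uniform (in $N$) operator-norm control of the tail $\sum_{k>K}c_k\bW^{(k)}$ together with a priori bounds on the iterates; convergence of $\nu$ at argument $1$ alone does not suffice, and you should either impose the same summability the cited theorem uses or state the required tail bound explicitly. Neither point is fatal, but both would need to be tightened if you were not ultimately leaning on the cited proof.
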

\begin{proof}
As mentioned, this is an adaptation  from \cite[Theorem 6]{el2021optimization},
with three points of difference. However, each of these points can be reduced
to the version stated in \cite{el2021optimization}.
\begin{enumerate}
\item In \cite{el2021optimization} it is assumed that $D=1$,
but memory across iterations is allowed. Namely, iterates $\bx^\ell\in\reals^N$
are updated via
\begin{align}
\bx^{\ell+1} &= \sum_{k=1}^{\infty}c_k\bW^{(k)}\{f_{\ell}(\bx^0,\dots,\bx^{\ell})\}
- f_{\ell-1}(\bx^0,\dots,\bx^{\ell})\tilde\sB_{\ell}\, ,\, .
\end{align}
It is clear that this case also covers the seemingly more general one
with iterates $\bz^{\ell}\in\reals^{N\times D}$ by grouping the 
$\bz^{\ell} = (\bx^{\ell D},\bx^{\ell D+1},\cdots, \bx^{\ell D+D-1})$.
Indeed, it covers a more general form of the present statement in
which $\Phi_{\ell}$ depends on $\bz^0,\dots,\bz^{\ell}$ rather than just on $\bz^{\ell}$.
\item The statement in \cite{el2021optimization} does not allow
for dependency on the vector $\bv$. However, the case with $\bv$
can be reduced to the one without $\bv$ simply by encoding $\bv$ as 
an additional column of $\bz^0$, and increasing $D$ to $D+1$.
Since, as mentioned in the previous point, the result of \cite{el2021optimization} 
implies the case in which both $D$ is arbitrary, and $\Phi_{\ell}$
depends on $\bz^0$, \dots, $\bz^{\ell}$, dependence on $\bv$ can be captured.
\item Finally, \cite{el2021optimization} uses the deterministic version of coefficient $\sB_{\ell}$.
One can show by induction over $\ell$ that: $(i)$ $\hsB_{\ell} = \sB_{\ell}+o_P(1)$; 
$(ii)$~Denoting by $\hbz^{\ell}$ the iterates that result from using the empirical version, we have
$\|\bz^{\ell}-\hbz^{\ell}\|_2=o_P(1)$. 
\end{enumerate}
Normalizations are different but equivalent to the ones of  \cite{el2021optimization}.
\end{proof}

We next set $N=n+d$ and
\begin{align}
\bz^{\ell}= \left(
\begin{matrix}
\bx^{\ell}\\
 \by^{\ell}
 \end{matrix}\right)\, ,
 \;\;\; \bx^{\ell}\in\reals^{n\times D},\;\;\;  \by^{\ell}\in\reals^{d\times D}\, .
 \end{align}
Further, we set $v_i = 0$ for $i\le n$, $v_i=1$ for $i>n$,  and
write $J_{\ell}(x)=\Phi_{\ell}(x;0)$, $H_{\ell}(x)=\Phi_{\ell}(x;1)$,
and write $\oW^{(k,q)}_{i;i_1,\dots,i_q,j_1,\dots j_{k-q}} = 
W^{(k)}_{i,i_1,\dots,i_q,n-1+j_1,\dots n-1+j_{k-q}}$, for $q\le k-1$. 
In what follows indices denoted by $i,i_1,i_2,\dots$ run over $[n]$,
and indices denoted by $j,i_1,i_2,\dots$ run over $[d]$. 
We can then rewrite the iteration as
\begin{align}
x_i^{\ell+1} = &\sum_{k=1}^{\infty}c_k\sum_{q=0}^{k-1}
\binom{k-1}{q}\sum_{i_1\dots i_{q}\le n}  \sum_{j_{q+1}\dots j_{k-1}\le n}
\oW^{(k,q)}_{i;i_1,\dots,i_q,j_1,\dots j_{k-1-q}}\cdot\\
&\phantom{AAAA}\cdot J_{\ell}(x^{\ell}_{i_1})\odot \cdots \odot
J_{\ell}(x^{\ell}_{i_q})\odot H_{\ell}(y^{\ell}_{j_1})\odot \cdots \odot
H_{\ell}(y^{\ell}_{j_{k-q}}) - \sB_{\ell}^{\sT}F_{\ell-1}(x^{\ell-1}_{i})\, ,\nonumber\\
y_i^{\ell+1} = & \sum_{k=1}^{\infty}c_k\sum_{q=0}^{k-1}
\binom{k-1}{q}\sum_{i_1\dots i_{q}\le n}  \sum_{j_{q+1}\dots j_{k-1}\le n}
\oW^{(k,q)}_{i_1;i_2,\dots,i_q,j,j_1\dots j_{k-1-q}}\cdot\\
&\phantom{AAAA}\cdot J_{\ell}(x^{\ell}_{i_1})\odot \cdots \odot
J_{\ell}(x^{\ell}_{i_q})\odot H_{\ell}(y_{j_1})\odot \cdots \odot
H_{\ell}(y^{\ell}_{j_{k-q}}) - \sB_{\ell}^{\sT}H_{\ell-1}(y^{\ell-1}_{j})\,  .\nonumber
\end{align}
We next set $D=3$ and
\begin{align}
J_{\ell}(x=(x_1,x_2,x_3)) &= \Big(
0,\;\;
\of_{\ell}(x_1),\;\; 0\Big)\, ,\\
H_{\ell}(y=(y_1,y_2,y_3)) &= 
\Big(\oh_{\ell}(y_2-y_3),\;\;
\og_{\ell}(y_2-y_3),\;\;\og_{\ell}(y_2-y_3)\Big)\, ,
\end{align}
Writing $\hbx^{\ell}$ for the first column of $\bx^{\ell}$ and $\hby^{\ell}$, $\hby^{\ell}_0$
for the second and third columns of $\by^{\ell}$.
In terms of these variables, the AMP iteration reads
\begin{align}
\hx_i^{\ell+1} & = 
\sum_{k=1}^{\infty}c_k \sum_{j_1,\dots,j_{k-1}>n}
\oW^{(k,0)}_{i;i_1\dots j_{k-1}} \oh_{\ell}(\hy_{j_1}^{\ell}-\hy_{0,j_1}^{\ell})\cdots  
\oh_{\ell}(\hy_{j_{k-1}}^{\ell}-\hy_{0,j_{k-1}}^{\ell}) -b_{1,\ell} \of_{\ell-1}(\hx_i^{\ell-1})\, ,\\
\hy_j^{\ell+1} &= \sum_{k=1}^{\infty}c_k\sum_{q=0}^{k-1}
\binom{k-1}{q}\sum_{i_1\dots i_{q}\le n}  \sum_{j_{q+1}\dots j_{k-1}\le n}
\oW^{(k,q)}_{i_1;i_2,\dots,i_q,j,j_1\dots j_{k-1-q}}\cdot\label{eq:FirstY}\\
&\phantom{AAAAAAAA}\cdot 
\of_{\ell}(\hx_{i_1}^{\ell})\cdots   \of_{\ell}(\hx_{i_q}^{\ell})
\og_{\ell}(\hy_{j_{1}}^{\ell}-\hy_{0,j_{1}}^{\ell}) \cdots 
\og_{\ell}(\hy_{j_{k-q}}^{\ell}-\hy_{0,j_{k-q}}^{\ell})\nonumber\\
&\phantom{AAAAAAAA} -
b_{2,\ell} \og_{\ell-1}(\hy_{j}^{\ell-1}-\hy_{0,j}^{\ell-1})
-b_{3,\ell} \oh_{\ell-1}(\hy_{j}^{\ell-1}-\hy_{0,j}^{\ell-1})
 \, ,\nonumber\\
\hy_{0,j}^{\ell+1} &= \sum_{k=1}^{\infty}c_k\sum_{j_1,\dots,j_{k-1}\le d}  
\oW^{(k,0)}_{j_1;j,j_2,\dots,j_{k-1}} 
\og_{\ell}(\hy_{j_{1}}^{\ell}-\hy_{0,j_{1}}^{\ell}) \cdots 
\og_{\ell}(\hy_{j_{k-1}}^{\ell}-\hy_{0,j_{k-1}}^{\ell})
 \label{eq:SecondY}\\
&\phantom{AAAAAAAA} -
b_{4,\ell} \og_{\ell-1}(\hy_{j}^{\ell-1}-\hy_{0,j}^{\ell-1})
-b_{5,\ell} \oh_{\ell-1}(\hy_{j}^{\ell-1}-\hy_{0,j}^{\ell-1})\, ,\nonumber
\end{align}
for suitable sequence of constants $b_{1,\ell},\dots,b_{5,\ell}$.
It is straightforward (albeit tedious) to write expressions for these constants,
as well as the state evolution characterization that follows from Proposition
\ref{propo:SE_prev}.

We next introduce a small parameter $\eps$, and set 
 $\of_{\ell}(x) = \eps f_{\ell}(x/\eps)$, $\og_{\ell}(x) = g_{\ell}(x/\eps)$
$\oh_{\ell}(x) = h_{\ell}(x/\eps)$.
By defining $x^\ell_{i}:= (\hy_{j}^{\ell}-\hy_{0,j}^{\ell})/\eps$, 
and taking the difference of Eqs.~\eqref{eq:FirstY} and \eqref{eq:SecondY},
we get
\begin{align}
\hx_i^{\ell+1} & = 
\sum_{k=1}^{\infty}c_k \sum_{j_1,\dots,j_{k-1}\le d}
\oW^{(k,0)}_{i;j_1\dots j_{k-1}} h_{\ell}(x_{j_1}^{\ell})\cdots  
h_{\ell}(x_{j_{k-1}}^{\ell}) -b_{*,\ell} f_{\ell-1}(\hx_i^{\ell-1})+O(\eps)\, ,\\
x_j^{\ell+1} &= \sum_{k=1}^{\infty}c_k(k-1)\sum_{i_1\le n}  
\sum_{j_{1}\dots j_{k-2}\le d}
\oW^{(k,0)}_{i_1;j,j_1\dots, j_{k-2}}
f_{\ell}(\hx_{i_1}^{\ell})
g_{\ell}(x_{j_{1}}^{\ell}) \cdots g_{\ell}(x_{j_{k-2}}^{\ell})  \nonumber\\
&\phantom{AAAAAAAA} -
a_{1,\ell} g_{\ell-1}(x_{j}^{\ell-1})
-a_{2,\ell} h_{\ell-1}(x_j^{\ell-1}) +O(\eps)\, .
\end{align}
The order $\eps$ term can be controlled uniformly over $n,d$ on a high probability 
event (controlling the operator norms of tensors $\bW^{(k)}$),
hence allowing to derive a state evolution characterization of the recursion in which 
$O(\eps)$ terms are dropped.

We finally define 
\begin{align}
F_i(\bx) : = \sum_{k=1}^{\infty}c_k \sum_{j_1,\dots,j_{k-1}\le d}
\oW^{(k,0)}_{i;j_1\dots j_{k-1}} x_{j_1}^{\ell}\cdots  
x_{j_{k-1}}^{\ell}\, ,
\end{align}
and note that the above recursion (dropping $O(\eps)$ terms) can be rewritten
as
\begin{align}
\hbx^{\ell+1} & = \bF(h_{\ell}(\bx^{\ell})) -b_{*,\ell} f_{\ell-1}(\hbx^{\ell-1})\, ,\\
\bx^{\ell+1} &= \bD\bF(\bx^{\ell})^{\sT}
f_{\ell}(\hbx^{\ell}) -
a_{1,\ell} g_{\ell-1}(\bx^{\ell-1})
-a_{2,\ell} h_{\ell-1}(\bx^{\ell-1})\, .
\end{align}
This is exactly the form of the algorithm introduced in the main text,
\eqref{eq:AMP1}, \eqref{eq:AMP2}, where functions $f_{\ell}, h_{\ell}, g_{\ell}$
are all linear. 

The proof of Proposition  \ref{propo:SE} follows from keeping track of the 
coefficients in the Onsager terms
(the memory terms in the AMP recursions), as well as of the state evolution recursion, along the 
chain of reductions just defined.

%
%*********************
%
\subsection{Proof of Theorem \ref{thm:MainAchievability}}
\label{app:TP2}

Theorem \ref{thm:MainAchievability} follows from Theorem \ref{thm:AMP} 
using the same exactly the same argument as for Theorem \ref{thm:Hessian}, with 
the following adaptations:
\begin{enumerate}
\item The initial value of the energy for the second phase is given by $H(\bm^L)/n = u_{\sRS}(q,\alpha,\xi)+o_P(1)$.
\item The Hessian descent algorithm takes palce in the $(d-1)$-dimensional space
$V_{L}:= \{\bx\in\reals^d:\, \<\bx,\bm^L\>=0\}$. 
\item Within this subspace, we attempt to solve an optimization problem that has the same 
form as the original one, namely
\begin{align}
\mbox{minimize}&\;\;\;\;\; \|\tilde{\bF}(\bx;\bm^L)\|_2^2\, ,\;\;\;\;\;\;\;
\tilde{\bF}(\bx;\bm^L) := \bF(\bm^L+\bx)\, ,\\
\mbox{subj. to}&\;\;\;\;\; \bx\in V_L, \;\; \|\bx\|_2^2 = 1-\|\bm^L\|_2^2
\end{align}
Of course, the reduction from 
dimension $d$ to $d-1$ is immaterial, and the change in the radius constraint does not affect 
the argument given for Theorem \ref{thm:AMP}.
\item For a non-random $\bv\in\Ball^d(1)$, the random function 
$\tilde{\bF}(\,\cdot \,;\bv)$ restricted to $\bv^{\perp}$ is again a centered
Gaussian process with covariance
\begin{align}
\E\big\{\tilde{F}_i(\bx_1)\tilde{F}_j(\bx_1;\bv) \big\} = \delta_{ij}\, 
\xi\big(\|\bv\|^2_2+\<\bx_1,\bx_2\>\big)\, .
\end{align}
Hence, by Theorem \ref{thm:Hessian}, Hessian descent on the subspace $\bv^{\perp}$
with respect to $\tilde{\bF}$ outputs $\bx^*\in\S^{d-1}$ with energy 
$\|\bF(\bx^{*})\|^2/(2n)\le u(t_*;\bv)+C\delta+o_P(1)$, where $t_*:=1-\|\bv\|^2_2$, where
$u(\,\cdot\,;\bv)$ solves
 \begin{align}
 \frac{\de u}{\de t}(t;\bv) = -\frac{1}{2\alpha}z_*\big(\alpha; \sqrt{2\alpha u(t) \xi''(\|\bv\|^2+t)}, \xi'(\|\bv\|^2+t)\big)\,,
 \;\;\;\;\; u(0) =  \frac{1}{2n}\|F(\bv)\|^2\, .
 \end{align}
\item The proof of Theorem \ref{thm:Hessian} only uses a uniform upper bound over the 
minimum eigenvalue of Hessian  of the energy function, and therefore the last claim actually
 applies uniformly over $\bv\in\Ball^d(1)$. As a consequence it also applies to the random
 point $\bm^L$.
\end{enumerate}
%

%***********************************************
%

\section{Conclusion and future directions}
\label{sec:Conclusion}

The GE model allows to study in detail the algorithmic and 
satisfiability phase transitions that are summarized in 
Figure \ref{fig:phase_diag}.
In particular, we showed that global minima can be found efficiently
for $n/d$ below a constant $\alpha_{\salg}(\xi)$ while
Lipshitz algorithms fail below  $\alpha_{\salg}(\xi)$.
This algorithmic threshold is explicitly given in Eq.~\eqref{eq:AlphaAlgDef}.

This threshold in general, is
strictly smaller than the satisfiability phase transition 
$\alpha_{\sSAT}(\xi)$, suggesting the existence of a regime in
which solutions exist but are not found by polynomial time algorithms.

We also found out that the tractable regime 
$\alpha\in (0,\alpha_{\salg})$  can only
partially be understood using theories that are popular in non-convex
optimization and learning.
Indeed, for $\alpha\in (0,\alpha_{\salg})$ but close to
$\alpha_{\salg}$, optimization algorithms converge to points that 
are highly sensitive to small perturbations.

While the GE model is obviously far from optimization
problems arising in applications (e.g., in machine learning),
we expect these phase transitions to take place for other 
overparametrized optimization problems
of the form $H(\bx) = \|\bF(\bx)\|^2/2$. 

Our work suggests a number of
concrete mathematical problems whose solution would contribute
bridging the gap between the GE model and more complex random optimization
problems:
\begin{enumerate}
\item\emph{Universality.} We observed that predictions from the GE model are accurate
for the AoR model as well. This agreement is not explained by
current mathematical theory\footnote{A direct application of
 high-dimensional central limit theorems
\cite{eldan2021non} yields that the the two model match for $m\gg d^{5k-1/2}$ where 
$k$ is the degree of $\xi$, but we observe good agreement for much smaller values of $m$.}
and it would be important to obtain a rigorous explanation.
It also suggests that Gaussian models might be more broadly predictive than expected.
\item\emph{Dependence on the covariance.} If some form of universality holds,
the behavior of the cost function $H(\,\cdot\,)$ is characterized by the covariance of the process 
$F_i(\,\cdot\,)$. We studied a particularly simple covariance that is rotationally
invariant. It would be interesting to understand whether the qualitative behavior 
outlined in Fig~\ref{fig:phase_diag} depends on the covariance structure.
\end{enumerate}

\section*{Acknowledgements}

AM was supported by the NSF through award DMS-2031883, the Simons Foundation
through Award 814639 for the Collaboration on the Theoretical Foundations of Deep Learning,
the NSF grant CCF-2006489 and the ONR grant N00014-18-1-2729. 
%Part of this work was carried out while Andrea Montanari was on partial leave from Stanford and a Chief Scientist at Ndata Inc dba
%Project N. The present research is unrelated to AM’s activity while on leave. 
ES was supported by the Israel Science Foundation (Grant
Agreement No. 2055/21), European Union (ERC, PolySpin, 101165541) and a research grant from the Center for Scientific Excellence at the Weizmann Institute of Science; and is the incumbent of the Skirball Chair in New Scientists.

\newpage

\newpage

\newpage

\bibliographystyle{amsalpha}

\newcommand{\etalchar}[1]{$^{#1}$}
\providecommand{\bysame}{\leavevmode\hbox to3em{\hrulefill}\thinspace}
\providecommand{\MR}{\relax\ifhmode\unskip\space\fi MR }
% \MRhref is called by the amsart/book/proc definition of \MR.
\providecommand{\MRhref}[2]{%
  \href{http://www.ams.org/mathscinet-getitem?mr=#1}{#2}
}
\providecommand{\href}[2]{#2}

\newpage

\appendix

\section{Numerical simulations in the AoR model}
\label{sec:Appendix_Numerical}

\subsection{Setup for the simulations}
\label{sec:Appendix_Numerical_Setup}

All of our simulations are carried out by generating random instances of
the AoR model. Namely, we generate weight vectors 
$\ba_i = (\ba_i^{(1)},\dots,\ba^{(m)}_i)$ with $(\ba_i^{(k)})_{i\le N, k\le m}\sim
\normal(0,\id_d)$ and attempt to minimize the energy function 
\begin{align}
H(\bx) := \frac{1}{2}\sum_{i=1}^nF_i(\bx)^2\, ,
\end{align}
with functions $F_i(\bx)$ defined as in Eq.~\eqref{eq:AoR_Fi} for $\|x\|=1$.
We extend the definition of $F_i(\bx)$ to $\bx\in \reals^d$ as follows.
Letting $\phi(t) = \sum_{k=0}^{\infty}\phi_k \he_k(t)$ be the Hermite decomposition 
of $\phi$, we define
\begin{align}
\phi_e(t;r) = \sum_{k=0}^{\infty}\phi_k r^{k}\he_k(t/r)\, ,
\end{align}
and generalize Eq.~\eqref{eq:AoR_Fi}  by
\begin{align}\
	F_i(\bx) =  \frac{1}{\sqrt{m}}\sum_{k=1}^m\phi_e\big(\<\bx,\ba^{(k)}_i\>;\|\bx\|\big)\, .
\end{align}
The rationale for this extension is that the resulting 
covariance matches the one of the GE model \eqref{eq:Fcov} for any $\bx_1, \bx_2\in \reals^d$.

We carry out experiments using two algorithms, as described below.
 
 \subsubsection{Stochastic gradient descent} 
 
 We view the function $H(\bx)$ as an average over terms $F_i(\bx)^2$
 (in a machine learning interpretation, each term $F_i$ corresponds to one datapoint)
 At each step $k$, we move in the direction of a gradient with respect to a subset of these:
 \begin{align}
 \tbx^{k+1} &= \bx^k - \frac{s_{\ell(k)}} {|B_k|}\sum_{i\in B_k} F_i(\bx^k)\nabla_{\bx}
 F_i(\bx^k)\, ,\\
 \bx^{k+1} & = \frac{\tbx^{k+1}}{1\wedge \|\tbx^k\|}\, .
 \end{align}
 Here $s_{\ell(k)}$ is the stepsize and $B_k$ is a random subset of $[n]$
 of size $|B_k|=b$. Throughout our simulations we take $b=4$,
 and shuffle the data between epoch (via the {\sf pytorch} option {\sf shuffle =True}). 
In other words, at the beginning of each epoch, the $n$ terms in the energy functions 
are randomly permuted,
 grouped in contiguous batches of size $b$, and one pass through the data is taken 
 (one epoch). In the equation above, we denote by $\ell(k)$ the epoch of step k,
 and note that the stepsize depends on the epoch.

 We initialize the SGD iteration with $\bx^0=\normal(0,\eps^2\id_d/d)$.
 We use $\eps=1/100$ in all simulations except for the ones to investigate
 the sensitivity phase transition. in that case, we let $\eps=1$ (i.e. we initialize close
 the the unit sphere) because we are interested in estimating the distance from initialization. 
 
 Note that in principle the SGD
 iteration could stop at at a point in the interior of the unit ball.
 In other world, we are attempting to solve the relaxed problem 
 whereby the constraint $\bx\in\S^{d-1}$ is replaced by $\bx\in\Ball^d(1)$. 
 However, we observe empirically that (for the settings we explore),
 the final point reached has $\|\bx^k\|=1$, unless $\alpha$ is small, in which
 case a solution is found quickly. We conclude that the energy achieved by SGD within 
 the relaxed model is achievable also
 in the unrelaxed one.
 
 Given a total number of epoch $N_{\sep}$,  we use the following stepsize schedule
 (for the learning rate $\lr>0$ a scale parameter)
 \begin{align}
s_{\ell}= \lr\cdot\big[(2+\ell-N_{\sep}/2)\wedge 1)\big]^{-1/3}\, .
\end{align}
We carried out experiments with other schedules, but did not observe major differences.

We used the {\sf pytorch} implementation of SGD. We note that the model 
\eqref{eq:AoR_Fi} can be instantiated as a convolutional neural network with 
$d$ input channels, $1$ output channel, and kernel-size $1$.
Input sample $i$ is formed by vectors $\ba^{(1)}_i,\dots,\ba^{(m)}_i$
of \eqref{eq:AoR_Fi}
whereby each $\ba^{(k)}_i$ comprises the $m$ input channels.
We reproduce simplified code for our implementation below.

\begin{lstlisting}
def __init__(self, a, m, d, eps):
    super().__init__()
    self.m = m
    self.conv1 = nn.Conv1d(d, 1, 1, padding=0, dtype=torch.float, bias=False)
    self.conv1.weight.data = (eps/np.sqrt(d))*torch.randn((1,d,1))
    self.lin2 = nn.Linear(m,1,bias=False)
    self.lin2.weight.data[0,:m//2] = a/np.sqrt(m)
    self.lin2.weight.data[0,m//2:] = -a/np.sqrt(m)
    self.lin2.weight.requires_grad = False
    self.act = Myact(type=act_type)

def forward(self, x):
    x0 = self.conv1(x)
    wnorm = torch.linalg.norm(self.conv1.weight)
    x1 = self.act(x0)  ### Normalization included
    x2 = self.lin2(einops.rearrange(x1, 'b c l -> b (c l)'))
    return x2
\end{lstlisting}

Summarizing, for each run of SGD, we specify the following parameters
(apart from the function $\phi$ that enters the definition \eqref{eq:AoR_Fi}):
\begin{itemize}
\item Dimension $d$.
\item Number of equations $n$.
\item Number of `units' $m$ in Eq.~\eqref{eq:AoR_Fi}.
\item Stepsize scale $\eta$.
\item Number of epochs $N_{\sep}$.
\end{itemize}

\subsubsection{Hessian descent}

The Hessian descent algorithm is described in the second part
of Algorithm \ref{algo:HD+AMP}. It remains to specify the calculation of 
the approximate eigenvector $\bv(\bx^k)\in \Ts_{\bx^k}$, such that 
\begin{align*}
\<\bv, \nabla^2H(\bx^k)\bv\> \le \lambda_{\min} (\nabla^2H(\bx^k)|_{\Ts_{\bx^k}})+d\delta\, .
\end{align*}
We form the Hessian using automatic differentiation and implemented two procedures for approximating the 
eigenvector with eigenvalue $\lambda_{\min}$: $(i)$~Power iteration;
$(ii)$~Eigenvector computation using {\sf torch.linalg.eigh}. 
The two procedures yield comparable results. 

As mentioned in the main text, we carry out simulations with two different procedures
to select the signs $s_k$ in Algorithm \ref{algo:HD+AMP}:
$(i)$~The optimal procedure outlined in  Algorithm \ref{algo:HD+AMP}; $(ii)$~Uniformly
random signs. We observe that the first procedure yields smaller values of
the energy, but the discrepancy is compatible with a finite $d$, $n$ effect.
\begin{itemize}
\item Dimension $d$.
\item Number of equations $n$.
\item Number of `units' $m$ in Eq.~\eqref{eq:AoR_Fi}.
\item Number of steps $K=1/\delta$.
\end{itemize}

\subsection{Numerical results omitted from the main text}

In this appendix we report further simulation results omitted from the main text, mainly to 
investigate the dependency on various parameter choices.

\subsubsection{Stochastic gradient descent}

In this section we report the following results of simulations with 
SGD:
\begin{itemize}
\item Figure \ref{fig:a025_third_change_param} reports the energy achieved by
 SGD in the AoR model, with
$\phi(t) = 1+a \he_3(t)$, with $a=1/4$. Results are averaged over $10$ realizations. 
This is the same setting as in Figure \ref{fig:a025_third_change_m}
(left frame), but we test the sensitivity of our results to variouschoices of the parameters.
\item Figures \ref{fig:a025_third_alpha04_SGD1},
 \ref{fig:a025_third_alpha04_SGD2} plots single trajectories of SGD for 
 $\phi(t) =1+ a \he_3(t)$, with $a=0.25$, $\alpha=0.4$.
(This setting is similar to the one of Figure \ref{fig:a033_fourth_alpha07_SGD3}.)
\item Figures \ref{fig:a033_fourth_alpha07_SGD1} and \ref{fig:a033_fourth_alpha07_SGD3}
plots single trajectories of SGD for 
 $\phi(t) =1+ a \he_4(t)$, with $a=0.33$.
\end{itemize}

\begin{figure}
\phantom{A}\hspace{-1cm}
\includegraphics[width=0.34\linewidth]{./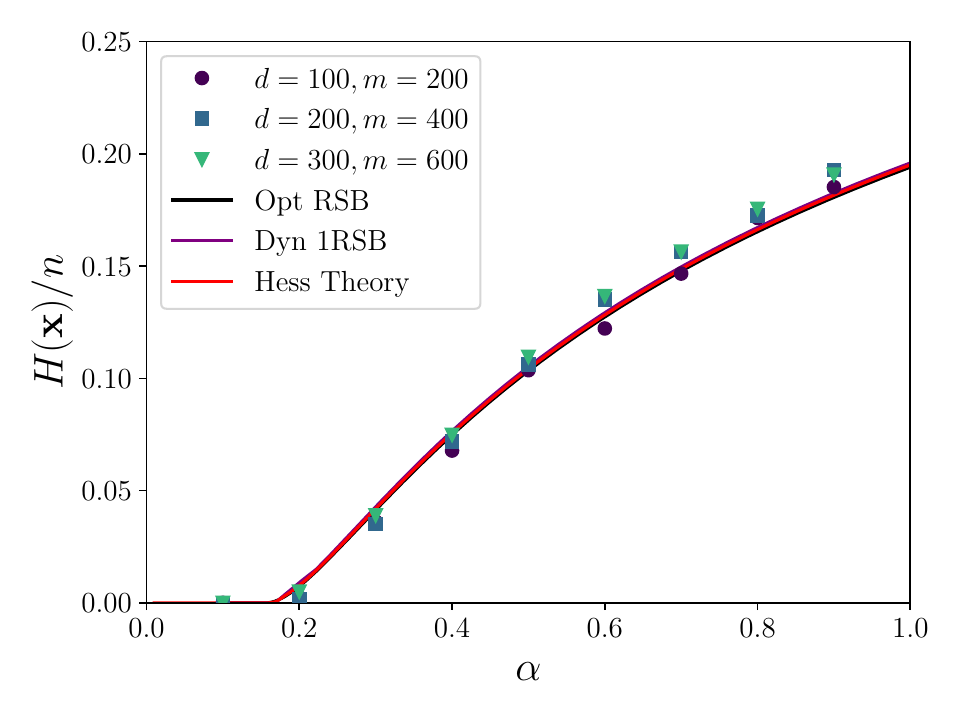}
\includegraphics[width=0.34\linewidth]{./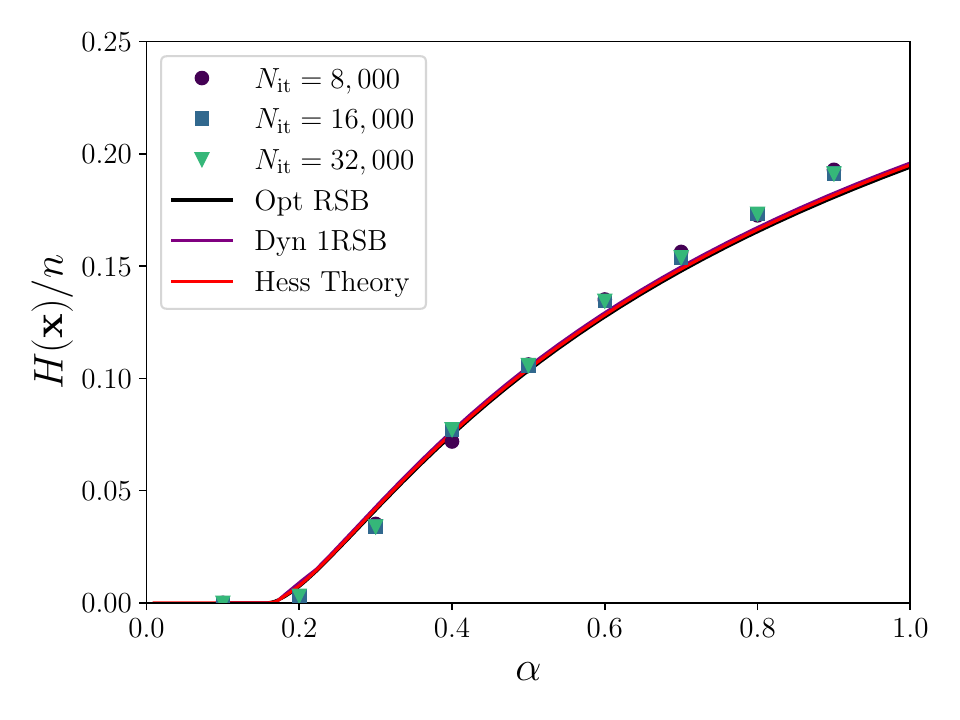}
\includegraphics[width=0.34\linewidth]{./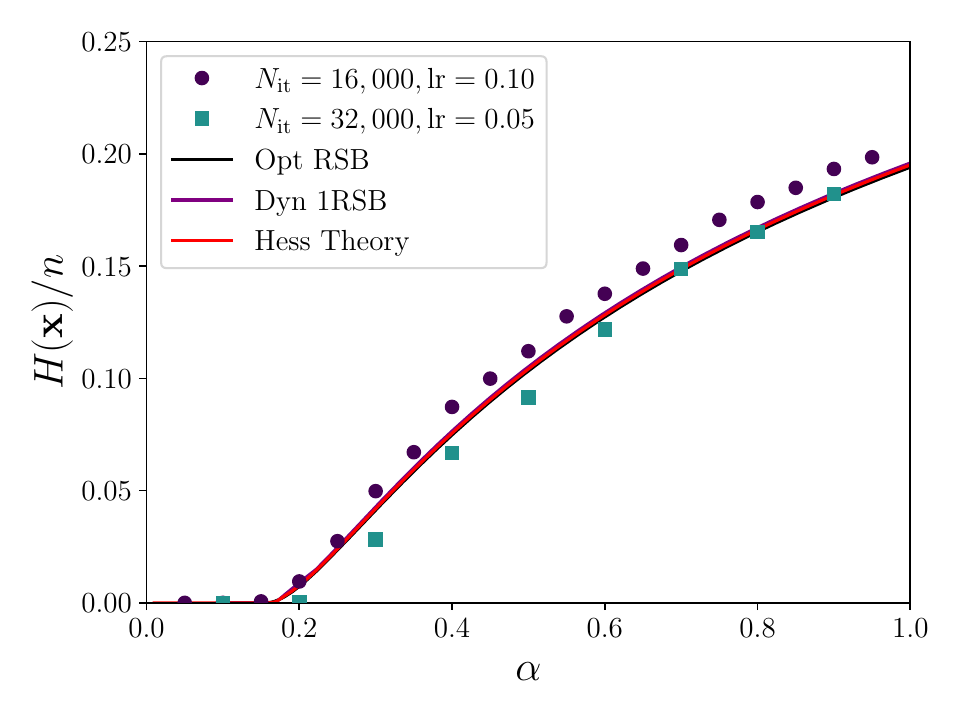}
\caption{Energy achieved with SGD in the AoR model, with
$\phi(t) = 1+a \_3(t)$, with $a=1/4$.
Left: We fix $\lr = 0.05$, $N_{\sit}=8,000$, $m=2d$ and vaary $d$.
Center: We fix $\lr = 0.1$, $m=200$, $d=200$ and change $N_{\sit}$.
Right:  We fix, $m=200$, $d=200$ and change $N_{\sit}$ and $\lr$.}
		\label{fig:a025_third_change_param}
\end{figure}

\begin{figure}
\includegraphics[width=0.5\linewidth]{./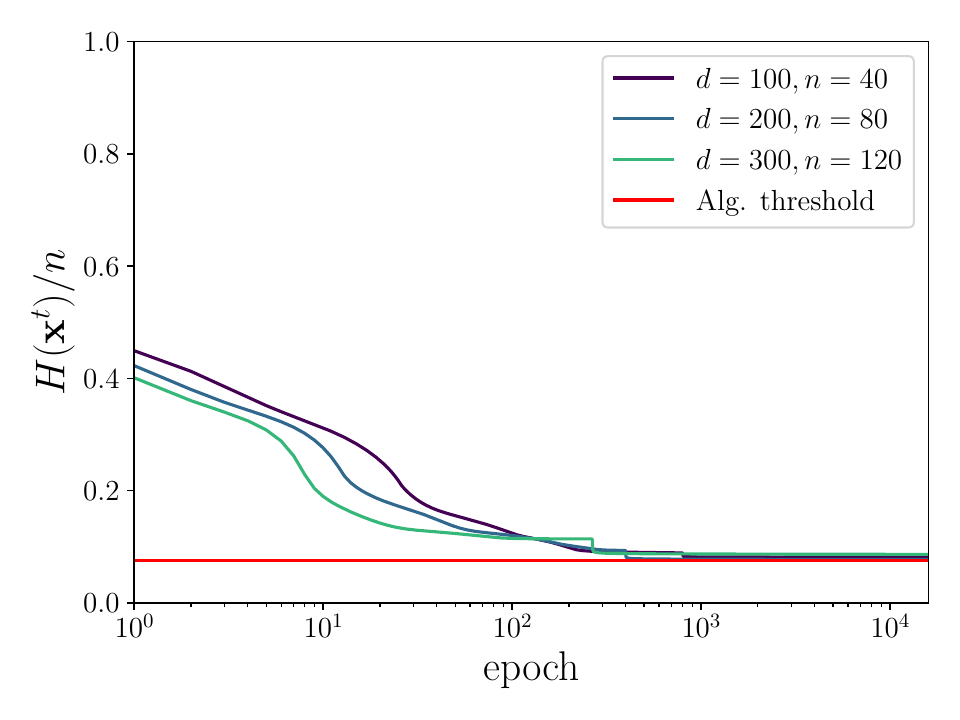}
\includegraphics[width=0.5\linewidth]{./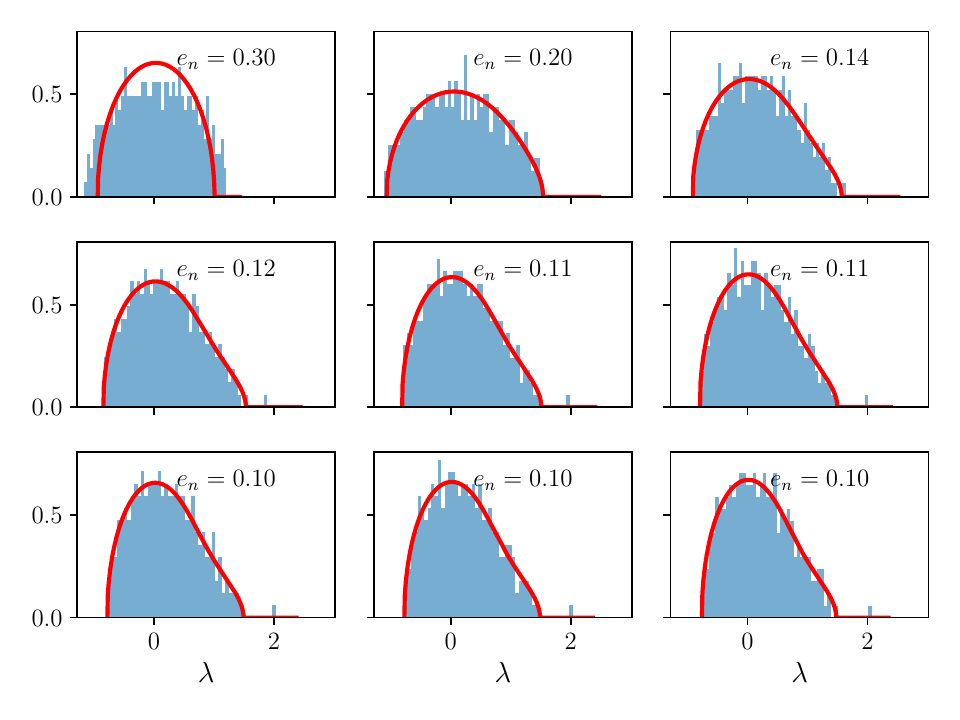}
\caption{Trajectory of SGD ,
for $\phi(t) = 1+a \he_3(t)$, with $a=0.25$.
Left, energy as a function of time (epochs):
 $\alpha=0.4$, $m=8d$,  $N_{\sit}=16,000$
${\rm lr}=0.01$. Red lines are theoretical predictions for the
algorithmic threshold.
Right: Empirical spectral distribution of the Hessian along SGD trajectory,
with theoretical predictions as red lines.
(Here $d=300$, $n=120$, $m=4800$.)}
		\label{fig:a025_third_alpha04_SGD1}
\end{figure}

\begin{figure}
\includegraphics[width=0.5\linewidth]{./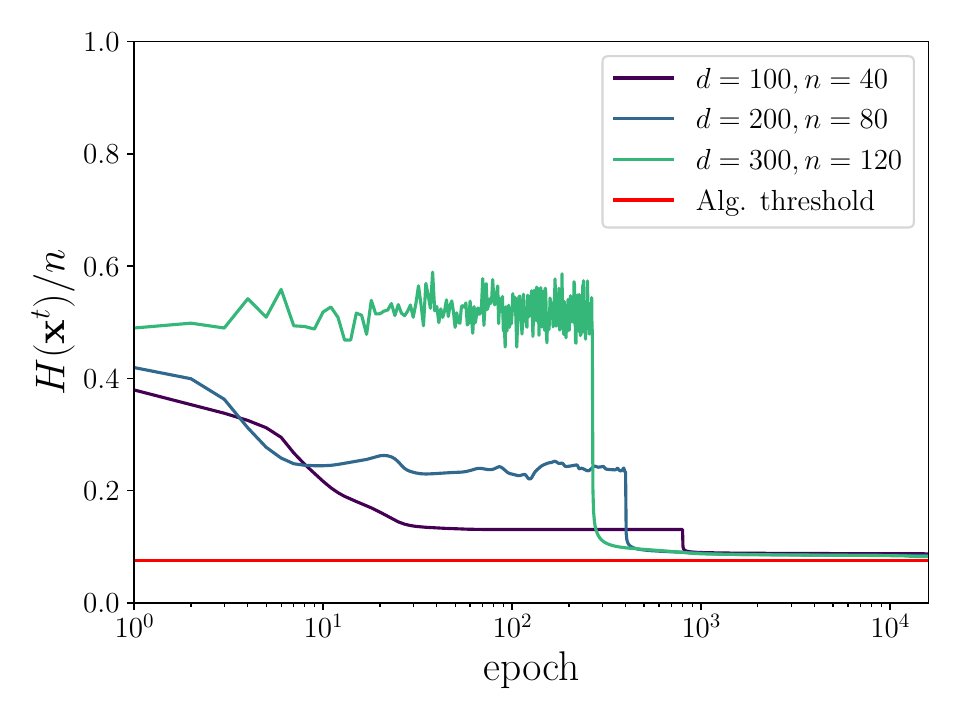}
\includegraphics[width=0.5\linewidth]{.//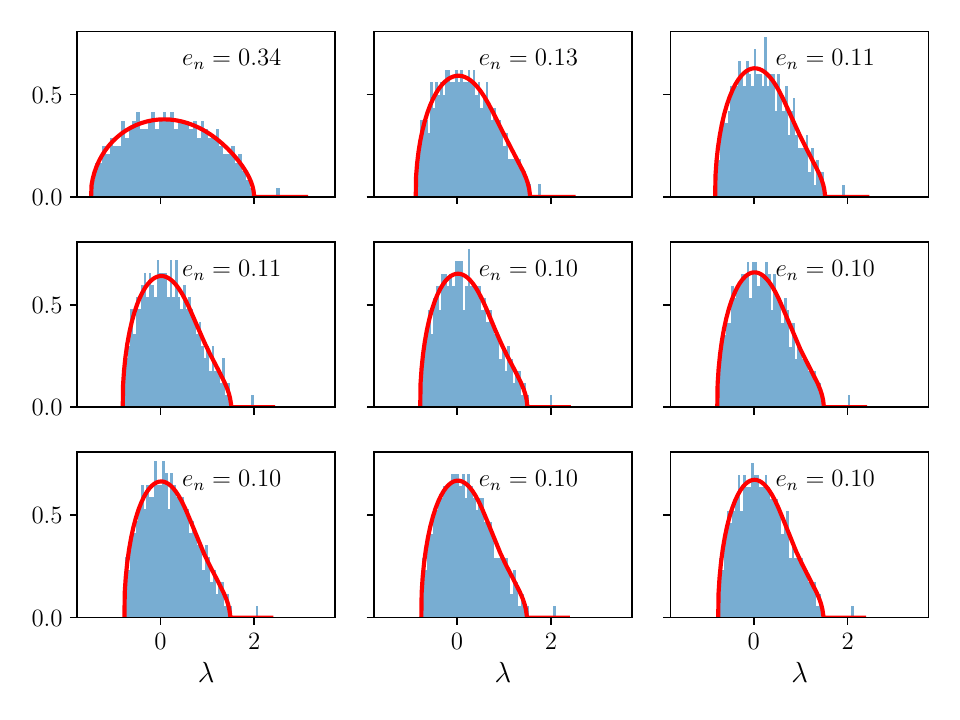 }
\caption{Trajectory of SGD ,
for $\phi(t) =1+ a \he_3(t)$, with $a=0.25$.
Left, energy as a function of time (epochs): $\alpha=0.4$, $m=8d$, $N_{\sit}=16,000$,
${\rm lr}=0.05$.  Red lines are theoretical predictions for the
algorithmic threshold.
Right: Empirical spectral distribution of the Hessian along SGD trajectory,
with theoretical predictions as red lines.
(Here $d=300$, $n=120$, $m=4800$.)}
		\label{fig:a025_third_alpha04_SGD2}
\end{figure}

\begin{figure}
\includegraphics[width=0.5\linewidth]{./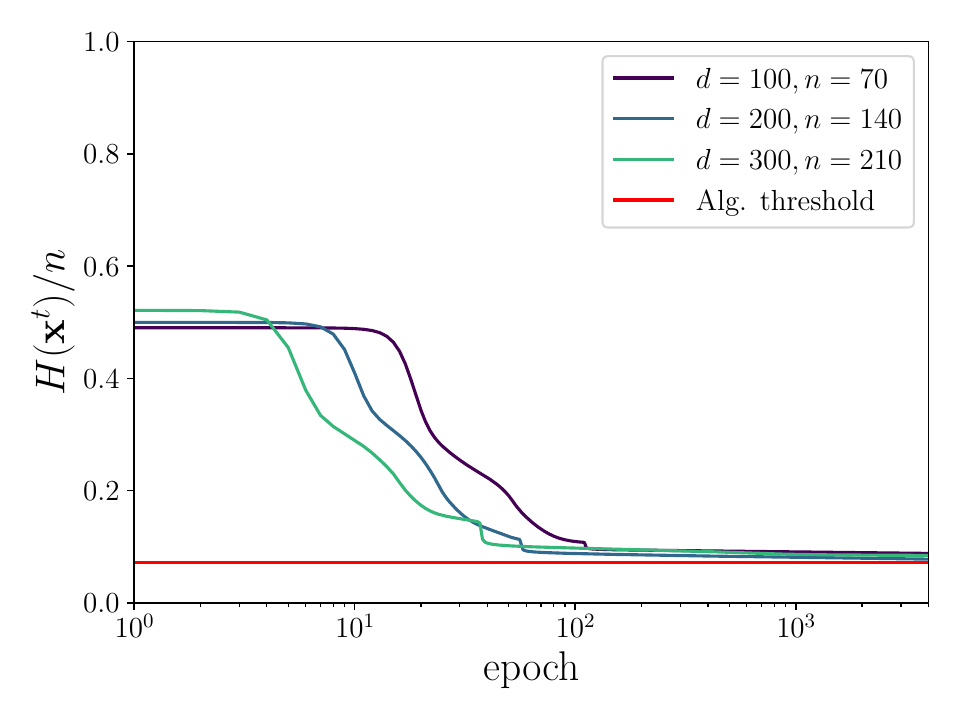}
\includegraphics[width=0.5\linewidth]{./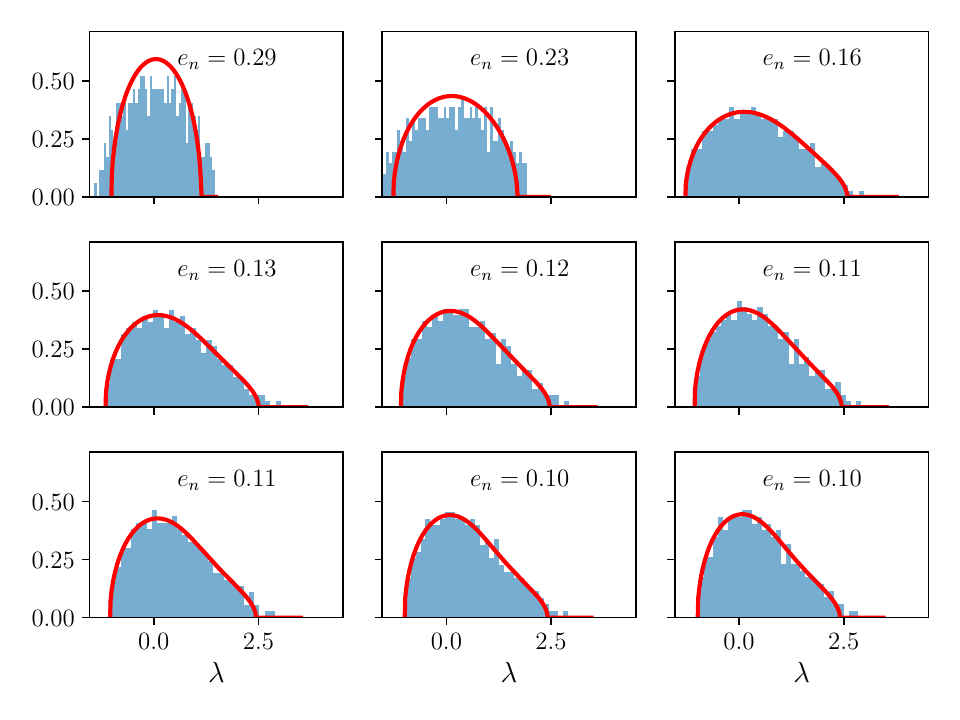}
\caption{Trajectory of SGD ,
for $\phi(t) = 1+a \he_4(t)$, with $a=0.33$.
Left, energy as a function of time (epochs): $\alpha=0.7$, $m=8d$, $N_{\sit}=4000$,
${\rm lr}=0.005$.
 Red lines are theoretical predictions for the
algorithmic threshold.
Right: Empirical spectral distribution of the Hessian along SGD trajectory,
with theoretical predictions as red lines. (Here $d=300$, $n=210$, $m=4800$.)}
		\label{fig:a033_fourth_alpha07_SGD1}
\end{figure}

\subsubsection{Hessian descent}
 
 In this section we report the following results of simulations with 
Hessian descent:
 \begin{itemize}
 \item Figures \ref{fig:a025_third_alpha01}, \ref{fig:a025_third_alpha02}, \ref{fig:a025_third_alpha04}
 report trajectories of Hessian descent for $\phi(t) =1+ a \he_3(t)$, with $a=0.25$ and 
 (respectively) $\alpha\in\{0.1,0.2,0.4\}$. In these cases we use the optimal choice of
 signs $s_k$, as in Algorithm \ref{algo:HD+AMP}.
 \item 
 Figures \ref{fig:a033_fourth_alpha02_OS},
\ref{fig:a033_fourth_alpha02_RS},
\ref{fig:a033_fourth_alpha07_OS},
\ref{fig:a033_fourth_alpha07_RS} 
 report trajectories of Hessian descent for $\phi(t) =1+ a \he_4(t)$, with $a=0.33$ and 
 $\alpha = 0.2$ (Figures \ref{fig:a033_fourth_alpha02_OS},
\ref{fig:a033_fourth_alpha02_RS}), $\alpha = 0.7$ (Figures \ref{fig:a033_fourth_alpha07_OS},
\ref{fig:a033_fourth_alpha07_RS}).  

Figures \ref{fig:a033_fourth_alpha02_OS} and
\ref{fig:a033_fourth_alpha07_OS}  use the optimal choice of
 signs $s_k$, as in Algorithm \ref{algo:HD+AMP}.
Figures 
\ref{fig:a033_fourth_alpha02_RS},
\ref{fig:a033_fourth_alpha07_RS} use random signs $s_k$.
\end{itemize}

 \begin{figure}
\includegraphics[width=0.5\linewidth]{./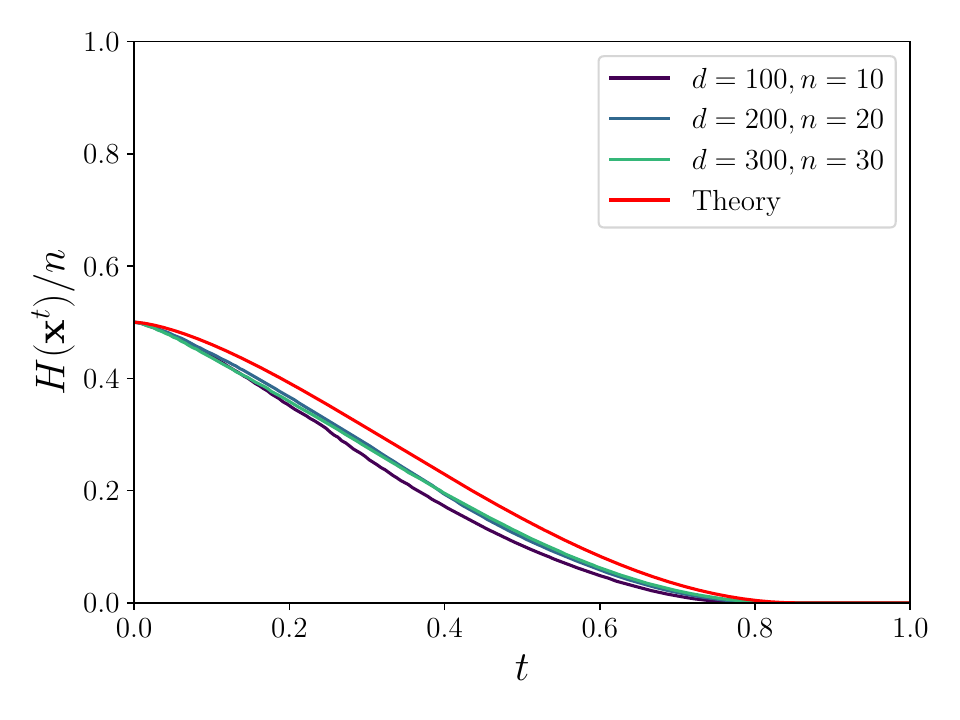}
\includegraphics[width=0.5\linewidth]{./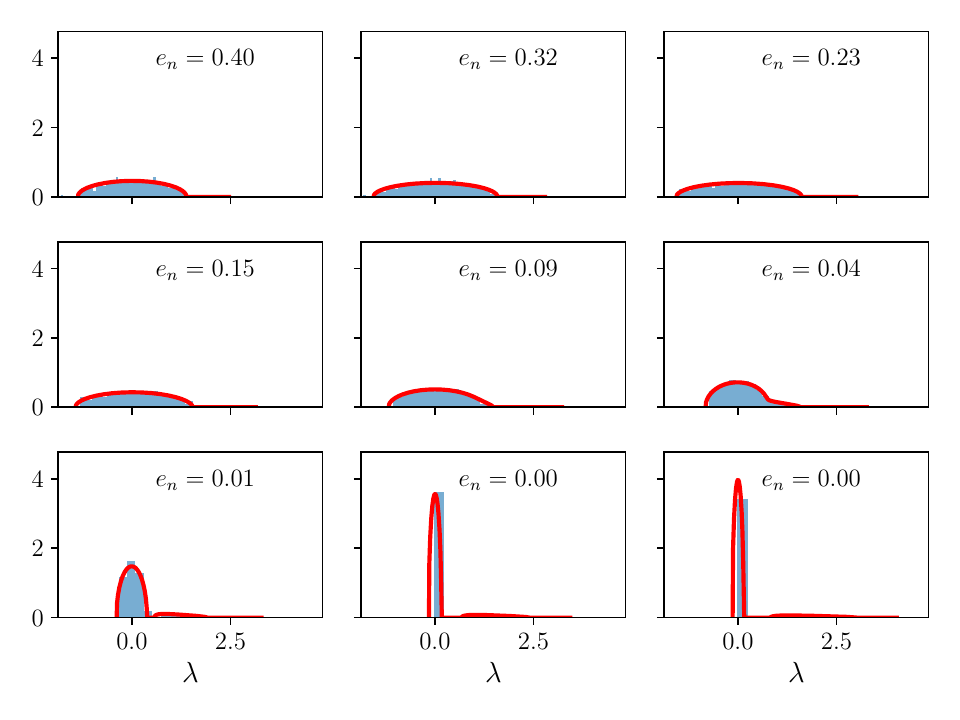}
\caption{Trajectory of Hessian descent, for $\phi(t) = 1+ a \he_3(t)$, with $a=1/4$. 
Left, energy as a function of time (radius): $\alpha=0.1$
 $m=2d$, $K=200$.
Red lines are theoretical predictions for the
algorithm evolution.
Right: Empirical spectral distribution of the Hessian along Hessian descent trajectory
with theoretical predictions as red lines. (Here $d=300$, $n=30$, $m=600$.)}
		\label{fig:a025_third_alpha01}
\end{figure}

\begin{figure}
\includegraphics[width=0.5\linewidth]{./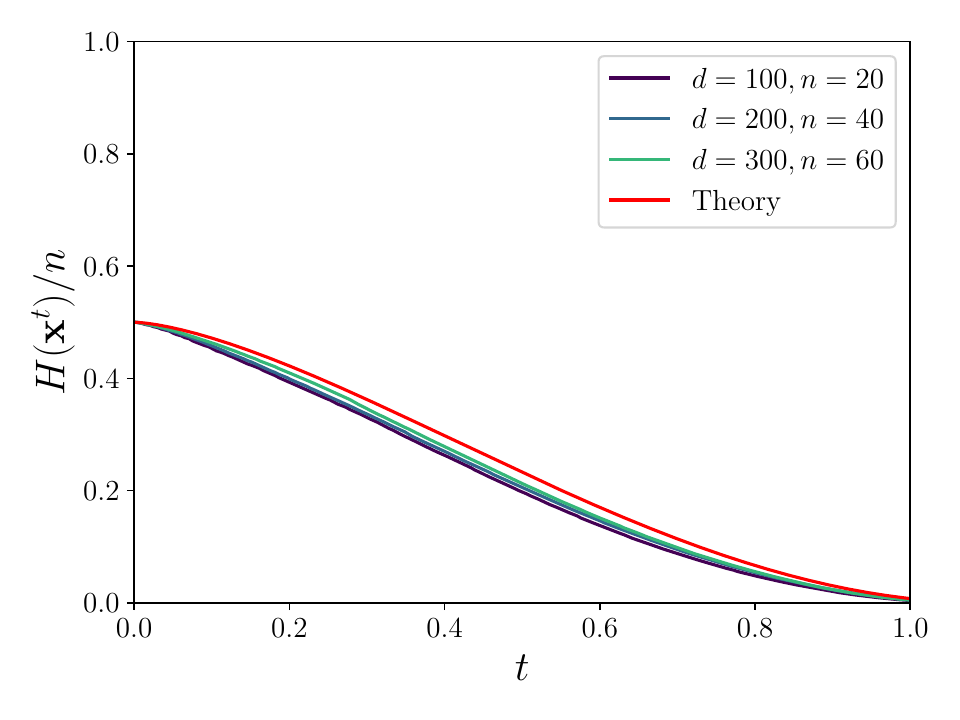}
\includegraphics[width=0.5\linewidth]{./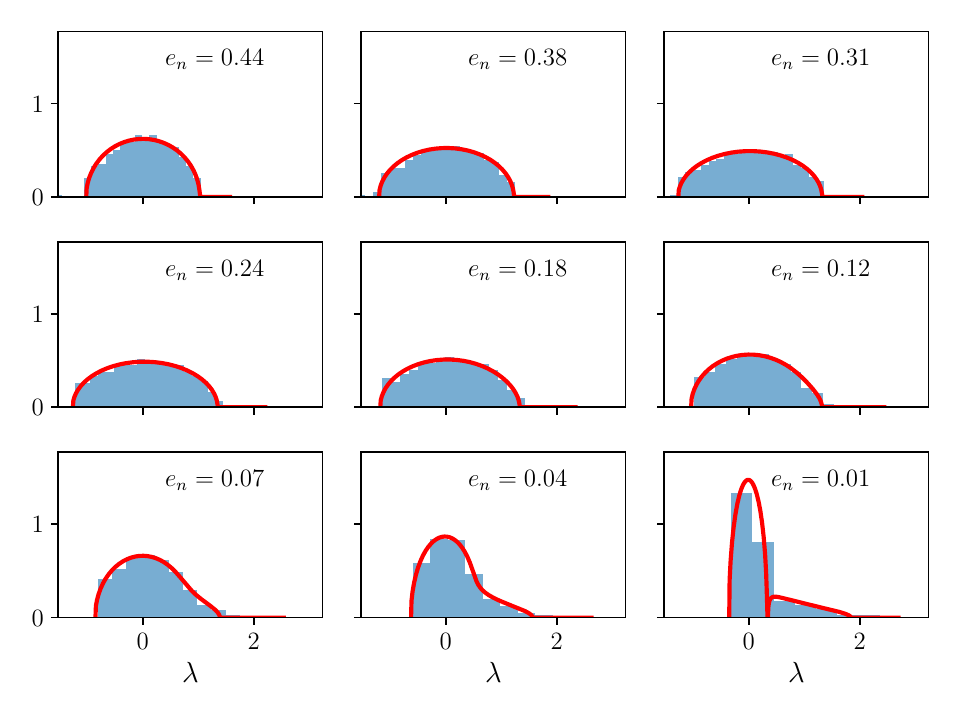}
\caption{Trajectory of Hessian descent, for $\phi(t) =1+ a \he_3(t)$, with $a=1/4$. 
Left, energy as a function of time (radius): $\alpha=0.2$, $m=2d$, $K=200$. 
Red lines are theoretical predictions for the
algorithm evolution.
Right: Empirical spectral distribution of the Hessian along Hessian descent trajectory
with theoretical predictions as red lines. (Here $d=300$, $n=60$, $m=600$.)}
		\label{fig:a025_third_alpha02}
\end{figure}

 \begin{figure}
\includegraphics[width=0.5\linewidth]{./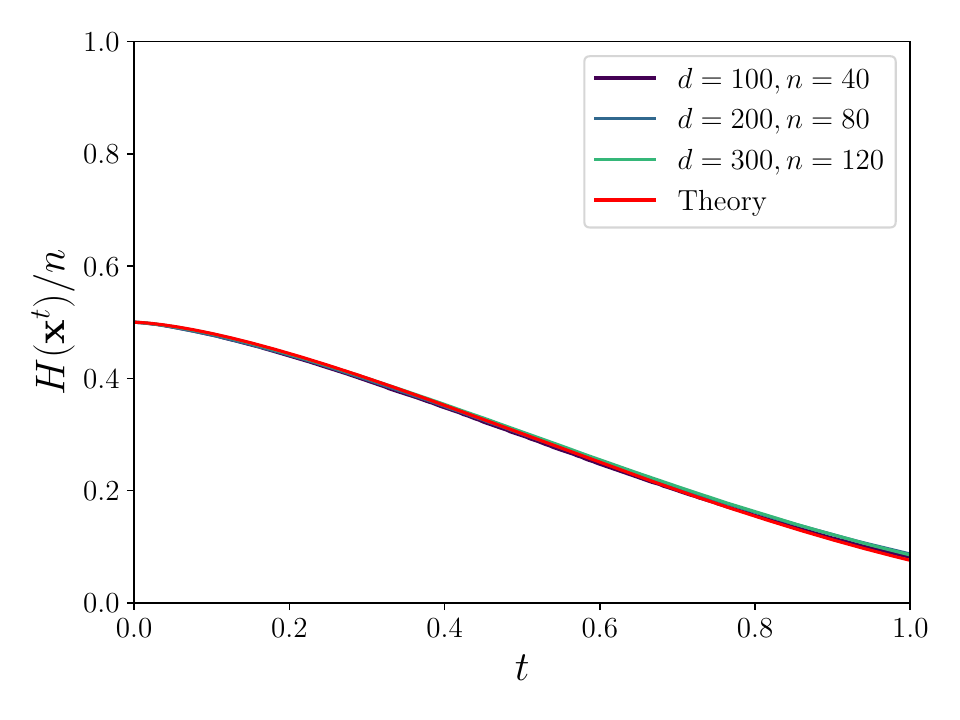}
\includegraphics[width=0.5\linewidth]{./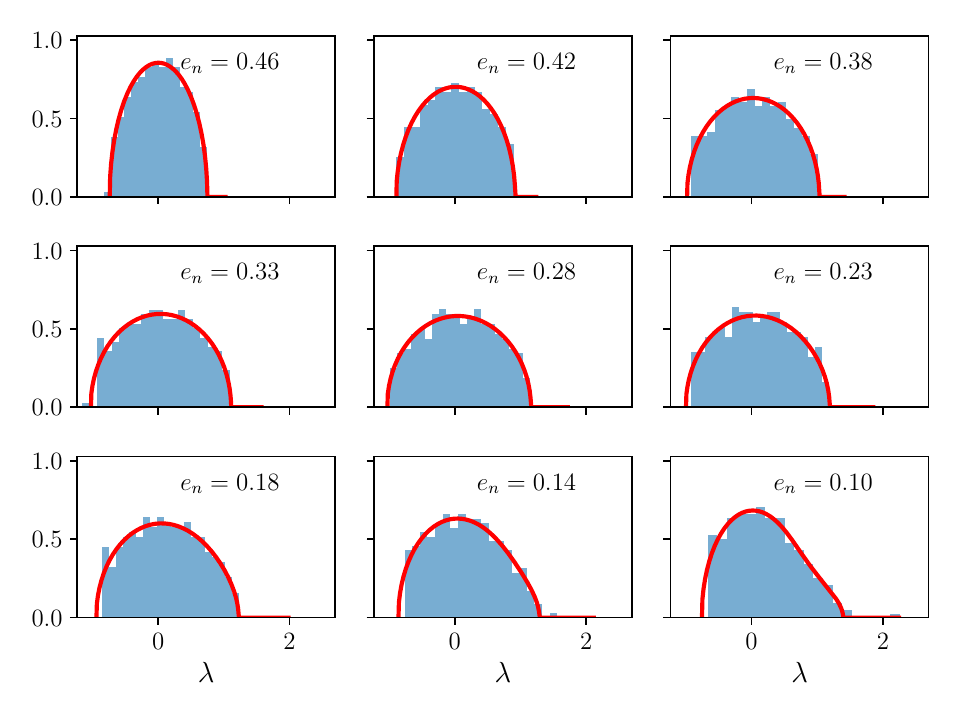}
\caption{Trajectory of Hessian descent, for $\phi(t) =1+ a \he_3(t)$, with $a=1/4$. 
Left, energy as a function of time (radius):  $\alpha =0.4$, $m=2d$, $K=200$. 
Red lines are theoretical predictions for the
algorithm evolution.
Right: Empirical spectral distribution of the Hessian along Hessian descent trajectory
with theoretical predictions as red lines. (Here $d=300$, $n=120$, $m=600$.)}
		\label{fig:a025_third_alpha04}
\end{figure}

\begin{figure}
\includegraphics[width=0.5\linewidth]{./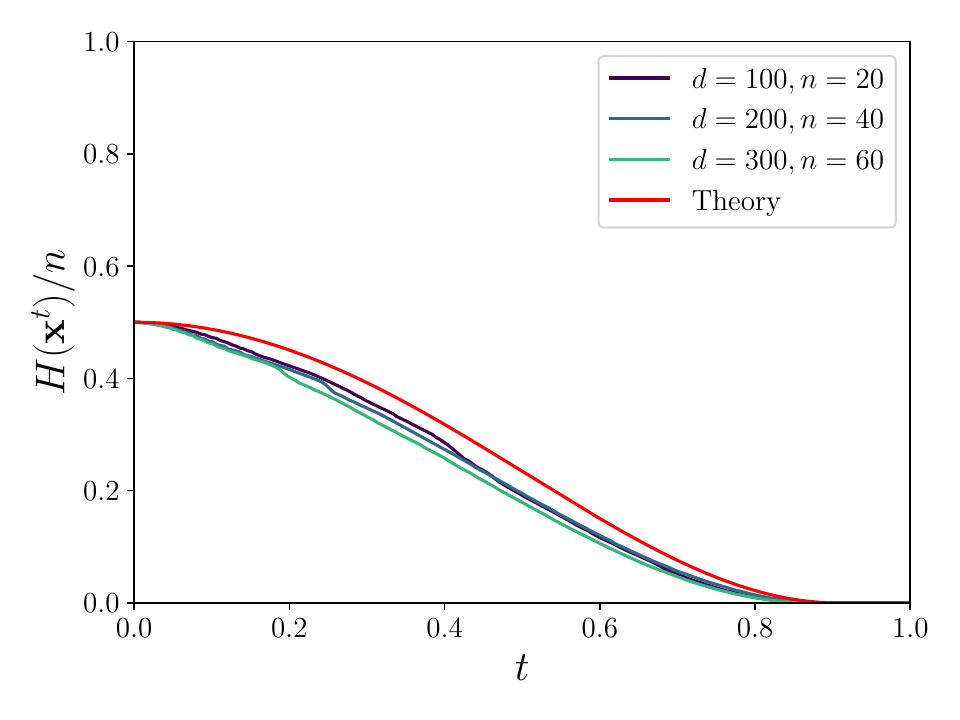}
\includegraphics[width=0.5\linewidth]{./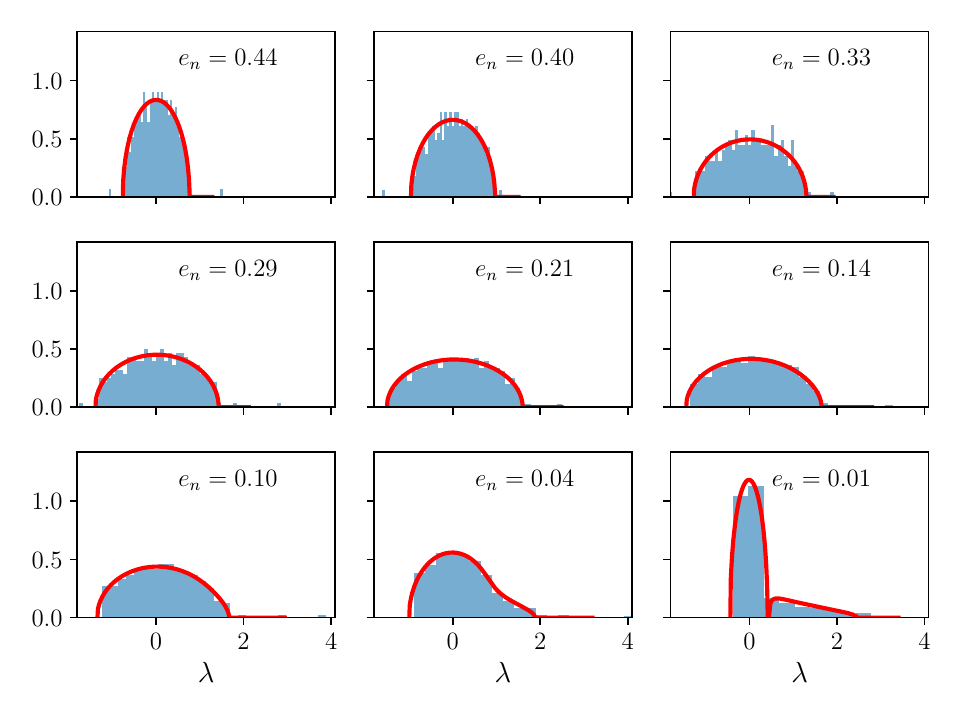 }
\caption{Trajectory of Hessian descent, \emph{with optimally signed steps},
for $\varphi(t) = 1+a \he_4(t)$, with $a=0.33$.
Left, energy as a function of time (radius): $\alpha=0.2$ $m=16d$, $K=200$. 
Red lines are theoretical predictions for the
algorithm evolution.
Right: Empirical spectral distribution of the Hessian along Hessian descent trajectory.
(Here  $d=300$, $n=60$, $m=4800$).}
		\label{fig:a033_fourth_alpha02_OS}
\end{figure}

\begin{figure}
\includegraphics[width=0.5\linewidth]{./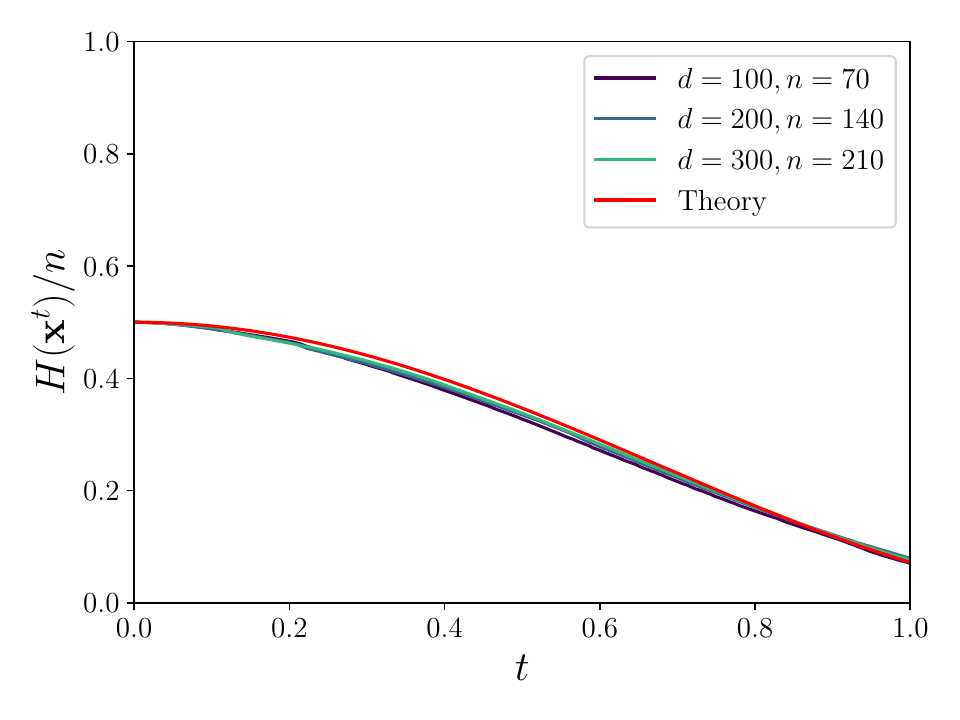}
\includegraphics[width=0.5\linewidth]{./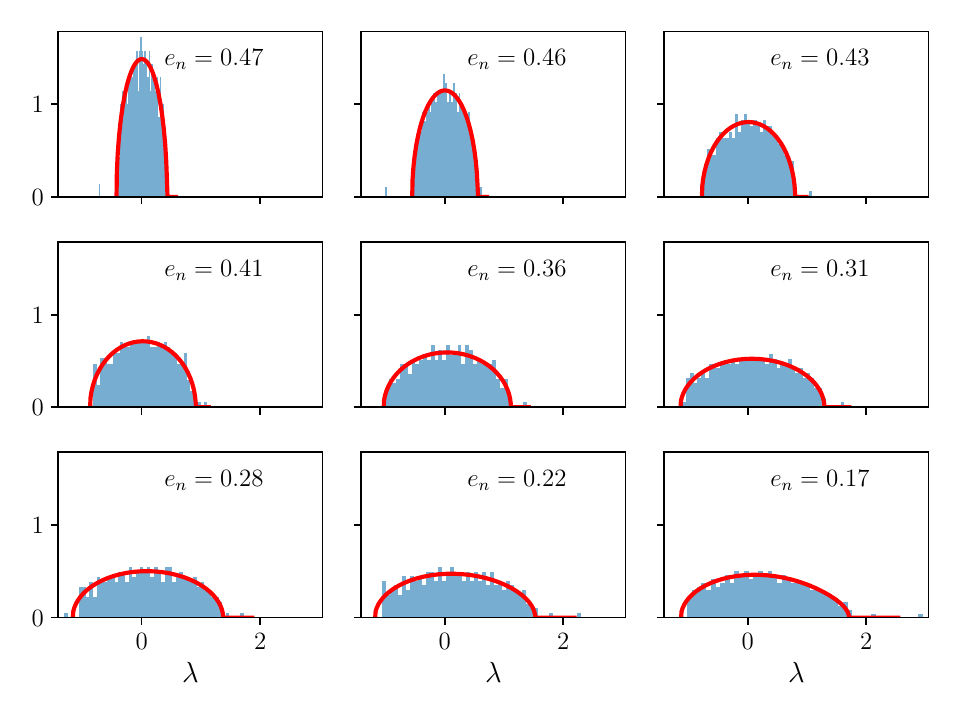}
\caption{Trajectory of Hessian descent, \emph{with optimally signed steps},
for $\varphi(t) = 1+a \he_4(t)$, with $a=0.33$.
Red lines are theoretical predictions for the
algorithm evolution.
Left, energy as a function of time (radius): $\alpha=0.7$, $m=8d$, $K=200$.
Right: Empirical spectral distribution of the Hessian along Hessian descent trajectory.
(Here  $d=300$, $n=210$, $m=4800$.)}
		\label{fig:a033_fourth_alpha07_OS}
\end{figure}

\begin{figure}
\includegraphics[width=0.5\linewidth]{./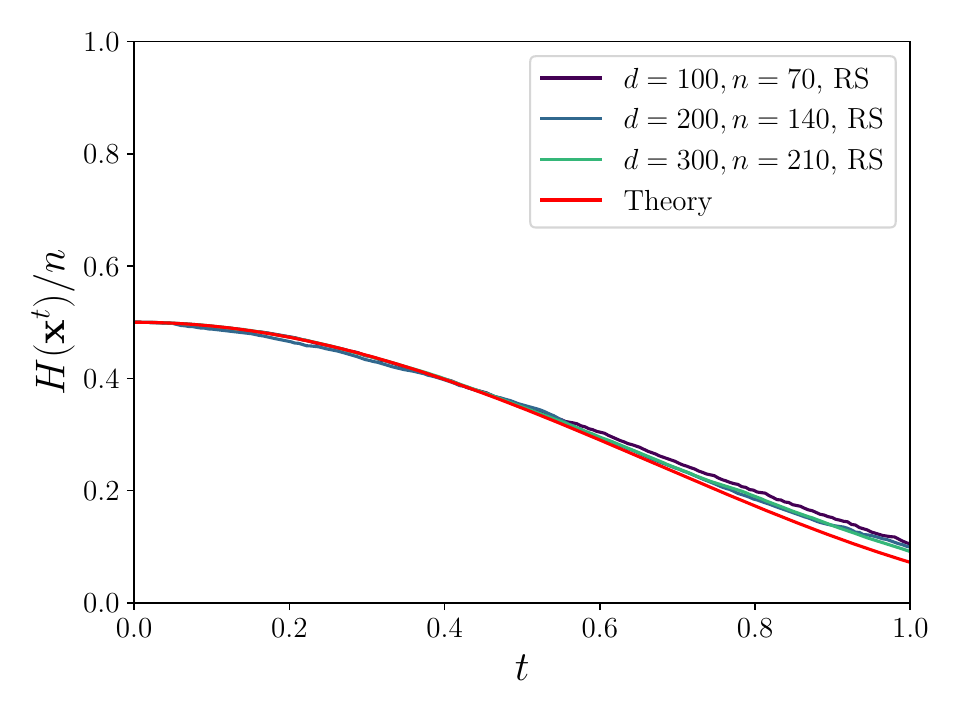}
\includegraphics[width=0.5\linewidth]{./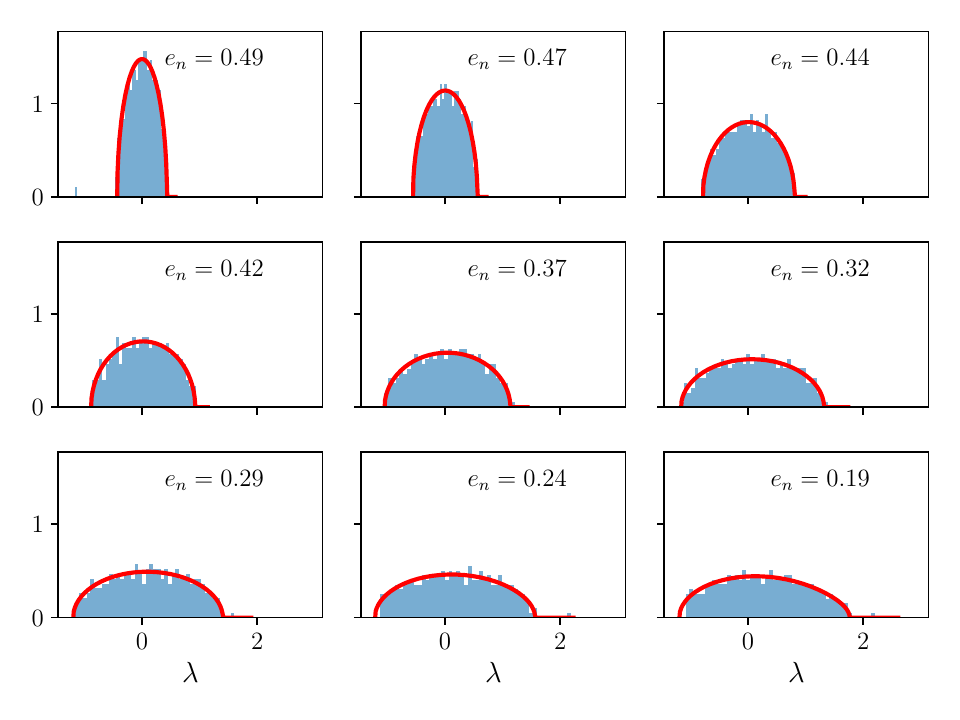 }
\caption{Trajectory of Hessian descent, \emph{with random signed steps},
for $\phi(t) = 1+a \he_4(t)$, with $a=0.33$.
Red lines are theoretical predictions for the
algorithm evolution.
Left, energy as a function of time (radius): $\alpha=0.7$, $m=8d$, $K=200$.
Right: Empirical spectral distribution of the Hessian along Hessian descent trajectory.
(Here  $d=300$, $n=210$, $m=4800$.)}
		\label{fig:a033_fourth_alpha07_RS}
\end{figure}

\subsection{Sensitivity phase transition}
\label{app:SensitivityNumer}

\begin{figure}
	\hspace{-1cm}
		\includegraphics[width=0.5\linewidth]{./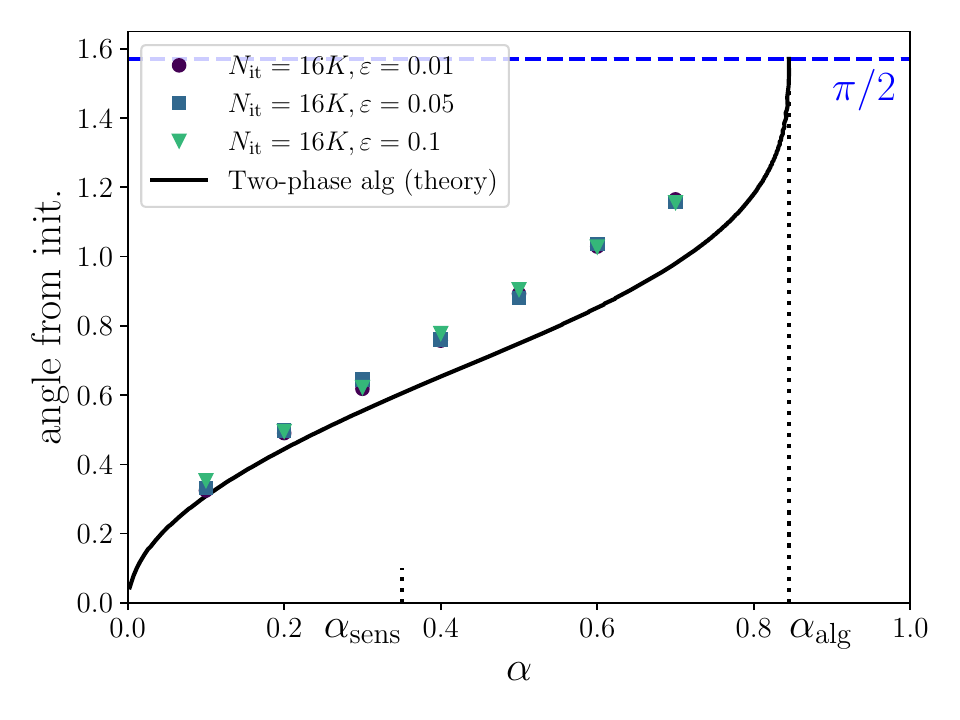}
			\includegraphics[width=0.5\linewidth]{./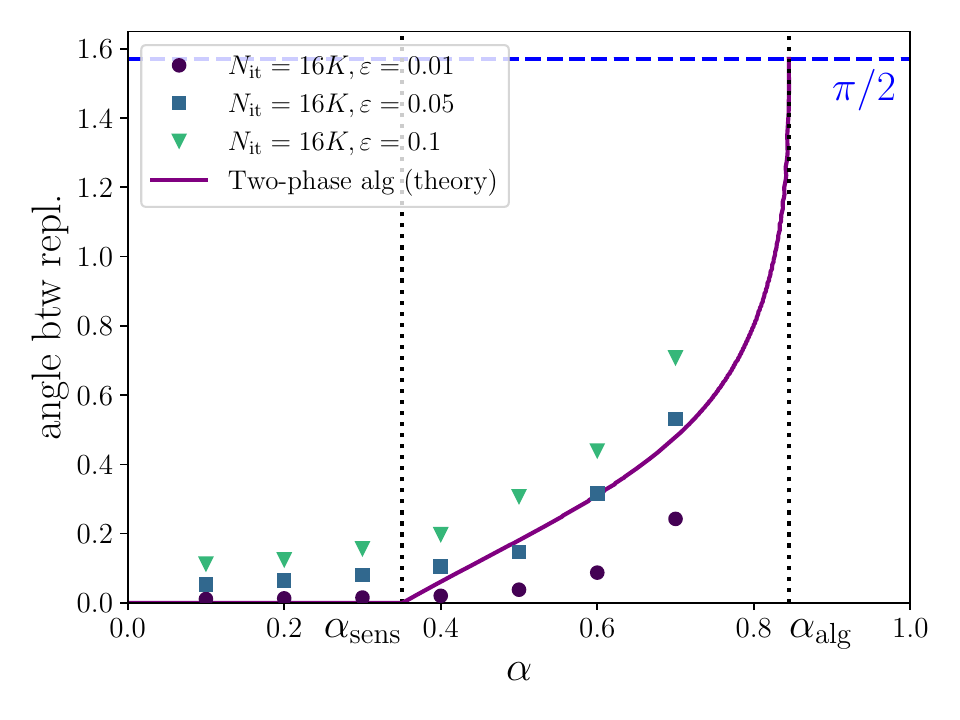}
		\caption{Geometry of SGD (symbols) and two-phase algorithm (continuous
		lines, asymptotic theory) in the AoR model $\phi(t) = 1+ a \he_3(t)$, with $a=0.33$. Here 
		$d=200$, $m=1600$,  $N_{\sit}=16000$, ${\rm lr}=$. Left: angular distance of the algorithm output
		from the random initialization $\bx_0\in \S^{d-1}$. Right: angular distance between 
		outputs between two runs of the optimization algorithm that differ by a perturbation
		of the initialization $\bx_0$.}
		\label{fig:SensitivityInit}
\end{figure}

\begin{figure}
	\hspace{-1cm}
		\includegraphics[width=0.5\linewidth]{./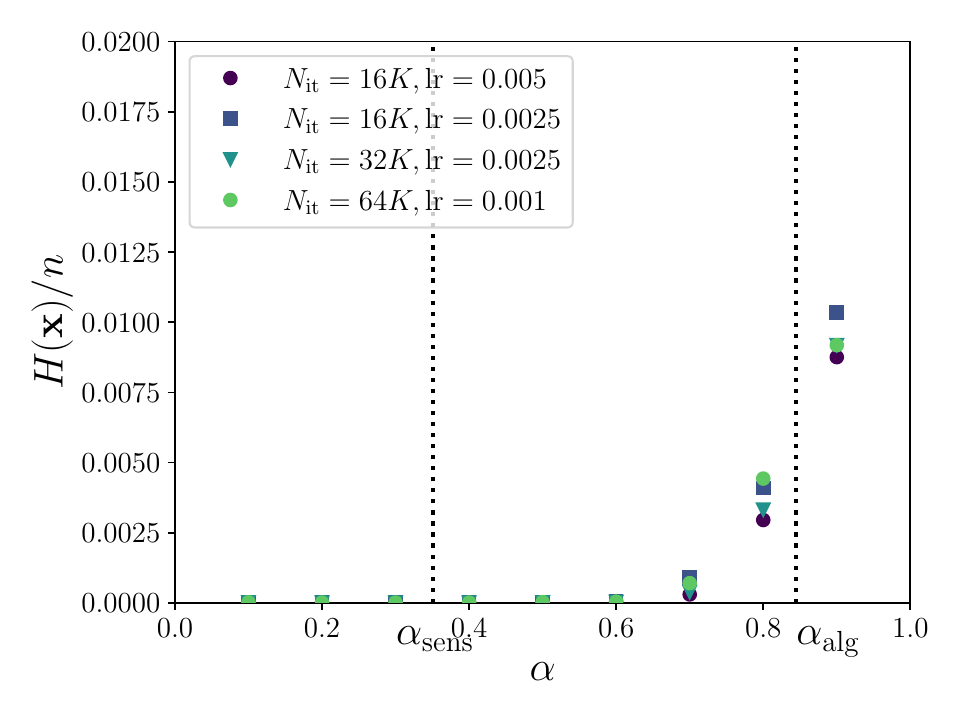}
			\includegraphics[width=0.5\linewidth]{./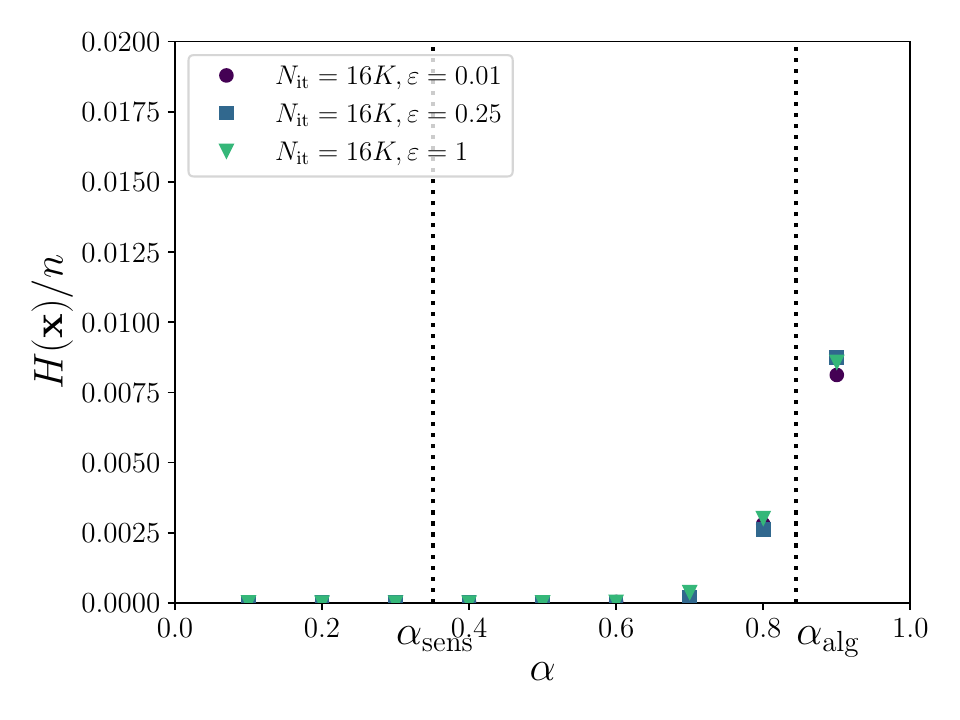}
		\caption{Energy achieved during the sensitivity experiments reported in Section 
		\ref{sec:SensitivityMain} and the present section, see text.}
		\label{fig:SensitivityEnergy}
\end{figure}

In Figure \ref{fig:SensitivityInit} we repeat the sensitivity experiment of Figure 
\ref{fig:Geometry} but change the nature of the perturbation. Namely, we 
generate approximate solutions $\bx_1^*$, $\bx_2^*$, by running SGD with the same
seed for the random number generator (hence, same ordering of the batches),
but different initializations $\bx_{0,1}$, $\bx_{0,2}$ so that 
$\<\bx_{0,1},\bx_{0,2}\>\approx \eps$.
More precisely, we let 
\begin{align}
(\bx_{1,0},\bx_{2,0})\sim \normal(0,\bQ_{\eps}\otimes \id/d)\, ,
\;\;\;\;\; \bQ_{\eps} := \left(\begin{matrix}1 & \eps\\
\eps & 1\end{matrix}\right)\, .
\end{align}
(In other words $\{(x_{1,0,i},x_{2,0,i})\}_{i\le d}\sim_{iid}\normal(0,\bQ_{\eps})$.)
We note in passing that $\bx_{1,0},\bx_{2,0}$ have not unit norm, but rather
$\|\bx_{1,0}\|,\|\bx_{2,0}\| = 1+O(d^{-1/2})$. However, since we run projected SGD, 
the vectors are rapidly projected to the unit sphere.

We observe that the results of this experiment are vary similar to the ones presented in
the main text, cf. Fig. \ref{fig:Geometry}. This suggests that the conclusions 
in the main text are robust to the choice of the perturbation.

In Figure \ref{fig:SensitivityEnergy} we report the average energy $H(\bx_*)/n$ achieved
in the two sets of experiments. We observe that the average of $H(\bx^*)/n)$ is below $0.001$
for $\alpha\in \{0.1,\dots,0.7\}$ awhile it becomes larger for $\alpha\in \{0.8,0.9\}$.
We thus exclude data for the last two values of $\alpha$ from 
Figs.~\ref{fig:Geometry} and \ref{fig:SensitivityInit}, since a sufficiently 
good near-optimum is not achieved in these cases. (This failure is not unexpected because 
of the proximity of $\alpha_{\salg}$.)

%*****************************************************
%
\section{Basic concentration estimates}
\label{sec:Basic}

This appendix contains some basic estimates for the process $(\bF(\bx))_{\bx\in\reals^d}$,
which will be useful in the proofs. Throughout $(F_{\ell})_{\ell\ge 0}$ are i.i.d.
Gaussian processes with $\E[F_{\ell}(\bx) F_{\ell}(\by)] = \xi(\<\bx,\by\>)$.

We begin by bounding the expected maximum. Notice that the limiting
value of this quantity is exactly given by Parisi's formula, 
cf.~Eq.~\eqref{eq:SphericalVariational}. However, we find it convenient to derive some explicit estimates.
\begin{proposition}\label{propo:MaxValue}
There exist absolute constants $C_1, C_2$ such that (writing $\log_+(t):=\max(1,\log t)$)
\begin{align}
C_1\sqrt{\frac{\xi'(1)^2}{\xi(1)\xi''(1)}
\log_+ \frac{\xi''(1)}{\xi'(1)}} \le \frac{1}{\sqrt{d\xi(1)}}\E\max_{\bx\in\S^{d-1}} F_1(\bx)\le  
C_2\sqrt{ \log_+ \frac{\xi'(1)}{\xi(1)}}\, .
\end{align}
The same bounds hold if the maximum is taken over $\bx\in\Ball^d(1)$.
\end{proposition}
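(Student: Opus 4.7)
The overall plan is to apply the standard chaining machinery for suprema of centered Gaussian processes, using the canonical (intrinsic) Gaussian pseudometric
\[
d_F(\bx,\by) := \sqrt{\E[(F_1(\bx)-F_1(\by))^2]} = \sqrt{2\xi(1) - 2\xi(\langle\bx,\by\rangle)}
\]
on $\S^{d-1}$. The upper bound will follow from Dudley's entropy integral, and the lower bound from Sudakov's minoration applied at a carefully chosen scale. A structural fact used throughout is that, since $\xi_k\ge 0$ for all $k$, the function $\xi$ is convex and non-decreasing on $[0,1]$; in particular $\xi(1)-\xi(q)\le \xi'(1)(1-q)$ for every $q\in[-1,1]$.

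For the upper bound, combining the above with the identity $1-q = \|\bx-\by\|_2^2/2$ on $\S^{d-1}$ yields the Lipschitz estimate $d_F(\bx,\by)\le \sqrt{\xi'(1)}\,\|\bx-\by\|_2$, hence $N(\S^{d-1},d_F,\eps)\le (C\sqrt{\xi'(1)}/\eps)^d$ for $\eps\le \sqrt{\xi'(1)}$. Since $\xi\ge 0$ on $[-1,1]$, the $d_F$-diameter of $\S^{d-1}$ is at most $2\sqrt{\xi(1)}$, so Dudley's integral over $[0,2\sqrt{\xi(1)}]$, after the change of variables $\eps=\sqrt{\xi(1)}\,v$, reduces to an integral of the form $\int_0^2\sqrt{\log(A/v)}\,dv$ with $A\asymp \sqrt{\xi'(1)/\xi(1)}$, which is bounded by $C\sqrt{\log_+ A}$. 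This yields the claimed upper bound.

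For the lower bound, Sudakov's minoration will be applied at the scale where the Taylor expansion
\[
d_F(\bx,\by)^2 = 2\xi'(1)(1-q) - \xi''(1)(1-q)^2 + O\bigl((1-q)^3\bigr)\,,\qquad q=\langle\bx,\by\rangle,
\]
is dominated by its linear term. Concretely, taking $1-q_* := c_0\,\xi'(1)/\xi''(1)$ (equivalently, Euclidean separation $r_*^2 := 2c_0\,\xi'(1)/\xi''(1)$) for an absolute constant $c_0>0$ small enough forces $d_F(\bx,\by)^2\ge c_1\,\xi'(1)^2/\xi''(1)$ whenever $\|\bx-\by\|_2\ge r_*$. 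Since $r_*\le 1$, the Euclidean packing number satisfies $\log N(\S^{d-1},\|\cdot\|_2, r_*)\gtrsim d\log_+(1/r_*)\asymp d\log_+(\xi''(1)/\xi'(1))$, and Sudakov's inequality then gives
\[
\E\max_{\bx\in\S^{d-1}} F_1(\bx) \;\gtrsim\; \sqrt{\frac{d\,\xi'(1)^2}{\xi''(1)}\,\log_+\!\frac{\xi''(1)}{\xi'(1)}}\,,
\]
which, after dividing by $\sqrt{d\xi(1)}$, is the claimed lower bound. The step I expect to require the most care is verifying that the quadratic correction is genuinely a strict fraction of the linear term at the chosen scale; this requires uniform control of the remainder of the Taylor expansion, which reduces to a direct bound on $\xi'''$ on $[0,1]$ together with an appropriate choice of $c_0$.

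Finally, the extension from $\S^{d-1}$ to $\Ball^d(1)$: the lower bound is immediate from $\S^{d-1}\subset \Ball^d(1)$. For the upper bound the cleanest route is to cover $\Ball^d(1)$ by $O(\log d)$ spherical shells of geometrically spaced radii $r_j$, apply the sphere estimate on each (noting that the rescaled process $\sqrt{\xi(r_j^2)}\,\tilde F_1(\bx/r_j)$ on $\S^{d-1}$ has covariance $\tilde\xi(q) := \xi(r_j^2\, q)/\xi(r_j^2)$, whose ratio $\tilde\xi'(1)/\tilde\xi(1) = r_j^2\,\xi'(r_j^2)/\xi(r_j^2)$ is controlled by $\xi'(1)/\xi(1)$ via the convexity of $\xi$), and interpolate between shells by Gaussian concentration. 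This preserves the claimed form up to an absolute constant.
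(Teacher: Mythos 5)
Your strategy coincides with the paper's: Dudley's entropy integral via the Lipschitz estimate $d_F(\bx,\by)\le \sqrt{\xi'(1)}\,\|\bx-\by\|_2$ for the upper bound, and Sudakov's minoration at the scale $1-\<\bx,\by\>\asymp \xi'(1)/\xi''(1)$ for the lower bound. The upper bound and the shell decomposition for $\Ball^d(1)$ are fine (the paper is in fact terser than you are on the ball extension).

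The one step you yourself flag as delicate is, however, justified incorrectly as written. You propose to certify $d_F(\bx,\by)^2\ge c_1\xi'(1)^2/\xi''(1)$ at separation $1-q_*=c_0\,\xi'(1)/\xi''(1)$ by expanding to third order and absorbing an $O((1-q)^3)$ remainder using ``a direct bound on $\xi'''$ on $[0,1]$ together with an appropriate choice of $c_0$.'' A magnitude bound on the cubic remainder compares to the linear term as $\xi'''(1)(1-q_*)^2/\xi'(1)\asymp c_0^2\,\xi'''(1)\xi'(1)/\xi''(1)^2$, and this ratio is \emph{not} bounded uniformly in $\xi$: for $\xi(t)=t+K^{-2}t^K$ one has $\xi'(1)\asymp\xi''(1)\asymp 1$ but $\xi'''(1)\asymp K$, so no absolute $c_0$ works and the claimed absolute constants $C_1$ are lost. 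The inequality you need is nevertheless true, for a simpler reason that uses no information about $\xi'''$ beyond its sign: writing the second-order Taylor formula with Lagrange remainder, for $t\in[0,1]$ there is $s\in(1-t,1)$ with $\xi(1)-\xi(1-t)=\xi'(1)t-\tfrac12\xi''(s)t^2\ge \xi'(1)t-\tfrac12\xi''(1)t^2$, since $\xi''$ is non-decreasing on $[0,1]$ (all Taylor coefficients of $\xi$ are nonnegative); equivalently, the third-order Lagrange remainder $\tfrac16\xi'''(s)(1-q)^3$ enters $\xi(1)-\xi(q)$ with a favorable sign and may simply be dropped. Taking $t=\xi'(1)/(2\xi''(1))$, capped at a constant when this exceeds one, gives $\xi(1)-\xi(1-t)\ge \tfrac38\,\xi'(1)^2/\xi''(1)$, which is exactly the paper's argument; with this substitution your proof goes through.
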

\begin{proof}
Given $\bx,\by\in\S^{d-1}$, we denote their Euclidean and rescaled canonical distances
by $r$ and $u$  respectively
\begin{align}
r & := \|\bx-\by\|_2 = \sqrt{2(1-\<\bx,\by\>)}\, ,\\
u & := \sqrt{\frac{1}{\xi(1)}\E\big\{\big[F_1(\bx)-F_1(\by)\big]^2\big\}}
= \sqrt{2\Big(1-\frac{\xi(\<\bx,\by\>)}{\xi(1)}\Big)}\, .\label{eq:u}
\end{align}
We denote by $r(u)$ the (strictly increasing) function that maps $u$ to $r$, namely
the unique solution of the equation
\begin{align}
1-\frac{u^2}{2}=\frac{\xi(1-r^2/2)}{\xi(1)} \, .
\end{align}
Since $\xi'(t)\leq \xi'(1)$ on $[0,1]$, %Considering the even and odd
%parts of $\xi$ separately and using convexity of $\xi$ on $[0,1]$), we obtain
%
\begin{align}
\frac{\xi(1-r^2/2)}{\xi(1)} \ge 1-\frac{\xi'(1)}{\xi(1)}\cdot \frac{r^2}{2}\, ,
\end{align}
whence 
\begin{align}
r(u)\ge \sqrt{\frac{\xi(1)}{\xi'(1)}}\, u\, .
\end{align}
Let $N_d(r)\le [(10/r)\vee 1]^d$ (respectively, $N_d^{F_1}(r)$) be the covering number of $\S^{d-1}$ w.r.t. the Euclidean distance (respectively, the canonical distance of $F_1$). Noting the $1/\xi(1)$ factor in \eqref{eq:u}, we have that
$$N_d^{F_1}(\sqrt{\xi(1)}u)=  N_d(r(u))\le N_d\bigg(\sqrt{\frac{\xi(1)}{\xi'(1)}}u\bigg)\le \bigg(\sqrt{\frac{\xi'(1)}{\xi(1)}}\frac{10}{u}\vee 1\bigg)^d.$$
Finally notice that, under the canonical distance, ${\rm diam}_{F_1}(\S^{d-1})\le 2\sqrt{\xi(1)}$.
By Dudley's inequality, %letting $N_d(r)\le [(10/r)\vee 1]^d$ be the covering number of $\S^{d-1}$,
we have
\begin{align*}
\E\max_{\bx\in\S^{d-1}} F_1(\bx)&\le 24 \int_0^{2\sqrt{\xi(1)}} \sqrt{\log N_d^{F_1}(u)}\, \de u\\
&\le 24\sqrt{\xi(1)} 
\int_0^2 \sqrt{\log N_d^{F_1}(\sqrt{\xi(1)}u)}\, \de u\\
& \le 24\sqrt{d\xi(1)} \int_0^2 \sqrt{\log_+ \Big(\sqrt{\frac{\xi'(1)}{\xi(1)}}\cdot \frac{10}{u}
\Big)}\, \de u\\
& \le C \sqrt{d\xi(1)\log_+\frac{\xi'(1)}{\xi(1)}}\, .
\end{align*}
This proves the desired upper bound.

In order to prove the lower bound, we use Sudakov's inequality.  Recall that the $r$-packing number is lower bounded by the $2r$-covering number. 
For any  $0<r<1$, letting $t=r^2/2$, we have
\begin{align*}
\E\max_{\bx\in\S^{d-1}} F_1(\bx) & \ge C'
\sqrt{\big(\xi(1)-\xi(1-(r^2/2)\big) \log N_d(2r) }\\
& \ge C' \sqrt{d\big(\xi(1) -\xi(1-t)\big) \log_+ \frac{1}{t}} \\
& \ge C' \sqrt{d\big(\xi'(1)t-\xi''(1)t^2/2\big)   \log_+ \frac{1}{t}  }\, ,
\end{align*}
where $C'>0$ is a constant that may change from line to line and where in the last step we used the intermediate value theorem, and the fact that 
$\xi''(s)\le \xi''(1)$ for $s\le 1$.
The desired inequality follows by choosing $t = \xi'(1)/(2\xi''(1))$.
\end{proof}
\begin{remark}
Let $L$ be an integer-valued random variable with probability distribution $\prob(L=\ell)=\xi_{\ell}/\xi(1)$.
Then the upper and lower bounds of the last proposition are within a constant provided 
$\E[L^2]\le C\E[L]^2$.
\end{remark}

We will next establish upper bounds on the derivatives of the process $F_\ell(\,\cdot\,)$.
Before doing it, we state a useful lemma.
\begin{lemma}\label{lemma:SmallDiam}
Let $(Z(\bx,\bv): \; \bx \in\Ball^d(1),\bv\in\S^{d-1})$ be a continuous centered Gaussian process, 
and assume that, for some $r_0>0$, the following holds for all
sets $A\subseteq \Ball^d(1)$:
\begin{align}
{\rm diam}(A)\le r_0 \;\;\Rightarrow\;\; \E[\sup_{\bx\in A, \bv\in\S^{d-1}}Z(\bx,\bv)]\le 
M(r_0)\, .
\end{align}
Then there exists a universal constant $C$ such that 
\begin{align}
&\E[\sup_{\bx\in \Ball^d(1), \bv\in\S^{d-1}}Z(\bx,\bv)]\le 
M(r_0)+C \sqrt{ d V_*\log_+(1/r_0)}\, ,\\
&V_*:=\sup_{\bx\in\Ball^d(1), \bv\in\S^{d-1}} \Var(Z(\bx,\bv))\, .
\end{align}
\end{lemma}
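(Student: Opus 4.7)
\medskip

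\noindent\textbf{Proof plan.} My plan is a standard chaining/covering argument, where the small-diameter hypothesis controls the ``inside'' of each piece while Gaussian concentration controls the maximum over the finite covering.

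First, I would fix an $(r_0/2)$-net $\{\bx_i\}_{i=1}^N$ of $\Ball^d(1)$ in the Euclidean metric. By the standard volume argument, such a net exists with $N \le (6/r_0)^d$ when $r_0\le 1$ (and $N=1$ suffices otherwise, in which case the inequality is trivial from the hypothesis). Set $B_i := \Ball^d(1) \cap \{\bx : \|\bx-\bx_i\|_2 \le r_0/2\}$ so that $\diam(B_i)\le r_0$ and $\bigcup_i B_i = \Ball^d(1)$. Defining
\[
X_i := \sup_{\bx\in B_i,\,\bv\in\S^{d-1}} Z(\bx,\bv),
\]
the hypothesis gives $\E X_i \le M(r_0)$ for every $i$, and by construction
\[
\sup_{\bx\in\Ball^d(1),\,\bv\in\S^{d-1}} Z(\bx,\bv)=\max_{i\le N} X_i.
\]

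Next, I would invoke Gaussian concentration for each $X_i$. Each $X_i$ is the supremum of a centered continuous Gaussian process whose pointwise variance is bounded by $V_*$. By the Borell--Tsirelson--Ibragimov--Sudakov inequality, $X_i - \E X_i$ is $V_*$-sub-Gaussian; in particular $\prob(|X_i - \E X_i|\ge t) \le 2e^{-t^2/(2V_*)}$. A standard maximal inequality for $N$ (not necessarily independent) sub-Gaussian variables then yields
\[
\E\Big[\max_{i\le N}\big(X_i-\E X_i\big)\Big] \le C\sqrt{V_*\log N} \le C\sqrt{V_*\cdot d\log(6/r_0)}\le C'\sqrt{dV_*\log_+(1/r_0)}.
\]

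Combining the two bounds gives
\[
\E\Big[\sup_{\bx\in\Ball^d(1),\,\bv\in\S^{d-1}}Z(\bx,\bv)\Big]
= \E\Big[\max_{i\le N} X_i\Big]
\le \max_{i\le N}\E X_i + \E\Big[\max_{i\le N}(X_i-\E X_i)\Big]
\le M(r_0) + C'\sqrt{dV_*\log_+(1/r_0)},
\]
which is the claim. The only slightly subtle point is justifying the sub-Gaussian maximal inequality when the $X_i$ are not independent; but this is standard, since for any $\lambda>0$ one has $\E\max_i(X_i-\E X_i)\le \lambda^{-1}\log\sum_i \E e^{\lambda(X_i-\E X_i)}\le \lambda^{-1}(\log N + \lambda^2 V_*/2)$, and optimizing in $\lambda$ yields the claimed $\sqrt{V_*\log N}$ bound. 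No step appears genuinely difficult here; the main thing to be careful about is that the net is over $\bx$ only and that $\bv\in\S^{d-1}$ is retained in full inside each $X_i$, which is why the hypothesis is phrased so as to take the sup over $\bv$ together with $\bx\in A$.
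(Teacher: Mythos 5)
Your proof is correct and follows essentially the same route as the paper: cover $\Ball^d(1)$ by $N\le e^{Cd\log_+(1/r_0)}$ pieces of diameter at most $r_0$, apply the hypothesis on each piece, and control the maximum of the $N$ suprema via Borell--TIS concentration. The only cosmetic difference is that the paper states a tail bound via a union bound while you bound the expectation directly through the sub-Gaussian moment-generating-function argument; these are interchangeable.
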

\begin{proof}
Let $(A_{\ell}:\; \ell\le N)$ be a covering of $\Ball^{d}(1)$ with sets of diameter 
$\diam(A_{\ell})\le r_0$, such that $\log N\le C d\log_+(1/r_0)$. Define
\begin{align}
Z_{\ell}:=\sup_{\bx\in A_{\ell}, \bv\in\S^{d-1}}Z(\bx,\bv)\, .
\end{align}
We then have
\begin{align*}
\prob\Big(\sup_{\bx\in \Ball^d(1), \bv\in\S^{d-1}}Z(\bx,\bv)\ge M(r_0)+t\Big)
&= \prob\Big(\max_{\ell\le N}Z_{\ell}\ge M(r_0)+t\Big)\\
&\le N\cdot \max_{\ell\le N}\prob\Big(Z_{\ell}\ge \E Z_{\ell}+t\Big)\\
& \le \exp\Big\{ C d\log_+(1/r_0) - \frac{t^2}{2V_*}\Big\}\, ,
\end{align*}
where the last step follows from the Borell-TIS inequality. This proves the claim.
\end{proof}

\begin{lemma}\label{lemma:Derivatives}
Recall that $\Proj_{\Ts,\bx}$ denotes the projector onto the tangent space to the sphere 
of radius $\|\bx\|_2$ at $\bx$. Write, as above, $\log_+(t)=\max(\log t;1)$
Then there exist an absolute constant $C$ such that, letting $\hbx:=\bx/\|\bx\|_2$, we have 
\begin{align}
\E\max_{\bx\in\Ball^{d}(1)} \|\Proj_{\Ts,\bx}\nabla F_1(\bx)\|_2& \le  
C\sqrt{d\xi'(1) \log_+ \frac{\xi''(1)}{\xi'(1)}}\, ,\label{eq:PerpGradBasic}\\
\E\max_{\bx\in\Ball^{d}(1)} \big|\<\hbx,\nabla F_1(\bx)\>\big|& \le  
C\sqrt{d\xi''(1) \log_+ \frac{\xi'''(1)}{\xi''(1)}}\, ,\label{eq:ParGradBasic}\\
\E\max_{\bx\in\Ball^{d}(1)} \|\Proj_{\Ts,\bx}\nabla^2 F_1(\bx)\Proj_{\Ts,\bx}\|_{\op}& \le  
C\sqrt{d\xi''(1) \log_+ \frac{\xi^{(3)}(1)}{\xi''(1)}}\, ,\label{eq:HessGradBasic1}\\
\E\max_{\bx\in\Ball^{d}(1)} \|\Proj_{\Ts,\bx}\nabla^2 F_1(\bx)\hbx\|_{\op}& \le  
C\sqrt{d\xi^{(3)}(1) \log_+ \frac{\xi^{(4)}(1)}{\xi^{(3)}(1)}}\, ,\label{eq:HessGradBasic2}\\
\E\max_{\bx\in\Ball^{d}(1)} \<\hbx,\nabla^2 F_1(\bx)\hbx\>& \le  
C\sqrt{d\xi^{(4)}(1) \log_+ \frac{\xi^{(5)}(1)}{\xi^{(4)}(1)}}\,. \label{eq:HessGradBasic3}
\end{align}
(Here $\xi^{(\ell)}$ denotes the $\ell$-th derivative of $\xi$.)
\end{lemma}
\begin{proof}
All these inequalities are analogous to the upper bound in 
Proposition \ref{propo:MaxValue}. We will repeatedly use Lemma
\ref{lemma:SmallDiam} to restrict ourselves to sets of some small diameter $r_0$.

Consider Eq.~\eqref{eq:PerpGradBasic}. Define the following Gaussian process
indexed by $\bx\in \Ball^{d}(1)$, $\bv\in \Ts_{\bx}$, $\|\bv\|_2=1$
\begin{align}
G_{\perp}(\bv,\bx) := \<\bv ,\nabla F_1(\bx)\>\, .
\end{align}
We have 
\begin{align}
\E\big\{G_{\perp}(\bv_1,\bx_1)G_{\perp}(\bv_2,\bx_2)\big\} = 
\<\bv_1,\bx_2\>
\<\bx_1,\bv_2\>\,\xi''(\<\bx_1,\bx_2\>)+\<\bv_1,\bv_2\>\,\xi'(\<\bx_1,\bx_2\>)\, .
 \end{align}
 This, and other covariance calculations below, can be checked either by writing the derivatives of $F_1$ explicitly or using that the order of taking derivatives and expectations can be interchanged. 
 Letting $d_{G,\perp}(\bv_1,\bx_1;\bv_2,\bx_2)$ denote the canonical distance associated to
 this process, we have
 \begin{align*}
 d_{G,\perp}(\bv_1,\bx_1;\bv_2,\bx_2)^2 &=
\xi'(\< \bx_1,\bx_2\>)\|\bv_1-\bv_2\|_2^2+\xi'(\|\bx_1\|^2_2)-2\xi'(\<\bx_1,\bx_2\>)+
 \xi'(\|\bx_2\|^2_2)\\
 & \phantom{AA}+(\<\bv_1,\bx_2\>-\<\bv_2,\bx_1\>)^2\xi''(\<\bx_1,\bx_2\>) \\
 &\phantom{AA}- 
 \<\bv_1,\bx_2\>^2\xi''(\<\bx_1,\bx_2\>)- \<\bv_1,\bx_2\>^2\xi''(\<\bx_1,\bx_2\>)\\
 &\le \xi'(1)\|\bv_1-\bv_2\|_2^2+\xi'(\|\bx_1\|^2_2)-2\xi'(\<\bx_1,\bx_2\>)+
 \xi'(\|\bx_2\|^2_2)\\
 &\phantom{AA} +\xi''(1) \|\bx_1-\bx_2\|^2\|\bv_1-\bv_2\|^2 \\
 &\le 2 \xi'(1)\|\bv_1-\bv_2\|_2^2 +\xi'(\|\bx_1\|^2_2)-2\xi'(\<\bx_1,\bx_2\>)+
 \xi'(\|\bx_2\|^2_2)\, ,
\end{align*}
where the last inequality follows for $\|\bx_1-\bx_2\|_2\le r_0:=\xi'(1)/\xi''(1)$.
Therefore, for any set $A\subseteq \Ball^d(1)$, $\diam(A)\le r_0$, using the Sudakov-Fernique inequality we
have
\begin{align}
\E\max_{\bx\in A}\max_{\bv\in \Ts_{\bx}, \|\bv\|_2=1}
G_{\perp}(\bv,\bx)\le C\E\max_{\|\bv\|_2=1} G_1(\bv) + \E\max_{\|\bx\|_2\le 1} G_2(\bx)\, ,
\end{align}
where $G_1$ is a process with canonical distance  $\xi'(1)\|\bv_1-\bv_2\|_2^2$
and $G_2$  is a process with canonical distance  $\xi'(\|\bx_1\|^2_2)-2\xi'(\<\bx_1,\bx_2\>)+
 \xi'(\|\bx_2\|^2_2)$. 
 By applying the previous proposition to $G_2$ (with $\xi$ replaced by $\xi'$), we obtain
\begin{align}
\E\max_{\bx\in A}\max_{\bv\in \Ts_{\bx}, \|\bv\|_2=1}
G_{\perp}(\bv,\bx)\le C\sqrt{s\xi'(1)} +C\sqrt{d\xi'(1)\log_+ \frac{\xi''(1)}{\xi'(1)}}\, .
\end{align}
Finally, using Lemma \ref{lemma:SmallDiam}, we see that
\begin{align}
\E\max_{\bx\in \Ball^{d}(1)}\max_{\bv\in \Ts_{\bx}, \|\bv\|_2=1}
G_{\perp}(\bv,\bx)\le C\sqrt{d\xi'(1)\log_+ \frac{\xi''(1)}{\xi'(1)}}+ C\sqrt{d\xi'(1)\log_+(1/r_0)}
\, ,
\end{align}
which yields the bound of Eq.~\eqref{eq:PerpGradBasic}. 
 
 Next consider Eq.~\eqref{eq:ParGradBasic}. Recalling the representation
 \eqref{eq:Representation}, we have  
 \begin{align}
\<\hbx,\nabla F_1(\bx)\> = \frac{1}{\|\bx\|_2}\sum_{k\ge 0}k\sqrt{\xi_k}\sum_{j_1,\dots,j_k=1}^d 
G_{1,j_1\dots j_k}^{(k)}x_{j_1}\cdots x_{j_k}\, ,
\end{align}
and therefore
 \begin{align}
\E\{\<\hbx_1,\nabla F_1(\bx_1)\>\<\hbx_2,\nabla F_1(\bx_2)\>\big\} = 
\frac{1}{\|\bx_1\|_2\|\bx_2\|_2}[\<\bx_1,\bx_2\>^2\xi''(\<\bx_1,\bx_2\>)+\<\bx_1,\bx_2\>\xi'(\<\bx_1,\bx_2\>)]\, .
\end{align}
In other words, $\<\hbx,\nabla F_1(\bx)\>$ has the same structure as $F_1(\bx)$
provided we replace $\xi$ by $\xi''+\xi'$. Therefore the claim follows 
by argument in Proposition \ref{propo:MaxValue}.

The bounds \eqref{eq:HessGradBasic1}, \eqref{eq:HessGradBasic2}, \eqref{eq:HessGradBasic3}
 follow from similar arguments, and we limit ourselves to defining the relevant Gaussian process 
 and computing its covariance:
 \begin{itemize}
 \item For Eq.~\eqref{eq:HessGradBasic1}, we define $H_{\perp}(\bv,\bx)$ indexed
 by $\bx\in \Ball^{d}(1)$ and $\bv\in \Ts_{\bx}$ via
 \begin{align}
 H_{\perp}(\bv,\bx) := \<\bv,\nabla^2 F_1(\bx)\bv\>\, .
 \end{align}
 Its covariance is (denoting by $\xi^{(\ell)}$ the $\ell$-th derivative of $\xi$
 and letting $q:=\<\bx_1,\bx_2\>$)
 \begin{align*}
 \E\big\{ H_{\perp}(\bv_1,\bx_1) H_{\perp}(\bv_2,\bx_2)\big\}
 =&\, \xi^{(4)}(q) \, \<\bx_1,\bv_2\>^2 \<\bv_1,\bx_2\>^2+
 4 \, \xi^{(3)}(q) \<\bx_1,\bv_2\> \<\bv_1,\bx_2\>\<\bv_1,\bv_2\>\\
 &+
 2 \, \xi^{(2)}(q) q^2\<\bv_1,\bv_2\>^2\, .
 \end{align*}
 The associated canonical distance is given by
 \begin{align*}
 d_{H,\perp}(\bv_1,\bx_1;\bv_2,\bx_2)^2 &= \Delta_{1,2} +
 2\xi^{(2)}(\|\bx_1\|^2_2)-4\xi^{(2)}(\<\bx_1,\bx_2\>)+
 2\xi^{(2)}(\|\bx_2\|^2_2)\, ,
 \end{align*}
 where $\Delta_{1,2}$ can be bounded as follows for $\|\bx_1-\bx_2\|_2\le c_0$
 with $c_0$ a small absolute constant (recall that $\Proj_{\Ts,\bx_i}$ is the projector orthogonal to $\bx_i$)
 \begin{align*}
 \Delta_{1,2} & = -2\xi^{(4)}(q) \, \<\bx_1,\bv_2\>^2 \<\bv_1,\bx_2\>^2-8
  \, \xi^{(3)}(q) \<\bx_1,\bv_2\> \<\bv_1,\bx_2\>\<\bv_1,\bv_2\>\\
 &\phantom{=}+
 4 \, \xi^{(2)}(q) q^2(1-\<\bv_1,\bv_2\>^2)\\
 &\le 8\xi^{(3)}(q)
  \big|\<\Proj_{\Ts,\bx_2}\bx_1,\bv_1-\bv_2\>\<\Proj_{\Ts,\bx_1}\bx_2,\bv_1-\bv_2\>\big|\\
  &\phantom{=}+8\xi^{(2)}(q) \|\bv_1-\bv_2\|_2^2\\
  &\le 8\big(\xi^{(3)}(q)\|\bx_1-\bx_2\|_2^2+\xi^{(2)}(q)\big)\|\bv_1-\bv_2\|^2_2\\
  &\le 8\big(\xi^{(3)}(q)(1-q)+ \xi^{(2)}(q)\big)\|\bv_1-\bv_2\|^2_2\\
  &\le 8 \xi^{(2)}(1)\|\bv_1-\bv_2\|^2_2\, .
 \end{align*}
 Hence the proof of Eq.~\eqref{eq:HessGradBasic1} follows by applying Propopsition
 \ref{propo:MaxValue} and Lemma \ref{lemma:SmallDiam}.
 \item For Eq.~\eqref{eq:HessGradBasic2}, we define $H_{2}(\bv,\bx)$ indexed
 by $\bx\in \Ball^{d}(1)$ and $\bv\in \Ts_{\bx}$ via
 \begin{align}
 H_{2}(\bv,\bx) := \<\hbx,\nabla^2 F_1(\bx)\bv\>\, .
 \end{align}
 Its covariance is (here $q:=\<\bx_1,\bx_2\>$)
 \begin{align*}
 \E\big\{ H_{2}(\bv_1,\bx_1) H_{2}(\bv_2,\bx_2)\big\}=&
 \frac{\<\bv_1,\bv_2\>}{\|\bx_1\|_2\|\bx_2\|_2}\big[\xi^{(3)}(q)q^2+\xi^{(2)}(q)q\big]\\
 &+ \frac{\<\bv_1,\bx_2\>\<\bx_1,\bv_2\>}{\|\bx_1\|_2\|\bx_2\|_2}
 \big[\xi^{(4)}(q)q^2+3\xi^{(3)}(q)q
 +\xi^{(2)}(q)\big]\, .
 \end{align*}
 \item Finally, for \eqref{eq:HessGradBasic3} we define
 \begin{align*}
 H_3(\bx):= \<\hbx,\nabla^2 F_1(\bx)\hbx\> \, ,
 \end{align*}
and note that  (again $q:=\<\bx_1,\bx_2\>$)
 \begin{align*}
 \E\big\{ H_{3}(\bx_1) H_{3}(\bx_2)\big\}=& \frac{1}{\|\bx_1\|_2^2\|\bx_2\|_2^2}
 \big[\xi^{(4)}(q)q^4+4\xi^{(3)}(q)q^3+2\xi^{(2)}(q)q^2\big]\, .
 \end{align*}
 \end{itemize}
\end{proof}

\begin{proposition}\label{propo:OPNorm}
Let $\bD\bF(\bx)\in \reals^{n\times d}$ be the Jacobian of $\bF$ at $\bx$ and
$\bD^2\bF(\bx)\in \reals^{n\times d\times d}$ the tensor of second derivatives.
We equivalently view the latter as a linear operator 
$\bD^2\bF(\bx):\reals^{d\times d}\to \reals^n$, and 
write $\bD^2\bF(\bx)|_{\Ts\otimes \Ts}$ for its restriction to $\Ts_{\bx}\otimes \Ts_{\bx}$
and similarly for $\bD^2\bF(\bx)|_{\Ts\otimes \hbx}$ and $\bD^2\bF(\bx)|_{\hbx\otimes \hbx}$.

Then there exists an absolute constant $C$ such that
(using the notation $\log_+(t):=\max(\log(t);1)$ introduced above)
\begin{align}
\E\big[\max_{\bx\in\Ball^d(1)} \|\bD \bF(\bx)|_{\Ts}\|_{\op}\big]\le 
C\sqrt{(n\vee d) \, \xi'(1)\log_+\frac{\xi''(1)}{\xi'(1)} }\, ,\label{eq:JacobianBasicBound1}\\
\E\big[\max_{\bx\in\Ball^d(1)} \|\bD \bF(\bx)|_{\hbx}\|_{\op}\big]\le 
C\sqrt{(n\vee d) \, \xi^{''}(1)\log_+\frac{\xi^{(3)}(1)}{\xi''(1)} }\, ,\label{eq:JacobianBasicBound2}\\
\E\big[\max_{\bx\in\Ball^d(1)} \|\bD^2 \bF(\bx)|_{\Ts\otimes \Ts}\|_{\op} \big]\le 
C\sqrt{(n\vee d) \, \xi''(1)\log_+\frac{\xi^{(3)}(1)}{\xi''(1)} }\, ,\label{eq:DDFnorm1}\\
\E\big[\max_{\bx\in\Ball^d(1)} \|\bD^2 \bF(\bx)|_{\Ts\otimes \hbx}\|_{\op} \big]\le 
C\sqrt{(n\vee d)\, \xi^{(3)}(1)\log_+\frac{\xi^{(4)}(1)}{\xi^{(3)}(1)} }\, ,\label{eq:DDFnorm2}\\
\E\big[\max_{\bx\in\Ball^d(1)} \|\bD^2 \bF(\bx)|_{\hbx\otimes \hbx}\|_{\op} \big]\le 
C\sqrt{(n\vee d)\, \xi^{(4)}(1)\log_+\frac{\xi^{(5)}(1)}{\xi^{(4)}(1)} }\, .\label{eq:DDFnorm3}
\end{align}
\end{proposition}
\begin{proof}
All of these bounds follow from Lemma \ref{lemma:Derivatives} via the same 
argument. We will focus to be definite on the bound \eqref{eq:JacobianBasicBound1}.
We define the following Gaussian 
process indexed by $\bu\in \reals^n$, $\|\bu\|_2=1$,
 $\bx\in\Ball(1)$, $\bv\in\Ts_{\bx}$, $\|\bv\|_2=1$: 
\begin{align}
Z(\bu,\bv,\bx):=\<\bu,\bD \bF(\bx)\bv\>\, ,
\end{align}
and notice that of course 
\begin{align}
\max_{\bx\in\Ball^d(1)} \|\bD \bF(\bx)|_{\Ts}\|_{\op}=\max_{\bu\in\reals^n,\|\bu\|_2=1} 
\max_{\bv\in\Ts_{\bx},\|\bv\|_2=1}\max_{\bx\in\Ball^d(1)} Z(\bu,\bv,\bx)\, .
\end{align}
We next compute the canonical distance of  $Z$, to get 
\begin{align}
d_Z(\bu_1,\bv_1,\bx_1;\bu_2,\bv_2,\bx_2)^2 = 
\frac{1}{2}
\|\bu_1-\bu_2\|^2(\xi'(\|\bx_1\|^2)+\xi'(\|\bx_2\|^2))
+\<\bu_1,\bu_2\> d_{G,\perp}(\bv_1,\bx_1;\bv_2,\bx_2)^2\,  ,
 \end{align}
 where $d_{G,\perp}(\bv_1,\bx_1;\bv_2,\bx_2)^2$ is the canonical distance of
 the process $G_{\perp}(\bv,\bx) := \<\bv ,\nabla F_1(\bx)\>$
 (this was derived in the proof of Lemma~\ref{lemma:Derivatives} but will not be needed 
 here).  We bound this distance as

\begin{align}
d_Z(\bu_1,\bv_1,\bx_1;\bu_2,\bv_2,\bx_2)^2 \le
\xi'(1)
\|\bu_1-\bu_2\|^2
+d_{G,\perp}(\bv_1,\bx_1;\bv_2,\bx_2)^2\,  ,
 \end{align}
 Therefore by Sudakov-Fernique, letting $\bg\sim\normal(0,\id_{n})$,
 \begin{align}
 \E\big[\max_{\bx\in\Ball^d(1)} \|\bD \bF(\bx)|_{\Ts}\|_{\op}\big]\le 
 \E\big[\max_{\bu\in\S^{n-1}} \sqrt{\xi'(1)} \<\bg,\bu\>\big]+
 \E\max_{\bx\in\Ball^{d}(1)} \|\Proj_{\Ts,\bx}\nabla F_1(\bx)\|_2\, .
 \end{align}
 The bound \eqref{eq:JacobianBasicBound1} then follows from Lemma \ref{lemma:Derivatives}.
 The other bounds are proved analogously.
\end{proof}

We next use the above estimates to control the Lipschitz constant of $F$ and its derivatives.
Recall that, for $\bx_1,\bx_2\in\reals^d$, $\bx_i\neq \bzero$, we let $\bR_{\bx_1,\bx_2}\in\reals^{d\times d}$ 
be the rotation in the plane $\bx_1,\bx_2$ such that 
$\bR_{\bx_1,\bx_2}\bx_1/\|\bx_1\| = \bx_2/\|\bx_2\|$. Among the two rotations that leave the orthogonal
complement of $\bx_1,\bx_2$ unchanged, choose the one with smallest angle, and break ties arbitrarily.
(In other words, $\bR_{\bx_1,\bx_2}$ is the parallel transport on $\S^{d-1}$.)

\begin{remark}\label{rmk:MaxValBorell}
By Borell-TIS inequality, the bounds in Proposition \ref{propo:MaxValue}, Lemma \ref{lemma:Derivatives} and
Proposition \ref{propo:OPNorm} with modified constants also hold with probability at least $1-\exp(-C(\xi)d)$
for some $C(\xi)>0$. 
\end{remark}

\begin{definition}\label{def:Lip}
For $\bx\in \reals^d$, we choose $\bU_{\bx}\in\reals^{d\times(d-1)}$ such that
$\bU_{\bx}^{\sT}\bU_{\bx}=\id_{d-1}$, $\bU_{\bx}^{\sT}\bx = \bzero$
(i.e. $\bU_{\bx}$ is an orthonormal basis for the tangent space $\Ts_{\bx}$.)
Further, for $\bx_1,\bx_2\in \reals^d$, define 
$\bU_{\bx_1,\bx_2}:= \bR_{\bx_1,\bx_2}\bU_{\bx_1}$.

For $\Omega\subseteq \reals^d$, we
define the following Lipschitz constants
\begin{align}
\Lip(\bF;S) &:= \sup_{\bx_1\neq\bx_2\in\Omega}\frac{\|\bF(\bx_1)-\bF(\bx_2)\|}{\|\bx_1-\bx_2\|_2}\, ,\\
\Lip_{\perp}(\bD\bF;\Omega)& :=\sup_{\bx_1\neq\bx_2\in\Omega}
\frac{\|\bD\bF(\bx_1)\bU_{x_1}-\bD\bF(\bx_2)\bU_{\bx_1,\bx_2}\|_{\op}}{\|\bx_1-\bx_2\|_2}\, ,\\
\Lip_{\perp}(\nabla^2\bF;\Omega)& :=\sup_{\bx_1\neq\bx_2\in\Omega}
\max_{\ell\le n}
\frac{\|\bU_{\bx_1}^{\sT}\nabla^2 F_{\ell}(\bx_1)\bU_{\bx_1}-
\bU_{\bx_1,\bx_2}^{\sT}\nabla^2 F_{\ell}(\bx_2)\bU_{\bx_1,\bx_2}\|_{\op}}{\|\bx_1-\bx_2\|_2}\, .
\end{align}
\end{definition}
\begin{lemma}\label{lemma:BoundLip}
Recall the notation $\log_+(t):=\max(\log(t);1)$.
Then there exists a universal constant $C_1$, and a $\xi$-dependent constant $C_*(\xi)$
such that the following hold with probability at least $1-\exp(-(n\vee d)/C_*(\xi))$:
\begin{align}
\Lip(\bF;\Ball^d(1)) & \le C_1 \sqrt{(n\vee d) \, \xi''(1)\log_+\frac{\xi^{(3)}(1)}{\xi''(1)} } \, ,
\label{eq:LipF}\\ 
\Lip_{\perp}(\bD\bF;\Ball^d(1))&  \le C_1\sqrt{(n\vee d)\, \xi^{(4)}(1)\log_+\frac{\xi^{(5)}(1)}{\xi^{(4)}(1)} }
\, ,\label{eq:LipDf}\\ 
\Lip_{\perp}(\nabla^2\bF;\Ball^d(1))& \le C_*(\xi)\sqrt{n\vee d}\, .\label{eq:LipDDf}
\end{align}
Further, the following tighter Lipshitz constant holds on $\S^{d-1}$:
\begin{align}
 \Lip(\bF;\S^{d-1}) & \le C_1 \sqrt{(n\vee d) \, \xi'(1)\log_+\frac{\xi''(1)}{\xi'(1)} } \, ,
 \label{eq:LipF_Sphere}\\ 
\Lip_{\perp}(\bD\bF;\S^{d-1})&  \le C_1\sqrt{(n\vee d)\, \xi''(1)\log_+\frac{\xi^{(3)}(1)}{\xi''(1)} }\, .
\label{eq:LipDf_Sphere}
\end{align}
\end{lemma}
\begin{proof}
Throughout, we make use of Remark \ref{rmk:MaxValBorell}.
\begin{itemize}
\item The bound \eqref{eq:LipF} follows from Eqs.~\eqref{eq:JacobianBasicBound1},
\eqref{eq:JacobianBasicBound2}. 
\item The bound \eqref{eq:LipDf} follows from Eqs.~\eqref{eq:DDFnorm1},
\eqref{eq:DDFnorm2}, \eqref{eq:DDFnorm3}.
\item The proof of Eq.~\eqref{eq:LipDDf} follows by bounding $\max_{\bx}\|\nabla^3F_{\ell}(\bx)\|$,
which in turn can be done by the same technique as in the proof of Lemma \ref{lemma:Derivatives}.
Since we are not seeking a precise characterization of the constant  $C_*(\xi)$
it suffices to notice that the canoninical distance of the process 
$\<\nabla^3F_{\ell}(\bx),\bv^{\otimes 3}\>$ is bounded by a smooth function of the distances
$\|\bv_1-\bv_2\|$, $\|\bx_1-\bx_2\|$.
\item  The bound \eqref{eq:LipF_Sphere} follows from Eqs.~\eqref{eq:JacobianBasicBound1} 
by noting that, for $\bx_1,\bx_2\in \S^{d-1}$, letting $\bx(t)$ denote a geodesic on the sphere
parametrized by arclength, we have
\begin{align}
\bF(\bx_2)-\bF(\bx_1) = \int_{0}^{t_*}\bD \bF(\bx(t))\dot\bx(t) \, \de t\, .
\end{align}
Here $t_* :=\arccos(\<\bx_1,\bx_2\>)$ and we notice that $\dot\bx(t)\in \Ts_{\bx(t)}$. 
\item Finally, for the bound \eqref{eq:LipDf_Sphere} we consider
 a geodesic path between $\bx_1$ and $\bx_2$ in the sphere. 
For any two vectors $\bv_1\in \Ts_{\bx_1}$, $\bv_2 =\bR_{\bx_1,\bx_2}\bv_1\in\Ts_{\bx_2}$,
let $\bv(t)$ be the parallel transport of $\bv_1$ along the geodesic connecting $\bx_1$
to $\bx_2$. We then have
\begin{align}
\bD\bF(\bx_2)\bv_2-\bD\bF(\bx_1)\bv_1 = 
\int_{0}^{t_*}\big\{\bD^2 \bF(\bx(t))\{\dot\bx(t),\bv(t)\} + 
\bD \bF(\bx(t))\dot\bv(t)\big\} \, \de t\, .
\end{align}
Hence the claim follows from Eqs.~\eqref{eq:JacobianBasicBound2} and
\eqref{eq:DDFnorm1}
\end{itemize}
\end{proof}
%
%****************************************************************** 
%
\section{Upper and lower bounds on the satisfiability threshold}
\label{sec:UpperLowerBounds}
\subsection{Lower bound:
Proof of Proposition \ref{thm:SecMoment}}
\label{sec:Existence_LB}

In this section and next ones, we prove our bounds on the satisfiability
threshold.
An illustration of these bounds is given in Figure \ref{fig:PhaseDiagP11}.

Before proceeding, we recall two elementary facts, in the next two remarks.
\begin{remark}
The solutions set is almost surely  empty for $n>d-1$.
To see this, write $F(\bx) = \bg+\bF_{>0}(\bx)$ where $\bg\sim\normal(\bzero,\xi_0\id_n)$
is independent of $\bF_{>0}$ (which has covariance determined by $\xi_{>0}(t) = \xi(t)-\xi(0)$).
Then solutions exists only if $\bg\in \cS_{d-1}:=\bF_{>0}(\S^{d-1})$ (the image of $\S^{d-1}$
under $\bF_{>0}$). However $\Vol_{n}(\cS_{d-1})=0$ on the event 
$\sup_{\bx\in\S^{d-1}}\|\bD\bF_{>0}(\bx)\|_{\op}<\infty$ (by a change of variables argument),
and the latter happens with probability one (by Proposition \ref{propo:OPNorm}).
\end{remark}

\begin{remark}\label{rmk:Concentration}
By Gaussian concentration we can
often focus on $\E \min_{\bx\in\S^{d-1}}\|\bF(\bx)\|_2$.

Indeed, for each $i\le n$, and each $\bx\in\S^{d-1}$
the function $(G^{(k)}_{i,j_1,\dots,j_k})\mapsto F_i(\bx)$ is Lipschitz continuous
(in $\ell_2$), with Lipschitz norm bounded by $\sqrt{\xi(1)}$ (recall that $\xi(1)=\sum_k \xi_k$). As a consequence
$(G^{(k)}_{i,j_1,\dots,j_k})\mapsto \min_{\bx\in\S^{d-1}}\|\bF(\bx)\|_2/\sqrt{n}$
is also Lipschitz hence it concentrates:
\begin{align}
\prob\Big(\big|\min_{\bx\in\S^{d-1}}\|\bF(\bx)\|_2 - \E \min_{\bx\in\S^{d-1}}\|\bF(\bx)\|_2
\big|\ge t\Big)\le 2\, e^{-t^2/(cn)}\, .
\end{align}
\end{remark}

Finally, for the proof of Proposition \ref{thm:SecMoment},
we will assume without loss of generality $n\le d-1$.

\subsubsection{Proof of Proposition \ref{thm:SecMoment}: Reduction to $\xi'(0)=0$}
\label{sec:SecMomentReduction}

In the case $\xi'(0)=0$,
$\bF(\bx)$ does not contain a linear component.
Let $\bF_{> 0}(\bx):= \bF(\bx)-\bF(\bzero)$, and note that 
$\bF_{> 0}(\, \cdot\, )$ is independent of $\bF(\bzero)\sim\normal(\bzero,\xi(0)\id_{n})$ and distributed as the original 
process, with $\xi(q)$ replaced by  $\xi_{> 0}(q) =\xi(q)-\xi(0)$.

Consider the modified set of solutions
\begin{align}
\Sol_{n,d}(\bu;\eps):= \Big\{\bx\in \S^{d-1}:\,\; \|\bF_{> 0}(\bx)-\bu\|_2^2\le n\xi(1)\cdot \eps\Big\}\, .
\label{eq:SolDef}
\end{align}
In the case $\xi''(0)=0$, we will prove 
in the next subsection  that, for $\alpha<\alpha_{\slb}$ and $\delta_d\ge 0$
any deterministic sequence such that $\delta_d\to 0$, we have
\begin{align}
    \lim_{n,d\to\infty}\sup_{\|\bu\|_2/\sqrt{n}\in[\xi_0^{1/2}-\delta_d,\xi_0^{1/2}+\delta_d]}
    \prob(\Sol_{n,d}(\bu;0)= \emptyset) =0\, .\label{eq:ProbSolEmpty}
\end{align}
In the case $\xi''(0)>0$, we will prove that $\sup_{\|\bu\|_2/\sqrt{n}\in[\xi_0^{1/2}-\delta_d,\xi_0^{1/2}+\delta_d]}
    \prob(\Sol_{n,d}(\bu;0)= \emptyset)$ is uniformly bounded away from $1$.

This of course implies the claim of the theorem
for $\xi'(0)=0$ since $\|\bF(\bzero)\|_2/\sqrt{n}$
concentrates around $\xi^{1/2}_0$.

 In the case $\xi'(0)>0$, we write
$\bF(\bx) = \bF_{\#}(\bx)+c_1\bG\bx$, $c_1^2=\xi'(0)$, where 
$\bF_{\#}$ is a Gaussian map
with covariance given by $\xi_{\#}(t)=\xi(t)-\xi'(0)t$, and $\bG\sim\GOE(n,d)$. 
We let $\bU\in \reals^{d\times(d-n)}$ by an orthogonal matrix whose columns span
the null space of $\bG$ (which has dimension $d-n$ almost surely). We then
look for a solution of the form $\bx = \bU\bz$, and is thus sufficient to solve
$\bF_{\#}(\bU\bz)=\bzero$. The claim then follows by applying the previous result 
to $\tilde\bF(\bz):= \bF_{\#}(\bU\bz)$.

\subsubsection{Proof of Proposition \ref{thm:SecMoment}: Case $\xi'(0)=0$}
\label{app:SecondMoment}

In this section, we  prove Eq.~\eqref{eq:ProbSolEmpty},
which, as explained above, directly  implies
Proposition \ref{thm:SecMoment} for $\xi'(0)=0$.

In order to upper bound the probability
of $\Sol_{n,d}(\bu;0)= \emptyset$, 
 we introduce the random variable
\begin{align}
\Vsol(\bu)& := \Vol_{d-n-1} \big(\Sol_{n,d}(\bu;0))\, ,\label{eq:VolDef}
\end{align}
where $\Vol_i$ is the Hausdorff measure of dimension $i$,
or the counting measure when $i=0$.

We will use Kac-Rice formula \cite{Kac1943,Rice1945} to compute the
first two moments of $\Vsol(\bu)$ and obtain, by a second moment argument,
 a sufficient condition for it to concentrate around the mean.
For $\bu=\bzero$ this calculation was carried out in \cite{SubagPolynomialSystems}.

For $\xi'(0)=\xi''(0)=0$, we will 
 prove that, for any deterministic sequence $\delta_d>0$ with 
  $\lim_{d\to\infty}\delta_d= 0$,
\begin{align}
 \lim_{n,d\to\infty}\sup_{\|\bu\|_2/\sqrt{n}\in[\xi_0^{1/2}-\delta_d,\xi_0^{1/2}+\delta_d]}
 \frac{\E\{\Vsol(\bu)^2\}}{\E\{\Vsol(\bu)\}^2} =1\, .
\end{align}
For $\xi'(0)=0$, $\xi''(0)>0$, the same argument implies that the ratio 
is bounded by a constant.
This will complete the proof sketch given in Section 
\ref{sec:SecMomentReduction}.

We begin by computing the first moment. Recall that $\Vol_i$ is the Hausdorff measure of dimension $i$,
or the counting measure when $i=0$.
\begin{proposition}\label{prop:1stmoment}
	For any $\bu\in \R^n$ and $n\le d-1$, 
	\begin{equation}\label{eq:meansol}
		\E\,\Vsol(\bu) = \cV_{d-n-1} \Big(\frac{\xi'(1)}{\xi(1)}\Big)^{\frac{n}{2}}e^{-\frac{\|\bu\|^2}{2\xi(1)}},
	\end{equation}
	where $\cV_{i}:=\Vol_{i}(\{\bx\in\R^{i+1}:\,\|\bx\|=1\})=2\pi^{\frac{i+1}{2}}/\Gamma(\frac{i+1}{2})$ is the volume of the sphere.
\end{proposition}
\begin{proof}
	The proof here is identical to the case $\bu=0$ treated in \cite{SubagPolynomialSystems}. We repeat it for completeness, using the notation of the current paper.
	
	Recall that, for $\bx\in \reals^d$ on the sphere $\bU_{\bx}\in\reals^{d\times(d-1)}$ 
	denotes a matrix whose columns form a basis of the tangent space $\Ts_{\bx}$ 
	to the sphere of radius $\|\bx\|_2$ at $\bx$ (equivalently, of the orthogonal complement
	of $\bx$ in $\reals^d$).
	We write $\bD_{\perp}\bF(\bx):=\bD\bF(\bx)\bU_{\bx}\in \R^{n\times d-1}$. 
	
	By a variant of the Kac-Rice formula in \cite[Theorem 6.8]{Azais2009},
	\begin{align}
		\E\Vsol(u) & =\int_{S^{d-1}}\E\left[J(\bD_{\perp}\bF(\bx))\,\Big|\,\bF(\bx)=\bu\right]\,
		p_{\bF(\bx)}(\bu)\, \de\Vol_{d-1}(\bx),%\label{eq:KR1stmoment1}
	\end{align}
	where $J(\bA)=\sqrt{\det \bA\bA^{\sT}}$, $\de\Vol_{d-1}(\bx)$ is the $(d-1)$-dimensional volume
	 element on the sphere  and 
	\[
	p_{\bF(\bx)}(\bu)=\big(2\pi\xi(1)\big)^{-\frac{n}{2}}e^{-\frac{\|\bu\|^2}{2\xi(1)}}
	\]
	is the 
	density of $\bF(\bx)$ at $\bu$. Using the Kac-Rice formula requires checking
 that certain conditions are satisfied, this was done in Remark 3 of  \cite{SubagPolynomialSystems}.
	
	Recall that by Lemma \ref{lemma:HessianDistr},	
	$\bD_{\perp}\bF(\bx) \ed \sqrt{\xi'(q)}\, \bZ$ where $q=\|\bx\|^2$, $\bZ\sim\GOE(n,d-1)$ and 
	$\bD_{\perp}\bF(\bx)$ is independent of $\bF(\bx)$. Thus,
	\begin{equation}\label{eq:1stmoment}
		\E\Vsol(\bu) = \cV_{d-1} \Big(\frac{\xi'(1)}{2\pi\xi(1)}\Big)^{n/2}e^{-\frac{\|\bu\|^2}{2\xi(1)}} \E J(\bZ).
	\end{equation}
	
	In the case $\xi(t)=t$ and $\bu=0$, the above is the volume of the set of points on the sphere orthogonal to $n$ independent Gaussian vectors, hence equal to $\cV_{d-n-1}$. Hence, 
	\begin{equation}\label{eq:EJ}
		\cV_{d-n-1}=\cV_{d-1} \Big(\frac{1}{2\pi}\Big)^{n/2} \E\, J(\bZ)
	\end{equation} 
and \eqref{eq:meansol} follows.
\end{proof}

The next theorem establishes the desired second moment bound.
To verify this note that the function $\Phi(r)$ defined here matches $\Psi(r;\alpha,\xi)$ of Eq.~\eqref{eq:PsiDef},
if we replace $\gamma^2$ by $\alpha \xi(0)$ and $\xi(t)$ by $\xi_{>0}(t) =\xi(t)-\xi(0)$, 
and that condition \eqref{eq:WeakConditionMoment}
is implied by $\alpha<\alpha_{\slb}(\xi)$.
\begin{theorem}
	Assume that $\xi(0)=\xi'(0)=0$. Let $\gamma\ge 0$ and $\alpha\in(0,1]$ and define
	\begin{align*}
		\Phi(r):= \frac{1}{2}\log(1-r^{2})- \frac{\alpha}{2}\log\Big(1-\frac{\xi(r)^{2}}{\xi(1)^{2}}  \Big)+\frac{\gamma^2}{\xi(1)}-\frac{\gamma^2}{\xi(1)+\xi(r)}\,.
	\end{align*}
Further assume that for any $\eps>0$,
	\begin{align}
		\Phi(0)>\sup_{r\in(\eps,1)}\Phi(r)\,.\label{eq:WeakConditionMoment}
	\end{align}
	
If $\xi''(0)=0$, then for any deterministic sequence $\delta_d>0$ such that $\lim_{d\to \infty}\delta_d= 0$,
\begin{equation}\label{eq:momentsmatching_a}
	\lim_{d,n\to\infty}\frac{\E\big(\Vsol(\bu)^2\big)}{\big(\E\Vsol(\bu)\big)^2}=1\, ,
\end{equation}
uniformly in $n=n(d)< d-1$ and $\bu=\bu^{(n,d)}\in\R^n$ such that $\lim_{d\to\infty}n(d)\to\alpha$ 
and $\sqrt d (\gamma-\delta_d)\leq \|\bu\|\leq \sqrt d(\gamma+\delta_d)$.
% and therefore for any $\delta>0$,
%\begin{equation}\label{eq:concentration}
%\lim_{d\to\infty}\prob\bigg(
%\Big| \frac{\Vsol(\bu)}{\E\Vsol(\bu)}-1 \Big| \leq\delta
%\bigg) = 1.
%\end{equation}

If $\xi''(0)>0$, then there exists $C(\gamma)$ bounded for $\gamma$ bounded, such that
 \begin{equation}\label{eq:momentsmatching}
	\lim_{d,n\to\infty}\frac{\E\big(\Vsol(\bu)^2\big)}{\big(\E\Vsol(\bu)\big)^2}\le C(\gamma)\, ,
\end{equation}
uniformly in $n=n(d)< d-1$ and $\bu=\bu^{(n,d)}\in\R^n$ such that $\lim_{d\to\infty}n(d)\to\alpha$ 
and $\sqrt d (\gamma-\delta_d)\leq \|\bu\|\leq \sqrt d(\gamma+\delta_d)$.
\end{theorem}
\begin{proof}
	To simplify the proof, throughout we will assume that $n\leq d-2$. The case 
	in which $n=d-1$ infinitely often (in particular $\alpha=1$) was treated for 
	$\bu=\bzero$ in \cite{SubagPolynomialSystems} and the argument can be adapted to 
	our situation with $\bu\neq 0$, but to save space we will omit the proof. We refer 
	the interested reader to Lemmas 8 and 9 in \cite{SubagPolynomialSystems}. 
	
	For any union of intervals $I\subset[-1,1]$, define 
	\[
	\Vsol^{(2)}(\bu,I):=\Vol_{2(d-n-1)}\Big(\left\{ (\bx,\by):\,\<\bx,\by\>\in I,\,\bF(\bx)=
	\bF(\by)=\bu,\bx,\by\in\S^{d-1}\right\}\Big) .
	\]

As in the proof of Proposition \ref{prop:1stmoment}, we may apply the Kac-Rice formula 
to the random field  $(\bx,\by)\to(\bF(\bx),\bF(\by))$ defined
on the domain
\begin{equation}
	\cD(I):=\left\{ (\bx,\by):\,\<\bx,\by\>\in I, \|\bx\|=\|\by\|=1 \right\}
\end{equation}
to compute the expectation of $\Vsol^{(2)}(\bu,I)$, see Section 3 of \cite{SubagPolynomialSystems} 
for the details. This yields that
\begin{align}
	\E\,\Vsol^{(2)}(\bu,I) &=\big(\xi'(1)\big)^n\int_{(\bx,\by)\in\cD(I)}p_{\bF(\bx),\bF(\by)}(\bu,\bu)
	\,T(\bu,\<\bx,\by\>)\,\de\Vol_{d-1}(\bx)\de\Vol_{d-1}(\by)\nonumber\\
	&=\cV_{d-1}\cdot \big(\xi'(1)\big)^n\int_{\{\by\in \S^{d-1}:\,\<\bx,\by\>\in I\}}
	p_{\bF(\bx),\bF(\by)}(\bu,\bu)T(\bu,\<\bx,\by\>)d\Vol_{d-1}(\by),\label{eq:SecondMomIntegral}
\end{align}
where in the second line $\bx$ is an arbitrary point on the sphere,
and writing $\<\bx,\by\>=r$, we have 
\[
p_{\bF(\bx),\bF(\by)}(\bu,\bu)
=(2\pi)^{-n}(\xi(1)^{2}-\xi(r)^{2})^{-\frac{n}{2}}e^{-\frac{\|\bu\|^2}{\xi(1)+\xi(r)}}
\]
is the density of $(\bF(\bx),\bF(\by))$ at $(\bu,\bu)$
since the covariance matrix of $(F_i(\bx),F_i(\by))$ is 
\[
\left(\begin{array}{cc}
	\xi(1) & \xi(r)\\
	\xi(r) & \xi(1)
\end{array}\right),
\]
and we define (always for $\<\bx,\by\>=r$)
\begin{equation}
	T(\bu,r):=\E\left[J\Big(\frac{\bD_{\perp}\bF(\bx)}{\sqrt{\xi'(1)}}\Big)J\Big(\frac{\bD_{\perp}\bF(\by)}{\sqrt{\xi'(1)}}\Big)\,\bigg|\,\bF(\bx)=\bF(\by)=\bu\right]\,.\label{eq:D(r)}
\end{equation}
By rotational invariance $T(\bu,r)$ does not depend on the choice of $\bU_{\bx}$ and $\bU_{\by}$ and of 
$\bx$ and $\by$ as long as $\<\bx,\by\>=r$.

Using the co-area formula with the function $\rho(\by)=\<\bx,\by\>$,
we may express the integral w.r.t. $\by$ in Eq.~\eqref{eq:SecondMomIntegral}
as a one-dimensional integral
over a parameter $r$ (the volume of the inverse-image $\rho^{-1}(r)$
and the inverse of the Jacobian are given by $\cV_{d-2}(1-r^{2})^{(d-2)/2}$
and $(1-r^{2})^{-1/2}$, respectively). This yields 
\begin{equation}\label{eq:VI}
\E\, \Vsol^{(2)}(\bu,I) =\cV_{d-1}\cV_{d-2}(2\pi)^{-n}\int_{I}\left(\frac{\xi'(1)^{2}}{\xi(1)^{2}-\xi(r)^{2}}\right)^{n/2}
(1-r^{2})^{\frac{d-3}{2}}T(\bu,r)e^{-\frac{\|\bu\|^2}{\xi(1)+\xi(r)}}\de r\, .
\end{equation}

From  Eqs.~\eqref{eq:1stmoment} and \eqref{eq:VI},we get
\begin{equation}\label{eq:momentsratio}
	\frac{\E\,\Vsol^{(2)}(\bu,I)}{\big(\E\, \Vsol(\bu)\big)^2}=
	\frac{\cV_{d-2}}{\cV_{d-1}} \int_{I} \frac{T(\bu,r)}{\big(\E J(\bZ)\big)^2} 
	\left(1-\frac{\xi(r)^{2}}{\xi(1)^{2}}\right)^{-n/2}(1-r^{2})^{(d-3)/2}
	e^{\frac{\|\bu\|^2}{\xi(1)}-\frac{\|\bu\|^2}{\xi(1)+\xi(r)}}\de r.
\end{equation}
The next lemma provide a useful bound on the integrand.
\begin{lemma}\label{lem:T_upperbd}
	Assume that  $\xi(0)=\xi'(0)=0$. Define 
	\begin{equation}
	\label{eq:kappa}
	\kappa=\kappa(r)= \frac{\xi'(r)}{\xi(1)+\xi(r)}\sqrt{\frac{1-r^2}{\xi'(1)}}\,.
	\end{equation}
	Then for some universal constant $c$ and sequence $\tau_d\to0$,  for any $r\in(-1,1)$, 
	\[
	\frac{T(\bu,r)}{\big(\E J(\bZ)\big)^2}\leq \big(1+\tau_d\big) \big(1+2r^2\sqrt d\big)\big(
	1+c\kappa(r)\|\bu\|+c\kappa(r)^2\|\bu\|^2
	\big)\,.
	\]
\end{lemma}

Before proving this lemma, let us complete the proof of the theorem.
Note that $\frac{\cV_{d-2}}{\cV_{d-1}}=\sqrt{\frac{d}{2\pi}}(1+o_d(1))$. Hence, we have that
\begin{equation*}%\label{eq:2ndmombd1}
	\begin{aligned}
		\frac{1}{d}\log\frac{\E\, \Vsol^{(2)}(\bu,I)}{\big(\E\, \Vsol(\bu)\big)^2}   \leq  \frac1d\log\int_{I}\nu(\bu,r)\, \de r+o_d(1)\,,
	\end{aligned}
\end{equation*}
where the $o_d(1)$ term is independent of $\bu$ and $n$, and
\[
\nu(\bu,r):=\left(1-\frac{\xi(r)^{2}}{\xi(1)^{2}}\right)^{-n/2}(1-r^{2})^{(d-3)/2}\big(
1+c\kappa(r)\|\bu\|+c\kappa(r)^2\|\bu\|^2
\big)e^{\frac{\|\bu\|^2}{\xi(1)}-\frac{\|\bu\|^2}{\xi(1)+\xi(r)}}\,.
\]

Note that $\nu(\bu,r)\leq (1-r^2)^{-1/2}e^{d\Phi(r)+o(d)}$, uniformly in $n\leq d-2$, 
$\sqrt d(\gamma-\delta_d)\leq \|\bu\|\leq \sqrt d(\gamma+\delta_d)$ and $r\in(-1,1)$.  Assuming that $n=n(d)\to\alpha$, 
since $(1-r^2)^{-1/2}$ is integrable on $(-1,1)$, we have that for any $\eps>0$, uniformly in 
$\sqrt d(\gamma-\delta_d)\leq \|\bu\|\leq \sqrt d(\gamma+\delta_d)$,
\[
\limsup_{d\to\infty}\frac{1}{d}\log\frac{\mathbb{E}\Vsol^{(2)}(\bu,[-1,1]\setminus(-\eps,\eps))}{\big(\E\Vsol(\bu)\big)^2}\leq \sup_{r\in[\eps,1]} \Phi(r)<\Phi(0)=0\,,
\]
where the second inequality follows by assumption. 

Of course, the same bound holds with some sequence $\eps_d\to0$ instead of  $\eps$. 
Since the ratio of moments in  \eqref{eq:momentsmatching_a} is lower bounded by $1$, to prove the same equation it remains to show that, uniformly,
\begin{equation}\label{eq:momentsmatchingepsilon}
	\limsup_{d\to\infty} \frac{\mathbb{E}\Vsol^{(2)}(\bu,(-\eps_d,\eps_d))}{\big(\E\Vsol(\bu)\big)^2}\leq 1\,.
\end{equation}

First consider the case $\xi(0)=\xi'(0)=\xi''(0)=0$.
Note that $\frac{\xi(r)}{\xi(1)}=O(r^3)$ and $\kappa(r)=O(r^2)$ for small $r$,  and recall that $\frac{\cV_{d-2}}{\cV_{d-1}}=\sqrt{\frac{d}{2\pi}}(1+o_d(1))$. 
From \eqref{eq:momentsratio} we obtain that the ratio in \eqref{eq:momentsmatchingepsilon} is equal to 
\begin{align*}
	&(1+o_d(1))\sqrt{\frac{d}{2\pi}}
	  \int_{-\eps_d}^{\eps_d} \frac{T(\bu,r)}{\big(\E J(\bZ)\big)^2} \left(1-O(r^6)\right)^{-n/2}(1-r^{2})^{d/2}e^{\frac{\|\bu\|^2}{\xi(1)}O(r^3)}\de r\\
	  &=(1+o_d(1))\sqrt{\frac{d}{2\pi}}
	  \int_{-\eps_d}^{\eps_d}\exp\Big(
	  -\frac{d}{2}r^2+2\sqrt d r^2
	  +\frac{\gamma^2 d}{\xi(1)}O(r^3)+dO(r^6)
	  + c\gamma\sqrt d O(r^2)+c\gamma^2 dO(r^4)
	  \Big) \de r
\end{align*}
where we used Lemma \ref{lem:T_upperbd} and that $\log(1+t)\leq t$. This proves \eqref{eq:momentsmatchingepsilon} and completes the proof.
\end{proof}

\begin{proof}[Proof of Lemma \ref{lem:T_upperbd}]
Fix some $r\in(-1,1)$. Let $\bM^1$ and $\bM^2$ be jointly Gaussian, centered random matrices such that 
\begin{equation}\label{eq:Mcov}
	\mbox{cov}(\bM^t_{ij}, \bM^s_{kl}) = \delta_{ik}\delta_{jl}\cdot \begin{cases}
		1&t=s,\,j<d-1\\
		1-\frac{\xi(1)}{\xi'(1)}\frac{\xi'(r)^2(1-r^2)}{\xi(1)^2-\xi(r)^2}&t=s,\,j=d-1\\
		\frac{\xi'(r)}{\xi'(1)}&t\neq s,\,j<d-1\\
		r\frac{\xi'(r)}{\xi'(1)}-\frac{\xi''(r)}{\xi'(1)}(1-r^2)-\frac{\xi(r)}{\xi'(1)}\frac{\xi'(r)^2(1-r^2)}{\xi(1)^2-\xi(r)^2}&t\neq s,\,j=d-1\\
		0 &\mbox{otherwise.}
	\end{cases}
\end{equation}
Suppose $\bx$ and $\by$ are such that $\<\bx,\by\>=r\in(-1,1)$. In Lemma 6 of \cite{SubagPolynomialSystems} it was shown that 
for an appropriate choice of  $\bU_{\bx}$ and $\bU_{\by}$ (bases of the tangent spaces $\Ts_{\bx}$,
$\Ts_{\by}$), 
\[
\bigg(
\frac{\bD_{\perp}\bF(\bx)}{\sqrt{\xi'(1)}},\,\frac{\bD_{\perp}\bF(\by)}{\sqrt{\xi'(1)}}\bigg)\ed\Big(\bM^1-\kappa \bu\cdot \be_{d-1}^{\sT},\,\bM^2+\kappa \bu\cdot \be_{d-1}^{\sT}\Big)
\,,
\]
where $\be_{d-1}\in \R^{d-1}$ is the last (column) vector in the standard basis and $\kappa$ is defined in \eqref{eq:kappa}. (To be precise, only the case $\bu=0$ was treated there, but only the conditional expectation depends on $\bu$ and is easily computed in the same way.) 

Let $\bO$ be some orthogonal matrix such that $\bO\bu=(0,\ldots,0,\|\bu\|)^{\sT}$. Since 
$J(\bA)=J(\bO \bA)$ and $(\bO\bM^1,\bO\bM^2)\ed (\bM^1,\bM^2)$,
\[
\bigg(
J\bigg(\frac{\bD_{\perp}\bF(\bx)}{\sqrt{\xi'(1)}}\bigg),\,J\bigg(\frac{\bD_{\perp}\bF(\by)}{\sqrt{\xi'(1)}}\bigg)\bigg)
\ed \Big(J\big(\hat\bM^1\big),\,J\big(\hat\bM^2\big)\Big)
\,,
\]
where we denote $\hat\bM^1:=\bM^1-\kappa \|\bu\| \be_n\be_{d-1}^{\sT}$ and $\hat\bM^2:=\bM^2+\kappa \|\bu\|\be_n \be_{d-1}^{\sT}$.   Recall that for any matrix
 $\bA$, $J(\bA)=\prod_{i=1}^{n}\Theta_i(\bA)$, where we define $\Theta_i(\bA)$ as the norm of the projection of the $i$-th 
 row of $\bA$ to the orthogonal space to its first $i-1$ rows. Note that  $\Theta_i(\hat\bM^j)= \Theta_i(\bM^j)$ for $i<n$ and $\Theta_n(\hat\bM^j)\leq \Theta_n(\bM^j)+\kappa \|\bu\|$. 
 Therefore, using independence of the rows anf the orthogonal invariance of Gaussians, 
\begin{align*}
	T(\bu,r)&\leq\E\Big[\prod_{i=1}^{n-1}\Theta_i(\bM^1)\big(
	\Theta_n(\bM^1)+\kappa \|\bu\|
	\big)
	\prod_{i=1}^{n-1}\Theta_i(\bM^2)\big(
	\Theta_n(\bM^2)+\kappa \|\bu\|
	\big)
	\Big]\\
	&= \E \big[J(\bM^1)J(\bM^2)\big]\Big(
	1+\frac{2\kappa\|\bu\|\E \Theta_n(\bM^1)}{\E\big[ \Theta_n(\bM^1)\Theta_n(\bM^2)\big]}+\frac{\kappa^2\|\bu\|^2}{\E\big[ \Theta_n(\bM^1)\Theta_n(\bM^2)\big]}
	\Big) \,.
\end{align*}

Using that $\Theta_n(\bM^i)\sim\chi_{n-d}$, we have that, for some universal constant  $c$,
\[
T(\bu,r)\leq \E \big[J(\bM^1)J(\bM^2)\big]
\big(
1+c\kappa(r)\|\bu\|+c\kappa(r)^2\|\bu\|^2
\big)\,.
\]

Lemma 10 of \cite{SubagPolynomialSystems} bounds $T(\bu,r)$ for $\bu=0$, which is exactly equal to $\E \big[J(\bM^1)J(\bM^2)\big]$. Precisely, it states that
\[
T(\bzero,r)= \E \big[J(\bM^1)J(\bM^2)\big]
\leq
\big(1+\tau_d\big) \big(1+2r^2\sqrt d\big)\,,
\]
for some universal $\tau_d\to0$. This completes the proof. 
\end{proof}

%
%**************************************************************************
%
\subsection{Upper bound:
Proof of Proposition \ref{propo:SimpleUB}}
\label{sec:Existence_UB}

In this section prove the two upper bounds $\alpha_{\sub}^{(1)}$ and $\alpha_{\sub}^{(2)}$
on the satisfiability threshold, plus an additional one which
 we refer to as $\alpha^{(3)}(\xi_0,p)$, 
 which only holds for `pure' model $\xi(q) =\xi_0+q^p$.

\subsubsection{Upper bound $\alpha_{\sub}^{(1)}$}

Define
\begin{equation}\label{eq:GS}
	\GS(\xi):=  \lim_{d\to\infty}\frac{1}{\sqrt d}\E  \max_{\bx\in\S^{d-1}} F_1(\bx)\,.
\end{equation}
The limit is known to exist and is given by the Parisi formula \cite{auffinger2017parisi,JT},
which we next recall.

Given $\gamma:[0,1)\to\reals_{\ge 0}$ and $L\ge \int_0^1 \gamma(s)\de s$, we define
\begin{align}
\Par(\gamma,L) & = \frac{1}{2}\int_0^1\left(\xi''(t)\Gamma(t) +\frac{1}{\Gamma(t)}\right)\de t\,,\label{eq:ParisiSpherical}\\
\Gamma(t) & := L-\int_0^t \gamma(s)\de s\,  .
\end{align}
Equivalently, we can view $\Par$ as a function of $\Gamma:[0,1]\to\reals_{\ge 0}$ which is 
continuous and non-increasing.
We then have 
\begin{align}
  \GS(\xi) &:= \inf \Big\{\Par(\gamma,L) :\;  \gamma \;\mbox{non-decreasing}\, \;\; L\ge \int_0^1\gamma(t)\, \de t\;\Big\}\, .
  \label{eq:SphericalVariational}
\end{align}
We next restate the claimed bound, in equvalent form, for the reader's 
convenience.
\begin{proposition}\label{propo:SimpleUB_app}
	Assume that $\xi(0)>0$ and define $\xi_{>0}(t) := \xi(t)-\xi(0)$.
	Then, for  $u_0^{(1)}(\alpha,\xi)$ defined as in 
	Eq.~\eqref{eq:U0_LB_def}, we have, provided $n/d\to\alpha$,
\begin{align}
	\lim_{n,d\to\infty}\P\left(\frac1n
	\min_{\bx \in\S^{d-1}}H(\bx) \ge u^{(1)}_0(\xi,\alpha)-\delta\right) = 1\;\;\;
	\forall\delta>0\, .
	\end{align}
	Finally, there exists an absolute constant $C$ such that 
	\begin{align}\label{eq:SimpleUB-Explicit}
	\alpha_{\sub}^{(1)}(\xi)\le C\frac{\xi(1)}{\xi(0)}\max \Big(\log\frac{\xi'(1)}{\xi(1)},1\Big)\, .
	\end{align} 
\end{proposition}
\begin{proof}
As in the previous section, we write $\bF(\bx)=\bF(\bzero)+\bF_{>0}(\bx)$
and note that the two summands are independent.
By Remark \ref{rmk:Concentration} (applied to $\bF_{>0}$)  and  Eq.~\eqref{eq:GS}
\begin{align}
\plim_{n,d\to\infty} \frac{1}{\sqrt{d}}\max_{\bx\in\S^{d-1}}
\left|\frac{\<\bF(\bzero),\bF_{>0}(\bx)\>}{\|\bF(\bzero)\|_2}\right|
= \GS(\xi)\, .
\end{align}
Therefore, 
\begin{align}
\frac{1}{\sqrt{n}}\min_{\bx\in\S^{d-1}}\| \bF(\bx)\|_2&\ge \frac{1}{\sqrt{n}} \|\bF(\bzero)\|_2
-\frac{1}{\sqrt{n}} \max_{\bx\in\S^{d-1}}\left|\frac{\<\bF(\bzero),\bF_{>0}(\bx)\>}{\|\bF(\bzero)\|_2}\right|\\
& \ge \sqrt{\xi(0)} - \frac{1}{\sqrt{\alpha}} \GS(\xi_{>0})-o_P(1)\, ,
\end{align}
where $o_P(1)$ denotes a term which converges in probability to $0$ as $n,d\to\infty$, 
which implies the claim. 

Finally, the explicit upper bound \eqref{eq:SimpleUB-Explicit} follows by using Proposition
\ref{propo:MaxValue}.
\end{proof}

\subsubsection{Upper bound $\alpha_{\sub}^{(2)}$}

Define the Gaussian process $(\bx,\blambda)$ indexed by 
$(\bx,\blambda)\in\S^{d-1}\times\S^{n-1}$ via
\begin{align}
G(\bx,\blambda) = \<\blambda,\bF(\bx)\>+g_0\sqrt{\xi'(1)}\, ,
\end{align}
where $g_0$ is standard normal and independent of $\bF(\, \cdot\,)$.
Clearly $G$ is centered and has covariance 
\begin{align}
	\E\big\{G(\bx_1,\blambda_1)G(\bx_2,\blambda_2)\big\}
	 = \<\blambda_1,\blambda_2\>\, \xi(\<\bx_1,\bx_2\>)+\xi'(1)\, .
\end{align}
We compare this to the Guassian process
\begin{align}
J(\bx,\blambda) :=\sqrt{\xi(1)}\<\blambda,\bg\>+
\sqrt{\xi'(1)}\<\bx,\bh\>\, ,
\end{align}
where $\bg\sim\normal(\bzero,\bI_n)$ and $\bh\sim\normal(\bzero,\bI_d)$
are independent Gaussian vectors.
This has covariance
\begin{align}
	\E\big\{J(\bx_1,\blambda_1)J(\bx_2,\blambda_2)\big\}
	 = \<\blambda_1,\blambda_2\>\, \xi(1)+\<\bx_1,\bx_2\>\xi'(1)\, .
\end{align}
Clearly we have, for all $\bx\in\S^{d-1}$, $\blambda_1,\blambda_2\in\S^{n-1}$,
\begin{align}
	\E\big\{G(\bx,\blambda_1)G(\bx,\blambda_2)\big\}=
	\E\big\{J(\bx,\blambda_1)J(\bx,\blambda_2)\big\}\, ,
\end{align}
while, for arbitrary $\bx_1,\bx_2\in\S^{d-1}$, 
$\blambda_1,\blambda_2\in\S^{n-1}$, letting $q:=\<\bx_1,\bx_2\>$,
$t:=\<\blambda_1,\blambda_2\>$, $q,t\in [-1,1]$,
\begin{align*}
	\E\big\{J(\bx_1,\blambda_1)J(\bx_2,\blambda_2)\big\}-
	\E\big\{G(\bx_1,\blambda_1)G(\bx_2,\blambda_2)\big\}
	&=(\xi(1)-\xi(q))t+\xi'(1)(q-1)\\
	&\le \xi'(1)(1-q)t+\xi'(1)(q-1)\\
	&=-\xi'(1)(1-t)(1-q)\le 0\, .
\end{align*}
Hence, by Gordon's comparison inequality \cite{gordon1985some},
\begin{align}
	J_n:=\min_{\bx\in\S^{n-1}}\max_{\blambda\in\S^{d-1}}
	J(\bx,\blambda)\preceq \min_{\bx\in\S^{n-1}}\max_{\blambda\in\S^{d-1}}
	G(\bx,\blambda)\, ,
\end{align}
where $\preceq$ denotes stochastic domination between random variables.
On the other hand, it is immediate to compute
\begin{align}
	\frac{1}{\sqrt{n}}J_n&=\sqrt{\xi(1)}\frac{\|\bg\|}{\sqrt{n}}-
	\sqrt{\xi'(1)}\frac{\|\bh\|}{\sqrt{n}}\nonumber\\
&=\sqrt{\xi(1)}- \sqrt{\frac{\xi'(1)}{\alpha}} +o_P(1)\, .
\label{eq:Jn-Bound}
\end{align}

Let 
\begin{align}
\frac{1}{\sqrt{n}}\min_{\bx\in \S^{d-1}} \big\|\bF(\bx)\big\|&=
\frac{1}{\sqrt{n}}\min_{\bx\in \S^{d-1}} \max_{\blambda\in \S^{n-1}}
\<\blambda,\bF(\bx)\>\\
&=\min_{\bx\in\S^{n-1}}\max_{\blambda\in\S^{d-1}}
	G(\bx,\blambda)+o_P(1)\\
&\succeq J_n +o_P(1)\, .
\end{align}
Hence using Eq.~\eqref{eq:Jn-Bound}, we conclude that,
for any $\delta>0$,
\begin{align}
\lim_{n,d\to\infty}\P\Big(\frac{1}{\sqrt{n}}\min_{\bx\in \S^{d-1}} \big\|\bF(\bx)\big\|
\ge\sqrt{2u_0^{(2)}(\alpha,\xi)}-\delta\Big)=1\, ,
\end{align}
which yields the desired claim.
%
%**********************************************************************************
%
\subsubsection{An upper bound for the case $\xi(t) = \xi_0+t^p$}
\label{app:UB}

We next obtain a bound for pure models $\xi(t) = \xi_0+t^p$. 
For $E\geq2\sqrt{(p-1)/p}$, define 
\begin{align}\label{eq:Theta_p_def}
\Theta_p(E)&:=\frac{1}{2}\log(p-1)-\frac{p-2}{p-1}\frac{E^2}{4}\nonumber\\
&\quad-\sqrt{\frac{p}{p-1}}\frac{E}{4}\sqrt{\frac{p}{p-1}E^2-4}\nonumber\\
&\quad+\log\Big(
\sqrt{\frac{p}{p-1}\frac{E^2}{4}-1}+\sqrt{\frac{p}{p-1}}\frac{E}{2}
\Big)\,.
\end{align}
The meaning of this function is established in \cite{crisanti1995TAP}
(at a heuristic level) and in \cite{AB,auffinger2013random,subag2017complexity,SubagZeitouniConcentration} (rigorously),
and it is worth reminding it here. 
Consider the process $F_{>0,1}(\,\cdot\,)$ (the first coordinate of
$\bF_{>0}(\bx)=\bF(\bx)-\bF(\bzero)$), which is a Gaussian process with covariance 
$\E[F_{>0,1}(\bx^1)F_{>0,1}(\bx^2)]=\<\bx^1,\bx^2\>^p$. Then, the number of local maxima
$\bx$ of this process with $F_{>0,1}(\bx) \approx E\sqrt{d}$  concentrates around 
$\exp(d \Theta_p(E))$. In particular, the function $E\mapsto \Theta_p(E)$
is monotone decreasing for $E>2\sqrt{(p-1)/p}$ and vanishes at $E =\GS(\xi(t)=t^p)$.
With an abuse of notation, we will write $\GS(p)=\GS(\xi(t)=t^p)$. 

For $c\geq0$, define
\begin{align*}
	\varphi_1(c,p) &= \sup_{t,s\geq0}\Big\{ c(\GS(p)+t+s) -\frac12\frac{s^2}{\xi(0)} + \Theta_p(\GS(p)+t)  \Big\},\\
	\varphi_2(c,\alpha) &=  \sup_{0<t\leq 1}\Big\{
	-c\sqrt{\alpha\xi(1)}t-\alpha\frac{t^2-1}{2}+\alpha\log t
	\Big\}\\
	&=-c\sqrt{\alpha\xi(1)}t_*-\alpha\frac{t_*^2-1}{2}+\alpha\log t_*,
\end{align*}
where $t_*=\frac12(\sqrt{c^2\xi(1)/\alpha+4}-c\sqrt{\xi(1)}/\sqrt{\alpha})$.
Note that $\varphi_2(c,\alpha)$ is decreasing in $\alpha>0$.
\begin{proposition}\label{thm:UB_Thm}
	Define 
	\begin{align*}
		\alpha_{\sub}^{(3)}(\xi_0,p):= \inf\Big\{\alpha\ge 0: \;\; \inf_{c>0} \big[\varphi_1(c,p)+\varphi_2(c,\alpha)-\frac12c^2(1+\xi_0) \big]<0 \Big\}\, .
	\end{align*}
	For the pure model $\xi(q) = \xi_0+q^p$ with $p\geq2$, 
	if $\alpha> \alpha_{\sub}^{(2)}(\xi_0,p)$, then there exists $\eps>0$ such that
		\begin{align*}
	 \lim_{n,d\to\infty}\prob\big(\Sol_{n,d}(\eps)= \emptyset \big)
		=1\, .
	\end{align*}
\end{proposition}

Note that
	\begin{equation}
		\label{eq:minmax}
\Sol_{n,d}(\eps)\neq \emptyset \iff \min_{\bx\in\S^{d-1}}\max_{\bv\in\S^{n-1}}\<\bF(\bx), \bv\>\le \sqrt{n\xi(1)\eps}\, .
\end{equation}
We define the two Gaussian processes on $\S^{d-1}\times \S^{n-1}$,
	\[
	f_1(\bx,\bv):= \<\bF(\bx), \bv\>+\sqrt{\xi(1)}\,z,\quad f_2(\bx,\bv):= F_1(\bx)+\sqrt{\xi(1)}\,\<\bv, \bg\>\, ,
	\]
where $\bg\sim\normal(0,\id_n)$ and $z\sim\normal(0,1)$, independent of each other and $\bF(\bx)$. Clearly,
	\begin{align*}
	\E f_1(\bx^1,\bv^1)f_1(\bx^2,\bv^2) &= \xi(\<\bx^1,\bx^2\>)\,\<\bv^1,\bv^2\>+\xi(1)\, ,\\
	\E f_2(\bx^1,\bv^1)f_2(\bx^2,\bv^2) &= \xi(\<\bx^1,\bx^2\>)+\xi(1)\<\bv^1,\bv^2\>\, .
	\end{align*}

For $\bx^1=\bx^2$ the two covariance functions coincide and for general $\bx^1,\bx^2\in \S^{d-1}$,
\[
	\E f_1(\bx^1,\bv^1)f_1(\bx^2,\bv^2)\geq \E f_2(\bx^1,\bv^1)f_2(\bx^2,\bv^2)\,.
\]
Hence, by Gordon's Gaussian comparison inequality \cite{gordon1985some}, we have
\begin{align*}
\min_{\bx\in\S^{d-1}}\max_{\bv\in\S^{n-1}}f_1(\bx^2,\bv^2) \succeq
\min_{\bx\in\S^{d-1}}\max_{\bv\in\S^{n-1}}f_2(\bx^2,\bv^2) \, ,
\end{align*}
where $\succeq$ denotes stochastic domination.
In particular, for any $c>0$,
\begin{align*}
e^{\frac12 c^2 d \xi(1)}\cdot\E \exp\Big\{ -c\sqrt{d} \min_{\bx\in\S^{d-1}}\max_{\bv\in\S^{n-1}}
\<\bF(\bx), \bv\> \Big\} &=\E \exp\Big\{ -c\sqrt{d} \min_{\bx\in\S^{d-1}}\max_{\bv\in\S^{n-1}}f_1(\bx,\bv) \Big\} \\
&\leq \E \exp\Big\{ -c \sqrt{d}\Big(\min_{\bx\in\S^{d-1}} F_1(\bx)+\sqrt{\xi(1)}\|\bg\|\Big) \Big\}\,.
\end{align*}

Using Eq.~\eqref{eq:minmax} and  Markov's inequality, 
\begin{equation}\label{eq:solUB}
	\begin{aligned}
	\prob \Big( \Sol_{n,d}(\eps) \neq \emptyset \Big) & \leq e^{c\sqrt{dn\xi(1)\eps}}\E \exp\Big\{ -c\sqrt{d} \min_{\bx\in\S^{d-1}}\max_{\bv\in\S^{n-1}}\<\bF(\bx), \bv\> \Big\}\\
	&\leq e^{d\eps'-\frac12 c^2d \xi(1)}\, \E e^{-c\sqrt{d\xi(1)}\|\bg\|}\, \E \exp\Big\{ c\sqrt{d} \max_{\bx\in\S^{d-1}} F_1(\bx)\Big\}
	\,,
	\end{aligned}
\end{equation}
where in the second line we defined $\eps':=c\sqrt{\alpha\xi(1)\eps}$.

By Cramer's theorem, for $t\in(0,1]$,
\[
\prob(\|\bg\|<t\sqrt n)=\prob(\|\bg\|^2<t^2 n) = \exp\Big(-n\frac{t^2-1}{2}+n\log t+o(n)\Big)
\]
and therefore
\begin{align}\label{eq:bdphi2}
\lim_{n,d\to\infty}\frac{1}{d}\log\E e^{-c\sqrt{d\xi(1)}\|\bg\|} &= \varphi_2(c,\alpha)\,.
\end{align}

Let $F_1^p(\bx)$ be the random process as defined in \eqref{eq:Fcov} with $\xi(t)=t^p$ and let $z\sim \normal(0,1)$ be 
independent of $F_1^p(\bx)$. Then, $F_1(\bx)\ed F_1^p(\bx)+\xi_0z$ as processes.
By Markov's inequality and \cite[Theorem 2.8]{auffinger2013random}, for $t\geq0$,
\[
\prob\Big(
\max_{\bx\in\S^{d-1}} F^p_1(\bx)\geq \sqrt{d}(E+t)
\Big) \leq e^{d\Theta_p(E+t)+o(d)}\,.
\]
For $s\geq0$, of course, $\prob(\xi_0z\geq \sqrt{d}t)\leq e^{-\frac12\frac{dt^2}{\xi_0^2}+o(d)}$. Combining these facts, one easily sees that
\begin{align}\label{eq:bdphi1}
	\lim_{n,d\to\infty}\frac{1}{d}\log\E \exp\Big\{ c\sqrt{d} \max_{\bx\in\S^{d-1}} F_1(\bx)\Big\} &= \varphi_1(c,p)\,.
\end{align}
The lemma follows from \eqref{eq:solUB}, \eqref{eq:bdphi2} and \eqref{eq:bdphi1}.

%
%**********************************************************************************
%
\subsection{Large-$p$ asymptotics of the satisfiability threshold: 
Proof of Eq.~\eqref{eq:LB-asymp}}
\label{app:LB-asymp}

\subsubsection{Asymptotics of the lower bound $\alpha_{\slb}(\xi_{p,\gamma_0})$}

For $\xi(t)=\xi_0+t^p$, the function $\Psi(r)=\Psi(r;\alpha,\xi_0,p)$ of Eq.~\eqref{eq:PsiDef} reads
\begin{align}
\Psi(r;\alpha,\xi_0,p):= \frac{1}{2}\log(1-r^2) -\frac{\alpha}{2}\log
\left(1-r^{2p}\right)+\alpha \xi_0\frac{r^p}{1+r^p}\, ,
\end{align}
and since $\Psi''(0;\alpha,\xi_0,p)=-1$ for all $\alpha$, we have
\begin{align}
\alpha_{\slb}(\xi_0,p):= \inf\Big\{\alpha\ge 0: \;\; \sup_{r\in [0,1]} \Psi(r;\alpha,\xi_0,p)>0 
 \Big\}\, .
\end{align}

To get an upper bound on $\alpha_{\slb}(\xi_0,p)$, we choose $r=1-M/p$ for some constant $M>0$.
Recalling that $\xi_0=\gamma_0\log p$ for some constant $\gamma_0>1$, it is easy to compute that, as $p\to\infty$,
\begin{align}
\Psi\Big(r= 1-\frac{M}{p}\Big) &= -\frac{1}{2}\log p+\alpha\xi_0 \frac{e^{-M}}{1+e^{-M}}
+O(1)\\
& = \Big(-\frac{1}{2}+\frac{\alpha\gamma_0}{1+e^{M}}\Big)\log p +O(1)\, .
\end{align}
Therefore, for all $p$ large enough
\begin{align}
\alpha_{\slb}(\xi_0=\gamma_0\log p,p)\le \frac{e^{M}+1}{2\gamma_0}\, .
\end{align}
Since $M> 0$ is arbitrary, we get $\limsup_{p\to\infty}\alpha_{\slb}(\xi_0=\gamma_0\log p,p)\le 1/\gamma_0$.

Next we prove the lower bound. For that purpose, we will fix $\gamma_0>1$, assume that
$\xi_0=\gamma_0\log p$, 
$\alpha\le (1-\eps)\gamma_0^{-1}$ and prove that $\sup_{r\in [0,1]} \Psi(r;\alpha,\xi_0,p)\le 0$
for all $p$ large enough.
Note that, under these definitions,  $\Psi(r;\alpha,\xi_0,p)\le \Psi_*(r)$, where
\begin{align}
\Psi_*(r) := \frac{1}{2}\log(1-r^2) -\frac{1}{2}\log
\left(1-r^{2p}\right)+(1-\eps)\frac{r^p}{1+r^p}\,\log p\, .
\end{align}
We split the maximization over $r$ in two regions (depending on a constant $\Delta$ to be fixed below):
\begin{enumerate}
\item $0\le r\le  1- p^{-1+\Delta}$.
 For an absolute constant $C$, we have

\begin{align}
\Psi_*(r)& \le -\frac{1}{2}\, r^2 + C(\log p) r^p\\
&\le  \Big(-\frac{1}{2}+ C'(\log p)e^{-p^{\Delta}}\Big)r^2\, .
\end{align}
\item $1- p^{-1+\Delta}< r <1$. Setting
$r = 1-x/p$, $x\in (0,p^{\Delta})$,  we have
\begin{align*}
\Psi_*(r) &\le \frac{1}{2}\log\Big(\frac{2x}{p}\Big)-\frac{1}{2}
\log(1-e^{-2x})
+\frac{1-\eps}{1+e^{x}} \log p\\
&\le -\frac{\eps}{2}\log p+\frac{1}{2}\log (2x)-\frac{1}{2}
\log(1-e^{-2x})\\
&\le -\frac{\eps}{2}\log p+\frac{1}{2}\log (2p^{\Delta})+1\\
&\le \left(\Delta-\frac{\eps}{2}\right)\log p+2\, .
\end{align*}
\end{enumerate}
Therefore the claim follows by choosing $\Delta=\eps/4$.

\subsubsection{Asymptotics of the upper bound $\alpha_{\sub}(\xi_{p,\gamma_0})$}

Recalling that $\xi_{p,0}(t)=t^p$, we obtain
\begin{align}
	\alpha_{\sub}(\xi_{p,\gamma_0}) \le 
	\alpha_{\sub}^{(1)}(\xi_{p,\gamma_0}) =
	\frac{1}{\gamma_0}\cdot \frac{\GS(\xi_{p,0})^2}{\log p}\, .
\end{align}
Hence the desired claim follows by proving 
$\GS(\xi_{p,0}) \le (1+o_p(1))\sqrt{\log p}$
(indeed, the matching lower bound on $\alpha_{\sub}$ follows from the lower
bound on $\alpha_{\slb}(\xi_{p,\gamma_0})$ in the previous subsection).

To prove the claim about $\GS(\xi_{p,0})$,
recall that  \cite{AB,auffinger2013random}  prove that 
\begin{align}
\GS(\xi_{p,0}) \le\sup\Big\{E>2\sqrt{\frac{p-1}{p}}\, : 
\;\; \Theta_p(E)>0\Big\}\, ,
\end{align}
where $\Theta_p$ is defined in \eqref{eq:Theta_p_def}.
Since $\Theta_p$ is monotone decreasing for $E>2\sqrt{(p-1)/p}$, it suffices to show that
$\Theta_p(E)<0$ for $E=(1+\eps)\sqrt{\log p}$ and all $p$ large enough, for any fixed
$\eps>0$.
Set $L:=\log p$ and $E:=\sqrt{uL}$ with $u>1$ fixed.
Writing $\rho:=\sqrt{p/(p-1)}$, a straightforward expansion gives, as $p\to\infty$,
\begin{align}
\frac{1}{2}\log(p-1) &= \frac{L}{2}+O(p^{-1})\, ,\\
-\frac{p-2}{p-1}\frac{E^2}{4} &= -\frac{uL}{4}+O(p^{-1}L)\, ,\\
-\rho\frac{E}{4}\sqrt{\rho^2E^2-4}
&= -\frac{uL}{4}+\frac12+O(L^{-1})\, ,\\
\log\Big(
\sqrt{\frac{p}{p-1}\frac{E^2}{4}-1}+\sqrt{\frac{p}{p-1}}\frac{E}{2}
\Big)
&= \frac12\log(uL)+O(L^{-1/2})\, .
\end{align}
Combining these identities,
\begin{align}
\Theta_p(\sqrt{uL}) = \frac{1-u}{2}\,L+\frac12+\frac12\log L+O(1)\, .
\end{align}
In particular, for any fixed $u>1$ we have $\Theta_p(\sqrt{uL})<0$ for all $p$ large enough,
whence $\GS(\xi_{p,0})\le \sqrt{uL}=\sqrt{u\log p}$. Since $u>1$ is arbitrary, 
$\GS(\xi_{p,0})\le (1+o_p(1))\sqrt{\log p}$ as claimed.
%
%***********************************************
%
\section{Gradient descent}
\label{sec:Algo_GD}

In this Appendix we study the classical projected gradient descent (GD) algorithm on 
the sphere  $\S^{d-1}$. Our analysis uses techniques recently developed 
to analyze overparametrized neural networks in the neural tangent regime.
A few pointers to this literature include
\cite{du2018gradient,chizat2019lazy,oymak2020towards,arora2019fine,allen2019learning,bartlett_montanari_rakhlin_2021}.
We deliberately follow this type of analysis as it provides a clear example of the gap between what is achievable
by such techniques and sharper characterizations developed in the
next sections.

In Section \ref{sec:Insensitivity} we 
prove that, essentially under the same conditions for
neural tangent  theory to hold, gradient descent is insensitive to 
perturbations.

We we run projected gradient descent
 with respect to the energy function $H(\bx) = \|\bF(\bx)\|_2^2/2$.
At iteration $k$
we take a step along the direction $-\Proj_{\Ts,\bx^k}\nabla H(\bx^k)$, with stepsize $\eta$,
and then project back on the sphere $\S^{d-1}$.
The algorithm,  is defined by the pseudocode of Algorithm \ref{algo:GD}.

This algorithm can be thought of as a discretization of the gradient flow 
dynamics
\begin{align}
\dot{\bx}(t) =-\Proj_{\Ts,\bx(t)}\nabla H(\bx(t))\, ,\label{eq:GFlow_0}
\end{align}
where $\Proj_{\Ts,\bx}$ is the projection onto the tangent space to
the sphere of radius $\|\bx\|_2$ at $\bx$, namely\footnote{In Eq.~\eqref{eq:GFlow_0} we have $\|\bx\|_2=1$ but
 we define $\Proj_{\Ts,\bx}$
more generally for future reference. By convention, we set $\bv/\|\bv\|_2=\bzero$ if $\bv=\bzero$.}
\begin{align}
\Proj_{\Ts,\bx}:= \id_{d}-\frac{\bx\bx^{\sT}}{\|\bx\|^2_2}\, .
\end{align}
Appendix \ref{app:GradientDescent-GF} proves results about gradient flow as well.

\vspace{0.3cm}

\begin{algorithm}[H]
\SetAlgoLined
\KwData{Couplings $\{\bG^{(k)}\}_{k\ge 0}$, stepsize $\eta$, 
number of iterations $K$}
\KwResult{approximate optimizer $\bx\in \S^{d-1}$}
Initialize $\bx^0\sim\Unif(\S^{d-1})$\;
\For{$k\in\{0,\dots,K-1\}$}{
$\bz^{k+1} =\bx^{k}-\eta\Proj_{\Ts,\bx^k}\nabla H(\bx^k)$\;
$\bx^{k+1} = \bz^{k+1}/\|\bz^{k+1}\|_2$\;
}
\KwRet $\bx^K$
\caption{Projected Gradient Descent}\label{algo:GD}
\end{algorithm}

\vspace{0.3cm}

Below, for $\bx\in \S^{d-1}$, $\bU_{\bx}$ is an orthonormal basis for $\Ts_{\bx}$ the orthogonal space to $\bx$.
Further, for $\bx_1,\bx_2\in \reals^d$, define 
$\bU_{\bx_1,\bx_2}:= \bR_{\bx_1,\bx_2}\bU_{\bx_1}$, where 
$\bR_{\bx_1,\bx_2}$ is the rotation that keeps unchanged the space orthogonal to $\bx_1$, $\bx_2$,
and maps $\bx_1$ to $\bx_2$. Finally, 
let $\bD\bF(\bx)\in \reals^{n\times d}$ be the Jacobian of $\bF$ at $\bx$.

Next, we state a general lemma which provides deterministic
conditions for the convergence of GD to a global optimum.
\begin{lemma}\label{lemma:GD}
Consider the Gradient Descent Algorithm \ref{algo:GD}, and fix $\eps_0\in (0,1/2)$. 
Let $\lambda_0:=\sigma_{\min}(\bD \bF(\bx^0)|_{\Ts,\bx^0})$, and $J_n,L_n,M_n$ be given by 
\begin{align}
J_n & := \sup_{\bx_1\neq \bx_2 \in\Ball^{d}(1+\eps_0)}
\frac{\|\bD \bF(\bx_1)-\bD \bF(\bx_2)\|_{\op}}{\|\bx_1-\bx_2\|_2} ,\\
L_n& :=\sup_{\bx_1\neq\bx_2\in\Omega}
\frac{\|\bD\bF(\bx_1)\bU_{x_1}-\bD\bF(\bx_2)\bU_{\bx_1,\bx_2}\|_{\op}}{\|\bx_1-\bx_2\|_2}\, ,\\
M_n &:= \sup_{\bx \in\Ball^{d}(1+\eps_0)}\| \bD \bF(\bx)\|_{\op}\, ,
\end{align}
and assume they satisfy
\begin{align}
\label{eq:ConditionGD}
L_n\|\bF(\bx^0)\|_2< \frac{\lambda_0^2}{16}\, ,
\end{align}
Further assume the step size $\eta$ to be such that
\begin{align}
\eta\le \min\left\{\frac{\eps_0}{
\max_{\bx\in\S^{d-1}}\|\Proj_{\Ts}\bD\bF(\bx)^{\sT} \bF(\bx)\big\|_2};\;
\frac{1}{M_n^2};\; \frac{1}{10\sqrt{n}(M_n+J_n)\max_{\bx\in\S^{d-1}}\| \bF(\bx)\|_2}\right\}\, .\label{eq:ConditionEta}
\end{align}
Then, for all $k\ge 0$ 
\begin{align}
\|\bF(\bx^k)\|_2^2 \le \|\bF(\bx^0)\|_2^2\, e^{-\lambda_0^2\eta k/8}\, .\label{eq:F-Decrease-GD}
\end{align}
\end{lemma}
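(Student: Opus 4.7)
The plan is a lazy-training / neural-tangent style induction on $k$, maintaining jointly (a) the exponential decay \eqref{eq:F-Decrease-GD} and (b) a cumulative path-length bound $\sum_{j<k}\|\bx^{j+1}-\bx^j\|_2 \le 4\|\bF(\bx^0)\|_2/\lambda_0$. The role of the hypothesis $L_n\|\bF(\bx^0)\|_2<\lambda_0^2/16$ is to translate (b) into a uniform lower bound $\sigma_{\min}(\bD\bF(\bx^k)|_{\Ts,\bx^k})\ge\lambda_0/2$ — the Polyak--\L ojasiewicz-type inequality that drives (a).

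For the one-step descent I would write $\bv_k:=\Proj_{\Ts,\bx^k}\bD\bF(\bx^k)^{\sT}\bF(\bx^k)$ so that $\bz^{k+1}=\bx^k-\eta\bv_k$ and $\bx^{k+1}=\bz^{k+1}/\|\bz^{k+1}\|_2$; since $\bv_k\perp\bx^k$, $\|\bz^{k+1}\|_2=\sqrt{1+\eta^2\|\bv_k\|_2^2}$ and $\|\bx^{k+1}-\bx^k\|_2\le\eta\|\bv_k\|_2$. The first condition on $\eta$ in \eqref{eq:ConditionEta} keeps the iterate inside $\Ball^d(1+\eps_0)$. An integral-form Taylor expansion yields $\|\bF(\bx^{k+1})-\bF(\bx^k)-\bD\bF(\bx^k)(\bx^{k+1}-\bx^k)\|_2\le (J_n/2)\|\bx^{k+1}-\bx^k\|_2^2$, and the tangent/radial decomposition $\bD\bF(\bx^k)^{\sT}\bF(\bx^k)=\bv_k+\alpha_k\bx^k$ with $|\alpha_k|\le M_n\|\bF(\bx^k)\|_2$ makes the linear-in-$\eta$ term equal to $-2\eta\rho\|\bv_k\|_2^2$ plus an error of order $\eta^2\|\bv_k\|_2^2 M_n\|\bF(\bx^k)\|_2$. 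The remaining conditions on $\eta$ absorb all $O(\eta^2)$ corrections, leaving the clean descent
\[
\|\bF(\bx^{k+1})\|_2^2\le\|\bF(\bx^k)\|_2^2-(\eta/2)\|\bv_k\|_2^2.
\]
For the spectral propagation, note that $\bU_{\bx^j,\bx^{j+1}}$ is an orthonormal basis of $\Ts_{\bx^{j+1}}$ since $\bR_{\bx^j,\bx^{j+1}}$ is an orthogonal map sending $(\bx^j)^\perp$ onto $(\bx^{j+1})^\perp$. Weyl's inequality combined with the definition of $L_n$ then gives $\sigma_{\min}(\bD\bF(\bx^{j+1})|_{\Ts,\bx^{j+1}})\ge\sigma_{\min}(\bD\bF(\bx^j)|_{\Ts,\bx^j})-L_n\|\bx^{j+1}-\bx^j\|_2$; telescoping and using (b) yields $\sigma_{\min}\ge\lambda_0/2$ under $L_n\|\bF(\bx^0)\|_2<\lambda_0^2/16$. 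This upgrades to $\|\bv_k\|_2\ge(\lambda_0/2)\|\bF(\bx^k)\|_2$ (since $\bv_k=(\bD\bF(\bx^k)|_{\Ts,\bx^k})^{\sT}\bF(\bx^k)$ and $\|A^{\sT}y\|_2\ge\sigma_{\min}(A)\|y\|_2$ for a wide matrix $A$), so the descent becomes $\|\bF(\bx^{k+1})\|_2^2\le(1-\eta\lambda_0^2/8)\|\bF(\bx^k)\|_2^2$, propagating (a) to step $k+1$.

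The heart of the argument — and the step that explains the precise form of the hypothesis — is closing the path-length induction (b) without an $M_n$ factor. The naive bound $\|\bx^{k+1}-\bx^k\|_2\le\eta M_n\|\bF(\bx^k)\|_2$ summed as a geometric series would force $M_n$ into the hypothesis; instead I would use the restricted lower bound $\|\bv_k\|_2\ge(\lambda_0/2)\|\bF(\bx^k)\|_2$ already established to write
\[
\eta\|\bv_k\|_2 \le \frac{\eta\|\bv_k\|_2^2}{(\lambda_0/2)\|\bF(\bx^k)\|_2} \le \frac{2(\|\bF(\bx^k)\|_2^2-\|\bF(\bx^{k+1})\|_2^2)}{(\lambda_0/2)\|\bF(\bx^k)\|_2} \le \frac{4}{\lambda_0}\bigl(\|\bF(\bx^k)\|_2-\|\bF(\bx^{k+1})\|_2\bigr),
\]
which telescopes to $\sum_k\|\bx^{k+1}-\bx^k\|_2\le 4\|\bF(\bx^0)\|_2/\lambda_0$, closing (b) with a factor of two of slack. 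The main technical care — rather than a conceptual obstacle — lies in bookkeeping the spherical normalization $\bz^{k+1}\mapsto\bx^{k+1}$ (whose effect is $O(\eta^2\|\bv_k\|_2^2)$ and hence absorbable into the $O(\eta^2)$ descent error) and in chaining the rotation-aligned bases $\bU_{\bx^j,\bx^{j+1}}$ correctly across steps; the definition of $L_n$ was engineered precisely so that the per-step Weyl deterioration is $L_n\|\bx^{j+1}-\bx^j\|_2$ rather than something coarser involving $M_n$.
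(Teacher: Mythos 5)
Your proof is essentially the paper's: the same one-step Taylor descent controlled by $J_n,M_n$ and the conditions on $\eta$, and the same mechanism for keeping $\sigma_{\min}(\bD\bF(\bx^k)|_{\Ts,\bx^k})\ge\lambda_0/2$ via $L_n$ and a telescoped path-length bound (the paper phrases this as monotonicity of $\Psi(\bx)=d_{\S^{d-1}}(\bx,\bx^0)+(8/\lambda_0)\|\bF(\bx)\|_2$). One arithmetic slip in your telescoping step: with $a=\|\bF(\bx^k)\|_2$, $b=\|\bF(\bx^{k+1})\|_2$ one has $2(a^2-b^2)/((\lambda_0/2)a)=4(a-b)(a+b)/(\lambda_0 a)\le 8(a-b)/\lambda_0$, not $4(a-b)/\lambda_0$, so the path-length bound is $8\|\bF(\bx^0)\|_2/\lambda_0$; this is harmless because the hypothesis $L_n\|\bF(\bx^0)\|_2<\lambda_0^2/16$ is calibrated exactly for that constant, but the ``factor of two of slack'' you claim is not actually there.
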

The proof of this statement is presented in Section
\ref{app:GradientDescent-GD}.

By evaluating the conditions in this statement, we obtain the following.
\begin{theorem}\label{propo:Gradient}
Consider the Gradient Descent Algorithm \ref{algo:GD},
with  $\bF$ the Gaussian process defined in Section \ref{sec:Introduction},
and initialization $\bx^0$ independent of $\bF$. 
Assume $n,d\to\infty$ with $n/d\to\alpha\in [0,1)$. 
Define, for $c_0$ a sufficiently small absolute constant and
\begin{align}
\ualpha_{\sGD}(\xi):= \frac{c_0 \xi'(1)^2}{\xi''(1)\xi(1) \big(\log(\xi'''(1)/\xi''(1))\vee 1\big)} \,. 
\label{eq:AlphaGD}
\end{align} 
 If $\alpha<\ualpha_{\sGD}(\xi)$, and $\eta< 1/(C_1 d)$ with $C_1$ a suitable constant depending on
 $\xi$,
then the following happens with high probability. For all $k\ge 1$,
\begin{align}
\|\bF(\bx^k)\|_2^2& \le 2 n\xi(1)\, \exp\Big(-\frac{\xi'(1)\eta}{16}\big(\sqrt{d}-\sqrt{n}\big)^{2} \cdot  k\Big)\, .
\label{eq:GDRate}
\end{align}
\end{theorem}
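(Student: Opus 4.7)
The strategy is to verify the deterministic hypotheses of Lemma \ref{lemma:GD} with high probability under the stated Gaussian model, then invoke Eq.~\eqref{eq:F-Decrease-GD} and choose the stepsize $\eta$ to extract the rate \eqref{eq:GDRate}. Concretely, we need high-probability estimates on four random quantities: the initial residual $\|\bF(\bx^0)\|_2$, the least singular value $\lambda_0$, and the uniform moduli $M_n$, $J_n$, $L_n$. The rest is bookkeeping.

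First I would exploit that $\bx^0 \sim \Unif(\S^{d-1})$ is independent of $\bF$ and that $\bF$ is rotationally invariant. Fixing $\bx^0 = \be_1$ WLOG, the restricted Jacobian $\bD\bF(\bx^0)\bU_{\bx^0} \in \reals^{n\times (d-1)}$ is a mean-zero Gaussian matrix whose entry covariance can be read off from Eq.~\eqref{eq:Fcov}: independent entries with variance $\xi'(1)$. Standard non-asymptotic bounds on Gaussian matrices then give
\[
\lambda_0 \;\ge\; \sqrt{\xi'(1)}\big(\sqrt{d-1}-\sqrt{n}\big) - o_P(\sqrt{d})
\]
with probability $1-o(1)$. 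Similarly, $\|\bF(\bx^0)\|_2^2/n \to \xi(1)$ in probability via Borell-TIS applied coordinate-wise.

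The core work is the uniform control of $M_n$, $J_n$ and especially $L_n$. Using the series representation \eqref{eq:Representation}, for any fixed $\bx\in\Ball^d(1+\eps_0)$, the matrix $\bD\bF(\bx)$ is centered Gaussian with entries of variance bounded by $\xi'((1+\eps_0)^2)$, so that $\|\bD\bF(\bx)\|_\op \lesssim \sqrt{n}+\sqrt{d}$ with controlled constants. Bounding $J_n$ and $L_n$ reduces to controlling the Hessian of $\bF$ whose covariance kernel involves $\xi''$ and $\xi'''$. A Dudley/chaining argument on the sphere for the Gaussian processes $\bx\mapsto \bD\bF(\bx)$ and $\bx\mapsto \nabla^2 F_i(\bx)$, combined with the operator-norm $\eps$-net argument, will yield, with high probability,
\[
M_n \lesssim \sqrt{\xi'(1)}\,\sqrt{d}, \qquad J_n,\; L_n \;\lesssim\; \sqrt{\xi''(1)\,\log(\xi'''(1)/\xi''(1))}\;\sqrt{d}.
\]
The logarithmic factor is the Dudley entropy contribution from chaining at scale $\sqrt{\xi'''(1)/\xi''(1)}$, and this is where the factor $\log(\xi'''(1)/\xi''(1))$ in \eqref{eq:AlphaGD} comes from; this is the step I expect to be the main obstacle, since one must carefully combine chaining with operator-norm nets to avoid losing further polynomial factors.

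Plugging the four bounds into \eqref{eq:ConditionGD} gives a condition of the form
\[
\sqrt{\xi''(1)\log(\xi'''(1)/\xi''(1))}\,\sqrt{d}\cdot \sqrt{n\xi(1)} \;\le\; \tfrac{1}{16}\,\xi'(1)\big(\sqrt{d}-\sqrt{n}\big)^{2},
\]
which, dividing by $d$ and using $n/d\to\alpha$, is satisfied for all $\alpha < c_0\,\xi'(1)^2/\big(\xi''(1)\xi(1)\,(\log(\xi'''(1)/\xi''(1))\vee 1)\big) = \ualpha_{\sGD}(\xi)$, matching \eqref{eq:AlphaGD}. Finally, choosing $\eta \le 1/(C_1 d)$ is enough to satisfy \eqref{eq:ConditionEta}: the three quantities in the minimum scale like $1/\sqrt{d}$, $1/d$, and $1/(d\sqrt{n})$ respectively (using the bounds above and $\|\bF(\bx)\|_2\lesssim \sqrt{n}$ uniformly on $\S^{d-1}$ by Borell-TIS), so a $1/(C_1 d)$ stepsize is admissible for a sufficiently large $C_1=C_1(\xi)$. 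Then Eq.~\eqref{eq:F-Decrease-GD} with $\lambda_0^2 \ge \xi'(1)(\sqrt{d}-\sqrt{n})^2(1-o_P(1))$ and $\|\bF(\bx^0)\|_2^2 \le 2n\xi(1)$ yields precisely \eqref{eq:GDRate}, completing the argument.
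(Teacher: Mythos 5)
Your proposal follows essentially the same route as the paper's proof: verify the hypotheses of Lemma \ref{lemma:GD} using the Gaussian estimates (Lemma \ref{lemma:HessianDistr} plus Bai--Yin for $\lambda_0$, concentration for $\|\bF(\bx^0)\|_2$, and the chaining bounds of Lemma \ref{lemma:BoundLip} and Proposition \ref{propo:OPNorm} for $M_n$, $J_n$, $L_n$), read off $\ualpha_{\sGD}$ from condition \eqref{eq:ConditionGD}, and conclude from Eq.~\eqref{eq:F-Decrease-GD}. The only quibble is with your stated scalings for the stepsize condition: the first quantity in \eqref{eq:ConditionEta} is of order $1/d$ rather than $1/\sqrt{d}$, and your third, of order $1/(d\sqrt{n})$, is strictly smaller than $1/(C_1 d)$ for fixed $C_1$ --- an imprecision the paper's own proof shares and which does not affect the substance of the argument.
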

Again, we refer to Section \ref{app:GradientDescent-GD} for the proof.

Considering our running example $\xi(t) = \gamma_0\log p+t^p$, cf. Eq.~\eqref{eq:LB-asymp}
and Appendix \ref{app:LB-asymp},
$\ualpha_{\sGD}(\xi)$ is equivalent, up to constants, to
\begin{align}
\ualpha_{\sGD}(\gamma_0,p):= \frac{c_1 }{\gamma_0 (\log p)^2 }\,. 
\end{align} 
It is instructive to compare this result with the lower bound on the threshold for existence of solutions,
see Eq.~\eqref{eq:LB-asymp}. Roughly speaking, for any $\delta>0$ and all $p$ large enough, 
if $1/\log p \ll \alpha\xi_0\le 1-\delta$ then solutions exist with high probability, 
but the gradient descent approach developed here does not guarantee that we can find them. 
We will see that this gap shrinks using the methods
in next sections. 
\subsection{Proof of Theorem \ref{propo:Gradient}: Gradient flow}
\label{app:GradientDescent-GF}

Since calculations are more transparent in the case of gradient flow, we will first 
treat this case, and then outline the modifications that arise for discrete time. 
Since these arguments are standard in the machine learning literature
\cite{du2018gradient,chizat2019lazy,oymak2020towards,arora2019fine,
allen2019learning,bartlett_montanari_rakhlin_2021}
we present them succinctly.

For gradient flow the time $t\in\reals_{\ge 0}$ is continuous and the state is updated according 
to
\begin{align}
\dot{\bx}(t) =-\Proj_{\Ts,\bx(t)}\nabla H(\bx(t))\, .\label{eq:GFlow}
\end{align}
(Recall the definition of cost function $H(\bx)=\|\bF(\bx)\|^2_2/2$.)
\begin{lemma}\label{lemma:GF}
Let $\lambda_0:=\sigma_{\min}(\bD \bF(\bx(0))|_{\Ts,\bx(0)})$
and $L_n := \Lip_{\perp}(\bD \bF;\S^{d-1})$, with the Lipschitz constant 
of Definition \ref{def:Lip}.
If 
\begin{align}
L_n\|\bF(\bx(0))\|_2< \frac{\lambda_0^2}{4}\, ,
\label{eq:LnBound}
\end{align}
then for all $t\ge 0$,
\begin{align}
\|\bF(\bx(t))\|_2^2 \le \|\bF(\bx(0))\|_2^2\, e^{-\lambda_0^2 t/4}\, .\label{eq:F-Decrease}
\end{align}
\end{lemma}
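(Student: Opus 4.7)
The plan is a standard ``lazy regime'' bootstrap: show that the restricted Jacobian $\bD\bF(\bx)|_{\Ts,\bx}$ keeps its least singular value bounded away from $0$ along the entire trajectory, which in turn forces $\|\bF(\bx(t))\|_2^2$ to decay exponentially and the trajectory to have bounded length.

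First I would compute the energy derivative. Writing $M(t):= \|\bF(\bx(t))\|_2$, $\bJ(\bx):= \bD\bF(\bx)$ and $\bJ_{\perp}(\bx):= \bJ(\bx)\bU_{\bx}$, the gradient flow \eqref{eq:GFlow} gives $\dot{\bx}(t) = -\bU_{\bx(t)}\bJ_{\perp}(\bx(t))^{\sT}\bF(\bx(t))$, whence
\begin{align*}
\frac{\de}{\de t}M(t)^2 \;=\; 2\bF^{\sT}\bJ\dot{\bx} \;=\; -2\|\bJ_{\perp}(\bx(t))^{\sT}\bF(\bx(t))\|_2^2
\;\le\; -2\,\sigma_{\min}\!\big(\bJ_{\perp}(\bx(t))\big)^{2}\, M(t)^2\, .
\end{align*}
The same identity yields the path-length bound
\begin{align*}
\|\dot{\bx}(t)\|_2 \;=\; \|\bJ_{\perp}(\bx(t))^{\sT}\bF(\bx(t))\|_2 \;\le\; \frac{\|\bJ_{\perp}^{\sT}\bF\|_2^{2}}{\sigma_{\min}(\bJ_{\perp})M(t)}
\;=\; \frac{-\dot M(t)}{\sigma_{\min}(\bJ_{\perp}(\bx(t)))}\,.
\end{align*}

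Next I set up the bootstrap. Define $T^{*}:= \sup\{t\ge 0 :\, \sigma_{\min}(\bJ_{\perp}(\bx(s)))\ge \lambda_0/2\;\forall s\in [0,t]\}$; by continuity $T^{*}>0$. On $[0,T^{*})$ the two inequalities above combine to
\begin{align*}
\int_0^t \|\dot{\bx}(s)\|_2\,\de s \;\le\; \frac{2}{\lambda_0}\,\big(M(0)-M(t)\big)\;\le\;\frac{2\,\|\bF(\bx(0))\|_2}{\lambda_0}\,,
\end{align*}
so $\|\bx(t)-\bx(0)\|_2\le 2\|\bF(\bx(0))\|_2/\lambda_0$ on this interval. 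The Lipschitz constant $L_n=\Lip_{\perp}(\bD\bF;\S^{d-1})$ from Definition \ref{def:Lip} controls exactly the object we need: since the singular values of $\bJ_{\perp}(\bx)$ are invariant under rotations of the basis of $\Ts_{\bx}$, we get $|\sigma_{\min}(\bJ_{\perp}(\bx_1))-\sigma_{\min}(\bJ_{\perp}(\bx_2))|\le L_n\|\bx_1-\bx_2\|_2$ (applied with $\bU_{\bx_2}$ replaced by the parallel-transported basis $\bU_{\bx_1,\bx_2}$). Hence for $t<T^{*}$
\begin{align*}
\sigma_{\min}(\bJ_{\perp}(\bx(t)))\;\ge\;\lambda_0 - L_n\|\bx(t)-\bx(0)\|_2 \;\ge\; \lambda_0 - \frac{2L_n\,\|\bF(\bx(0))\|_2}{\lambda_0}\;\ge\;\frac{\lambda_0}{2}\,,
\end{align*}
where the last inequality is precisely hypothesis \eqref{eq:LnBound}. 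A continuity argument then yields $T^{*}=\infty$.

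Finally, plugging $\sigma_{\min}(\bJ_{\perp}(\bx(t)))\ge \lambda_0/2$ into the energy derivative gives $\tfrac{\de}{\de t}M(t)^2\le -\tfrac{\lambda_0^2}{2}M(t)^2$, and Gr\"onwall yields $M(t)^2\le M(0)^2 e^{-\lambda_0^2 t/2}$, which is stronger than \eqref{eq:F-Decrease}. The only mildly delicate point is handling the Lipschitz bound across different tangent spaces, which is why the parallel-transport version $\Lip_{\perp}$ of Definition \ref{def:Lip} (as opposed to a naive operator-norm Lipschitz of $\bD\bF$) is used; everything else is routine.
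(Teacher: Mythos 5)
Your proof is correct and follows essentially the same route as the paper's: a continuity/bootstrap argument in which the decrease of $\|\bF(\bx(t))\|_2$ controls the length of the trajectory (the paper packages the same estimate as monotonicity of $d_{\S^{d-1}}(\bx(t),\bx(0))+\tfrac{2}{\lambda_0}\|\bF(\bx(t))\|_2$), which via $L_n$ keeps $\sigma_{\min}(\bD\bF|_{\Ts,\bx(t)})\ge\lambda_0/2$ for all time. Your handling of the cross-tangent-space comparison via the parallel-transported basis and Weyl's inequality is exactly why $\Lip_\perp$ is defined that way, and your rate $e^{-\lambda_0^2 t/2}$ indeed implies the stated $e^{-\lambda_0^2 t/4}$.
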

\begin{proof}
We define $\bK(\bx) := \bD\bF(\bx) \Proj_{\Ts,\bx} \bD\bF(\bx)^{\sT}$.
Then the gradient flow equation implies
\begin{align}
\frac{\de \phantom{t}}{\de t}\bF(\bx(t)) =-\bK(\bx(t))\cdot \bF(\bx(t))\, .
\end{align}
Let 
\begin{align}\label{eq:BadDef}
B:= \Big\{\bx\in \S^{d-1}:\; \sigma_{\min}(\bD \bF(\bx)|_{\Ts,\bx})\le \frac{\lambda_0}{2}\Big\}\, .
\end{align}
Obviously $\bx(0)\in B^c$. Further, letting $d_{\S^{d-1}}(\bx_1,\bx_2):=\arccos(\<\bx_1,\bx_2\>)$
denote the geodesic distance on the unit sphere, we have
\begin{align}
d_{\S^{d-1}}(\bx(0),B):= \inf\big\{\, d_{\S^{d-1}}(\bx(0),\bx):\; \bx\in B\, \big\} \ge \frac{\lambda_0}{2L_n}\, .
\label{eq:DistanceBad}
\end{align}
Define $t_*:= \inf\{t:\; \bx(t)\in B \}$. For all $t\le t_*$, we have
\begin{align}
\frac{\de \phantom{t}}{\de t}\|\bF(\bx(t))\|_2^2 \le -\frac{\lambda_0^2}{4}\big\|\bF(\bx(t))\big\|_2^2\, ,
\label{eq:KeyDecrease}
\end{align}
which implies Eq.~\eqref{eq:F-Decrease} for all $t\le t_*$

We next prove that $t_*=\infty$. Indeed, note that for $t\le t_*$,
\begin{align}
\frac{\de \phantom{t}}{\de t}\|\bF(\bx(t))\|_2&= -\frac{1}{\|\bF(\bx(t))\|_2}
\big\|\Proj_{\Ts,\bx(t)} \bD\bF(\bx(t))^{\sT}\bF(\bx(t))\big\|_2^2\\
&\le - \sigma_{\min} (\bD\bF(\bx(t))\Proj_{\Ts,\bx(t)}) \big\|\Proj_{\Ts,\bx(t)} \bD\bF(\bx(t))^{\sT}\bF(\bx(t))\big\|_2\\
& \le -\frac{\lambda_0}{2} \big\|\Proj_{\Ts,\bx(t)} \bD\bF(\bx(t))^{\sT}\bF(\bx(t))\big\|_2\, .
\end{align}
Further, denoting by $\bu(t)\in \Ts_{\bx(t)}$ the unit vector that is tangent to the geodesic between
$\bx(0)$ and $\bx(t)$ at $\bx(t)$, we have
\begin{align}
\frac{\de \phantom{t}}{\de t} d_{\S^{d-1}}(\bx(t),\bx(0))& = -
\<\Proj_{\Ts,\bx(t)} \bD\bF(\bx(t))^{\sT}\bF(\bx(t)),\bu(t)\>\, .
\end{align}
Therefore,  
\begin{align}
\frac{\de \phantom{t}}{\de t}\Big\{ d_{\S^{d-1}}(\bx(t),\bx(0)) + \frac{2}{\lambda_0}\|\bF(\bx(t))\|_2\Big\}
\le 0\, .
\end{align}
whence, for all $t\le t_*$, we have $d_{\S^{d-1}}(\bx(t),\bx(0)) \le 
2\|\bF(\bx(0))\|_2 /\lambda_0$. Recalling Eq.~\eqref{eq:LnBound}, we get $t_*=\infty$ 
as claimed.
\end{proof}

We next prove a version of Theorem \ref{propo:Gradient} for gradient flow.
\begin{theorem} \label{thm:Gradient-App} 
Consider gradient flow, as defined in Eq.~\eqref{eq:GFlow}, 
with respect to the energy function $H(\bx) = \|\bF(\bx)\|_2^2/2$
with  $\bF$ the Gaussian process defined in Section \ref{sec:Introduction},
and initialization $\bx(0)$ independent of $\bF$. 
Assume $n,d\to\infty$ with $n/d\to\alpha\in [0,1)$. 
Define, for $c_0$ a sufficiently small absolute constant,  
\begin{align}
\ualpha_{\sGF}(\xi):= \frac{c_0 \xi'(1)^2}{\xi''(1)\xi(1) \big(\log(\xi^{(3)}(1)/\xi''(1)) \vee 1\big)} \,. 
\end{align}
 If $\alpha<\ualpha_{\sGF}(\xi)$,
then the following happens with high probability. For all $t\ge 0$
\begin{align}
\|\bF(\bx(t))\|_2^2& \le 2 n\xi(1)\, \exp\Big(-\frac{3\xi'(1)}{16}\big(\sqrt{d}-\sqrt{n}\big)^{2} \cdot t\Big)\, .
\end{align}
\end{theorem}
\begin{proof}
This is an immediate application of Lemma \ref{lemma:GF}, whereby
we note that 
\begin{align}
\|\bF(\bx_0)\|^2_2 & = n\xi(1) +o_P(n)\, ,\label{eq:EstGF1}\\
\sigma_{\min}(\bD\bF(\bx(0))|_{\Ts,\bx(0)}) &= \sqrt{\xi'(1)}\big(\sqrt{d}-\sqrt{n}\big) +o_P(\sqrt{n})\, ,
\label{eq:EstGF2}\\
\Lip_{\perp}(\bD\bF;\S^{d-1})&  \le C\sqrt{d\, \xi^{''}(1)\log_+\frac{\xi^{(3)}(1)}{\xi''(1)} }\, .
\label{eq:EstGF3}
\end{align}
where the last inequality holds with high probability for a universal constant $C$.
The first estimate is by the law of large numbers, the second 
by Lemma \ref{lemma:HessianDistr}, which implies $\bD\bF(\bx)\bU_{\bx} = \sqrt{\xi'(q)}\, \bZ$ with 
$\bZ\sim\GOE(n,d-1)$ and $q=\|\bx\|_2^2$ and the Bai-Yin law, and for the last one, we refer to Lemma \ref{lemma:BoundLip}.
\end{proof}

\subsection{Proof of Theorem \ref{propo:Gradient}: Gradient descent}
\label{app:GradientDescent-GD}

The  analogue of Lemma \ref{lemma:GF} for gradient descent is stated in the main text as 
Lemma \ref{lemma:GD}.
\begin{proof}[Proof of Lemma \ref{lemma:GD}]
The key step is to prove that an inequality analogous to the upper bound \eqref{eq:KeyDecrease}
holds for all $k\le  k_{\star}:=\min\{k:\, \bx^k\in B\}$ with $B$ defined as per
Eq.~\eqref{eq:BadDef}, which we copy here for the reader's convenience
\begin{align}
B:= \Big\{\bx\in \S^{d-1}:\; \sigma_{\min}(\bD \bF(\bx)|_{\Ts,\bx})\le \frac{\lambda_0}{2}\Big\}\, .
\end{align}

To this end first note that
\begin{align}
\big\|\bz^{k+1}-\bx^k\big\|= \eta\big\|\Proj_{\Ts}\bD\bF(\bx^k)^{\sT} \bF(\bx^k)\big\|_2\le \eps_0\, .
\end{align}
(In what follows we omit the reference to the point on the sphere from the projector 
$\Proj_{\Ts}$.)

Further there exist $\bxi_{(i)}^{k}\in [\bx^k,\bz^{k+1}]$ (here
$[\bu,\bv]$ denotes the segment between $\bu$ and $\bv\in\reals^d$) such that
\begin{align*}
F_i(\bz^{k+1}) & = F_i(\bx^k)-\eta \nabla F_i(\bx^{k})^{\sT}\Proj_{\Ts}\bD\bF(\bx^k)^{\sT}\bF(\bx^k)
-\eta [\nabla F_i(\bxi_{(i)}^{k})-\nabla F_i(\bx^{k})]^{\sT}\Proj_{\Ts} \bD\bF(\bx^k)^{\sT}\bF(\bx^k)\\
&=: F_i(\bx^k)-\eta \nabla F_i(\bx^{k})^{\sT}\Proj_{\Ts} \bD\bF(\bx^k)^{\sT}\bF(\bx^k) +
\overline{\Delta^k_{i}}\, .
\end{align*}
We have the estimate
\begin{align}
|\overline{\Delta_{i}^k}|&\le \eta J_n\cdot \|\bz^{k+1}-\bx^k\|_2 \cdot
\|\Proj_{\Ts}\bD\bF(\bx^k)^{\Ts}\bF(\bx^k)\|_2\\
&\le J_n\eta^2 \|\Proj_{\Ts}\bD\bF(\bx^k)^{\Ts}\bF(\bx^k)\|_2^2\, .
\end{align}
Further by Pythagoras' theorem $\|\bz^{k+1}\|_2^2 = 1 +\|\bz^{k+1}-\bx^{k}\|^2$, whence,
for some $\bzeta_{(i)}^{k}\in [\bz^{k+1},\bx^{k+1}]$,
\begin{align*}
\big|F_i(\bx^{k+1})-F_i(\bz^{k+1})\big| &= 
\big|\<\nabla F_i(\bzeta_{(i)}^{k}),\bx^{k+1}\>\big| \cdot \big|\|\bz^{k+1}\|_2-1\big|\\
&\le \sup_{\bx\in\Ball^{d}(1+\eps_0)}\| \nabla F_i(\bx)\|_2
\cdot \|\bz^{k+1}-\bx^{k}\|_2^2\\
& \le  M_n \eta^2\big\|\Proj_{\Ts}\bD\bF(\bx^k)^{\sT} \bF(\bx^k)\big\|^2_2\, .
\end{align*}
We therefore obtain that
\begin{align}
\bF(\bx^{k+1})&= \bF(\bx^k)-\eta \bD \bF(\bx^k)\Proj_{\Ts} \bD\bF(\bx^k)^{\sT}\bF(\bx^k) 
+\bDelta^k\, ,\label{eq:CrucialGD1}\\
\|\bDelta^k\|_2 & 
\le \sqrt{n}  (J_n+M_n) \eta^2\big\|\Proj_{\Ts}\bD\bF(\bx^k)^{\sT} \bF(\bx^k)\big\|^2_2
\nonumber\\
& \le \frac{\eta}{10\max_{\bx\in\S^{d-1}}\|\bF(\bx)\|_2}\big\|\Proj_{\Ts}\bD\bF(\bx^k)^{\sT} \bF(\bx^k)\big\|^2_2\, ,\label{eq:GD_Delta}
\end{align}
where the last inequality follows from Eq.~\eqref{eq:ConditionEta}.
Further
\begin{align}
\big\|\bF(\bx^k)-\eta \bD \bF(\bx^{k})\Proj_{\Ts} \bD\bF(\bx^k)^{\sT}\bF(\bx^k) \big\|_2
&\le \|\bF(\bx^k)\|_2+\eta  M_n \|\Proj_{\Ts} \bD\bF(\bx^k)^{\sT}\bF(\bx^k) \big\|_2\nonumber\\
& \le (1+\eta M_n^2)\|\bF(\bx^k)\|_2\nonumber\\
& \le 2\|\bF(\bx^k)\|_2\, .\label{eq:CrucialGD2}
\end{align}
Again, in the last step, we used  condition \eqref{eq:ConditionEta}.
Also, note that by Eq.~\eqref{eq:GD_Delta} and condition \eqref{eq:ConditionEta},
we have
\begin{align}
\|\bDelta^k\|_2 & \le  \frac{\eta}{8\max_{\bx\in\S^{d-1}}\|\bF(\bx)\|_2} M_n^2 \|\bF(\bx^k)\big\|^2_2
\le \frac{1}{10}\|\bF(\bx^k)\big\|_2\, .\label{eq:CrucialGD3}
\end{align}

Using Eqs.~\eqref{eq:CrucialGD1} and \eqref{eq:CrucialGD2}, we get
\begin{align*}
\|\bF(\bx^{k+1})\|_2^2 \le & 
\big\|\bF(\bx^k)-\eta \bD \bF(\bx^{k})\Proj_{\Ts} \bD\bF(\bx^k)^{\sT}\bF(\bx^k) \big\|^2_2+
4\|\bDelta^k\|_2\|\bF(\bx^k)\|_2
+\|\bDelta^k\|_2^2\, .
\end{align*}
Using Eqs.~\eqref{eq:CrucialGD1}  to~\eqref{eq:CrucialGD3}, we get
\begin{align}
\|\bF(\bx^{k+1})\|_2^2 &\le 
\big\|\bF(\bx^k)-\eta \bD \bF(\bx^{k})\Proj_{\Ts} \bD\bF(\bx^k)^{\sT}\bF(\bx^k) \big\|^2_2+
5\|\bDelta^k\|_2\|\bF(\bx^k)\|_2\nonumber\\
&\le \big\|\bF(\bx^k)-\eta \bD \bF(\bx^{k})\Proj_{\Ts} \bD\bF(\bx^k)^{\sT}\bF(\bx^k) \big\|^2_2+
\frac{\eta}{2}\big\|\Proj_{\Ts}\bD\bF(\bx^k)^{\sT} \bF(\bx^k)\big\|^2_2\nonumber\\
&\le \<\bF(\bx^k),\Big(\id-\frac{3}{2}\eta\bK(\bx^k)+\eta^2\bK(\bx^k)^2\Big)\bF(\bx^k)\>\,.
\label{eq:DiscreteRecF}
\end{align}
Since $\eta\lambda_{\max}(\bK(\bx_k)) \le \eta M_n^2\le 1$ by Eq.~\eqref{eq:ConditionEta},
we conclude that
\begin{align*}
\|\bF(\bx^{k+1})\|_2^2 \le & \Big(1-\frac{1}{2}\eta \lambda_{\min}(\bK(\bx^k))\Big)\|\bF(\bx^{k})\|_2^2\,,
\end{align*}
and therefore, for any $k\le k_{\star}$,
\begin{align*}
\|\bF(\bx^{k})\|_2^2 \le & e^{-\eta\lambda_0^2k/8}\|\bF(\bx^{0})\|_2^2\,.
\end{align*}

We are left with the task of proving that $k_{\star}=\infty$. 
To this end we proceed as in the case of gradient flow. Namely, we note
that, for $k\le k_{\star}$ Eq.~\eqref{eq:DiscreteRecF} implies 
(for $\eta\lambda_{\max}(\bK(\bx_k)) \le 1$)
\begin{align}
\|\bF(\bx^{k+1})\|_2^2 & \le \|\bF(\bx^{k})\|_2^2 - \frac{\eta}{2}\big\|\Proj_{\Ts}
\bD\bF(\bx^k)^{\sT}\bF(\bx^k)\big\|^2_2\\
& \le \|\bF(\bx^{k})\|_2^2 - \frac{\lambda_0\eta}{4}\big\|\Proj_{\Ts}
\bD\bF(\bx^k)^{\sT}\bF(\bx^k)\big\|_2\|\bF(\bx^k)\|_2\, ,
\end{align}
and therefore
\begin{align}
\|\bF(\bx^{k+1})\|_2 & \le  \|\bF(\bx^{k})\|_2- \frac{\lambda_0\eta}{8}\big\|\Proj_{\Ts}
\bD\bF(\bx^k)^{\sT}\bF(\bx^k)\big\|_2\, .
\end{align}
Further 
\begin{align}
d_{\S^{d-1}}(\bx^{k+1},\bx^0)\le d_{\S^{d-1}}(\bx^{k+1},\bx^0) +
\eta\big\|\Proj_{\Ts}\bD\bF(\bx^k)^{\sT} \bF(\bx^k)\big\|_2\,.
\end{align}
Therefore, defining $\Psi(\bx) := d_{\S^{d-1}}(\bx,\bx^0) +(8/\lambda_0)\|\bF(\bx)\|_2$,
we have $\Psi(\bx^{k+1})\le \Psi(\bx^{k})$, whence the proof follows.
\end{proof}

We are now in position to prove Theorem \ref{propo:Gradient}.
\begin{proof}[Proof of Theorem \ref{propo:Gradient}]
The proof consists in using the estimates of Section \ref{sec:Basic} to check the assumptions of 
Lemma \ref{lemma:GD}.

More precisely, the assumption in \eqref{eq:ConditionGD} holds with high probability under 
condition  \eqref{eq:AlphaGD}, using the estimates \eqref{eq:EstGF1} to \eqref{eq:EstGF3}
that we obtained in the case of gradient flow. The convergence rate \eqref{eq:GDRate}
follows from Eqs.~\eqref{eq:F-Decrease-GD} and \eqref{eq:EstGF2}.

Finally, the  assumption in \eqref{eq:ConditionEta} on the stepsize, holds with high probability
when $\eta< 1/(C_1 d)$ with $C_1$ large enough, because the following inequalities hold 
with high probability for $C=C(\xi)$ a sufficiently large constant:
\begin{align}
\max_{\bx\in\S^{d-1}}\| \bF(\bx)\big\|_2& \le C\sqrt{d}\, ,\\
\max_{\bx\in\S^{d-1}}\|\Proj_{\Ts}\bD\bF(\bx)^{\sT} \bF(\bx)\big\|_2& \le C\,d \, ,\\
J_n\vee M_n  &\le C\, \sqrt{d}\, .
\end{align}
These inequalities follow from Propositions \ref{propo:MaxValue} and \ref{propo:OPNorm},
thus completing the proof.
\end{proof}

%
%************************************************
%
\subsection{Sensitivity bounds for gradient flow}
\label{sec:Insensitivity}

In this section we study gradient flow on $\S^{d-1}$
as introduced in Appendix \ref{app:GradientDescent-GF}, see Eq.~\eqref{eq:GFlow}
(Riemannian gradient flow on $\S^{d-1}$ with respect
to the cost function $H(\bx)=\|\bF(\bx)\|^2_2/2$.)
We study perturbations of the initialization, and
prove that gradient flow is Lipschitz continuous in such perturbations
as long as energy converges to $0$ at a rate that is integrable (in particular,
faster than $t^{-1}$.)

It is convenient
to denote by $\bx(t;\bz)$ the solution of Eq.~\eqref{eq:GFlow} with initialization
$\bz\in\S^{d-1}$:
\begin{align}
\dot{\bx}(t;\bz) =-\Proj_{\Ts,\bx(t;\bz)}\nabla H(\bx(t;\bz))\, ,\;\;\;\; 
\bx(0;\bz) =\bz\, .
\end{align}
The following lemma establishes a sensitivity bound under conditions on the rate of convergence.
\begin{lemma}\label{lemma:GF-stability-A}
For $\bz_0\in\S^{d-1}$, let $L_n := \Lip_{\perp}(\bD \bF;\S^{d-1})$, with the Lipschitz constant 
of Definition \ref{def:Lip}.
Assume $r_0, K_n$ be such that 
\begin{align}
& \frac{\|\bF(\bz_0)\|_2}
{10\|\bD \bF(\bz_0)|_{\Ts,\bz_0}\|_{\op}} \ge r_0 \, ,\\
&\sup_{\bx\in \S^{d-1}}\big\|\bD^2 \bF(\bx)|_{\Ts\otimes \Ts}\big\|_{\op}
\le K_n\, .
\label{eq:LnBound-2}
\end{align}
Further assume that, for all $\bz\in \S^{d-1}\cap\Ball(\bz_0, r_0)$
\begin{align}
\int_0^{\infty} \|\bF(\bx(t;\bz))\|\, \de t\le T_0 \|\bF(\bz)\|\, . \label{eq:Integrability}
\end{align}
Then, for all $\bz_1,\bz_2\in\S^{d-1}\cap\Ball(\bz_0, r_0)$, we have
\begin{align}
\sup_{t\ge 0}\|\bx(t;\bz_1)-\bx(t;\bz_2)\|_2^2 \le 2 B_n
\, \|\bz_1-\bz_2\|_2\, ,
\;\;\;\;\;\;\;
 B_n :=\exp\Big(2 T_0 K_n\|\bF(\bz_0)\|_2\Big)\, 
.\label{eq:X-Stability-1}
\end{align}
\end{lemma}
\begin{proof}
Define $\eps \mapsto \bz(\eps)$ be the unit-speed geodesic connecting $\bz_1$ and $\bz_2$. 
Namely, $\bz(\eps) = \bz_1\cos \eps+\bv\sin \eps$, 
where $\bv\in \Ts_{\bz_1}$, $\|\bv\|=1$is defined by
 $\bv = \proj_{\Ts,\bz_1}\bz_2/\|\proj_{\Ts,\bz_1}\bz_2\|_2$.
We then have 
\begin{align}
\frac{\de\phantom{\eps}}{\de\eps}\bx(t,\bz(\eps)) =
 \bM(t;\bz(\eps)) \frac{\de\phantom{\eps}}{\de\eps} \bz(\eps) \, ,
\end{align}
where  for $\bz \in \S^{d-1}$, $\bM(t;\bz)$ has row space in $\Ts_{\bz}$ and column space in
 $\Ts_{\bx(t;\bz)}$.
A standard calculation yields (for $\bx(t)=\bx(t;\bz)$):
\begin{align}
\frac{\de\phantom{t}}{\de t}\bM(t;\bz) =-\Proj_{\Ts,\bx(t)}\nabla^2
 H(\bx(t))\Proj_{\Ts,\bx(t)} \bM(t;\bz) \, ,\;\;\;\; 
\bM(0;\bz) =\Proj_{\Ts,\bz}\, .
\end{align}
Therefore
\begin{align}
\|\bM(t;\bz) \|_{\op}\le \exp\left\{\int_0^t \lambda_{\max} \big(-\Proj_{\Ts,\bx(s)}\nabla^2
 H(\bx(s))\Proj_{\Ts,\bx(s)}\big) \,\de s\right\}\,.\label{eq:DiffInit}
\end{align}
Recalling the form of the Hessian 
\begin{align}
\nabla^2
 H(\bx) = \bD\bF(\bx)^\sT\bD\bF(\bx) + \sum_{i=1}^nF_i(\bx) 
 \nabla^2F_i(\bx)\, ,
\end{align}
 we get
\begin{align*}
 \lambda_{\max} \big(-\Proj_{\Ts,\bx}\nabla^2
 H(\bx)\Proj_{\Ts,\bx}\big)& \le 
 \left\|\sum_{i=1}^n F_i(\bx)\nabla^2 F_i(\bx)\right\|_{\op}\\
 &\le  K_n \big\|\bF(\bx)\big\|_2\, .
\end{align*}
Using  the integrability assumption  \eqref{eq:Integrability}, we get
\begin{align*}
\int_0^t \lambda_{\max} \big(-\Proj_{\Ts,\bx(s)}\nabla^2
 H(\bx(s))\Proj_{\Ts,\bx(s)}\big) \,\de s&\le K_n
 \int_{0}^{\infty}\big\|\bF(\bx(s))\big\|_2\de s\\
 &\le K_n T _0\|\bF(\bz)\|_2\, .
\end{align*}
Since  $r_0\le \|\bF(\bz_0)\|_2/10\|\bD \bF(\bz_0)|_{\Ts,\bz_0}\|_{\op}$, we have
 $\|\bF(\bz)\|_2\le 2 \|\bF(\bz_0)\|_2$ for all $\bz\in\S^{d-1}\cap\Ball^d(\bz_0,r_0)$.
Together with Eq.~\eqref{eq:DiffInit} this yields
\begin{align}
\|\bM(t;\bz) \|_{\op} \le \exp\Big( 2 T_0 K_n\|\bF(\bz_0)\|_2\Big) =B_n\, .
\end{align}
Therefore,  letting $\eps_* = \arccos\<\bz_1,\bz_2\>\le 2\|\bz_1-\bz_2\|_2$
\begin{align}
\big\|\bx(t,\bz_2)- \bx(t,\bz_1)\big\|_2&=
\left\|\int_{0}^{\eps_*} \bM(t;\bz(\eps))\dot{\bz}(\eps)\, \de\eps\right\|_2\\
&\le \int_{0}^{\eps_*} \|\bM(t;\bz(\eps))\|_{\op}\, \de\eps\\
&\le 2B_n\|\bz_1-\bz_2\|_2\, .
\, ,
\end{align}
This proves the claim.
\end{proof}

We next specialize the above result to the NT regime. 
\begin{lemma}\label{lemma:GF-stability}
Under the general setting of Lemma 
\ref{lemma:GF-stability-A}  let $\lambda_0 := \sigma_{\min}(\bD \bF(\bz_0)|_{\Ts,\bz_0})$
and
\begin{align}
r_0 := \min \Big( \frac{\lambda_0}{10L_n};\; \frac{\|\bF(\bz_0)\|_2}
{10\|\bD \bF(\bz_0)|_{\Ts,\bz_0}\|_{\op}} \Big)\ .
\end{align}
If
\begin{align}
&L_n\|\bF(\bz_0)\|_2< \frac{\lambda_0^2}{8}\, ,
\end{align}
then, for all $\bz_1,\bz_2\in\S^{d-1}\cap\Ball(\bz_0, r_0)$, we have
\begin{align}
\sup_{t\ge 0}\|\bx(t;\bz_1)-\bx(t;\bz_2)\|_2^2 \le 2 B_n
\, \|\bz_1-\bz_2\|_2\, ,
\;\;\;\;\;\;\;
 B_n :=\exp\Big(\frac{16}{\lambda_0^2}K_n\|\bF(\bz_0)\|_2\Big)\, 
.\label{eq:X-Stability}
\end{align}
\end{lemma}
\begin{proof}
Note that, for  any $\bz\in\S^{d-1}\cap\Ball(\bz_0, r_0)$, the conditions of 
Lemma \ref{lemma:GF} hold.
Therefore
\begin{align}
\int_{0}^{\infty}\big\|\bF(\bx(s;\bz))\big\|_2\de s
 &\le
 \int_{0}^{\infty}\|\bF(\bz)\|_2\, e^{-\lambda_0^2 s/8} \de s\\
 &\le 
  \frac{8}{\lambda_0^2} \|\bF(\bz)\|_2\, .
\end{align}
This shows that condition \eqref{eq:Integrability} in Lemma \ref{lemma:GF-stability-A}
holds. The claim follows by applying that lemma.
\end{proof}

\begin{proposition}
Consider gradient flow with respect to $H(\bx) = \|\bF(\bx)\|_2^2/2$
with  $\bF$ the Gaussian process defined in Section \ref{sec:Introduction}.
Assume $n,d\to\infty$ with $n/d\to\alpha\in [0,1)$ and 
let $\bz_0\in \S^{d-1}$ be an initialization independent of $\bF$.
Define, for $c_0,c_0'$  sufficiently small absolute constantx,  
\begin{align}
\ualpha_{\sGF}(\xi) & := \frac{c_0 \xi'(1)^2}{\xi''(1)\xi(1) \big(\log(\xi^{(3)}(1)/\xi''(1)) \vee 1\big)} \,,\\
\rr_0(\xi,\alpha) & := c_1\sqrt{\frac{\alpha \xi(1)}{ \xi'(1)}}
\end{align}
 Then there exists an absolute constant $C$ such that, if $\alpha<\ualpha_{\sGF}(\xi)$,
then the following happens with high probability. 
For for all $\bz_1,\bz_2\in\S^{d-1}\cap\Ball(\bz_0, \rr_0)$,
\begin{align}
&\sup_{t\ge 0}\|\bx(t;\bz_1)-\bx(t;\bz_2)\|_2 \le \overline{B}_n \|\bz_1-\bz_2\|_2\, ,\\
 & \overline{B}_n  := 2\exp\Big(C\sqrt{\frac{\alpha\xi^{(2)}(1)\xi(1)}{\xi'(1)^2}
 \log_+ \frac{\xi^{(3)}(1)}{\xi^{(2)}(1)}}\Big)\, .
\end{align}
\end{proposition}
\begin{proof}
Recall the following estimates that we already obtained in the proof
Theorem \ref{thm:Gradient-App}:
\begin{align}
\|\bF(\bz_0)\|^2_2 & = n\xi(1) +o_P(n)\, ,\label{eq:EstGF1-Bis}\\
\sigma_{\min}(\bD\bF(\bz_0)|_{\Ts,\bx(0)}) &= \sqrt{\xi'(1)}\big(\sqrt{d}-\sqrt{n}\big) +o_P(\sqrt{n})\, ,
\label{eq:EstGF2-Bis}\\
\Lip_{\perp}(\bD\bF;\S^{d-1})&  \le C\sqrt{d\, \xi^{''}(1)\log_+\frac{\xi^{(3)}(1)}{\xi''(1)} }\, .
\label{eq:EstGF3-Bis}
\end{align}
Further using Proposition  \ref{propo:OPNorm} (cf. Eq.~\eqref{eq:DDFnorm1})
and Gaussian concentration, we obtain that, with high probability
\begin{align}
\sup_{\bx\in \S^{d-1}}\max_{i\le n} \big\|\Proj_{\Ts,\bx}\nabla^2 F_i(\bx)\Proj_{\Ts,\bx}\big\|_{\op}
\le K_n:=C\sqrt{d\xi''(1) \log_+ \frac{\xi^{(3)}(1)}{\xi''(1)}}\, .
\end{align}
Finally, using the the fact that $\bD\bF(\bz_0)|_{\Ts,\bz_0} = \sqrt{\xi'(1)} \bZ\proj_{\Ts,\bz}$
for $\bZ\sim\GOE(n,d)$, we have, with high probability 
\begin{align}
\big\| \bD\bF(\bz_0)|_{\Ts,\bz_0} \big\|_{\op} \le 2\sqrt{\xi'(1)}\big(\sqrt{n}+\sqrt{d}\big)\, .
\end{align}

The claim then follows by applying Lemma \ref{lemma:GF-stability}, whereby condition 
$L_n\|\bF(\bz_0)\|_2< \lambda_0^2/8$ is verified for $\alpha\le \ualpha_{\sGF}(\xi) $
as in the proof of Theorem \ref{thm:Gradient-App}.
Further $B_n\le \overline{B}_n$ follows by substituting the above estimates in
the expression for $B_n$ given in Eq.~\eqref{eq:X-Stability}.
(Note that, without loss of generality, we can assume $\ualpha_{\sGF}(\xi)<1/2$.)

Finally, again applying the estimates above we conclude that, with high probability,
for $\alpha\le \ualpha_{\sGF}(\xi)$
\begin{align}
\frac{\lambda_0}{L_n} & \ge c_1\sqrt{\frac{\xi'(1)}{\xi''(1)\log_+\xi^{(3)}(1)/\xi''(1)}}\, ,\\
\frac{\|\bF(\bz_0)\|_2}
{\|\bD \bF(\bz_0)|_{\Ts,\bz_0}\|_{\op}}  & \ge c_1\sqrt{\frac{\alpha \xi(1)}{ \xi'(1)}}\, .
\end{align}
Since the second lower bound is smaller for $\alpha\le \ualpha_{\sGF}(\xi)$, 
we obtain $\rr_0(\xi,\alpha) \le r_0$ with high probability, thus completing the proof.
\end{proof}
\section{Proof sketch for Theorem \ref{thm:Optimality}}
\label{sec:optimalitypfsketch}

As mentioned after the statement of Theorem \ref{thm:Optimality}, the fact that $u_{\salg,\slb}(\alpha,\xi)=u_{\salg}(\alpha,\xi)$ for $u_{\salg,\slb}(\alpha,\xi)$ of \eqref{eq:ALG-LB}
is proved in \cite{hardness_in_preparation}. Here we sketch how \eqref{eq:optimality} is proved with $u_{\salg,\slb}(\alpha,\xi)$ in the lower bound.

The proof follows the technique developed in \cite{huang2022tight,huang2023hardnessmulti} for optimization of spin glasses.
For any $\rho\in[0,1]$ and $\bF,\tilde\bF\in  \cuF(n,d)$ whose coefficients are jointly 
Gaussian, we say that $\bF$ and $\tilde\bF$ are $\rho$-correlated if
 $\E \{\bF(\bx)\tilde\bF(\by)^{\sT} \} = \rho \E \{\bF(\bx)\bF(\by)^{\sT}\}= \rho \E \{\tilde\bF(\bx)\tilde\bF(\by)^{\sT} \}$.
An algorithm $\bx^{\salg}: \cuF(n,d) \to \S^{d-1}$, $\bF\mapsto \bx^{\salg}(\bF)$ is said to be $(\lambda,\nu)$ overlap concentrated if for any $\rho\in[0,1]$ and $\rho$-correlated $\bF$ and $\tilde\bF$,
\[
\P\Big(
\big|\big\< \bx^{\salg}(\bF),\bx^{\salg}(\tilde\bF)\big\>- \E \big\< \bx^{\salg}(\bF),\bx^{\salg}(\tilde\bF)\big\> \big| \ge \lambda 
\Big) \le \nu\,.
\]

Suppose that $\bx^{\salg}$ is an algorithm which is Lipschitz with constant $M$  for some $M>0$.
From Gaussian concentration, one can show that $\bx^{\salg}$ is $(\lambda,\nu)$ overlap concentrated for arbitrary $\lambda>0$ and $\nu=2e^{-\frac{d\lambda^2}{8M}}$. 
This is the only role of the Lipschitz assumption in the proof, and, in fact, the theorem also holds if we instead assume overlap concentration directly.

For any $\rho\in[0,1]$ and $\rho$-correlated $(\bF,\tilde\bF)$, define
\[
\chi_d(\rho):=\E\big\<\bx^{\salg}(\bF),\bx^{\salg}(\tilde\bF)\big\>\,.
\] 
We will assume that $\chi_d$ is not constant in $\rho$, which is the case only if $\bx^{\salg}$ a.s. outputs some deterministic point (since then $\chi_{d}(0)=\chi_{d}(1)=1$). It is also not difficult to check that $\chi_d(0)\ge 0$ by considering the mean inner-products $\E\<\bx^{\salg}(\bF^{(i)}),\bx^{\salg}(\bF^{(j)})\>$ for a large collection of i.i.d. $\bF^{(i)}$, which are pairwise $0$-correlated.
 
In the sequel we shall assume that  $\chi_d(\rho)\to\chi(\rho)$ pointwise for some $\chi:[0,1]\to[0,1]$. Indeed, we may always move to a subsequence such that the convergence holds, and proving Theorem \ref{thm:Optimality} for such sub-sequences implies it in general (formally, in the rest of the proof we need to consider a sub-sequence on which there is convergence, but this will be implicit from now on).
$\chi$ may have a discontinuity at $\rho=1$ and $\chi(0)$ may be strictly positive. Those cases need to be dealt with separately, and to simplify our discussion, we henceforth assume that neither occurs. 

Using Gaussianity, it can be shown that $\chi_d(t)=\sum_{i=0}^{\infty}\gamma_{d,i}t^i$ for some $\gamma_{d,i}\ge 0$. Under our assumptions, $\chi(t)=\sum_{i=0}^{\infty}\gamma_{i}t^i$ for $\gamma_i=\lim_{d\to\infty}\gamma_{d,i}$ and it is therefore a continuous, convex and strictly increasing on $[0,1]$ with $\chi(0)=0$ and $\chi(1)=1$. Hence,  the function 
$
p(t):=\chi^{-1}(t)
$
is well-defined on $[0,1]$ and belongs to $\cuC^1$. As we explain below, with this variance profile 
$H(\bx^{\salg})/n \ge u_{\slb}(1;p)-\delta$ with high 
probability, for any fixed $\delta>0$, where $u_{\slb}(1;p)$
 is defined 
in Remark~\ref{rmk:LBeqUB}.

\subsection{Hierarchically correlated systems}
Given some integer numbers $\kappa,m\ge1$, let $\TT=\TT(\kappa,m)$
be the $\kappa$-regular tree
of depth $m$ with root $\varnothing$, depth $a$ vertex set $V_{a}=[\kappa]^{a}$
and set of leaves $\LL=V_{m}$. For any two vertices $u,v\in\TT$
let $|u|$ denote the depth of $u$, $\varnothing\leftrightarrow u$ the path
from the root to $u$ and $u\wedge v$ the least common ancestor.
We write $\langle u,v\rangle:=|u\wedge v|/m$ and $u\leq v$ if
$v$ is a descendant of $u$ or $u=v$. 

For each edge $e=(u,v)$ with $|u|=|v|-1=a$, let $\bF^{e}$
be an i.i.d. copy of $\bF$ and by an abuse of notation write $p(e)=p((a+1)/m)-p(a/m)$, where $p(t)=\chi^{-1}(t)$ is the variance profile as above.
For any vertex $u\in\mathbb{T}$, define 
\begin{equation}\label{eq:bFu}
	\bF^{(u)}:=\sum_{e\in\varnothing\leftrightarrow u}\sqrt{p(e)}\bF^{e}
\end{equation}
so that
\begin{equation}
	\E F_{i}^{(u)}(\bx)F_{j}^{(v)}(\by)=\delta_{ij}p(\langle u,v\rangle)\xi(\langle\bx,\by\rangle).\label{eq:correlation-1}
\end{equation}

Note that, for large $d$, 
\begin{equation*}
\E\big\langle\bx^{\salg}(\bF^{(u)}),\bx^{\salg}(\bF^{(v)})\big\rangle=\chi_{d}\circ\chi^{-1}(\langle u,v\rangle)\approx \langle u,v\rangle.
\end{equation*}
By the aforementioned overlap concentration of Lipschitz algorithms and a union bound, for fixed $\kappa$ and $m$, we have that, for large $d$, 
\begin{equation}\label{eq:alg_hierarchy}
	\P\Big(
	\sup_{u,v\in\LL}\big|\big\langle\bx^{\salg}(\bF^{(u)}),\bx^{\salg}(\bF^{(v)})\big\rangle - \langle u,v\rangle \big|\le \eps
	\Big)\ge 1-2\kappa^m e^{-\frac{d\eps^2}{32M}}\,.
\end{equation}

To any leaf of the tree $u\in\LL$, we associate the point $\bx_{u}:=\bx^{\salg}(\bF^{(u)})\in\S^{d-1}$. 
To other vertices $u\in\TT\setminus\LL$ we associate the normalized averages
\begin{equation}
	\bx_{u}:=\sqrt{|u|/m}\cdot\frac{\hat\bx_u}{\|\hat\bx_u\|},\quad\hat\bx_u:=\kappa^{-(m-|u|)}\sum_{v\in\LL(u)}\bx_{v},\label{eq:xu}
\end{equation}
where $\LL(u):=\{v\in\LL:u\leq v\}$ is the set of leaves
in the subtree of $u$. By definition, $\|\bx_u\|^2=|u|/m$. Combining \eqref{eq:alg_hierarchy} with some algebra one can show that, for any $\eps$ there is a constant $c=c(\eps)>0$ such that for any fixed sufficiently large $\kappa$ and any $m$, for any large $d$, 
\begin{equation}\label{eq:xu_hierarchy}
	\P\Big(
	\sup_{u,v\in\TT}\big|\big\langle\bx_u,\bx_v\rangle - \langle u,v\rangle \big|\le \eps
	\Big)\ge 1-2\kappa^m e^{-dc}\,.
\end{equation}

\subsection{Reduction to one-level averages}

Throughout this section, we adopt the notation 
$\ALG=u_{\salg,\slb}(\xi,\alpha):=\inf_{p\in\cuC^1}  u_{\slb}(1;p)$
for the predicted algorithmic theshold. 
Let $\delta>0$ be some number such that $\ALG-\delta>0$.
Suppose that the variance profile $p(t):=\chi^{-1}(t)$ 
is as defined above and let $\varphi:[0,1]\to\R_+$ be some differentiable function such that $\varphi(1)=\ALG-\delta$, $\varphi(0)=\dot{u}_{\slb}(0;p)-\delta'$ and $\dot{\varphi}(t)\le \dot{u}_{\slb}(t;p)-\delta'$, for some $\delta'>0$ and any $t\in[0,1]$.

In the previous section, we constructed the random tree $(\bx_u)_{u\in\TT}$ from the outputs of the correlated systems $\bF^{(u)}$. On this tree, we consider the corresponding energies  $H^{(u)}(\bx_u):=\frac12\|\bF^{(u)}(\bx_u)\|^2$. From Jensen's inequality (see \cite[Proposition 3.6(a)]{huang2023hardnessmulti}) one can show that
\begin{align}
	\P\big(
	\forall u\in \LL:\, H^{(u)}(\bx_u)/n\le \ALG-\delta
	\big)
	&\ge
	\P\big(
	H(\bx^{\salg}(\bF))/n\le \ALG-\delta
	\big)^{\kappa^m}\,.\label{eq:ALGdeltabd}
\end{align}

Let $\bar \calE$ be the event that there exists $u\in\TT\setminus\LL$ such that 
\begin{equation}\label{eq:xuavg}
\frac{1}{n}H^{(u)}(\bx_u) \ge \varphi(|u|/m)\quad\mbox{and}\quad\frac{1}{n\kappa}\sum_{v:\,u\le v,\,|v|=|u|+1} H^{(v)}(\bx_v)\le \varphi((|u|+1)/m)
\end{equation}
and let $\calE_{\varnothing}$ be the event that $H^{(\varnothing)}(\bx_{\varnothing})/n=H^{(\varnothing)}(\bzero)/n\ge \varphi(0)$. 
We claim that the left-hand side of \eqref{eq:ALGdeltabd} is bounded from above by 
$\P(\bar\calE\cup \calE_{\varnothing}^c)$.
Indeed, if $\bar\calE^c\cap \calE_{\varnothing}$ occurs, then  starting at the root $\varnothing$ and iteratively choosing the child with maximal energy $H^{(v)}(\bx_v)$ constructs a path on which  $H^{(u)}(\bx_u)/n\ge \varphi(|u|/m)$. In particular, since $\varphi(1)= \ALG-\delta$, the event in \eqref{eq:ALGdeltabd} does not occur.   

Of course, $\P(\calE_{\varnothing}^c)\le ce^{-cd}$ for some $c=c(\xi,\alpha)$. Hence, denoting by $\calE_{\eps}$ then event from \eqref{eq:xu_hierarchy}, to prove the required bound on the probabilities in \eqref{eq:ALGdeltabd}, it will suffice to show that
\[
\P(\bar\calE\cap  \calE_{\eps})\le e^{-Cd}/C\,.
\]

Define $\bar T(\by,u)=\bar T(\by,u,\kappa,m,\eps)$ and $\bar \Delta(\by,u)=\bar \Delta(\by,u,\kappa,\eps)$ by
\begin{equation}
	\begin{aligned}
		\bar T(\by,u):= & \Big\{(\by_v)_{u\le v, |v|=|u|+1}:\,\forall v\neq v',\\ &\big|\langle\by_v,\by\rangle-|u|/m\big|\vee\big|\langle\by_v,\by_{v'}\rangle-|u|/m\big|\le \eps,\,\langle\by_v,\by_v\rangle=(|u|+1)/m\Big\}\,,\\
		\bar \Delta(\by,u):= & \frac{1}{\kappa}\inf_{ \bar T(\by,u)}\sum_{v:\,u\le v,\,|v|=|u|+1}H^{(v)}(\by_v) \,.
	\end{aligned}
	\label{eq:barT(x)}
\end{equation}
On $\calE_{\eps}$, for any $u\in\TT\setminus\LL$ the $\kappa$ children of $\bx_u$ belong to $\bar T(\bx_u,u)$ and thus
\[
\frac1n\bar\Delta(\bx_u,u) \le \frac{1}{n\kappa}\sum_{v:\,u\le v,\,|v|=|u|+1} H^{(v)}(\bx_v)\,.
\]
Hence, $\P(\bar\calE\cap  \calE_{\eps})\le \P(\calE\cap  \calE_{\eps}) \le \P(\calE)$ where we define $\calE$ as the event that 
there exists $u\in\TT\setminus\LL$ and $\bx$ with $\|\bx\|^2=|u|/m$ such that 
\begin{equation}%\label{eq:xuavg}
	\frac{1}{n}H^{(u)}(\bx) \ge \varphi(|u|/m)\quad\mbox{and}\quad\frac1n\bar\Delta(\bx,u)\le \varphi((|u|+1)/m)\,.
\end{equation}

\subsection{The role of many approximately orthogonal replicas}

Similarly to \eqref{eq:barT(x)}, define 
$T(\bx)=T(\bx,\kappa,\eps,q,q')$ and $\Delta(\bx)=\Delta(\bx,\kappa,\eps,q,q',\rho,\rho')$  by 
\begin{equation}
	\begin{aligned}T(\bx):= & \Big\{(\bx^{1},\ldots,\bx^{\kappa})\in(\Ball^d(1))^{\kappa}:\,\forall i<j\leq\kappa,\\
		& \quad\big|\langle\bx^{i},\bx\rangle-q\big|\vee\big|\langle\bx^{i},\bx^{j}\rangle-q\big|\le \eps,\,\langle\bx^{i},\bx^{i}\rangle=q'\Big\}\,,\\
		\Delta(\bx):=&\frac{1}{\kappa}\inf_{T(\bx)}\sum_{i=1}^{\kappa} H^{(i)}(\bx^{i})\,,
	\end{aligned}
	\label{eq:T(x)}
\end{equation}
where, by an abuse of notation, $H^{(i)}(\bx):=\frac12\|\bF^{(i)}(\bx)\|^2$ and $\bF^{(i)}$ are correlated systems such that 
\begin{equation}
	\E F_{\ell}^{(i)}(\bx)F_{\ell'}^{(j)}(\by)=\delta_{\ell\ell'} \big[ \rho + \delta_{ij}(1-\delta_{i0})(\rho'-\rho)\big]
	\xi(\langle\bx,\by\rangle)\,.\label{eq:correlation-2}
\end{equation}

Note that for any $u$ such that $|u|=i$,  setting $q=i/m$, $q'=(i+1)/m$, $\rho=p(q)$, $\rho'=p(q')$ we have that  $(H^{(u)}(\bx),\bar \Delta(\bx,u))$ and $(H^{(0)}(\bx),\Delta(\bx))$ have the same law, as processes on $\sqrt{q}\S^{d-1}=\{\bx:\,\|\bx\|^2=q\}$. 
Hence, since $|\TT|=\kappa^m$ does not depend on $d,n$, by a union bound, to prove that $\P(\calE)\le e^{-Cd}/C$ and complete the proof of Theorem \ref{thm:Optimality} it is sufficient to show that for any $0\le i\le m-1$,
\begin{equation}\label{eq:Deltabd}
	\P\bigg(\exists \bx\in \sqrt{q}\S^{d-1}:\ \frac{1}{n}H^{(0)}(\bx) \ge \varphi(q)\quad\mbox{and}\quad\frac1n\Delta(\bx)\le \varphi(q')
	\bigg)\le e^{-Cd}/C\,,
\end{equation}
where we may need to adjust the constant $C=C(\xi,\alpha,p,\eps,\kappa,m,q,q')>0$.

Note that for small $\eps$ and $\bx$ such that $\|\bx\|^2=q$, if $(\bx^i)\in  T(\bx)$ then $\bx^i-\bx$ are roughly orthogonal to each other. The first crucial role of this orthogonality is that for any $t,c>0$,
\begin{align}\label{eq:unifconc}
	\P\bigg(\sup_{\bx:\,\|\bx\|^{2}=q}\Big|\Delta(\bx)-\E\big[\Delta(\bx)\,|\,H^{(0)}(\bx)\big]\Big|>nt
	\bigg)\le e^{-cd}\,,
\end{align}
for sufficiently large $d$ and $\kappa$ and small $\eps$.
The main step in proving this bound simply uses Gaussian concentration. A similar idea was first used
in \cite{subag2018landscape,chen2018generalized,chen2020generalized2} 
in the study of the Thouless-Anderson-Palmer approach for mixed $p$-spin models and later utilized in \cite{subag2021following}
in the context of optimization and \cite{huang2022tight,huang2023hardnessmulti}
in the context of algorithmic hardness for spin glasses, which
is closest to our setting.

By the uniform concentration of \eqref{eq:unifconc}, to prove \eqref{eq:Deltabd} it is sufficient to show that, for some small $t>0$,
\[
\frac1n H^{(0)}(\bx)\ge \varphi(q)\implies \frac1n \E\big[\Delta(\bx)\,|\,H^{(0)}(\bx)\big]\ge \varphi(q')+t\,.
\]
If $m$ is sufficiently large (so that $q'-q=1/m$ is small), one can show that the conditional expectation above is increasing with the value of $H^{(0)}(\bx)$. Namely, the above implication follows if we can show that
\begin{equation}
	\frac1n H^{(0)}(\bx)= \varphi(q)\implies \frac1n \E\big[\Delta(\bx)\,|\,H^{(0)}(\bx)\big]\ge \varphi(q')+t\,.
	\label{eq:conditionalDelta}
\end{equation}

To analyze $\E\big[\Delta(\bx)\,|\,H^{(0)}(\bx)\big]$ it is useful to write the processes $\bF^{(i)}$ from  \eqref{eq:correlation-2} explicitly as 
\[
\bF^{(0)} =\sqrt{\rho} \tilde \bF^{(0)} \mbox{ and }\, \bF^{(i)}= \sqrt{\rho} \tilde \bF^{(0)}+\sqrt{\rho'-\rho} \tilde \bF^{(i)}\,,
\]
for $i\ge1$ for i.i.d. copies $\tilde \bF^{(i)}$ of $\bF$. Using this representation, $\Delta(\bx)$ can be expressed as a sum of several terms. The second crucial role of using many approximately orthogonal directions $\bx^i-\bx$ is that one of those terms scales like $1/\sqrt{\kappa}$, and therefore can be made as small as we wish provided that the number of copies $\kappa$ is large enough. Other error terms, which basically correspond to degree $3$ and above terms in a Taylor expansion of the processes $\tilde \bF^{(i)}$ around $\bx$, vanish as $m\to\infty$. For large $\kappa,m$, we have $\rho'-\rho\approx \dot p(q)/m$ while $q'-q=1/m$, and 
$\frac1n \E\big[\Delta(\bx)\,|\,H^{(0)}(\bx)\big]-\frac1n H^{(0)}(\bx)$ can be shown to be approximately equal to $\dot u_{\slb}(q)/m$. The fact that $\dot{\varphi}(q)\le \dot{u}_{\slb}(q;p)-\delta'$ for $\delta'$ independent of $\kappa,m$ allows us to show \eqref{eq:conditionalDelta}.

%
%***********************************************
%
\subsection{Monotonicity of the algorithmic threshold energy}
\label{sec:Monotonicity}

Let $\cL(s;p,\dot{p},u,t,\alpha)= \cL(s;p,\dot{p},u,t)$ be the 
drift of Eq.~\eqref{eq:DriftLowerBound}, whereby we emphasize 
the dependence on $\alpha$. 
For any $\alpha_1\le\alpha_2$, and any $s,p,\dot{p},u,t$, we have
$\cL(s;p,\dot{p},u,t,\alpha_1)\le \cL(s;p,\dot{p},u,t,\alpha_2)$.
Therefore, it follows from Eq.~\eqref{eq:ulb} that 
$u_{\slb}(t;p,\alpha_1)\le _{\slb}(t;p,\alpha_2)$,
whence $u_{\salg,\slb}(\alpha_1,\xi)\le u_{\salg,\slb}(\alpha_2,\xi)$.

%
%***********************************************
%
\section{Interpolating with neural networks: An empirical comparison}
\label{sec:Empirical}

Throughout the paper, we studied the problem of solving the system of nonlinear
equations $\bF(\bW)=\bzero$ with respect to unknowns $\bW\in\S^{d-1}$,  when $\bF$ is a Gaussian 
process. (Throughout this section, the unknowns will be denoted by $\bW$ to match the 
applied literature.)

It is natural to wonder whether the theory developed in this setting can provide
any guidance towards understanding more complex cases in which the 
functions $F_1,\dots, F_n$ are random but non-Gaussian.

In this section we consider the interpolation problem introduced in 
Section \ref{sec:Introduction} and describe a simulation study comparing the Gaussian theory
of the previous section to empirical results. More precisely,
we consider the problem of interpolating random data $(y_i,\bz_i)\in\{+1,-1\}\times\reals^D$
using a two-layer neural network with weights $\bW$.

The setup for these simulations is defined in Section \ref{sec:Setup}.
Section \ref{sec:Sensitivity} investigates the robustness of our results with respect to
various choices in the simulations. Finally, in Section \ref{sec:Comparison}
we compare empirical results in the interpolation problem with the Gaussian theory of the previous
sections. Namely, we compare empirical results with theoretical ones within a 
Gaussian model that approximately matches the covariance structure of the 
interpolation model.

In general, we observe a gap between the Gaussian theory and the neural network model. 
However, the two appear to be in rough qualitative agreement and, in certain cases,
surprisingly close to each other. 

\subsection{Setup and definitions}
\label{sec:Setup}

We are interested in the ability of large neural networks
to interpolate completely unstructured (pure noise) data, a phenomenon that
has attracted considerable attention over the last few years 
\cite{zhang2021understanding,belkin2019reconciling,bartlett_montanari_rakhlin_2021}. 
In order to capture the essence of this problem in a simple setting, 
we assume to be given i.i.d. data $\{(y_i,\bz_i)\}_{i\le n}$
with $y_i\sim \Unif(\{+1,-1\})$ independent of $\bz_i\sim\normal(\bzero,\id_D)$.
We consider a two-layer neural network with $D$ inputs and $m$ hidden units:
\begin{align}
f(\bz;\bW)= \frac{a}{\sqrt{m}}\sum_{j=1}^ms_i\, \sigma(\<\bw_j,\bz\>)\, \, .\label{eq:TwoLayer}
\end{align}
Here $a\in\reals_{>0}$ is a scale parameter that will be fixed independently
of the data. The signs $s_1,\dots,s_{m}\in\{+1,-1\}$ are also fixed
(independent of the data) and uniformly random subject to 
$\#\{i\in [m]:\; s_i=+1\} = \#\{i\in [m]:\; s_i=-1\}=m/2$ (for simplicity we assume $m$ even).
The weights $\bW=(\bw_1,\dots,\bw_m)^{\sT}\in\reals^{m\times D}$,
are instead fit to the data as to 
minimize the empirical risk
\begin{align}
\hR_n(\bW) &:=\frac{1}{2n}\sum_{i=1}^n\big(y_i-f(\bz_i;\bW)\big)^2\, ,
\end{align}
subject to the norm constraints
\begin{align}
\|\bW\|_F^2 = \sum_{i=1}^m\|\bw_i\|_2^2 &\le m\, .\label{eq:WeigthConstraint}
\end{align}
Note that $n\cdot \hR_n(\bW)$ is nothing but the Hamiltonian $H(\bx)= \|\bF(\bx)\|_2^2/2$
of the previous pages, with the replacement $\bx = \bW/\sqrt{m}$ and $F_i(\bx) = y_i-f(\bz_i;\bW)$.
We thus identify the dimension of the optimization problem as $d=mD$.

We attempt to minimize the cost $\hR_n(\bW)$ using stochastic gradient descent 
(SGD). Below are some specifics of our experiments.

\paragraph{Activation function.} We use the standard ELU activation: 
\begin{align}
\sigma(x) = \begin{cases}
x & \mbox{ if $x\ge 0$,}\\
e^{x}-1& \mbox{ if $x< 0$.}\\
\end{cases}
\end{align}

\paragraph{Optimization algorithm.} We use SGD with batch size $|B(k)|=4$:
\begin{align}
\tilde{\bW}^{k+1} &= \bW^k-\frac{\eta_k}{2}\sum_{i\in B(k)} \nabla_{\bw}\big(y_i-f(\bz_i;\bW^k)\big)^2\, ,\\
\bW^{k+1}& = \Big(1\wedge \frac{\sqrt{m}}{\|\tilde{\bW}^{k+1}\|_F}\Big)\tilde{\bW}^{k+1} \, ,\label{eq:Proj}
\end{align}
and stepsize $\eta_k ={\rm lr}/(1+E(k))^{1/2}$, where $E(k)$ is the epoch index. We
will vary the learning rate $\lr$ and number of epochs.

In order to satisfy the $\ell_2$ constraint \eqref{eq:WeigthConstraint},
we project back onto this set at each iteration, as per Eq.~\eqref{eq:Proj}.

\paragraph{Initialization.} We initialize the weights to 
$\bW\sim\normal(0,\eps^2\id_{mD}/D)$, and use  $\eps=0.03$ in simulations.
This corresponds to initializing close to the center of the ball \eqref{eq:WeigthConstraint},
since $\|\bW\|_F/\sqrt{m} = \eps +o_P(1)$.  

\subsection{Dependence on the simulations parameters}
\label{sec:Sensitivity}

\begin{figure}
\includegraphics[width=0.5\linewidth]{./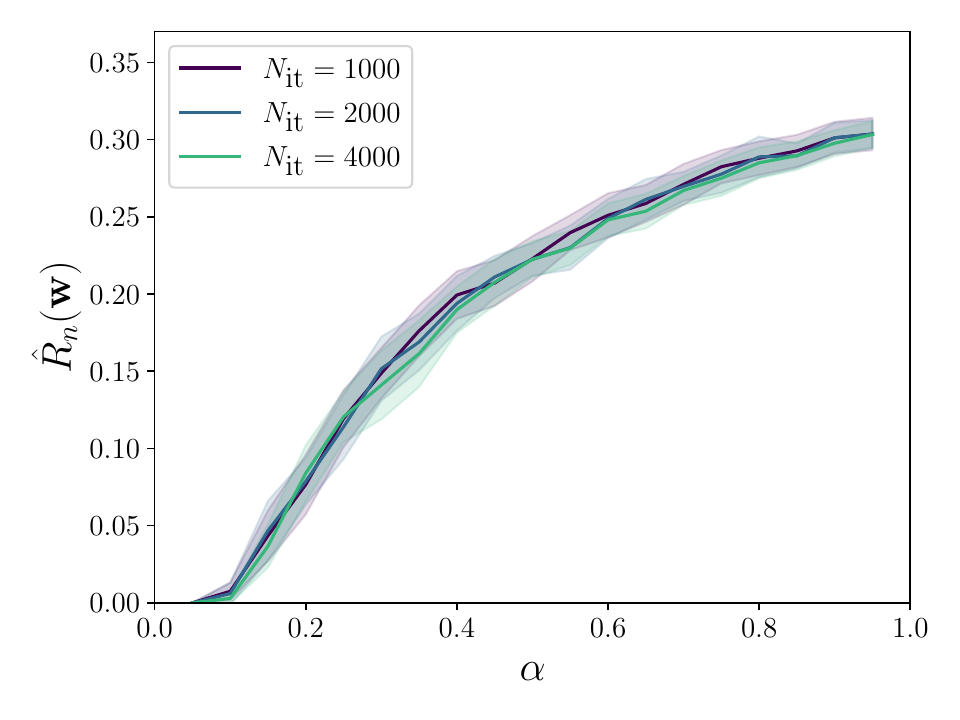}
\includegraphics[width=0.5\linewidth]{./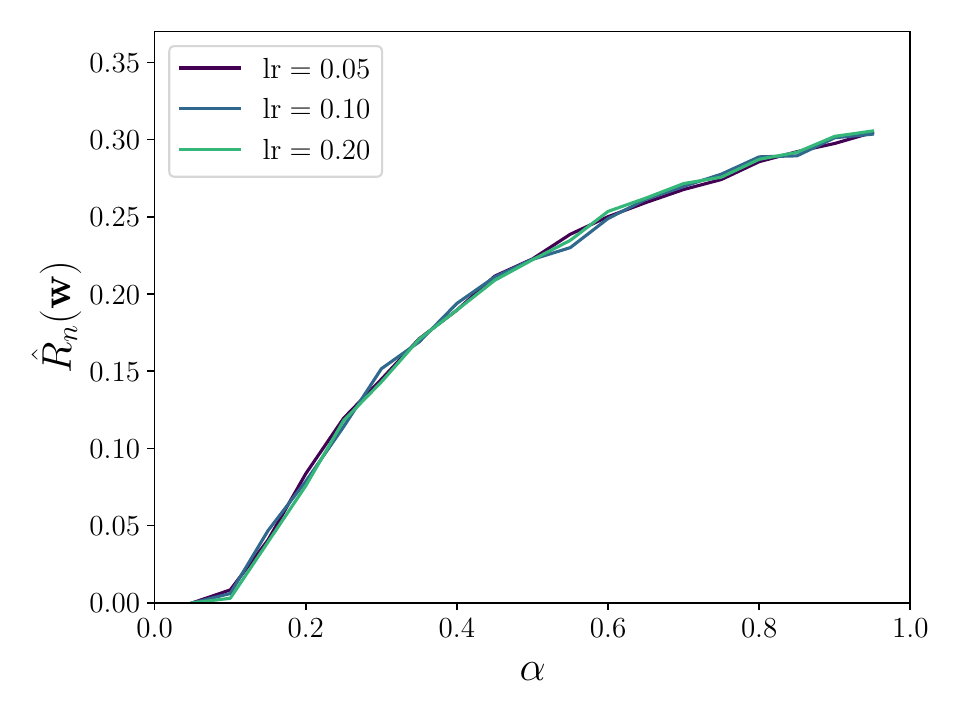}
\caption{Fitting random binary labels using a $2$-layer ELU network \eqref{eq:TwoLayer}, 
using SGD and weights
bounded as per Eq.~\eqref{eq:WeigthConstraint}, as a function of the number
of samples per parameter $n/(mD)$. Left: Dependence of training error on the number of 
epochs $N_{\text{it}}$. Right: Dependence on the learning rate $\lr$.}
\label{fig:Energy-ELU-nit-lr}
\end{figure}

\begin{figure}
\begin{center}
\includegraphics[width=0.5\linewidth]{./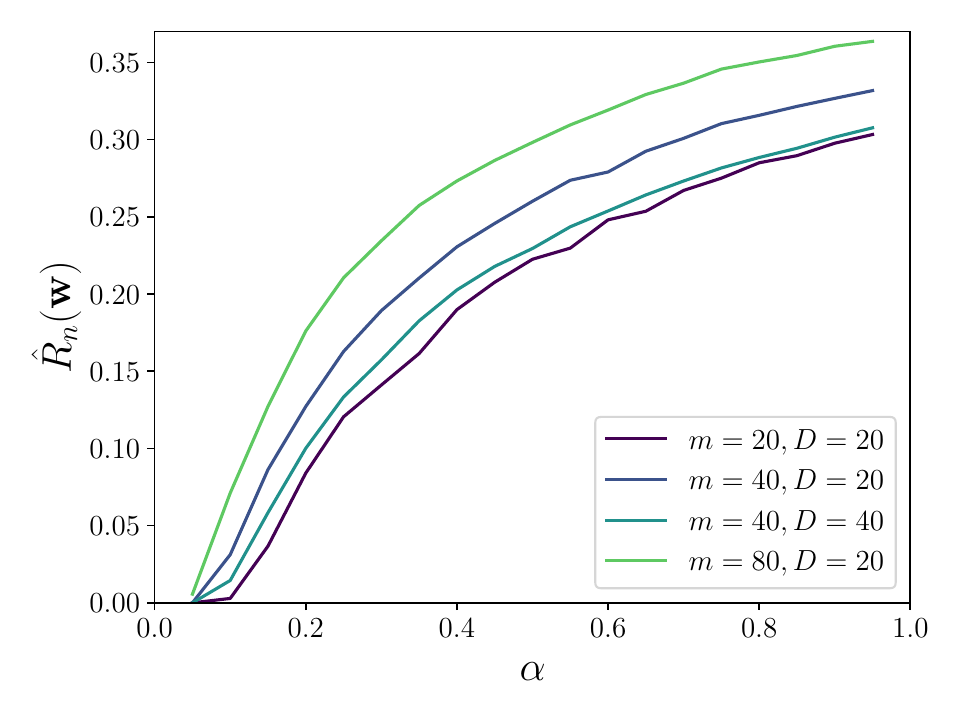}
\end{center}
\caption{Same as in Fig.~\ref{fig:Energy-ELU-nit-lr}. Training error as a function of
 the number of samples per parameter $n/(mD)$. Here we vary the input dimension $D$ 
 and number of neurons $m$.}
\label{fig:Energy-ELU-md}
\end{figure}

In Figures \ref{fig:Energy-ELU-nit-lr}, \ref{fig:Energy-ELU-md} we investigated 
the dependence of training error (energy) achieved  over various parameters of the model and training algorithm.
All of these figures are obtained by setting $a=1$, 
and report the average training error over $20$ realizations of the data,
and independent runs of the algorithm. When present, bands represent an interval of 
plus/minus one standard deviation over these $20$ realizations.

In more detail, we consider the following settings:
\begin{itemize}
\item Figure \ref{fig:Energy-ELU-nit-lr}, left frame: we use $m=D=20$, $\lr =0.1$, and increase 
the number of epochs $N_{\text{it}}$ from $1000$ to $4000$.
\item Figure \ref{fig:Energy-ELU-nit-lr}, right frame:  we use $m=D=20$, $N_{\text{it}}=2000$,
and vary the learning rate $\lr$ between $0.05$ and $0.20$.
\end{itemize}
Our theory does not capture dependence on learning rate or number of iterations. In this respect,
the empirical observation that this dependence is mild in the current setting is reassuring.

Next, the Gaussian theory only depends on the ratio $\alpha=n/d$,
where $d$ is the number of decision variables. In the current setting $d=mD$,
and is therefore interesting to investigate the dependence on $m,D$ while keeping
$\alpha = n/(mD)$ fixed:
\begin{itemize}
\item In Figure \ref{fig:Energy-ELU-md} we use   $\lr =0.1$,  $N_{\text{it}}=4000$, 
and vary the number of neurons $m$ and input dimension $D$.
\end{itemize}
We observe that the training error achieved \emph{does} depend on the architecture,
and most noticeably on the `aspect ratio' $m/D$. On the other hand, this 
dependence is not as dramatic as one might expect. A fourfold increase in aspect 
ratio only changes the training error by $20\%$.

\subsection{Comparison with Gaussian theory}
\label{sec:Comparison}

\begin{figure}
\begin{center}
\includegraphics[width=0.58\linewidth]{./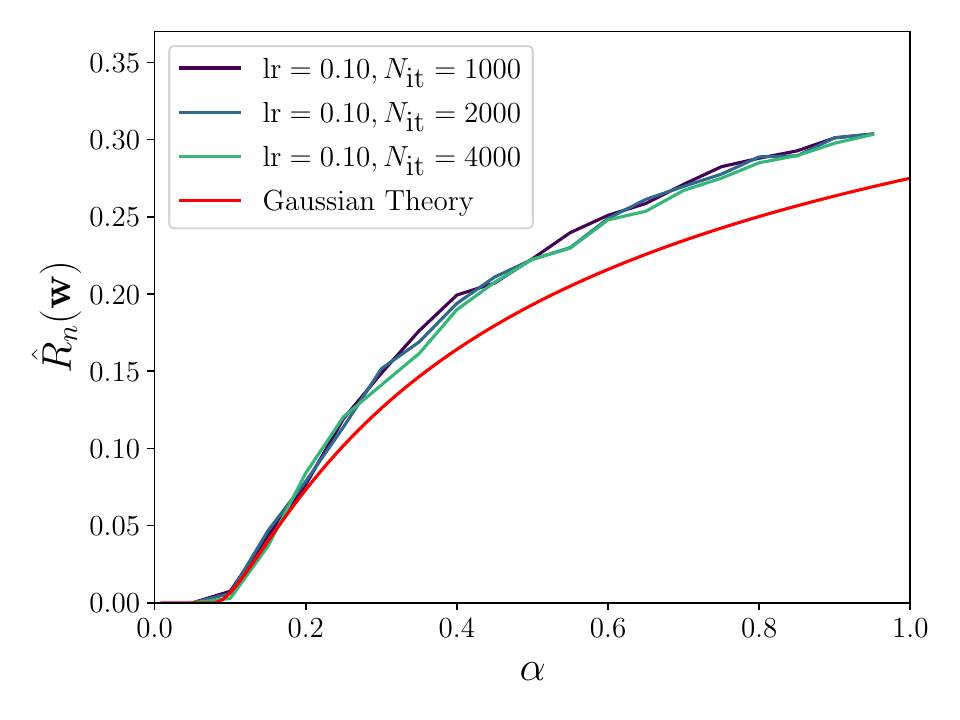}\vspace{-0.05cm}\\
\includegraphics[width=0.58\linewidth]{./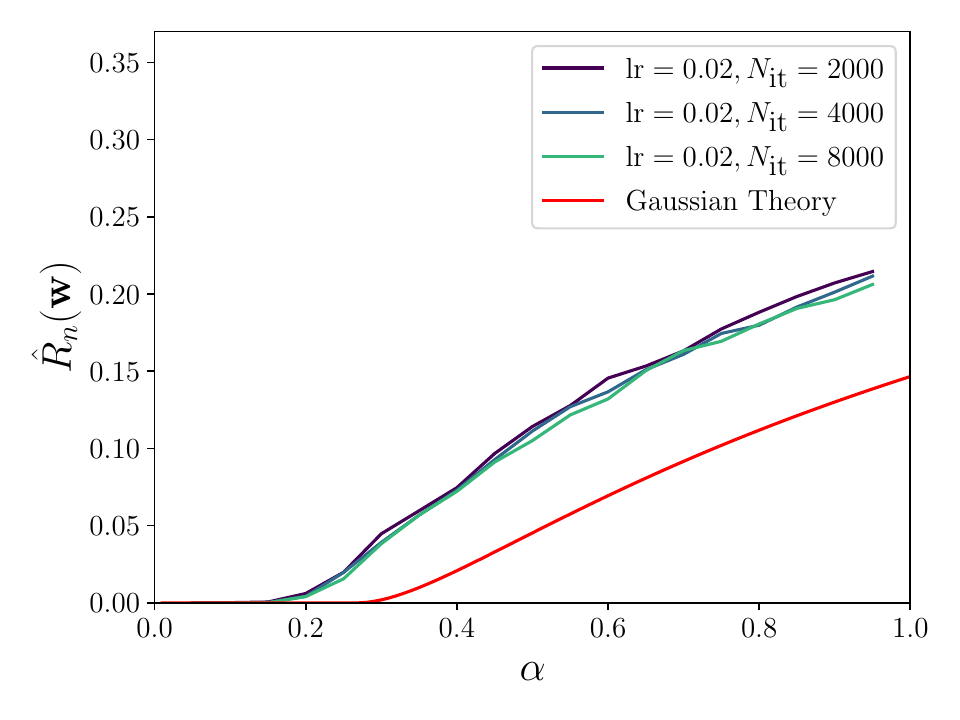}\vspace{-0.05cm}\\
\includegraphics[width=0.58\linewidth]{./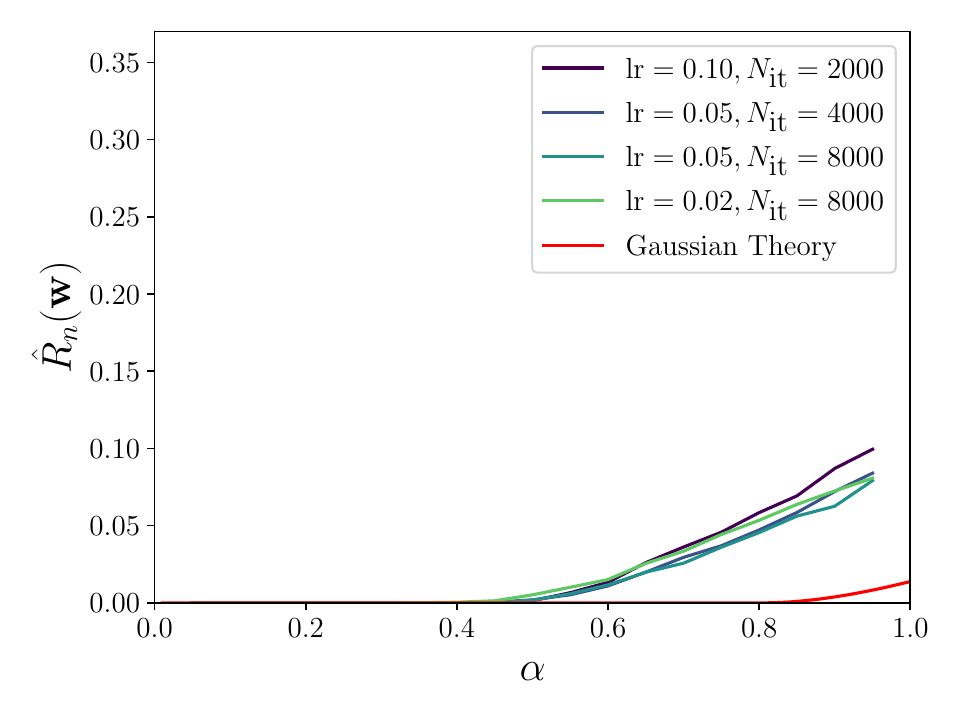}\vspace{-0.05cm}
\end{center}
\caption{Training error od SGD as a function of
 the number of samples per parameter $\alpha=n/(mD)$. From top to bottom: 
 $a\in \{1,2,5\}$. Red lines report the theoretical prediction of Theorem \ref{thm:MainAchievability}
 for the optimal `two-phase' algorithm.
 }
\label{fig:Energy-ELU-Theory}
\end{figure}

We next compare simulation results with the analytical prediction for a Gaussian model 
with matching covariance (see Section \ref{sec:Covariance} for a description of
this prediction). More precisely we compute the prediction
for the two-phase algorithms of 
Section \ref{sec:AlgoDescr}. We expect this to correspond 
to the optimal energy achieved (asymptotically as $n,d\to\infty$) by a broad class of 
efficient algorithms. Hence should provide an upper bound on the energy achieved by SGD
(and an upper bound on the threshold.)

 In Figure \ref{fig:Energy-ELU-Theory} we consider number of neurons $m=20$, and input dimension
$D=20$, and vary $N_{\text{it}}$, $\lr$ as indicated in legends.
We change $a\in\{1,2,5\}$, which of course impacts the Gaussian prediction.
The agreement is surprisingly good for $a=1$, and worsens for $a\in \{2, 5\}$. 
Despite the worse agreement for $a\gtrsim 2$, the Gaussian theory still has the correct 
qualitative behavior, and appears to be a good starting point to analyze the more complex model
\eqref{eq:TwoLayer}.

\subsection{Covariance matching}
\label{sec:Covariance}
We conclude this section by discussing the covariance matching procedure used  
to compute the theoretical predictions in Fig.~\ref{fig:Energy-ELU-Theory}. 
We begin by observing that, \emph{in general}, we cannot expect the Gaussian model to capture the
behavior of the system of nonlinear equations $y_i-f(\bz_i;\bW)=0$, 
with $f(\,\cdot\,;\bW)$ given by Eq.~\eqref{eq:TwoLayer}. 

To see this, consider the
case of a linear activation $\sigma(x) = x$. Then the system of equations reads 
\begin{align}
\Big(\frac{a}{\sqrt{m}}\sum_{j=1}^ms_j\,\bw_j\Big)^{\sT}\bz_i =y_i\, \;\;\;\;\;\forall i\le n\,.
\end{align}
In other words, the model depends on the weight vectors only through the 
$D$-dimensional projection $\bu := am^{-1/2} \sum_{j=1}^ms_j\,\bw_j$. In particular, a
solution exists if and only if $D\ge n$  (for generic data $(\bz_i)_{i\le n}$),
and the total number of parameters is irrelevant. 

In order to take into account the fact that the linear component of $\sigma$
has a much smaller number of degrees of freedom than in the corresponding Gaussian model,
we project out the linear component of activations before matching covariances.
Namely, we define 
$\sigma_{\#}(x) := \sigma(x) -\sigma_1\cdot x$, where $\sigma_1:= \E_{G\sim\normal(0,1)}[G\sigma(G)]$
is the linear coefficient in the Hermite expansion of $\sigma$, and
\begin{align}
f_{\#}(\bz;\bW)
&= \frac{a}{\sqrt{m}}\sum_{j=1}^ms_j\, \sigma_{\#}(\<\bw_j,\bz\>)\, ,\\
F_i(\bW) &:= y_i- f_{\#}(\bz;\bW)\, .
\end{align}
We now compute the covariance of the process $F_i(\bw)$ with respect to the random data $\bz$.
Letting $C_F(\bW;\bW'):= \E_{\bz_i,y_i}[F_i(\bW)F_i(\bW')]$, we have
\begin{align}
C_F(\bW;\bW')& = 1+\E_{\bz}\big\{f_{\#}(\bz;\bW)f_{\#}(\bz;\bW')\big\}\\
&= 1+\frac{a^2}{m}\sum_{j,l=1}^ms_js_l\, 
\E_{\bz}\sigma_{\#}(\<\bw_j,\bz\>)\sigma_{\#}(\<\bw'_l,\bz\>)\big\}\\
& = 1+\frac{a^2}{m}\sum_{j,k=1}^ms_js_l\, \widehat{K}\big(\<\bw_j,\bw'_l\>,\|\bw_j\|^2_2,\|\bw'_l\|^2_2\big)\, ,
\label{eq:RealCovariance}
\end{align}
where we introduced the kernel  $\widehat{K}$
defined by
\begin{align}
\widehat{K}(r_{12},r_{11},r_{22})&:=\E\big[\sigma_{\#}(G_1)\sigma_{\#}(G_2)\big]\, , \\
(G_1,G_2)&\sim\normal(\bzero,\bR) \, \;\; 
\bR:= \left(\begin{matrix}
r_{11} & r_{12}\\
r_{12} & r_{22}
\end{matrix}\right)\, .
\end{align}
We observe that the covariance 
\eqref{eq:RealCovariance} is not invariant under the orthogonal group $\cO(mD)$
but only under ${\mathfrak S}_{m/2}\otimes {\mathfrak S}_{m/2}\otimes \cO(D)$ 
(with ${\mathfrak S}_{m/2}$ denoting the group of permutations in the space of neurons, 
for neurons with same $s_j$, and  $\cO(D)$ denoting 
rotations in the space of weights).
In order to replace it by an orthogonally invariant covariance (depending
only on $\<\bW,\bW'\>$), we make the following approximations:
\begin{enumerate}
\item Since the signs $(s_j)_{ j\le m}$ in Eq.~\eqref{eq:RealCovariance}
are random, we keep only diagonal terms in the sum, and thus replace $C_F(\bW;\bW')$
by $C^{(1)}_F(\bW;\bW')$ whereby
\begin{align*}
C^{(1)}_F(\bW;\bW')&= 
1+\frac{a^2}{m}\sum_{j=1}^m \widehat{K}\big(\<\bw_j,\bw'_j\>,\|\bw_j\|^2_2,\|\bw_j\|^2_2\big)\, .
%\label{eq:RealCovariance}
\end{align*}
Notice that this is accurate at a fixed $\bW$, $\bW'$ but not uniformly over $\bW$, $\bW'$.
\item We approximate the value of $C^{(1)}_F(\bW;\bW')$ by the one taken by $C^{(1)}_F(\bW;\bW')$
for a `typical' pair $\bW$, $\bW'$ with a given inner product $\<\bW,\bW'\>$.
More formally define the set
\begin{align}
\cW_{m,n}(q):=\Big\{\bW,\bW'\in\reals^{m\times D}:\;\; \|\bW\|_F^2=m, \|\bW'\|_F^2=m,
\<\bW,\bW'\> = mq\Big\}\, .
\end{align}
If we draw $(\bW,\bW')\sim \Unif(\cW_{m,n}(q))$, then for a fixed $j\in[m]$,
$\|\bw_j\|_2=1+o_P(1)$, $\|\bw'_j\|_2=1+o_P(1)$  and $\<\bw_j,\bw_j'\>=q+o_P(1)$. As
a consequence, we will have
\begin{align}
C^{(1)}_F(\bW;\bW')&= 
1+a^2\, \widehat{K}\big(q,1,1\big)+o_P(1)\, .
\end{align}
\end{enumerate}

Summarizing, and simplifying notations, the above approximation suggests to use 
a  Gaussian process $\bF^G$ in $d=mD$ dimensions with  
\begin{align}
&\E[F^G_{i}(\bW)F^G_{j}(\bW')] = \xi\big(\<\bW,\bW'\>/m\big)\, ,\\
&\xi(q)  := 1+ a^2\, K_0(q) := 1+a^2[K(q)-\sigma_1^2 q]\, ,\\
&K(q)  :=\E\big[\sigma(G_1)\sigma(G_2)\big]\, , \;\;\;\;\;
(G_1,G_2)\sim\normal\left(\bzero, \left(\begin{matrix}
1 & q\\
q & 1
\end{matrix}\right)\right)\, .
\end{align}
This is exactly the function $\xi(\,\cdot\,)$ used to compare simulations and theory
in the previous sections.

We conclude this section with a warning.  \emph{We do not expect
the theoretical predictions based on the covariance matching here to be 
asymptotically exact as $m,D, n\to\infty$.} In particular, as emphasized
above, the actual covariance structure is not invariant under $\cO(mD)$. 

Nevertheless, the rough agreement between theory and empirical results 
suggests that the Gaussian model (possibly with a more complex covariance structure) might be a 
useful starting point.

\end{document}